\newcommand{\R}{{\mathbb R}}       
\newcommand{\N}{{\mathbb N}}
\newcommand{\Z}{{\mathbb Z}}       
\newcommand{\DD}{{\mathcal D}}
\newcommand{\HH}{{\mathcal H}}
\newcommand{\LL}{{\mathcal L}}
\newcommand{\PP}{{\mathcal P}}
\newcommand{\QQ}{{\mathcal Q}}
\newcommand{\TT}{{\mathcal T}}
\newcommand{\RR}{{\mathcal R}}
\newcommand{\CH}{{\mathcal Ch}}
\newcommand{\EE}{{\mathcal E}}
\newcommand{\cI}{{\mathcal I}}
\newcommand{\cJ}{{\mathcal J}}
\newcommand{\diam}{{\rm diam}}
\newcommand{\dist}{{\rm dist}}
\newcommand{\rf}[1]{{(\ref{#1})}}
\newcommand{\supp}{\operatorname{supp}}
\newcommand{\vphi}{{\varphi}}
\newcommand{\ve}{{\varepsilon}}
\newcommand{\vv}{{\vspace{2mm}}}
\newcommand{\vvv}{{\vspace{3mm}}}
\newcommand{\wt}[1]{{\widetilde{#1}}}
\newcommand{\wh}[1]{{\widehat{#1}}}
\newcommand{\noi}{\noindent}
\newcommand{\rest}{{\lfloor}}
\newcommand{\sss}{{\mathsf {Stop}}}
\newcommand{\DB}{{\mathsf {DB}}}
\newcommand{\Gen}{{\mathsf {Gen}}}
\newcommand{\tree}{{\rm Tree}}
\newcommand{\pv}{\operatorname{pv}}
\newcommand{\HE}{{\mathsf {HE}}}
\newcommand{\bad}{{\mathsf{Bad}}}
\newcommand{\HD}{{\mathsf{HD}}}
\newcommand{\hd}{{\mathsf{hd}}}
\newcommand{\GH}{{\mathsf{GH}}}
\newcommand{\Ot}{{\mathsf{Ot}}}
\newcommand{\LD}{{\mathsf{LD}}}
\newcommand{\MDW}{{\mathsf{MDW}}}
\newcommand{\Trc}{{\mathsf{Trc}}}
\newcommand{\sL}{{\mathsf{L}}}
\newcommand{\sD}{{\mathsf{D}}}
\newcommand{\sF}{{\mathsf{F}}}
\newcommand{\sM}{{\mathsf{M}}}
\newcommand{\Neg}{{\mathsf{Neg}}}
\newcommand{\End}{{\mathsf{End}}}
\newcommand{\Reg}{{\mathsf{Reg}}}
\newcommand{\NDB}{{\mathsf{NDB}}}
\newcommand{\Ty}{{\mathsf{Ty}}}
\newcommand{\GDF}{{\mathsf{GDF}}}
\def\XXint#1#2#3{{\setbox0=\hbox{$#1{#2#3}{\int}$ }
\vcenter{\hbox{$#2#3$ }}\kern-.58\wd0}}
\definecolor{ffffff}{rgb}{1.0,1.0,1.0}
\definecolor{qqqqff}{rgb}{0.0,0.0,1.0}
\definecolor{ffqqqq}{rgb}{1.0,0.0,0.0}
\definecolor{zzzzqq}{rgb}{0.6,0.6,0.0}
\definecolor{marronet}{rgb}{0.6,0.2,0}
\definecolor{negre}{rgb}{0,0,0}
\definecolor{vermell}{rgb}{0.8,0.05,0.05}
\definecolor{blau}{rgb}{0.3,0.2,1.}
\definecolor{blauclar}{rgb}{0.,0.,1.}
\definecolor{grisfosc}{rgb}{0.25098039215686274,0.25098039215686274,0.25098039215686274}
\definecolor{verd}{rgb}{0.1,0.6,0.1}
\definecolor{taronja}{rgb}{0.9,0.6,0.05}
\definecolor{vermellclar}{rgb}{1.,0.,0.}
\definecolor{verdet}{rgb}{0,0.8,0.1}
\definecolor{blauverd}{rgb}{0,0.4,0.2}
\definecolor{grisclar}{rgb}{0.6274509803921569,0.6274509803921569,0.6274509803921569}
\newtheorem{theorem}{Theorem}[section]
\newtheorem{lemma}[theorem]{Lemma}
\newtheorem{coro}[theorem]{Corollary}
\newtheorem{mpropo}[theorem]{Main Proposition}
\newtheorem{claim}[theorem]{Claim}
\newtheorem*{claim*}{Claim}
\newtheorem*{theorem*}{Theorem}
\theoremstyle{definition}
\theoremstyle{remark}
\newtheorem{rem}[theorem]{\bf Remark}
\numberwithin{equation}{section}
\newcommand{\brem}{\begin{rem}}
\newcommand{\erem}{\end{rem}}
\begin{document}


\title[The Riesz transforms and the Painlev\'e problem]{The measures with $L^2$-bounded Riesz transform and the Painlev\'e problem for Lipschitz harmonic functions}

\author{Xavier Tolsa}

\address{Xavier Tolsa
\\
ICREA, Passeig Llu\'{\i}s Companys 23 08010 Barcelona, Catalonia;\\
Universitat Aut\`onoma de Barcelona 
\\
08193 Bellaterra, Catalonia; and Centre de Recerca Matem\`atica,
08193 Bellaterra, Catalonia.
}

\address{Xavier Tolsa \\
ICREA, Passeig Llu\'{\i}s Companys 23 08010 Barcelona, Catalonia;  
Universitat Aut\`onoma de Barcelona and Centre de Recerca Matem\`atica,
08193 Bellaterra, Catalonia.
}

\thanks{Supported by 2017-SGR-0395 (Catalonia), MTM-2016-77635-P (MINECO, Spain), and
by
 the European Research Council (ERC) under the European Union's Horizon 2020 research and innovation programme (grant agreement 101018680).
}

\subjclass{42B20, 28A75, 49Q15}

\begin{abstract}
This work provides a geometric characterization of the measures $\mu$ in $\R^{n+1}$ with polynomial upper growth of degree $n$ such that the $n$-dimensional Riesz transform $\RR\mu (x) = \int \frac{x-y}{|x-y|^{n+1}}\,d\mu(y)$ belongs to $L^2(\mu)$. More precisely, it is shown that
$$\|\RR\mu\|_{L^2(\mu)}^2 + \|\mu\|\approx \int\!\!\int_0^\infty \beta_{2,\mu}(x,r)^2\,\frac{\mu(B(x,r))}{r^n}\,\frac{dr}r\,d\mu(x) +
\|\mu\|,$$
where $\beta_{\mu,2}(x,r)^2 = \inf_L \frac1{r^n}\int_{B(x,r)} \left(\frac{\dist(y,L)}r\right)^2\,d\mu(y),$
with the infimum taken over all affine $n$-planes $L\subset\R^{n+1}$. 
As a corollary, one obtains a characterization of the removable sets for Lipschitz
harmonic functions in terms of a metric-geometric potential and one deduces that the class of removable sets for Lipschitz harmonic functions is invariant by bilipschitz mappings.

\end{abstract}

\maketitle

\tableofcontents

\section{Introduction}

Given a Radon measure $\mu$ in $\R^{n+1}$, its 
($n$-dimensional) Riesz transform at $x\in\R^{n+1}$ is defined by
$$\RR\mu (x) = \int \frac{x-y}{|x-y|^{n+1}}\,d\mu(y),$$
whenever the integral makes sense. For $f\in L^1_{loc}(\mu)$,
one writes $\RR_\mu f(x) = \RR(f\mu)(x)$.
Given $\ve>0$, the $\ve$-truncated Riesz transform of $\mu$ equals
$$\RR_\ve\mu (x) = \int_{|x-y|>\ve} \frac{x-y}{|x-y|^{n+1}}\,d\mu(y),$$
and the operator $\RR_{\mu,\ve}$ is defined by
 $\RR_{\mu,\ve}f(x) = \RR_\ve(f\mu)(x)$. 
 
 We say that $\RR_\mu$ is bounded in $L^2(\mu)$ if
the operators $\RR_{\mu,\ve}$ are bounded uniformly in $L^2(\mu)$ uniformly on $\ve$, and then we denote
$$\|\RR_\mu\|_{L^2(\mu)\to L^2(\mu)} = \sup_{\ve>0} \|\RR_{\mu,\ve}\|_{L^2(\mu)\to L^2(\mu)}.$$
We also write
$$\RR_*\mu(x) = \sup_{\ve>0} |\RR_{\ve}\mu(x)|, \qquad  \pv\RR\mu(x) = \lim_{\ve>0} \RR_{\ve}\mu(x),$$
in case that the latter limit exists. Remark that, sometimes, abusing notation we will
write $\RR\mu$ instead of $\pv\RR\mu$.

This paper provides a full geometric description of the measures $\mu$ with no point masses such that $\RR_\mu$ is bounded in
$L^2(\mu)$. 
In the case $n=1$, such description has already been obtained (see \cite{MV}, \cite{Leger}, \cite{Tolsa-duke}), relying on the connection between Menger curvature and the Cauchy kernel
found by Melnikov \cite{Melnikov}. In higher dimensions, a similar connection is missing,
and thus the obtention of analogous results presents major difficulties. 
In the case when the measure $\mu$ is AD-regular (i.e., Ahlfors-David regular) that geometric description is equivalent to the 
codimension $1$ David-Semmes problem, solved by Nazarov, the author of the current paper, and Volberg
in \cite{NToV1}. Recall that a measure $\mu$ is AD-regular (or $n$-AD-regular) if there exists a constant $C>0$ such that
$$C^{-1}\,r^n\leq \mu(B(x,r))\leq C\,r^n\quad \mbox{ for all $x\in\supp\mu$ and $0<r\leq \diam(\supp\mu)$.}$$
One of the main motivations for the description of the measures $\mu$ such that $\RR_\mu$ is bounded in $L^2(\mu)$
is the characterization of the removable singularities for Lipschitz harmonic functions. Also, one may expect other
applications regarding the study of harmonic and elliptic measures. Indeed, in some of the recent advances on this topic, the connection between harmonic measure, the Riesz transform, and rectifiability has played an essential role (see \cite{AHM3TV}, \cite{AMT}, and \cite{AMTV}, for example).

Next we need to introduce additional notation. For a ball $B\subset \R^{n+1}$, we consider its $n$-dimensional density (with respect to $\mu$):
$$\theta_\mu(B)= \frac{\mu(B)}{r(B)^n},$$
and its $\beta_{2,\mu}$ coefficient:
$$\beta_{2,\mu}(B) = \inf_L \left(\frac1{r(B)^n}\int_B \left(\frac{\dist(x,L)}{r(B)}\right)^2\,d\mu(x)\right)^{1/2},$$
where the infimum is taken over all $n$-planes $L\subset\R^{n+1}$ and $r(B)$ stands for the radius of $B$. For
$B=B(x,r)$ we may also write $\theta_\mu(x,r)$ and $\beta_{2,\mu}(x,r)$ instead of $\theta_\mu(B)$ and 
$\beta_{2,\mu}(B)$. The coefficients $\beta_{2,\mu}$ were introduced by David and Semmes in their fundamental works \cite{DS1}, \cite{DS2} on uniform rectifiability. They can be considered as $L^2$ variants of 
some analogous coefficients considered previously by Peter Jones in his celebrated travelling salesman theorem \cite{Jones}.

The first main result of this paper is the following.

\begin{theorem}\label{teomain1}
Let $\mu$ be a Radon measure in $\R^{n+1}$ satisfying the polynomial growth condition
\begin{equation}\label{eqgrow00}
\mu(B(x,r))\leq \theta_0\,r^n\quad \mbox{ for all $x\in\supp\mu$ and all $r>0$}
\end{equation}
and such that $\RR_*\mu(x)<\infty$ $\mu$-a.e.
Then
\begin{equation}\label{eqbetawolff}
\int\!\!\int_0^\infty \beta_{2,\mu}(x,r)^2\,\theta_\mu(x,r)\,\frac{dr}r\,d\mu(x)\leq C\,(\big\|\pv\RR\mu\|_{L^2(\mu)}^2
+\theta_0^2\,\|\mu\|\big),
\end{equation}
where $C$ is an absolute constant.
\end{theorem}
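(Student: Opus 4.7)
The plan is to reduce this $\beta^2\theta$-energy estimate to a collection of local tree estimates via a corona-type decomposition of $\supp\mu$, and then to extract the $\beta$-coefficients from the $L^2$-energy of $\RR\mu$ by comparing $\mu$ to flat models on each cube. First I would discretize the integrand by introducing a David--Christ system of dyadic cells $\DD$ adapted to $\mu$, replacing the left-hand side by the equivalent sum
$$\sum_{Q\in\DD}\beta_{2,\mu}(2B_Q)^2\,\theta_\mu(2B_Q)\,\mu(Q),$$
with $B_Q$ a ball of radius comparable to $\diam(Q)$ centered at a point of $Q$. Standard truncation and exhaustion reduce the problem to a finite subsum over cells of side length in $[2^{-N},2^N]$, with $\mu$ compactly supported and of finite mass; a monotone convergence argument recovers the full statement as $N\to\infty$.

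The second ingredient is a corona decomposition of $\DD$ into trees $\{T_j\}_j$ with roots $R_j$ on which the density $\theta_\mu(2B_Q)$ stays comparable to $\theta_\mu(2B_{R_j})$. The stopping conditions for a descendant $Q\subsetneq R_j$ are the usual three: (i) high density, (ii) low density, and (iii) large $\beta$ together with an auxiliary geometric condition convenient for the local analysis. A weak covering argument using the polynomial growth constant $\theta_0$ gives a packing bound for the type-(i) cubes summing to $\lesssim\theta_0^2\|\mu\|$; the type-(ii) cubes lose a definite fraction of mass at each generation and are summed geometrically; and the type-(iii) cubes are re-absorbed through the tree estimate below.

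The heart of the proof is a local tree estimate of the form
$$\sum_{Q\in T_j}\beta_{2,\mu}(2B_Q)^2\,\theta_\mu(2B_Q)\,\mu(Q) \;\lesssim\; \int_{F_j}|\pv\RR\mu|^2\,d\mu + \theta_0^2\,\mu(R_j),$$
where $F_j\subset R_j$ is the set of points not covered by stopping cubes. The mechanism is to compare, at each $Q\in T_j$, the restriction of $\mu$ to a neighborhood of $Q$ with its best flat approximation $c_Q\,\HH^n|_{L_Q}$; since $\pv\RR(\HH^n|_{L_Q})$ vanishes identically on $L_Q$, the deviation from flatness is recorded by $\RR\mu$ itself with a quantitative gain of $\beta_{2,\mu}(2B_Q)^2\,r(Q)^n$. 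A Littlewood--Paley / quasi-orthogonality argument, made feasible by the near-constant density within the tree, then converts the pointwise ball-by-ball gains into a genuine $L^2$ sum. Finally, summing the tree estimates over $j$ and invoking the packing of stopping cubes yields the theorem.

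The hardest part is the tree estimate in the absence of a lower density bound, which is exactly what distinguishes this setup from the AD-regular David--Semmes problem resolved in \cite{NToV1}. The corona architecture, and in particular the low-density stopping rule, is designed precisely to mitigate the missing lower regularity, but implementing the quasi-orthogonality so that every error is absorbed into $\theta_0^2\|\mu\|$ or into $\|\pv\RR\mu\|_{L^2(\mu)}^2$ will require delicate bookkeeping: choosing the correct stopping parameters, controlling the interaction between type-(iii) stopping and the $\beta^2\theta$ sum, and avoiding any circularity when feeding the tree estimate back into the global bound.
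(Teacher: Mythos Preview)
Your plan describes the architecture one would hope for, but it misses the actual mechanism by which the theorem is proved, and the central step you flag as ``delicate bookkeeping'' is in fact the entire content of two long papers. Concretely: the paper does \emph{not} obtain a local tree estimate by comparing $\mu$ to a flat model $c_Q\HH^n|_{L_Q}$ and reading off a $\beta^2$-gain from $\RR\mu$. That heuristic is the David--Semmes picture, but even in the AD-regular case it required the full \cite{NToV1} machinery, and for measures with only polynomial upper growth no direct quasi-orthogonality argument of this type is known to produce the tree estimate you state. There is no $\beta$-stopping condition anywhere in the paper's corona construction.

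What the paper actually does is quite different. It invokes a prior result from \cite{DT} (Theorem~\ref{propomain00} here) which already gives
\[
\sum_{Q}\beta_{2,\mu}(2B_Q)^2\Theta(Q)\mu(Q)\lesssim \|\RR\mu\|_{L^2(\mu)}^2+\theta_0^2\|\mu\|+\sum_{Q\in\HE}\EE(4Q),
\]
where $\EE$ is a discrete Wolff-type energy and $\HE$ is the family of $\PP$-doubling cubes with abnormally high energy. The entire present paper is devoted to killing the last sum (Main Proposition~\ref{propomain}). This is done not by flat comparison but by: (i) associating to each high-energy cube a family of ``tractable trees'' in which the density jumps by a large fixed factor $\Lambda$; (ii) on each such tree, building an approximating measure $\eta$ and using a \emph{variational} argument (minimizing an $L^p$ functional, cf.\ \cite{ENV,Reguera-Tolsa,JNRT}) to force $\|\RR\eta\|_{L^p(\eta)}$ to be large; (iii) transferring this to lower bounds on Haar coefficients of $\RR\mu$; and (iv) closing the loop via a bootstrapping inequality that controls $\sum_{Q\in\DB}\EE_\infty(9Q)$ by itself plus $\|\RR\mu\|_{L^2(\mu)}^2$, with a small coefficient in front of the self-term. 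Your proposal contains none of these ingredients, and in particular the packing of high-density stopping cubes by $\theta_0^2\|\mu\|$ alone is false in this generality: it is precisely the failure of that packing that forces the Wolff-energy detour.
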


Let us remark that the growth condition \rf{eqgrow00} and the assumption that $\RR_*\mu(x)<\infty$ $\mu$-a.e.\ imply the existence of principal values $\pv\RR\mu(x)$ $\mu$-a.e., by \cite{NToV2}, and so
$\big\|\pv\RR\mu\|_{L^2(\mu)}$ is well defined.

 A converse to the estimate \rf{eqbetawolff} also holds: if $\mu$ satisfies the growth condition
\rf{eqgrow00}, then
\begin{equation}\label{eqbetawolff'}
\|\pv\RR\mu\|_{L^2(\mu)}^2\leq C\,\int\!\!\int_0^\infty \beta_{2,\mu}(x,r)^2\,\theta_\mu(x,r)\,\frac{dr}r\,d\mu(x) 
+C\,\theta_0^2\,\|\mu\|,
\end{equation}
where $C$ is an absolute constant. This was shown in \cite{Azzam-Tolsa} in the case $n=1$, and in \cite{Girela} in full generality.

From \rf{eqbetawolff'}, Theorem \ref{teomain1}, and a direct application of the $T1$ theorem for non-doubling measures (\cite{NTrV1}, \cite{NTrV2}) one deduces the following.

\begin{theorem}\label{teomain2}
Let $\mu$ be a Radon measure in $\R^{n+1}$ with no point masses. Then $\RR_\mu$ is bounded in $L^2(\mu)$ if and
only if it satisfies the polynomial growth condition
\begin{equation}\label{eqgrow01}
\mu(B(x,r))\leq C\,r^n\quad \mbox{ for all $x\in\supp\mu$ and all $r>0$}
\end{equation}
and
\begin{equation}\label{eqbetawolff2}
\int_B\int_0^{r(B)} \beta_{2,\mu}(x,r)^2\,\theta_\mu(x,r)\,\frac{dr}r\,d\mu(x)\leq C^2\,\mu(B)\quad\mbox{ for any ball
$B\subset\R^{n+1}$.}
\end{equation}
Further, the optimal constant $C$ is comparable to $\|\RR_\mu\|_{L^2(\mu)\to L^2(\mu)}$.
\end{theorem}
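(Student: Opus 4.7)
The plan is to combine three ingredients: Theorem \ref{teomain1} above, the converse inequality \rf{eqbetawolff'} from \cite{Azzam-Tolsa} and \cite{Girela}, and the non-homogeneous $T1$ theorem of Nazarov--Treil--Volberg. The strategy will be to translate both the conditions \rf{eqgrow01}--\rf{eqbetawolff2} and the $L^2(\mu)$-boundedness of $\RR_\mu$ into the same local $L^2$ testing condition $\int_B |\pv\RR(\mu|_B)|^2\,d\mu \lesssim \mu(B)$, and then pair this with the growth condition via $T1$.

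For sufficiency I would assume \rf{eqgrow01} and \rf{eqbetawolff2}, fix an arbitrary ball $B$, and apply \rf{eqbetawolff'} to $\mu|_B$ (which inherits the polynomial growth condition). Since $\beta_{2,\mu|_B}(x,r) \leq \beta_{2,\mu}(x,r)$ and $\theta_{\mu|_B}(x,r) \leq \theta_\mu(x,r)$ pointwise, the $r \leq r(B)$ contribution to the resulting $\beta_2$-integral is controlled directly by \rf{eqbetawolff2}, while the $r > r(B)$ tail is handled by the crude flatness bound $\beta_{2,\mu|_B}(x,r)^2 \lesssim r(B)^2\mu(B)/r^{n+2}$ together with $\mu|_B(B(x,r)) \leq \mu(B)$. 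The testing condition $\int_B |\pv\RR(\mu|_B)|^2\,d\mu \lesssim (C^2 + \theta_0^2)\,\mu(B)$, combined with \rf{eqgrow01}, then feeds into the Nazarov--Treil--Volberg $T1$ theorem to yield $\RR_\mu$ bounded on $L^2(\mu)$ with norm $\lesssim C$.

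For necessity I would start from $\RR_\mu$ bounded on $L^2(\mu)$ with norm $M$. Standard Calder\'on--Zygmund theory will give \rf{eqgrow01} with constant $\lesssim M$ (and in particular $\RR_*\mu<\infty$ $\mu$-a.e.). I would fix a ball $B$ and apply Theorem \ref{teomain1} to $\mu|_{2B}$; for $x \in B$ and $r \leq r(B)$ one has $B(x,r) \subset 2B$, so $\beta_{2,\mu|_{2B}}(x,r) = \beta_{2,\mu}(x,r)$ and $\theta_{\mu|_{2B}}(x,r) = \theta_\mu(x,r)$. Combining with $\|\pv\RR(\mu|_{2B})\|_{L^2(\mu|_{2B})}^2 \leq \|\RR(\chi_{2B}\mu)\|_{L^2(\mu)}^2 \leq M^2\mu(2B)$ then gives
$$\int_B\int_0^{r(B)} \beta_{2,\mu}(x,r)^2\,\theta_\mu(x,r)\,\frac{dr}{r}\,d\mu(x) \lesssim M^2 \mu(2B).$$
To replace $\mu(2B)$ by $\mu(B)$ on the right-hand side I would split the $r$-range into $(0, r(B)/2]$, handled by a Whitney-type covering of $B$ by sub-balls $B_i$ with $2B_i \subset B$ (so that $\mu(2B_i) \leq \mu(B)$), and $(r(B)/2, r(B)]$, controlled trivially via $\beta_{2,\mu}^2\,\theta_\mu \lesssim \theta_0^2$ and the logarithmic length of the range.

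The hard part will be the non-doubling bookkeeping needed to upgrade $\mu(2B)$ to $\mu(B)$ and to match the testing condition to whichever precise formulation of the $T1$ theorem is most convenient; all of the analytic substance sits in Theorem \ref{teomain1} and its converse \rf{eqbetawolff'}, with the $T1$ theorem doing only the final packaging.
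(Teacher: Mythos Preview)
Your overall plan coincides with the paper's: assemble Theorem~\ref{teomain1}, the converse inequality \rf{eqbetawolff'}, and the non-homogeneous $T1$ theorem. The sufficiency direction is argued correctly.

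The necessity direction has a genuine gap at the step where you upgrade $\mu(2B)$ to $\mu(B)$. A Whitney-type covering of $B$ by balls $B_i$ with $2B_i\subset B$ forces $r(B_i)\approx\dist(x_i,\partial B)$, so for $x\in B_i$ the estimate $J(B_i)\lesssim M^2\mu(2B_i)$ only controls scales $r\le r(B_i)$. The leftover boundary layer $r(B_i)<r\le r(B)/2$, under your trivial bound $\beta_{2,\mu}^2\,\theta_\mu\le\theta_0^2$, contributes
\[
\theta_0^2\int_B \log\frac{r(B)}{\dist(x,\partial B)}\,d\mu(x),
\]
and for non-doubling $\mu$ this need \emph{not} be $\lesssim\mu(B)$: take $\mu$ concentrated on a sphere just inside $\partial B$. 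If instead the $B_i$ have fixed radius $\approx r(B)/4$, the constraint $2B_i\subset B$ forces their centers into $\tfrac12B$, so they fail to cover the annulus $B\setminus\tfrac34B$. Either reading leaves the same boundary term uncontrolled, so the Whitney scheme does not close.

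You rightly flagged this as ``the hard part''; the paper gives no details here either. One route that avoids the issue is to apply Theorem~\ref{teomain1} to $\mu|_B$ (so the right-hand side is $M^2\mu(B)$ from the start) and then control the discrepancy $\beta_{2,\mu}^2\theta_\mu-\beta_{2,\mu|_B}^2\theta_{\mu|_B}$; after Fubini this becomes the interaction term $\theta_0\int_{2B\setminus B}\int_B|x-y|^{-n}\,d\mu(x)\,d\mu(y)$, which again resists the crude $\theta_0$-bound and needs the small-boundary/energy machinery (cf.\ Lemmas~\ref{lemDMimproved2}--\ref{lemDMimproved}) rather than a covering argument. In practice the cleanest fix is to work first with the dyadic Carleson condition on the David--Mattila lattice (which is what the proof of Theorem~\ref{teomain1} actually delivers, with $\mu(Q)$ on the right for $Q\in\DD_\mu$) and only then pass to arbitrary balls; the thin-boundary property \rf{eqfk490} of the lattice is exactly what absorbs the problematic layer.
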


In the case $n=1$ the preceding results are already known. They were proven in \cite{Azzam-Tolsa}, relying on the
corona decomposition involving the curvature of $\mu$ from \cite{Tolsa-bilip}. 
Further, for arbitrary $n>1$, 
Theorems \ref{teomain1} and \ref{teomain2} were proven recently by D\k{a}browski and
the author \cite{DT} for a special class of measures $\mu$ with an appropriate Wolff type energy satisfying some scale invariant 
estimates. In fact, the proof of Theorems \ref{teomain1} and \ref{teomain2} in full generality in the current paper relies heavily on the results from \cite{DT}. 

It is also known that if one assumes that there exists a sufficiently large class of $L^2(\mu)$ bounded singular integral operators with an odd Calder\'on-Zygmund kernel, then the condition \rf{eqbetawolff2}
 holds, in any codimension (i.e., assuming that, instead of $\R^{n+1}$, the ambient space is $\R^d$, with $d\geq n$). This was proved in \cite{JNT}.

Observe that, when $\mu$
is $n$-AD-regular, $\theta_\mu(x,r)\approx1$ for all $x\in\supp\mu$ and $0<r\leq \diam(\supp\mu)$, and so
the condition \rf{eqbetawolff2} is equivalent to the uniform $n$-rectifiability of $\mu$, by \cite{DS1}.
 So one deduces that the $L^2(\mu)$ boundedness
of $\RR_\mu$ implies the uniform $n$-rectifiability of $\mu$ and then one recovers the solution of the David-Semmes
problem from \cite{NToV1}. Recall that a measure $\mu$ in $\R^d$ is called {\em uniformly $n$-rectifiable} (UR) if it is $n$-AD-regular and
there exist constants $\kappa, M >0$ such that for all $x \in \supp\mu$ and all $0<r\leq \diam(\supp\mu)$ 
there is a Lipschitz mapping $g$ from the ball $B_n(0,r)\subset\R^{n}$ to $\R^d$ with $\text{Lip}(g) \leq M$ such that
$$
\mu(B(x,r)\cap g(B_{n}(0,r)))\geq \kappa\, r^{n}.$$

It is worth comparing Theorem \ref{teomain2} with a related result obtained in \cite{JNRT} in connection
with the fractional Riesz transform $\RR^s$ associated with the kernel
$x/|x|^{s+1}$ for $s\in (n,n+1)$. The precise result, which involves the $s$-dimensional density $\theta_\mu^s(x,r) = \mu(B(x,r))r^{-s}$, is the following.

\begin{theorem*}[\cite{JNRT}]
Let $\mu$ be a Radon measure in $\R^{n+1}$ with no point masses and let $s\in(n,n+1)$. Then $\RR_\mu^s$ is bounded in $L^2(\mu)$ if and
only if \begin{equation}\label{eqbetawolff3}
\int_B\int_0^{r(B)} \theta^s_\mu(x,r)^2\,\frac{dr}r\,d\mu(x)\leq C^2\,\mu(B)\quad\mbox{ for any ball
$B\subset\R^{n+1}$.}
\end{equation}
Further, the optimal constant $C$ is comparable to $\|\RR_\mu^s\|_{L^2(\mu)\to L^2(\mu)}$.
\end{theorem*}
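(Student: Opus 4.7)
\emph{Proof proposal.} The plan is to prove the two directions separately, exploiting the fact that for $s>n$ the Riesz kernel admits a \emph{pointwise positive symmetrization}, which stands in for---and is dramatically simpler than---the corona/$\beta$-coefficient machinery needed in the critical case $s=n$ (Theorems \ref{teomain1} and \ref{teomain2}).

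For the direction $\rf{eqbetawolff3}\Rightarrow \RR_\mu^s$ bounded in $L^2(\mu)$, I would first extract polynomial growth $\mu(B(x_0,r))\lesssim r^s$ directly from $\rf{eqbetawolff3}$: applying the Carleson condition to $B(x_0,2r)$ and noting that for $y\in B(x_0,r/2)$ and $\rho\in[r/2,r]$ one has $\theta_\mu^s(y,\rho)\gtrsim \mu(B(x_0,r/2))/r^s$ yields an inequality of the form $\mu(B(x_0,r/2))^3\lesssim r^{2s}\,\mu(B(x_0,2r))$, from which polynomial growth follows by iteration. With polynomial growth in hand, the non-homogeneous $T1$ theorem of \cite{NTrV1}, \cite{NTrV2} reduces the claim to the testing estimate $\|\RR_\mu^s\chi_B\|_{L^2(\mu)}^2\lesssim \mu(B)$ for all balls $B$; this in turn follows from $\rf{eqbetawolff3}$ after splitting $\RR_\mu^s\chi_B$ into its near and far parts and invoking a standard layer-cake identity that expresses $\int |x-y|^{-2s}\,d\mu(y)$ in terms of $\theta_\mu^s$.

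For the converse direction, the central tool is a \emph{cyclic-permutation symmetrization}. Since $k_s(v):=v/|v|^{s+1}$ is odd, relabeling the three integration variables gives
\begin{equation*}
3 \int_B |\RR_\mu^s \chi_B(x)|^2\,d\mu(x) \;=\; \iiint_{B^3} p_s(x,y,z)\,d\mu(x)\,d\mu(y)\,d\mu(z),
\end{equation*}
where
\begin{equation*}
p_s(x,y,z) \;:=\; k_s(x-y)\cdot k_s(x-z) + k_s(y-z)\cdot k_s(y-x) + k_s(z-x)\cdot k_s(z-y).
\end{equation*}
The crucial and most delicate ingredient is a \emph{pointwise positivity lemma}: for $s\in(n,n+1)$ there exists $c=c(s,n)>0$ such that $p_s(x,y,z)\geq c\,\max(|x-y|,|x-z|)^{-2s}$ for every distinct triple. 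Given this, the $L^2$-boundedness hypothesis yields $\iiint_{B^3}p_s\lesssim\|\RR_\mu^s\|^2\,\mu(B)$, while the layer-cake identity
\begin{equation*}
\int_0^{r(B)}\theta_\mu^s(x,r)^2\,\frac{dr}{r}\;\approx\;\iint_{B\times B}\max(|x-y|,|x-z|)^{-2s}\,d\mu(y)\,d\mu(z)
\end{equation*}
converts the positivity bound into exactly $\rf{eqbetawolff3}$.

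The principal obstacle is this positivity lemma for $p_s$. While Melnikov's identity at $s=n=1$ is algebraically exact (and is what couples the Cauchy kernel to Menger curvature), the positivity in the range $s\in(n,n+1)$ has to be extracted via an explicit Taylor-type expansion of the three inner products around degenerate (collinear or near-coincident) configurations; the hypothesis $s>n$ enters decisively to guarantee that the leading-order term of that expansion carries a definite sign. Once this pointwise estimate is established, the rest of the argument is routine non-homogeneous $T1$ machinery, which explains why no $\beta$-coefficient appears in $\rf{eqbetawolff3}$---a striking simplification over the critical case $s=n$ treated in the present paper.
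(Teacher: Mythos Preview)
Your proposal has a fundamental gap in the hard direction. The ``pointwise positivity lemma'' you invoke---that $p_s(x,y,z)\geq c\,\max(|x-y|,|x-z|)^{-2s}$ for all distinct triples when $s\in(n,n+1)$---is simply \emph{false} whenever $s>1$, hence for every $n\geq 1$. A direct computation already kills it: in $\R^2$ with $s=3/2$ take the collinear triple $x=(0,0)$, $y=(1,0)$, $z=(-1,0)$. Then $k_s(x-y)\cdot k_s(x-z)=-1$, while each of the two remaining cyclic terms equals $2^{-3/2}$, so $p_s(x,y,z)=-1+2^{-1/2}<0$. More generally, it is well known that the cyclic symmetrization of the vector $s$-Riesz kernel is pointwise nonnegative only in the range $0<s\leq 1$; the failure of positivity for $s>1$ is precisely the obstruction that blocked curvature-type methods beyond the Cauchy setting, and is one of the reasons the integer case $s=n$ (\cite{NToV1} and the present paper) and the fractional case $s\in(n,n+1)$ required entirely new machinery.

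The actual proof in \cite{JNRT}, as the paper notes in the paragraph right after the theorem, goes by a completely different route: it relies on blowup techniques and the classification of \emph{reflectionless} measures for $\RR^s$---namely, that any measure with polynomial growth of degree $s$ and $\RR^s\mu=0$ in a suitable $BMO(\mu)$ sense must vanish (\cite{JN1}, \cite{JN2}). No symmetrization identity appears. Your easy direction (extract growth from \rf{eqbetawolff3} and test via the non-homogeneous $T1$ theorem) is plausible, but the converse has to be rebuilt from scratch along these lines.
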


In the preceding theorem one does not need to ask any growth condition analogous to \rf{eqgrow01}
(with $n$ interchanged with $s$) because this condition is already implied by \rf{eqbetawolff3}. Observe that in
\rf{eqbetawolff3} the density $\theta^s_\mu(x,r)$ replaces $\beta_{2,\mu}(x,r)^2$ in \rf{eqbetawolff2}, which scales similarly to
$\theta^s(x,r)$ when $n=s$. On the other hand, the proof of the last theorem in \cite{JNRT} makes an
extensive use of blowup techniques, which essentially rely on the fact that any measure $\mu$ satisfying the growth condition $\mu(B(x,r))\leq r^s$ for all $x\in\R^{n+1}$, $r>0$, and such that 
$\RR^s\mu=0$ in a suitable $BMO(\mu)$ sense, must be the zero measure (see \cite{JN1}, \cite{JN2}). In 
the codimension $1$ case, one might expect that if $\mu$ satisfies \rf{eqgrow01}
 and $\RR\mu=0$ in the $BMO(\mu)$ sense, then $\mu=c\HH^n|_L$ for some $n$-plane $L$. However, this is 
 still an open problem. If this were known to be true, probably in the present paper (and in \cite{DT}) we
 could use blowup arguments analogous to the ones in \cite{JNRT}.

As shown in \cite{Azzam-Tolsa}, the finiteness of the double integral on the left hand side of \rf{eqbetawolff}
is equivalent to the existence of a suitable corona decomposition for $\mu$ satisfying an appropriate packing
condition. This condition is stable by bilipschitz maps (see also \cite{Girela} for more details). So we get the
following corollary.

\begin{coro}\label{coro1}
Let $\mu$ be a Radon measure in $\R^{n+1}$ with no point masses. Let $\vphi:\R^{n+1}\to\R^{n+1}$ be a bilipschitz
map. Let $\sigma=\vphi\#\mu$ be the image measure of $\mu$ by $\vphi$.
If $\RR_\mu$ is bounded in $L^2(\mu)$, then $\RR_\sigma$ is bounded in $L^2(\sigma)$. Further,
$$\|\RR_\sigma\|_{L^2(\sigma)\to L^2(\sigma)}\leq C\,\|\RR_\mu\|_{L^2(\mu)\to L^2(\mu)},$$
where $C$ depends only on the bilipschitz constant of $\vphi$.
\end{coro}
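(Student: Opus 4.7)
My plan is to deduce the corollary directly from the characterization in Theorem~\ref{teomain2}, viewing it as a statement about two geometric conditions that are both bilipschitz invariant up to constants depending only on the bilipschitz constant $L$ of $\vphi$. Since $\vphi$ is a bijection and $\mu$ has no point masses, the same is true of $\sigma=\vphi\#\mu$, so Theorem~\ref{teomain2} applies to $\sigma$.

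The preservation of the polynomial growth condition \rf{eqgrow01} is immediate: for any $y=\vphi(x)\in\supp\sigma$ and $r>0$ one has $\vphi^{-1}(B(y,r))\subset B(x,Lr)$, so
\begin{equation*}
\sigma(B(y,r)) = \mu\bigl(\vphi^{-1}(B(y,r))\bigr)\leq \mu(B(x,Lr))\leq C\,L^n\,r^n.
\end{equation*}

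The essential step is the bilipschitz invariance of the packing condition \rf{eqbetawolff2}. I would not try to push the $\beta_{2,\mu}$-coefficients themselves through $\vphi$, since distances to affine $n$-planes do not transform well under bilipschitz maps. Instead I would invoke the equivalence proved in \cite{Azzam-Tolsa} for $n=1$ and in \cite{Girela} in full generality: under polynomial growth, the condition \rf{eqbetawolff2} is equivalent to the existence of a corona decomposition of $\mu$ into stopping-time regions approximated by Lipschitz graphs of bounded slope, together with a Carleson packing estimate on the top cubes. This corona decomposition is a metric-geometric object built out of Christ-David cubes, densities and Lipschitz graphs, and it transfers naturally to $\sigma$: the images under $\vphi$ of the stopping-time regions and of their approximating graphs yield a corona decomposition for $\sigma$ with all quantitative constants controlled in terms of the original ones and of $L$. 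Running the equivalence in the reverse direction recovers \rf{eqbetawolff2} for $\sigma$.

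Combining the two preceding steps with Theorem~\ref{teomain2} yields that $\RR_\sigma$ is bounded in $L^2(\sigma)$ and that $\|\RR_\sigma\|_{L^2(\sigma)\to L^2(\sigma)}\leq C\,\|\RR_\mu\|_{L^2(\mu)\to L^2(\mu)}$ with $C=C(L)$. The main obstacle is the transfer of the corona decomposition through $\vphi$: one must verify that the bilipschitz image of a Lipschitz graph approximating $\mu$ at a given scale is still approximable by Lipschitz graphs of bounded slope at the corresponding scale for $\sigma$, and that the top-cube packing bound is preserved. This is precisely the bilipschitz-stability content of \cite{Azzam-Tolsa} and \cite{Girela}, so in the end the corollary is a clean combination of those stability results with Theorem~\ref{teomain2}.
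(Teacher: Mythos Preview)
Your proposal is correct and follows essentially the same approach as the paper: the paper's justification of the corollary (given in the paragraph immediately preceding it) is precisely that by \cite{Azzam-Tolsa} and \cite{Girela} the condition \rf{eqbetawolff2} is equivalent to a corona decomposition with a packing condition that is stable under bilipschitz maps, and then Theorem~\ref{teomain2} applies. Your write-up makes this explicit, including the easy preservation of the growth condition, exactly as intended.
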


Remark that, up to now, the preceding result was not  known even for the case of invertible affine maps such as the one defined by
$$\vphi(x_1,\ldots,x_{n+1}) = (2x_1,x_2,\ldots,x_{n+1}).$$

As shown in \cite{Girela}, the conditions \rf{eqgrow01} and \rf{eqbetawolff2} imply the $L^2(\mu)$ boundedness of any singular integral operator of the form
$$T_\mu f(x) =\int K(x-y)\,f(y)\,d\mu(y),$$
where $K$ is an odd kernel such that
\begin{equation}
\label{eqKernel}
|\nabla^j K(x)|\lesssim \frac1{|x|^{n+j}}\quad \mbox{ for all $x\neq0$ and $0\leq j\leq 2$}
\end{equation}
(remark that $T_\mu$ is said to be bounded in $L^2(\mu)$ is the truncated operators $T_{\mu,\ve}$, defined analogously to $\RR_{\mu,\ve}$, are bounded in $L^2(\mu)$ uniformly on $\ve>0$).
Then we deduce the following.

\begin{coro}\label{coro1.5}
Let $\mu$ be a Radon measure in $\R^{n+1}$ with no point masses. Let $T_\mu$ be a singular integral 
operator associated with an odd kernel $K$ satisfying \rf{eqKernel}.
If $\RR_\mu$ is bounded in $L^2(\mu)$, then $T_\mu$ is also bounded in $L^2(\mu)$. Further,
$$\|T_\mu\|_{L^2(\mu)\to L^2(\mu)}\leq C\,\|\RR_\mu\|_{L^2(\mu)\to L^2(\mu)},$$
where $C$ depends just on $n$ and the implicit constants in \rf{eqKernel}.
\end{coro}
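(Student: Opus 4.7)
The plan is to simply chain two ingredients already stated in the excerpt and keep track of constants. First, I apply Theorem \ref{teomain2}: since $\mu$ has no point masses and $\RR_\mu$ is bounded in $L^2(\mu)$, the measure $\mu$ automatically satisfies the polynomial growth condition \rf{eqgrow01} with some constant $C_0$, and it also satisfies the $\beta_{2,\mu}$ packing condition \rf{eqbetawolff2} with a constant $C_1$. Moreover, Theorem \ref{teomain2} asserts that both $C_0$ and $C_1$ can be taken comparable to $\|\RR_\mu\|_{L^2(\mu)\to L^2(\mu)}$ (the growth constant being controlled by $\|\RR_\mu\|$ is standard, for instance via the usual testing of $\RR_\mu$ on characteristic functions of balls, and the $\beta_2$ packing constant being comparable to $\|\RR_\mu\|$ is exactly the quantitative content of Theorem \ref{teomain2}).

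Second, I invoke the result of Girela-Sarri\'on cited in the excerpt right before the statement: conditions \rf{eqgrow01} and \rf{eqbetawolff2} together imply the $L^2(\mu)$-boundedness of every singular integral operator $T_\mu$ whose odd kernel $K$ satisfies the bounds \rf{eqKernel}, with operator norm depending only on $n$, on the constants $C_0,C_1$ appearing in \rf{eqgrow01} and \rf{eqbetawolff2}, and on the implicit constants in \rf{eqKernel}. Combining this with the previous paragraph, I obtain
$$\|T_\mu\|_{L^2(\mu)\to L^2(\mu)}\leq C\bigl(C_0+C_1\bigr)\leq C'\,\|\RR_\mu\|_{L^2(\mu)\to L^2(\mu)},$$
with $C'$ depending only on $n$ and the implicit constants in \rf{eqKernel}, which is the claimed estimate.

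There is essentially no hard step here; the corollary is just the composition of the two black boxes above. The only thing that deserves a brief comment is the quantitative tracking: one needs to verify that the constant in \cite{Girela} depends linearly (or at least polynomially, with control) on $C_0$ and $C_1$, and that the $\beta_2$-packing bound from Theorem \ref{teomain2} indeed gives $C_1\lesssim \|\RR_\mu\|_{L^2(\mu)\to L^2(\mu)}$ rather than merely $C_1<\infty$. Both are already built into the statements recalled above, so no further work is required beyond citing them.
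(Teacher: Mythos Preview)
Your proposal is correct and matches the paper's own argument: the corollary is stated as an immediate consequence of combining Theorem~\ref{teomain2} with the result from \cite{Girela}, exactly as you do. There is no additional content in the paper's proof beyond what you have written.
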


Lipschitz
We turn now to the applications of the results above to Lipschitz harmonic functions and Lipschitz harmonic
capacity. Given a compact set $E\subset \R^{n+1}$, one says that $E$ is removable for Lipschitz harmonic functions 
if for any open set $\Omega\supset E$, any function $f:\Omega\to\R$ which is Lipschitz in $\Omega$ and harmonic in
$\Omega\setminus E$
can be extended in a harmonic way to the whole $\Omega$. To study this problem and some related questions on approximation by Lipschitz harmonic functions it is useful to introduce the Lipschitz harmonic capacity $\kappa$
(see \cite{Paramonov} and \cite{Mattila-Paramonov}). This is defined by
$$\kappa(E) = \sup |\langle \Delta f,1\rangle|,$$
where the supremum is taken over all Lipschitz functions $f:\R^{n+1}\to\R$ which are harmonic in $\R^{n+1}\setminus E$ and satisfy $\|\nabla f\|_\infty\leq1$, with $\Delta f$ understood in the sense of distributions. It turns
out that $E$ is removable for Lipschitz harmonic functions if and only if $\kappa(E)=0$.

Extending previous results for analytic capacity from \cite{Tolsa-sem},  Volberg showed in \cite{Volberg} that
$$\kappa(E) \approx \sup \mu(E),$$
where the supremum is taken over all measures $\mu$ satisfying the polynomial growth condition 
\rf{eqgrow01} with constant $C=1$ and such that $\|\RR_\mu\|_{L^2(\mu)\to L^2(\mu)}\leq 1$.
Combining this result with Theorem \ref{teomain2}, we obtain:

\begin{coro}\label{coro2}
Let $E\subset\R^{n+1}$ be compact. Then
$$\kappa(E)\approx \mu(E),$$
where the supremum is taken over all Radon measures $\mu$ such that
$$
\mu(B(x,r))\leq r^n\quad \mbox{ for all $x\in\supp\mu$ and all $r>0$}
$$
and
$$
\int\!\!\int_0^\infty \beta_{2,\mu}(x,r)^2\,\theta_\mu(x,r)\,\frac{dr}r\,d\mu(x)\leq \mu(E).
$$
\end{coro}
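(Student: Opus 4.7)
The plan is to read the corollary as a translation between the $L^{2}(\mu)$-boundedness of $\RR_\mu$---which controls $\kappa(E)$ via Volberg's theorem from \cite{Volberg}---and the geometric Wolff-type energy condition, the translation being supplied by Theorems \ref{teomain1} and \ref{teomain2}. In both directions one may assume $\supp\mu\subset E$, since discarding mass outside $E$ preserves all hypotheses and cannot decrease $\mu(E)$.

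For the inequality $\kappa(E)\lesssim \sup \mu(E)$, where the supremum is taken over the class of the corollary, I choose a measure $\mu$ nearly optimal in Volberg's characterization: $\supp\mu\subset E$, $\mu(B(x,r))\le r^n$, $\|\RR_\mu\|_{L^{2}(\mu)\to L^{2}(\mu)}\le 1$, and $\mu(E)\gtrsim\kappa(E)$. Because $\mu$ is finite one has $\|\pv\RR\mu\|_{L^{2}(\mu)}^{2}\le\|\RR_\mu\|^{2}\,\|\mu\|\le\mu(E)$, and Theorem \ref{teomain1} applied with $\theta_{0}=1$ yields
$$
\int\!\!\int_0^\infty \beta_{2,\mu}(x,r)^2\,\theta_\mu(x,r)\,\frac{dr}{r}\,d\mu(x)\le C_{0}\,\mu(E).
$$
Under the dilation $\mu\mapsto c\mu$ the left-hand side scales as $c^{3}$ while $\mu(E)$ scales as $c$, so the choice $c=1/\sqrt{C_{0}}$ (assuming $C_{0}\ge 1$) produces a measure in the admissible class of the corollary whose total mass on $E$ is still comparable to $\kappa(E)$.

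For the reverse inequality, let $\mu$ be any measure satisfying the two conditions of the corollary and set
$$
h(x)=\int_0^\infty \beta_{2,\mu}(x,r)^2\,\theta_\mu(x,r)\,\frac{dr}{r},
$$
so that $\int h\,d\mu\le\mu(E)$. A Chebyshev argument produces $F\subset\supp\mu$ with $\mu(F)\ge\mu(E)/2$ and $h\le 2$ on $F$. Set $\nu=\mu|_F$. The monotonicity $\beta_{2,\nu}(x,r)\le\beta_{2,\mu}(x,r)$ and $\theta_\nu(x,r)\le\theta_\mu(x,r)$, which holds because replacing $\mu$ by a smaller measure can only decrease the minimized $\beta_{2}$-integral and the density, gives for every ball $B$
$$
\int_B\!\int_0^{r(B)}\beta_{2,\nu}(x,r)^2\,\theta_\nu(x,r)\,\frac{dr}{r}\,d\nu(x)\le\int_{B\cap F} h(x)\,d\mu(x)\le 2\,\nu(B).
$$
Together with $\nu(B(x,r))\le r^{n}$, Theorem \ref{teomain2} gives $\|\RR_\nu\|_{L^{2}(\nu)\to L^{2}(\nu)}\le C_{1}$. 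Dividing $\nu$ by $C_{1}$ produces a measure admissible for Volberg's characterization, and that characterization then yields $\kappa(E)\gtrsim\nu(E)/C_{1}\gtrsim\mu(E)$.

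The main technical step is the passage from the \emph{global} Wolff-type integral assumption to the \emph{local} bound over every ball $B$ required by Theorem \ref{teomain2}; the Chebyshev selection of $F$ together with the monotonicity of $\beta_{2,\mu}^{2}\,\theta_\mu$ under restriction of $\mu$ is what makes this passage possible without losing a factor of order $\mu(E)/\mu(B)$. Everything else is routine scaling book-keeping.
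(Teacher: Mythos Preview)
Your argument is correct and follows essentially the same approach as the paper's sketch: the Chebyshev selection of a big piece $F$ together with the monotonicity $\beta_{2,\nu}\le\beta_{2,\mu}$, $\theta_\nu\le\theta_\mu$ for $\nu=\mu|_F$ is precisely what the paper means when it says one deduces that $\wt\mu=\mu|_F$ satisfies \rf{eqbetawolff2}. The only cosmetic difference is that for the direction $\kappa(E)\lesssim\sup\mu(E)$ you invoke Theorem~\ref{teomain1} directly (which is the natural choice for the global integral condition), whereas the paper phrases it via Theorem~\ref{teomain2}; these are equivalent here.
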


To derive this corollary, remark that if $\mu$ satisfies the conditions above, by Chebyshev one deduces that there is a big piece
$F\subset E\cap\supp\mu$, with $\mu(F)\approx\mu(E)$, such that the measure $\wt\mu = \mu|_F$ satisfies \rf{eqbetawolff2}, and so
$\RR_{\wt \mu}$ is bounded in $L^2(\wt \mu)$. Hence $\kappa(E)\gtrsim \mu(F)\approx\mu(E)$. The converse
direction of the corollary is a straightforward consequence of the aforementioned theorem of Volberg and 
Theorem \ref{teomain2}.

As explained above, the conditions on the measure $\mu$ in Corollary \ref{coro2} are stable by bilipschitz maps. So we deduce that if $\vphi:\R^{n+1}
\to\R^{n+1}$ is bilipschitz, then
$$\kappa(E)\approx \kappa(\vphi(E))\quad\mbox{ for any compact set $E\subset\R^{n+1}$,}$$
with the implicit constant just depending on the bilipschitz constant of $\vphi$ and the ambient dimension.

Another suggestive characterization of the capacity $\kappa(E)$ can be given in terms of the following potential, 
which we call the Jones-Wolff potential of $\mu$:
$$U_\mu(x) = \sup_{r>0} \theta_\mu(x,r) + \left(\int_0^\infty \beta_{2,\mu}(x,r)^2\,\theta_\mu(x,r)\,\frac{dr}r\right)^{1/2}.$$

\begin{coro}\label{coro3}
Let $E\subset\R^{n+1}$ be compact. Then
$$\kappa(E) \approx \sup\{\mu(E):\,U_\mu(x)\leq 1\,\forall x\in E\}.$$
\end{coro}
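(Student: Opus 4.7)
Both directions will be derived from Corollary \ref{coro2} via a Chebyshev argument applied to the Wolff integrand $V_\mu(x):=\int_0^\infty\beta_{2,\mu}(x,r)^2\,\theta_\mu(x,r)\,\frac{dr}{r}$, combined with the monotonicity property $\beta_{2,\nu}(x,r)^2\,\theta_\nu(x,r)\le \beta_{2,\mu}(x,r)^2\,\theta_\mu(x,r)$ whenever $\nu\le\mu$. The latter is immediate from the identity
\[
\beta_{2,\mu}(x,r)^2\,\theta_\mu(x,r)=\inf_L\frac{\mu(B(x,r))}{r^{2n+2}}\int_{B(x,r)}\dist(z,L)^2\,d\mu(z),
\]
since each factor is monotone in $\mu$ and the infimum preserves the inequality.

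The easy inequality $\kappa(E)\gtrsim\sup\{\mu(E):U_\mu(x)\le 1\ \forall x\in E\}$ is immediate: any admissible $\mu$ may be replaced by $\mu|_E$, which still satisfies $U_{\mu|_E}\le 1$ on $E$ by monotonicity, so we may assume $\mu$ is supported in $E$. The bound on the first summand of $U_\mu$ gives the growth condition $\mu(B(x,r))\le r^n$, and integrating the bound on the second summand against $d\mu$ yields $\int\!\!\int_0^\infty \beta_{2,\mu}^2\,\theta_\mu\,\frac{dr}{r}\,d\mu\le\mu(E)$. Corollary \ref{coro2} then forces $\kappa(E)\gtrsim\mu(E)$.

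For the reverse inequality I would invoke Corollary \ref{coro2} to produce a measure $\mu$ supported in $E$ with $\mu(B(x,r))\le r^n$, $\int\!\!\int_0^\infty \beta_{2,\mu}^2\,\theta_\mu\,\frac{dr}{r}\,d\mu\le C_1\mu(E)$, and $\mu(E)\gtrsim\kappa(E)$. Chebyshev's inequality gives $\mu(\{V_\mu>M\})\le C_1\mu(E)/M$, so for $M$ a large absolute constant the set $F:=\{x\in E:V_\mu(x)\le M\}$ satisfies $\mu(F)\ge\mu(E)/2$. Set $\wt\mu:=\mu|_F$. The monotonicity property then gives $U_{\wt\mu}(x)\le 1+\sqrt{M}$ for every $x\in F$.

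The main obstacle is upgrading this bound to every $x\in E$, including the points in $E\setminus F$ which lie outside the support of $\wt\mu$. For such an $x$, set $d:=\dist(x,\supp\wt\mu)>0$ and pick $y\in\supp\wt\mu\subset F$ with $|x-y|=d$. For $r<d$ both $\theta_{\wt\mu}(x,r)$ and $\beta_{2,\wt\mu}(x,r)$ vanish, while for $r\ge d$ one has $B(x,r)\subset B(y,2r)$; testing this inclusion against the optimal $n$-plane for $(y,2r)$ in the infimum defining $\beta_{2,\wt\mu}(x,r)$ yields
\[
\theta_{\wt\mu}(x,r)\le 2^n\,\theta_{\wt\mu}(y,2r),\qquad \beta_{2,\wt\mu}(x,r)^2\,\theta_{\wt\mu}(x,r)\le 2^{2n+2}\,\beta_{2,\wt\mu}(y,2r)^2\,\theta_{\wt\mu}(y,2r).
\]
Integrating $\frac{dr}{r}$ with the change of variable $s=2r$ gives $V_{\wt\mu}(x)\lesssim V_{\wt\mu}(y)\le V_\mu(y)\le M$, and analogously $\sup_r\theta_{\wt\mu}(x,r)\lesssim 1$. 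Hence $U_{\wt\mu}(x)\le C_2$ on all of $E$, and by the homogeneity $U_{c\nu}=c\,U_\nu$ the rescaled measure $\nu:=\wt\mu/C_2$ satisfies $U_\nu\le 1$ on $E$ together with $\nu(E)=\mu(F)/C_2\gtrsim\kappa(E)$, closing the argument.
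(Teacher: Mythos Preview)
Your proof is essentially correct and follows the natural route the paper implicitly has in mind (the paper does not spell out a proof of this corollary, but the discussion after Corollary~\ref{coro2} makes clear that a Chebyshev argument is intended). Both directions are handled correctly, and the nearest-point comparison for $x\in E\setminus F$ is the right idea.

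There is one small technical slip. You write ``pick $y\in\supp\wt\mu\subset F$'', but in general $\supp(\mu|_F)\subset\overline F$, not $F$, since $F=\{x\in E:V_\mu(x)\le M\}$ need not be closed (lower semicontinuity of $V_\mu$ is not obvious because of the infimum over $L$). If $y\in\overline F\setminus F$ then $V_\mu(y)>M$ and your chain $V_{\wt\mu}(x)\lesssim V_{\wt\mu}(y)\le V_\mu(y)\le M$ breaks at the last step; the same issue would arise when $d=0$, i.e.\ when $x\in\supp\wt\mu\cap(E\setminus F)$. The fix is immediate: by inner regularity of $\mu$, replace $F$ by a compact subset $F'\subset\{V_\mu\le M\}$ with $\mu(F')\ge\mu(E)/4$, and run the argument with $\wt\mu=\mu|_{F'}$. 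Then $\supp\wt\mu\subset F'$ and every $y\in\supp\wt\mu$ genuinely satisfies $V_\mu(y)\le M$, so $d>0$ for $x\in E\setminus F'$ and your comparison goes through verbatim.
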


An immediate consequence of this result is that $E$ is non-removable for Lipschitz harmonic functions if and only if
it supports a non-zero measure such that $U_\mu(x)\leq1$ for all $x\in E$. 

The characterization of the capacity $\kappa$ and of removable sets for Lipschitz harmonic functions 
in terms of a metric-geometric potential such as $U_\mu$ should be considered as an analogue of the characterization
of analytic capacity and of removable sets for bounded analytic functions
in terms of curvature of measures \cite{Tolsa-sem}. So one can think of the results stated in Corollaries
\ref{coro2} and \ref{coro3} as possible solutions of the Painlevé problem for Lipschitz harmonic functions.

\vv
Next we describe the main ideas involved in the proof of Theorem \ref{teomain1}. As explained above, the proof
relies on the results obtained in \cite{DT}. More precisely, for a Radon measure $\sigma$ we consider the Wolff type
energy
$$\mathbb E(\sigma) = \int\!\! \int_0^\infty \left(\frac{\sigma(B(x,r))}{r^{n-\frac38}}\right)^2\,\frac{dr}r\,d\sigma(x)=\int\!\! \int_0^\infty r^{\frac34}\,\theta_\sigma(B(x,r)^2\,\frac{dr}{r}\,d\sigma(x).$$
As in \cite{DT}, given the Radon measure $\mu$, we consider a suitably modified version of the David-Mattila lattice $\DD_\mu$ associated 
with $\mu$. Then for a given $\PP$-doubling cube $Q\in\DD_\mu$, we let $\EE(4Q)$ be an appropriate discrete version of $\mathbb E(\mu|_{4Q})\,\ell(Q)^{-3/4}$.
We say that a $\PP$-doubling cube $Q$ has high energy, and we write $Q\in\HE$, if
$$\EE(4Q) \geq M_0^2\,\Theta(Q)^2\,\mu(Q),$$
where $M_0\gg1$ is some fixed constant, $\Theta(Q)$ is another discrete version of $\theta_\mu(2B_Q)$, and $B_Q$ is a ball concentric with $Q$, containing $Q$, with radius
comparable to $\ell(Q)$ (for the precise definitions of $\PP$-doubling cubes, $\Theta(Q)$, and $\EE(4Q)$ see Section \ref{sec4}). In \cite[Main Theorem 3.4]{DT} it is shown that
$$\int\!\!\int_0^\infty \beta_{2,\mu}(x,r)^2\,\theta_\mu(x,r)\,\frac{dr}r\,d\mu(x)\leq C\,\Big(\|\RR\mu\|_{L^2(\mu)}^2
+\theta_0^2\,\|\mu\| + \sum_{Q\in\HE} \EE(4Q)\Big),
$$
So to prove Theorem \ref{teomain1} it suffices to show that
\begin{equation}\label{eqwwwpp}
\sum_{Q\in\HE} \EE(4Q)\leq C\,\big(\|\RR\mu\|_{L^2(\mu)}^2 +\theta_0^2\,\|\mu\| \big).
\end{equation}
This is the task we perform in this paper (see Main Proposition \ref{propomain}). 

To prove \rf{eqwwwpp} we introduce
in Section \ref{sec4} 
a related energy $\EE_\infty$ which is more appropriate for the stopping time conditions and the bootstrapping 
argument involved in the proof of \rf{eqwwwpp}. The objective is then to show that the family $\DB$ of $\PP$-doubling cubes $Q$ 
such that
$$\EE_\infty(9Q) \geq M_0\,\Theta(Q)^2\,\mu(Q)$$
satisfies a Carleson type estimate analogous to \rf{eqwwwpp}. The first step is the construction of a
family (called $\GDF$) of cubes $Q$ which, in a sense, contain many stopping cubes whose density is much larger than
the density $\Theta(Q)$. The selection of this family, in Section \ref{sec5}, is one of the key steps for the proof proof of Theorem \ref{teomain1}. In Section \ref{sec6} we associate a family of ``tractable trees'' of cubes with each cube from
$\GDF$, by arguments somewhat similar to others appearing in \cite{DT}, although here we need to consider different and additional stopping conditions. In the tractable trees the density of $\mu$ oscillates in such a way that one can
bound from below the Haar coefficients of $\RR\mu$ for the cubes which belong to that tree or are close to that tree. For each tractable tree $\TT$, this is shown by a variational argument applied to a measure $\eta$ that approximates the measure $\mu$ at the level of some regularized stopping cubes of $\TT$.
This argument provides a lower estimate for $\|\RR\eta\|_{L^p(\eta)}$,  
 essentially in the same way as in \cite{DT}, and so we refer to the appropriate lemma from \cite{DT}
when this is required in Section \ref{sec8}. 

To complete the estimates from below for the Haar coefficients of 
$\RR\mu$ near the tree $\TT$, in Section \ref{sec9} we transfer the lower estimates obtained for $\|\RR\eta\|_{L^p(\eta)}$ to 
$\RR\mu$. This is another of the delicate key points of the proof of Theorem \ref{teomain2}. It requires much more precise estimates
than other related arguments appearing in \cite{DT} or \cite{JNRT}, mainly because the presence of cubes from $\DB$ (or $\HE$) in the tree $\TT$ originates ``error terms'' in the transference of those estimates which are difficult to control. Most important, we can only quantify the presence of cubes from $\DB$ in most of the trees $\TT$ by a bootstrapping argument which gives rather weak bounds.

\vv

In the whole paper we denote by $C$ or $c$ some constants that may depend on the dimension and perhaps other fixed parameters. Their values may change at different occurrences. On the contrary, constants with subscripts, like $C_0$, retain their values.
For $a,b\geq 0$, we write $a\lesssim b$ if there is $C>0$ such that $a\leq Cb$. We write $a\approx b$ to mean $a\lesssim b\lesssim a$.

\vv




\section{The modified dyadic lattice of David and Mattila and the dyadic martingale decomposition}\label{sec:DMlatt}\label{sec3}

\subsection{The David-Mattila lattice}
We recall now the properties of the dyadic lattice of cubes
with small boundaries of David-Mattila associated with a Radon measure $\mu$. 
This lattice has been constructed in \cite[Theorem 3.2]{David-Mattila}. 
Later on we will state some additional useful properties of $\DD_\mu$ that are obtained by modifying its construction as in \cite{DT}.

\begin{lemma}[David, Mattila]
\label{lemcubs}
Let $\mu$ be a compactly supported Radon measure in $\R^{d}$.
Consider two constants $C_0>1$ and $A_0>5000\,C_0$ and denote $E=\supp\mu$. 
Then there exists a sequence of partitions of $E$ into
Borel subsets $Q$, $Q\in \DD_{\mu,k}$, with the following properties:
\begin{itemize}
\item For each integer $k\geq0$, $E$ is the disjoint union of the ``cubes'' $Q$, $Q\in\DD_{\mu,k}$, and
if $k<l$, $Q\in\DD_{\mu,l}$, and $R\in\DD_{\mu,k}$, then either $Q\cap R=\varnothing$ or else $Q\subset R$.
\vv

\item The general position of the cubes $Q$ can be described as follows. For each $k\geq0$ and each cube $Q\in\DD_{\mu,k}$, there is a ball $B(Q)=B(x_Q,r(Q))$ such that
$$x_Q\in E, \qquad A_0^{-k}\leq r(Q)\leq C_0\,A_0^{-k},$$
$$E\cap B(Q)\subset Q\subset E\cap 28\,B(Q)=E \cap B(x_Q,28r(Q)),$$
and
$$\mbox{the balls\, $5B(Q)$, $Q\in\DD_{\mu,k}$, are disjoint.}$$

\vv
\item The cubes $Q\in\DD_{\mu,k}$ have small boundaries. That is, for each $Q\in\DD_{\mu,k}$ and each
integer $l\geq0$, set
$$N_l^{ext}(Q)= \{x\in E\setminus Q:\,\dist(x,Q)< A_0^{-k-l}\},$$
$$N_l^{int}(Q)= \{x\in Q:\,\dist(x,E\setminus Q)< A_0^{-k-l}\},$$
and
$$N_l(Q)= N_l^{ext}(Q) \cup N_l^{int}(Q).$$
Then
\begin{equation}\label{eqsmb2}
\mu(N_l(Q))\leq (C^{-1}C_0^{-3d-1}A_0)^{-l}\,\mu(90B(Q)).
\end{equation}
\vv

\item Denote by $\DD_{\mu,k}^{db}$ the family of cubes $Q\in\DD_{\mu,k}$ for which
\begin{equation}\label{eqdob22}
\mu(100B(Q))\leq C_0\,\mu(B(Q)).
\end{equation}
We have that $r(Q)=A_0^{-k}$ when $Q\in\DD_{\mu,k}\setminus \DD_{\mu,k}^{db}$
and
\begin{equation}\label{eqdob23}
\mu(100B(Q))\leq C_0^{-l}\,\mu(100^{l+1}B(Q))\quad
\mbox{for all $l\geq1$ with $100^l\leq C_0$ and $Q\in\DD_{\mu,k}\setminus \DD_{\mu,k}^{db}$.}
\end{equation}
\end{itemize}
\end{lemma}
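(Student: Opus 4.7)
My plan is to build the lattice scale by scale, inductively from coarse to fine, making at each scale two coupled choices: the locations of the cube centers $x_Q$, and a carefully tuned radius $r(Q)$ designed to deliver simultaneously the doubling dichotomy and the small-boundary estimate \eqref{eqsmb2}. For the centers, at level $k$ I would take a maximal $A_0^{-k}$-separated subset of $E=\supp\mu$, which automatically makes the balls $\{5B(x_Q,A_0^{-k})\}$ disjoint and provides a $2A_0^{-k}$-covering of $E$. When passing from level $k$ to level $k+1$ I would force every new center $x_{Q'}$ to lie inside exactly one level-$k$ cube, thereby engineering nesting into the construction. The partition at each level would then be defined by assigning each $x\in E$ to a unique center via a nearest-center rule with a fixed tie-breaking priority, chosen so that the level-$(k+1)$ ancestor of a cube is always unambiguous.

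For the radius selection I would first look for $r\in[A_0^{-k},C_0A_0^{-k}]$ satisfying the doubling inequality $\mu(100B(x_Q,r))\le C_0\,\mu(B(x_Q,r))$; any such radius puts $Q$ into $\DD_{\mu,k}^{db}$. Within this range there is enough room to apply a Fubini/pigeonhole argument to the averaged boundary mass
\[
\int_{A_0^{-k}}^{C_0 A_0^{-k}}\Bigl(\sum_{l\ge 0} (C^{-1}C_0^{-3d-1}A_0)^{l}\,\mu\bigl(\{y:|\,|y-x_Q|-r|<A_0^{-k-l}\}\bigr)\Bigr)\,\frac{dr}{r},
\]
bounded crudely by $\mu(90B(Q))$ times a convergent geometric series, to extract one specific $r(Q)$ satisfying \eqref{eqsmb2} for all $l$ simultaneously. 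If on the other hand no doubling radius exists in $[A_0^{-k},C_0A_0^{-k}]$, I would set $r(Q)=A_0^{-k}$ exactly; telescoping the failure of doubling across $l$ dyadic scales then forces \eqref{eqdob23}, and the small-boundary estimate in this case is essentially free, because each shell $N_l(Q)$ sits inside $100\,B(Q)$, whose $\mu$-mass is already tiny compared to $\mu(90\cdot 100^{l+1}B(Q))$ by \eqref{eqdob23} itself.

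With centers, radii, and cubes in hand, I would verify the stated inclusions $E\cap B(Q)\subset Q\subset E\cap 28B(Q)$ directly from separation and maximality, and the disjointness of $5B(Q)$ at a fixed level from the $A_0^{-k}$-separation of centers. Nesting across levels is built in by the refinement rule, and the small-boundary inequality at level $k+1$ is independent of that at level $k$ since the two are controlled by separate applications of the pigeonhole.

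The main obstacle I foresee is reconciling (a) the freedom of radii needed for the pigeonhole step in the doubling case with (b) the rigid choice $r(Q)=A_0^{-k}$ required in the non-doubling case, while also maintaining (c) the strict nesting of partitions across scales. The pigeonhole step is quantitatively wasteful, which is precisely why the hypothesis $A_0>5000C_0$ appears: it leaves enough logarithmic room in $[A_0^{-k},C_0A_0^{-k}]$ to absorb the loss from summing over $l$. Keeping the bookkeeping consistent across the constants $C_0,A_0,d$, and verifying that the nesting plus tie-breaking rule does not quietly destroy the small-boundary estimate when boundaries are inherited between generations (a point not manifest until one tries to write down the induction), is where the main delicate work of the proof would lie.
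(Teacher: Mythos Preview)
The paper does not prove this lemma at all: it simply quotes \cite[Theorem~3.2]{David-Mattila} and uses the result as a black box. So there is no ``paper's proof'' to compare against beyond the original David--Mattila construction, and your sketch is indeed aimed at reproducing that construction: maximal $A_0^{-k}$-separated nets for centers, a pigeonhole over radii to secure thin boundaries, and the doubling/non-doubling dichotomy for $r(Q)$. That is the correct architecture.

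There is, however, a genuine gap in your treatment of the non-doubling case. You claim that when $r(Q)=A_0^{-k}$ the small-boundary estimate \eqref{eqsmb2} is ``essentially free'' because $N_l(Q)\subset 100B(Q)$ and $\mu(100B(Q))$ is small compared to $\mu(100^{l+1}B(Q))$ via \eqref{eqdob23}. But \eqref{eqsmb2} requires $\mu(N_l(Q))$ to be small relative to $\mu(90B(Q))$, not to $\mu(100^{l+1}B(Q))$; the ball $100^{l+1}B(Q)$ is enormously larger than $90B(Q)$, so \eqref{eqdob23} gives a bound in the wrong direction. Moreover \eqref{eqdob23} only holds for the finitely many $l$ with $100^l\le C_0$, whereas \eqref{eqsmb2} must hold for all $l\ge 0$. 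The small-boundary property in the non-doubling case cannot be read off from \eqref{eqdob23}; in David--Mattila it comes from the same annular pigeonhole mechanism, which is run uniformly over all centers before the doubling/non-doubling split is made, not only in the doubling branch.

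A second, smaller point: your pigeonhole integral controls the mass of spherical annuli $\{y:\bigl||y-x_Q|-r\bigr|<A_0^{-k-l}\}$, but $N_l(Q)$ is defined via distance to the \emph{cube} boundary, which is a nearest-center (Voronoi-type) set rather than a sphere. Passing from thin annuli for every center to thin cube boundaries is not automatic; in the actual construction one shows that any point of $N_l(Q)$ lies in a thin annulus of \emph{some} nearby center at level $k$, and then sums. You have correctly flagged this as a delicate point, but it deserves to be named explicitly as a required step rather than folded into ``tie-breaking bookkeeping''.
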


\vv

\begin{rem}\label{rema00}
The constants $C_0$ and $A_0$ are chosen so that
$$A_0 = C_0^{C(d)},$$
where $C(d)$ depends
 just on $d$ and $C_0$ is big enough.
\end{rem}

We use the notation $\DD_\mu=\bigcup_{k\geq0}\DD_{\mu,k}$. Observe that the families $\DD_{\mu,k}$ are only defined for $k\geq0$. So the diameter of the cubes from $\DD_\mu$ are uniformly
bounded from above.
For $Q\in\DD_{\mu,k}$, we set
$\ell(Q)= 56\,C_0\,A_0^{-k}$ and we call it the side length of $Q$. Notice that 
$$C_0^{-1}\ell(Q)\leq \diam(28B(Q))\leq\ell(Q).$$
Observe that $r(Q)\approx\diam(Q)\approx\ell(Q)$.
Also we call $x_Q$ the center of $Q$, and the cube $Q'\in \DD_{\mu,k-1}$ such that $Q'\supset Q$ the parent of $Q$.
We denote the family of cubes from $\DD_{\mu,k+1}$ which are contained in $Q$ by $\CH(Q)$, and we call their elements children or sons of $Q$.
 We set
$B_Q=28 B(Q)=B(x_Q,28\,r(Q))$, so that 
$$E\cap \tfrac1{28}B_Q\subset Q\subset B_Q\subset B(x_Q,\ell(Q)/2).$$

For a given $\gamma\in(0,1)$, let $A_0$ be big enough so that the constant $C^{-1}C_0^{-3d-1}A_0$ in 
\rf{eqsmb2} satisfies 
$$C^{-1}C_0^{-3d-1}A_0>A_0^{\gamma}>10.$$
Then we deduce that, for all $0<\lambda\leq1$,
\begin{align}\label{eqfk490}
\mu\bigl(\{x\in Q:\dist(x,E\setminus Q)\leq \lambda\,\ell(Q)\}\bigr) + 
\mu\bigl(\bigl\{x\in 3.5B_Q\setminus Q:\dist&(x,Q)\leq \lambda\,\ell(Q)\}\bigr)\\
&\leq_\gamma
c\,\lambda^{\gamma}\,\mu(3.5B_Q).\nonumber
\end{align}

We denote
$\DD_\mu^{db}=\bigcup_{k\geq0}\DD_{\mu,k}^{db}$.
Note that, in particular, from \rf{eqdob22} it follows that
\begin{equation}\label{eqdob*}
\mu(3B_{Q})\leq \mu(100B(Q))\leq C_0\,\mu(Q)\qquad\mbox{if $Q\in\DD_\mu^{db}.$}
\end{equation}
For this reason we will call the cubes from $\DD_\mu^{db}$ doubling. 
Given $Q\in\DD_\mu$, we denote by $\DD_\mu(Q)$
the family of cubes from $\DD_\mu$ which are contained in $Q$. Analogously,
we write $\DD_\mu^{db}(Q) = \DD^{db}_\mu\cap\DD(Q)$. 

\vv

As shown in \cite[Lemma 5.28]{David-Mattila}, every cube $R\in\DD_\mu$ can be covered $\mu$-a.e.\
by a family of doubling cubes:

\begin{lemma}\label{lemcobdob}
Let $R\in\DD_\mu$. Suppose that the constants $A_0$ and $C_0$ in Lemma \ref{lemcubs} are
chosen suitably. Then there exists a family of
doubling cubes $\{Q_i\}_{i\in I}\subset \DD_\mu^{db}$, with
$Q_i\subset R$ for all $i$, such that their union covers $\mu$-almost all $R$.
\end{lemma}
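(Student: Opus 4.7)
The plan is first to reduce the lemma to showing that the exceptional set
\[
N := \{x \in R : x \notin Q \text{ for every } Q \in \DD_\mu^{db}(R)\}
\]
has $\mu(N) = 0$. Once this is established, the family $\{Q_i\}_{i \in I}$ of \emph{maximal} doubling sub-cubes of $R$ in $\DD_\mu^{db}$ is countable and, by the nested tree structure of $\DD_\mu$, pairwise disjoint; each $Q_i$ is contained in $R$, and $\bigcup_i Q_i = R \setminus N$ covers $\mu$-almost all of $R$. The trivial case $R \in \DD_\mu^{db}$ is handled by the single-cube family $\{R\}$.

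To prove $\mu(N) = 0$, I would iterate the non-doubling mass-decay estimate \rf{eqdob23} across scales. For each $x \in N \cap \supp\mu$, the descending chain $R = R^{k_0}(x) \supsetneq R^{k_0+1}(x) \supsetneq \cdots$ of cubes of $\DD_\mu$ containing $x$ must consist entirely of non-doubling cubes; otherwise $x$ would belong to some $Q \in \DD_\mu^{db}(R)$. In particular $r(R^k(x)) = A_0^{-k}$ for every $k \geq k_0$, and with $l_0 := \lfloor \log_{100} C_0\rfloor$, estimate \rf{eqdob23} gives
\[
\mu(R^k(x)) \leq \mu(100 B(R^k(x))) \leq C_0^{-l_0}\,\mu\bigl(100^{l_0+1} B(R^k(x))\bigr).
\]
Summing over $Q \in \DD_{\mu,k}(R) \setminus \DD_{\mu,k}^{db}$, whose union contains $N$, and using the disjointness of the balls $5B(Q)$ at level $k$ to bound the overlap of the enlargements $100^{l_0+1}B(Q)$ by a constant $C(C_0,d)$, one derives a bound of the form
\[
\mu(N) \leq C(C_0,d)\,C_0^{-l_0}\,\mu(\Omega_R)
\]
for a fixed enlargement $\Omega_R$ of $R$, valid once $k$ is so large that $100^{l_0+1}A_0^{-k} \ll r(R)$. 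Iterating the same bound on each non-doubling sub-cube inside which a portion of $N$ still lives compounds the decay geometrically and forces $\mu(N) = 0$.

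The main obstacle is the quantitative interplay between the two scales involved: \rf{eqdob23} decreases the mass by $C_0^{-l_0}$ while enlarging radii only by a factor $100^{l_0+1} \sim C_0$, whereas consecutive lattice levels differ by a factor $A_0 = C_0^{C(d)}$. The parameters $C_0$ and $A_0$ must be chosen large enough, as in Remark \ref{rema00}, so that the bounded-overlap constant $\sim C_0^d$ is dominated by the decay factor $C_0^{-l_0} \sim C_0^{-\log_{100} C_0}$ and simultaneously so that each enlargement $100^{l_0+1}B(Q)$ sits well inside the parent of $Q$ in $\DD_\mu$. Balancing these requirements is the delicate point of the argument and is precisely what the choice of exponent $C(d)$ in $A_0 = C_0^{C(d)}$ is designed to accommodate.
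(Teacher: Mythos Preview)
The paper does not prove this lemma; it cites \cite[Lemma 5.28]{David-Mattila}. Your strategy—reduce to $\mu(N)=0$ via the mass decay \rf{eqdob23} for non-doubling cubes—is the standard one and is correct in outline.

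The iteration, however, needs to be stated more carefully: the displayed bound $\mu(N)\le\rho\,\mu(\Omega_R)$ by itself does not compound, since the right side is a fixed number independent of how deep $k$ is. What your single step actually yields (same computation, keeping $\mu(100B(Q))$ instead of $\mu(Q)$ on the left) is the stronger inequality
\[
\sum_{Q\in\DD_{\mu,k}(S)\setminus\DD_{\mu,k}^{db}}\mu(100B(Q))\ \le\ \rho\,\mu(100B(S))
\]
for any cube $S$ and any sufficiently deep level $k$; applying this with $S=R$ and then recursively inside each surviving $Q$ produces $\mu(N)\le\rho^m\,\mu(100B(R))$ for every $m$, hence $\mu(N)=0$ once $\rho<1$. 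Equivalently, one can iterate \rf{eqdob23} generation by generation via $100^{l_0+1}B(Q)\subset 100B(\wh Q)$—this is precisely Lemma~\ref{lemcad22}—and finish with a crude Lebesgue-volume count of the admissible level-$k$ cubes in $R$. The parameter constraint is also milder than your last paragraph suggests: one only needs $C_0^{l_0}\gg C_0^d$, i.e.\ $C_0>100^{d+O(1)}$; the relation $A_0=C_0^{C(d)}$ is needed for other features of the lattice (the small-boundary estimate) but is not essential here.
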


The following result is proved in \cite[Lemma 5.31]{David-Mattila}.
\vv

\begin{lemma}\label{lemcad22}
Let $R\in\DD_\mu$ and let $Q\subset R$ be a cube such that all the intermediate cubes $S$,
$Q\subsetneq S\subsetneq R$ are non-doubling (i.e.\ belong to $\DD_\mu\setminus \DD_\mu^{db}$).
Suppose that the constants $A_0$ and $C_0$ in Lemma \ref{lemcubs} are
chosen suitably. 
Then
\begin{equation}\label{eqdk88}
\mu(100B(Q))\leq A_0^{-10d(J(Q)-J(R)-1)}\mu(100B(R)).
\end{equation}
\end{lemma}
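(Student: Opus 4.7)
The plan is to telescope along the chain of ancestors of $Q$ inside $R$, applying the non-doubling estimate \rf{eqdob23} at each intermediate cube. Write $Q = Q_0 \subsetneq Q_1 \subsetneq \cdots \subsetneq Q_m = R$, where $m = J(Q) - J(R)$ and $Q_i \in \DD_{\mu, J(R)+m-i}$. By hypothesis, $Q_1, \ldots, Q_{m-1}$ are all non-doubling, so $r(Q_i) = A_0^{-J(Q_i)}$ for $1 \leq i \leq m-1$, which gives $r(Q_{i+1}) \geq A_0\, r(Q_i)$ for every $i = 0, 1, \ldots, m-1$ (using $r(R) \geq A_0^{-J(R)}$ at the top step and the crude bound $r(Q_0) \leq C_0 A_0^{-J(Q_0)}$ together with $A_0 > 5000 C_0$ at the bottom step).

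Next, I would fix the largest integer $l_0$ with $100^{l_0} \leq C_0$. Since we are free to take $C_0$ as large as we please, and $A_0 = C_0^{C(d)}$ with $C(d)$ a fixed function of the dimension, we may arrange that $C_0^{l_0} \geq A_0^{10d}$, so that one application of \rf{eqdob23} already saves the desired factor $A_0^{-10d}$. Apply \rf{eqdob23} at each non-doubling $Q_i$ ($1 \leq i \leq m-1$) with $l = l_0$:
\begin{equation*}
\mu(100 B(Q_i)) \leq C_0^{-l_0}\, \mu(100^{l_0+1} B(Q_i)) \leq A_0^{-10d}\, \mu(100^{l_0+1} B(Q_i)).
\end{equation*}
To chain these, verify the inclusion $100^{l_0+1} B(Q_i) \subset 100 B(Q_{i+1})$: the distance $|x_{Q_i} - x_{Q_{i+1}}|$ is at most $28\, r(Q_{i+1})$ since $x_{Q_i} \in Q_i \subset Q_{i+1} \subset 28 B(Q_{i+1})$, while the radius $100^{l_0+1} r(Q_i) \leq 100 C_0\, r(Q_i)$ is dwarfed by $100\, r(Q_{i+1}) \geq 100 A_0\, r(Q_i)$ because $A_0 \gg C_0$. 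The inclusion thus holds, and iterating for $i = m-1, m-2, \ldots, 1$ yields
\begin{equation*}
\mu(100 B(Q_1)) \leq A_0^{-10d(m-1)}\, \mu(100 B(R)).
\end{equation*}

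Finally, I would dispose of the bottom step $Q_0 \to Q_1$ for free: from $r(Q_1) \geq (A_0/C_0)\, r(Q_0) > 5000\, r(Q_0)$ and $|x_{Q_0} - x_{Q_1}| \leq 28\, r(Q_1)$, one obtains $100 B(Q_0) \subset 100 B(Q_1)$, so $\mu(100 B(Q)) \leq \mu(100 B(Q_1))$, completing the proof. The only real obstacle is the quantitative book-keeping of the constants: one must simultaneously ensure that $l_0$ is large enough for $C_0^{-l_0} \leq A_0^{-10d}$ and that the nesting $100^{l_0+1} B(Q_i) \subset 100 B(Q_{i+1})$ survives, which is precisely what the choice $A_0 = C_0^{C(d)}$ with $C_0$ sufficiently large guarantees, and why the lemma's hypothesis requires ``$A_0, C_0$ chosen suitably''. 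Everything else is clean because the non-doubling identity $r(Q_{i+1})/r(Q_i) = A_0$ makes all the geometric nestings depend only on an $O(1)$ factor, easily absorbed into the enormous gap between $C_0$ and $A_0$.
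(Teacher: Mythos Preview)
Your proof is correct and is essentially the standard argument (the paper itself gives no proof, merely citing \cite[Lemma 5.31]{David-Mattila}). One minor imprecision: in the first paragraph you claim $r(Q_{i+1})\geq A_0\,r(Q_i)$ also at the bottom step $i=0$, whereas in fact one only has $r(Q_1)\geq (A_0/C_0)\,r(Q_0)$ there; you silently correct this in the final paragraph, and since $(A_0/C_0)>5000$ is all that is needed for the inclusion $100B(Q_0)\subset 100B(Q_1)$, the argument stands.
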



Given a ball $B\subset \R^{n+1}$, we consider its $n$-dimensional density:
$$\theta_\mu(B)= \frac{\mu(B)}{r(B)^n}.$$
We will also write $\theta_\mu(x,r)$ instead of $\theta_\mu(B(x,r))$.

From the preceding lemma we deduce:

\vv
\begin{lemma}\label{lemcad23}
Let $Q,R\in\DD_\mu$ be as in Lemma \ref{lemcad22}.
Then
$$\theta_\mu(100B(Q))\leq (C_0A_0)^{n+1}\,A_0^{-9d(J(Q)-J(R)-1)}\,\theta_\mu(100B(R))$$
and
$$\sum_{S\in\DD_\mu:Q\subset S\subset R}\theta_\mu(100B(S))\leq c\,\theta_\mu(100B(R)),$$
with $c$ depending on $C_0$ and $A_0$.
\end{lemma}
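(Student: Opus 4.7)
Write $k=J(Q)-J(R)$ and $d=n+1$. The first inequality is essentially a rescaling of Lemma \ref{lemcad22}: since all intermediate cubes $S$ with $Q\subsetneq S\subsetneq R$ are non-doubling, Lemma \ref{lemcad22} gives
$$\mu(100B(Q))\le A_0^{-10d(k-1)}\,\mu(100B(R)).$$
To convert this into a bound on densities, the plan is to divide both sides by $r(Q)^n$ and then control the ratio $r(R)/r(Q)$ using Lemma \ref{lemcubs}. Because every intermediate cube between $Q$ and $R$ is non-doubling, the last clause of that lemma forces $r(S)=A_0^{-J(S)}$ for all such $S$ (and in particular for $Q$), whereas $r(R)\le C_0 A_0^{-J(R)}$. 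Hence $r(R)/r(Q)\le C_0 A_0^{k}$, and after multiplying and dividing by $(100r(R))^n$ we obtain
$$\theta_\mu(100B(Q))\le C_0^{n}\,A_0^{10d-(10d-n)k}\,\theta_\mu(100B(R)).$$
A quick exponent comparison shows $10d-(10d-n)k\le(n+1)+9d-9dk$ whenever $k\ge1$ and $d=n+1$, so the right side is bounded by $(C_0A_0)^{n+1}A_0^{-9d(k-1)}\theta_\mu(100B(R))$, proving the first inequality.

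For the second inequality, the plan is to apply the first inequality to every cube $S$ with $Q\subset S\subset R$. The chain hypothesis in Lemma \ref{lemcad22} is inherited: if $Q\subsetneq T\subsetneq R$ forces $T$ to be non-doubling, then in particular any cube strictly between $S$ and $R$ is also strictly between $Q$ and $R$, so the first inequality applies to each such $S$ (with $R$ fixed). This yields
$$\theta_\mu(100B(S))\le C_0^{n}\,A_0^{10d}\,A_0^{-(10d-n)(J(S)-J(R))}\,\theta_\mu(100B(R)).$$
Since exactly one cube $S$ in the chain sits at each generation between $J(R)$ and $J(Q)$, summing over $S$ produces a geometric series in $A_0^{-(10d-n)}<1$, whose sum is absorbed into a constant $c=c(C_0,A_0)$, giving the desired bound.

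The step requiring a little care, which I would treat as the only real obstacle, is the bookkeeping of the exponents: one must verify that $10d-n>0$ (so that the series converges) and that the constant produced at $k=1$ is compatible with the clean form $(C_0A_0)^{n+1}$ stated in the lemma. Both are immediate once one uses $d=n+1$, but one should double-check the boundary case $k=1$, where Lemma \ref{lemcad22} contributes only the trivial bound $\mu(100B(Q))\le\mu(100B(R))$ and all of the gain must come from the density normalization. Everything else is routine.
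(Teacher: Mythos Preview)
Your approach is correct and matches the standard argument (the paper itself does not give a proof but refers to \cite[Lemma 4.4]{Tolsa-memo}). One small slip: you justify $r(Q)=A_0^{-J(Q)}$ by saying $Q$ is non-doubling, but the hypothesis only concerns the \emph{strictly} intermediate cubes, so $Q$ itself may be doubling. This does not matter, because the general bound $r(Q)\ge A_0^{-J(Q)}$ from Lemma~\ref{lemcubs} already gives $r(R)/r(Q)\le C_0A_0^{k}$, which is all you need. Also, you need not assume $d=n+1$: your exponent comparison $10d-(10d-n)k\le (n+1)-9d(k-1)$ reduces to $(d-n)k\ge d-n-1$, which holds for all $k\ge1$ whenever $d\ge n$.
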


For the easy proof, see
 \cite[Lemma 4.4]{Tolsa-memo}, for example.

\vv

\subsection{The dyadic martingale decomposition and the additional properties from \cite{DT}}

For $f\in L^2(\mu)$ and $Q\in\DD_\mu$ we denote 
\begin{equation}\label{eqdq1}
\Delta_Q f=\sum_{S\in\CH(Q)}m_{\mu,S}(f)\chi_S-m_{\mu,Q}(f)\chi_Q,
\end{equation}
where $m_{\mu,S}(f)$ stands for the average of $f$ on $S$ with respect to $\mu$.
Then we have the orthogonal expansion, for any cube $R\in\DD_\mu$,
$$\chi_{R} \bigl(f - m_{\mu,R}(f)\bigr) = \sum_{Q\in\DD_\mu(R)}\Delta_Q f,$$
in the $L^2(\mu)$-sense, so that
$$\|\chi_{R} \bigl(f - m_{\mu,R}(f)\|_{L^2(\mu)}^2 = \sum_{Q\in\DD_\mu(R)}\|\Delta_Q f\|_{L^2(\mu)}^2.$$

In several places in this paper we will have to estimate terms such as 
$\|\RR(\chi_{2B_Q\setminus Q}\mu)\|_{L^2(\mu|_Q)}$, and so we will need to deal with integrals such as 
$$\int_{2B_Q\setminus Q}\left(\int_Q \frac1{|x-y|^n}\,d\mu(y)\right)^2 d\mu(x).$$
We will describe now how this integral can be estimated in terms of the energy $\EE(9Q)$ when using
the enhanced David-Mattila lattice from \cite{DT}.

We need some additional notation.
Given $Q\in\DD_\mu$ and $\lambda>1$, we let $\lambda Q$ be the union of cubes $P$ from the same
generation as $Q$ such that $\dist(x_Q,P)\leq \lambda \,\ell(Q)$. 
 Notice that
\begin{equation}\label{eqlambq12}
\lambda Q\subset B(x_Q,(\lambda+\tfrac12)\ell(Q)).
\end{equation}
Also, we let
$$\DD_\mu(\lambda Q)=\{P\in\DD_\mu:P\subset \lambda Q,\,\ell(P)\leq \ell(Q)\},$$
and 
$$\DD_{\mu,k}(\lambda Q) =\{P\in\DD_\mu:P\subset \lambda Q,\,\ell(P)=A_0^{-k} \ell(Q)\},\qquad
\DD_\mu^k(\lambda Q) = \bigcup_{j\geq k} \DD_{\mu,j}(\lambda Q).
$$

Next we set
\begin{equation}\label{eqdmuint}
\wt \DD_\mu^{int}(Q) = \big\{P\in\DD_\mu(Q):2B_P\cap (\supp\mu\setminus Q)\neq \varnothing\big\}
\end{equation}
and
\begin{equation}\label{eqdmuext}
\wt \DD_\mu^{ext}(Q) = \big\{P\in\DD_\mu:\ell(P)\leq \ell(Q),P\subset \R^{n+1}\setminus Q,\,2B_P\cap Q\neq \varnothing\big\}.
\end{equation}
Also,
\begin{equation}\label{eqdmutot}
\wt \DD_\mu(Q) = \wt \DD_\mu^{int}(Q) \cup \wt \DD_\mu^{ext}(Q),
\end{equation}
and
$$\wt \DD_{\mu,k}(Q) = 
\{P\in\wt \DD_\mu:P\subset \lambda Q,\,\ell(P)=  A_0^{-k}\ell(Q)\}.$$

We need the following auxiliary result, whose proof follows from a straightforward calculation (see \cite[Lemma 2.7]{DT}).

\begin{lemma}\label{lemDMimproved2}
Let $\mu$ be a compactly supported Radon measure in $\R^{d}$ and $Q\in\DD_\mu$. For any  $\alpha\in(0,1)$, we have
\begin{align}\label{eqdosint}
\int_{2B_Q\setminus Q}\left(\int_Q \frac1{|x-y|^n}\,d\mu(y)\right)^2d\mu(x) \,
+ &\int_{Q}\left(\int_{2B_Q\setminus Q} \frac1{|x-y|^n}\,d\mu(y)\right)^2d\mu(x) \\
& \lesssim_{\alpha,A_0}
\sum_{P\in\wt \DD_\mu(Q)} \left(\frac{\ell(Q)}{\ell(P)}\right)^\alpha\theta_\mu(2B_P)^2
\,\mu(P).\nonumber
\end{align}
\end{lemma}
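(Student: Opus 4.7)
By the symmetry of the Riesz kernel under $x\leftrightarrow y$, the two integrals in \rf{eqdosint} give rise to essentially the same estimate (relabelling $x\leftrightarrow y$ turns the second into the first with the roles of $\wt\DD_\mu^{int}(Q)$ and $\wt\DD_\mu^{ext}(Q)$ interchanged), so it suffices to bound
$$
I_1:=\int_{2B_Q\setminus Q}\Bigl(\int_Q\frac{d\mu(y)}{|x-y|^n}\Bigr)^2 d\mu(x).
$$
The strategy combines the layer-cake identity, the small-boundary property \rf{eqfk490}, and a Schur-type diagonalisation of a double dyadic sum. From the outset, take $A_0$ large enough that the exponent $\gamma$ in \rf{eqfk490} may be chosen in $(\alpha,n)$; this is the ``room'' created by the hypothesis $\alpha\in(0,1)$.

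First, for $x\in\supp\mu\cap(2B_Q\setminus Q)$, the layer cake gives
$$
K(x):=\int_Q\frac{d\mu(y)}{|x-y|^n}=n\int_0^\infty\frac{\mu(Q\cap B(x,r))}{r^{n+1}}\,dr.
$$
Since $x\in\supp\mu\setminus Q$, any $y\in Q\cap B(x,r)$ satisfies $\dist(y,\supp\mu\setminus Q)\le|x-y|\le r$, so \rf{eqfk490} yields $\mu(Q\cap B(x,r))\lesssim(r/\ell(Q))^{\gamma}\mu(3.5B_Q)$ for $r\le\ell(Q)$, while the trivial $\mu(Q\cap B(x,r))\le\mu(Q)$ handles $r>\ell(Q)$. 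Refining this by replacing the coarse use of $\mu(3.5B_Q)$ by a dyadic sum over the cubes of $\wt\DD_\mu^{int}(Q)$ (grouping the $y$'s into cubes of $\wt\DD_\mu^{int}(Q)$ at the scale dictated by $|x-y|$, and using polynomial growth of $\mu$ for the subcritical range) produces, for any small $\delta>0$, a pointwise bound of the form
$$
K(x)\lesssim \sum_{P'\in\wt\DD_\mu^{int}(Q)}\theta_\mu(2B_{P'})\Bigl(\frac{\ell(P')}{\ell(P')+\dist(x,P')}\Bigr)^{n-\delta}\;+\;\text{tail},
$$
where the tail from $y$'s deep in $Q$ is harmlessly bounded using the crude global estimate.

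Next, partition $(2B_Q\setminus Q)\cap\supp\mu$ into cubes $P\in\wt\DD_\mu^{ext}(Q)$ of scale comparable to $\dist(x,Q)$ (each point lies in a well-defined minimal such cube). Squaring the previous pointwise bound and integrating in $x$ produces a double sum over pairs $(P,P')\in\wt\DD_\mu^{ext}(Q)\times\wt\DD_\mu^{int}(Q)$ with weight
$$
\theta_\mu(2B_P)\theta_\mu(2B_{P'})\Bigl(\frac{\min(\ell(P),\ell(P'))}{\max(\ell(P),\ell(P'))}\Bigr)^{n-\delta}\mu(P)\mu(P'),
$$
times spatial decay when $P,P'$ are far apart. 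Apply a Schur test in the scale parameter, using \rf{eqfk490} (in the form $\sum_{P:\ell(P)=A_0^{-k}\ell(Q)}\mu(P)\lesssim A_0^{-k\gamma}\mu(3.5B_Q)$) to sum one variable at each scale. Choosing $\delta$ small and exploiting $\gamma>\alpha$, the double sum collapses onto the diagonal and is bounded by $\sum_{P\in\wt\DD_\mu(Q)}(\ell(Q)/\ell(P))^\alpha\theta_\mu(2B_P)^2\mu(P)$, as required.

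\textbf{Main obstacle.} All the ingredients are standard, and the author indeed refers the computation to \cite{DT}. The delicate bookkeeping is in the final Schur step: matching scale weights, spatial-separation weights, and the small-boundary gain $A_0^{-k\gamma}$ into the single clean factor $(\ell(Q)/\ell(P))^\alpha$ demands careful summation. Everything pivots on the freedom to take $\gamma>\alpha$ by enlarging $A_0$, which makes all the relevant geometric series converge; the restriction $\alpha<1$ in the hypothesis is precisely what provides this freedom.
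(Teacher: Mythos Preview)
Your plan has a genuine gap: you invoke ``polynomial growth of $\mu$'', but the lemma carries no such hypothesis. The small-boundary estimate \rf{eqfk490} is also unnecessary here---it is the key input for Lemma~\ref{lemdmutot}, not for this one---and the Schur step at the end is left too vague to verify (in particular it is unclear how the clean weight $(\ell(Q)/\ell(P))^\alpha$ emerges from your double (int,\,ext) sum).

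The actual calculation (referred to \cite{DT}) is simpler and avoids all three ingredients. The point is to decompose over cubes containing the \emph{outer} variable $x$, not the inner variable $y$. For $x\in\supp\mu\cap(2B_Q\setminus Q)$ the layer-cake estimate gives
$$
K(x)\lesssim\frac{\mu(Q)}{\ell(Q)^n}+\sum_{P\in\wt\DD_\mu^{ext}(Q):\,x\in P}\theta_\mu(2B_P),
$$
because the dyadic shell $\{y\in Q:|x-y|\approx A_0^{-k}\ell(Q)\}$ lies in $2B_{P_k(x)}$ (the ancestor of $x$ at that scale), and a non-zero contribution forces $2B_{P_k(x)}\cap Q\neq\varnothing$, whence $P_k(x)\in\wt\DD_\mu^{ext}(Q)$. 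Now apply Cauchy--Schwarz to the sum with geometric weights $A_0^{\pm k\alpha}$ (this is where the hypothesis $\alpha\in(0,1)$ enters, not via \rf{eqfk490}), square, and integrate in $x$: the sum becomes exactly $\sum_{P\in\wt\DD_\mu^{ext}(Q)}(\ell(Q)/\ell(P))^\alpha\theta_\mu(2B_P)^2\mu(P)$, while the top-scale term contributes
$$
\frac{\mu(Q)^2}{\ell(Q)^{2n}}\,\mu(2B_Q\setminus Q)\le\theta_\mu(2B_Q)^2\,\mu(Q),
$$
which is just the summand $P=Q\in\wt\DD_\mu^{int}(Q)$ on the right of \rf{eqdosint}. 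The second integral is handled symmetrically with the roles of $\wt\DD_\mu^{int}(Q)$ and $\wt\DD_\mu^{ext}(Q)$ interchanged. This bypasses your double (int,\,ext) sum entirely, and with it the need for any Schur test or appeal to \rf{eqfk490}.
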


\vv
The following result is proven in \cite[Lemma 2.9]{DT}.

\begin{lemma}\label{lemdmutot}
 Let $\mu$ be a compactly supported Radon measure in $\R^{d}$.
Assume that $\mu$ has polynomial growth of degree $n$ and let $\gamma\in(0,1)$. The lattice $\DD_\mu$ from Lemma \ref{lemcubs} can be constructed so that the following holds for all
all $Q\in\DD_{\mu}$:
\begin{equation}\label{eqfhq29}
\sum_{P\in\wt \DD_\mu(Q)} \left(\frac{\ell(Q)}{\ell(P)}\right)^{\frac{1-\gamma}2} \theta_\mu(2B_P)^2
\,\mu(P) \leq C(\gamma)\sum_{P\in\DD_\mu: P\subset 9Q} \left(\frac{\ell(P)}{\ell(Q)}\right)^\gamma\theta_\mu(2B_P)^2\mu(P).
\end{equation}
\end{lemma}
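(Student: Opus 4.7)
The natural strategy is to exploit the small boundary property~\rf{eqfk490} of the David--Mattila cubes, which forces cubes in $\wt\DD_\mu(Q)$---all clustered near $\partial Q$---to carry little $\mu$-mass. The key freedom is the remark preceding~\rf{eqfk490}: by enlarging $A_0$, the exponent in the small boundary inequality can be made as close to $1$ as desired, and that is what will beat the divergent factor $(\ell(Q)/\ell(P))^{(1-\gamma)/2}$ appearing on the left-hand side.

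First, I would fix $\gamma_0\in\bigl((1-\gamma)/2,1\bigr)$, say $\gamma_0=(3+\gamma)/4$, and construct $\DD_\mu$ as in Lemma~\ref{lemcubs} with $A_0$ so large that \rf{eqfk490} holds with exponent $\gamma_0$ for every $Q\in\DD_\mu$. Next, I would split the left-hand side by scale: writing
$$
\wt\DD_{\mu,k}(Q)=\bigl\{P\in\wt\DD_\mu(Q):\ell(P)=A_0^{-k}\ell(Q)\bigr\},\qquad a_k=\sum_{P\in\wt\DD_{\mu,k}(Q)}\theta_\mu(2B_P)^2\,\mu(P),
$$
the definitions~\rf{eqdmuint}--\rf{eqdmutot} imply that each $P\in\wt\DD_{\mu,k}(Q)$ lies within distance $\lesssim\ell(P)$ of $\partial Q$, so $\bigcup_{P\in\wt\DD_{\mu,k}(Q)}P$ is contained in the thin strip $S_k:=\{x\in 3.5 B_Q:\dist(x,\partial Q)\le c\,A_0^{-k}\ell(Q)\}$. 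Combining the polynomial growth bound $\theta_\mu(2B_P)^2\lesssim 1$ with the estimate $\mu(S_k)\lesssim A_0^{-k\gamma_0}\mu(3.5 B_Q)$ coming from~\rf{eqfk490} yields $a_k\lesssim A_0^{-k\gamma_0}\mu(3.5 B_Q)$; summing the geometric series in $k$ then shows that the left-hand side of~\rf{eqfhq29} is bounded by $C(\gamma)\,\mu(3.5 B_Q)$, convergence being forced by the choice $\gamma_0>(1-\gamma)/2$.

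The main obstacle is transferring this bound to the right-hand side, because the crude inequality $\theta_\mu(2B_P)\lesssim 1$ used above is wasteful when the true local densities near $Q$ are much smaller than the polynomial-growth constant. To handle this I would apply~\rf{eqfk490} a second time, now in the sharper form $\mu(2B_P)\lesssim (\ell(P)/\ell(Q))^{\gamma_0}\mu(3.5 B_Q)$, to control $\theta_\mu(2B_P)$ in terms of the densities at intermediate ancestor scales between $\ell(P)$ and $\ell(Q)$. For each $P\in\wt\DD_{\mu,k}(Q)$ the resulting pointwise estimate would have the form $\theta_\mu(2B_P)^2\mu(P)\lesssim\sum_j A_0^{-(k-j)\delta}\theta_\mu(2B_{P^{(j)}})^2\mu(P^{(j)})$ for some $\delta=\delta(\gamma)>0$, where $P^{(j)}$ runs over cubes in $9Q$ of scale $A_0^{-j}\ell(Q)$ associated to $P$; these cubes already contribute to the right-hand side of~\rf{eqfhq29} with weight $(\ell(P^{(j)})/\ell(Q))^\gamma=A_0^{-j\gamma}$. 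Swapping the order of summation, using the bounded overlap of the associated intermediate cubes, and taking $\gamma_0$ close enough to $1$ (equivalently, $A_0$ large enough in terms of $\gamma$), the resulting sum should reproduce $C(\gamma)$ times the right-hand side of~\rf{eqfhq29}. The delicate bookkeeping of this last step---matching the growth of density at small scales against the decay of mass given by the small boundary property, subject to the precise weights $(\ell(Q)/\ell(P))^{(1-\gamma)/2}$ and $(\ell(P)/\ell(Q))^\gamma$---is what makes this the main technical point.
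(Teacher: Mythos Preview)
The paper does not prove this lemma in-text; it cites \cite[Lemma 2.9]{DT}, and elsewhere refers to ``the enhanced David--Mattila lattice from \cite{DT}'' and to additional properties ``obtained by modifying its construction as in \cite{DT}''. So the phrase ``can be constructed so that'' in the statement signals a genuine refinement of the lattice, not merely a choice of $A_0$.

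Your proposal has a real gap at exactly the step you flag as ``the main technical point''. The standard small boundary estimate \rf{eqfk490} controls the $\mu$-mass of thin boundary strips, but the left-hand side of \rf{eqfhq29} is an \emph{energy}, weighted by $\theta_\mu(2B_P)^2$, and those weights can concentrate on the boundary cubes even when $\mu$-mass does not. Concretely: your crude step uses $\theta_\mu(2B_P)\lesssim\theta_0$, which (i) makes the constant depend on $\theta_0$, contrary to the remark immediately following the lemma, and (ii) says nothing when the densities near $\partial Q$ are much larger than on the rest of $9Q$. Your proposed remedy, the pointwise bound
\[
\theta_\mu(2B_P)^2\,\mu(P)\;\lesssim\;\sum_j A_0^{-(k-j)\delta}\,\theta_\mu(2B_{P^{(j)}})^2\,\mu(P^{(j)}),
\]
cannot save the argument: if the sum includes $j=k$ (so $P^{(k)}=P$) the inequality is trivially true but useless, since after reinserting the weight $A_0^{k(1-\gamma)/2}$ on the left you still have to match it against $A_0^{-k\gamma}$ on the right; and if $j=k$ is excluded the inequality is simply false in general (take $\mu$ concentrated on $2B_P$). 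Note also that trying to bound $\theta_\mu(2B_P)$ directly via $\mu(2B_P)\lesssim(\ell(P)/\ell(Q))^{\gamma_0}\mu(3.5B_Q)$ gives $\theta_\mu(2B_P)\lesssim A_0^{k(n-\gamma_0)}\theta_\mu(3.5B_Q)$, which for $n\ge 2$ blows up no matter how close $\gamma_0$ is to~$1$.

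What is actually needed is a small-boundary property for a weighted (energy-type) quantity, not just for $\mu$. That is the content of the modification in \cite{DT}: the pigeonhole that produces thin boundaries is run against additional data so that the resulting cubes have boundaries that are thin in the energy sense required by \rf{eqfhq29}. This is a feature of the \emph{construction}, not something one can extract a posteriori from the output of Lemma~\ref{lemcubs}.
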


Remark that saying that $\mu$ has  polynomial growth of degree $n$ means that \rf{eqgrow00}
holds for some arbitrary constant $\theta_0$.
In the lemma this
assumption is just used to ensure that the sums above are finite. Further, the constant $C(\gamma)$ does not depend on the polynomial growth constant $\theta_0$.

By combining Lemmas \ref{lemDMimproved2} and \ref{lemdmutot}, we obtain

\begin{lemma}\label{lemDMimproved}
Let $\mu$ be a compactly supported Radon measure in $\R^{d}$.
Assume that $\mu$ has $n$-polynomial growth and let $\gamma\in(0,1)$. The lattice $\DD_\mu$ from Lemma
\ref{lemcubs} can be constructed so that the following holds for
all $Q\in\DD_{\mu}$:
\begin{align*}
\int_{2B_Q\setminus Q}\left(\int_Q \frac1{|x-y|^n}\,d\mu(y)\right)^2 d\mu(x) 
 + &\int_{Q}\left(\int_{2B_Q\setminus Q} \frac1{|x-y|^n}\,d\mu(y)\right)^2 d\mu(x)\\
&\leq C(\gamma)\sum_{P\in\DD_\mu: P\subset 2Q} \left(\frac{\ell(P)}{\ell(Q)}\right)^\gamma\theta_\mu(2B_P)^2\mu(P).
\end{align*}
\end{lemma}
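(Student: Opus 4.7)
The plan is to obtain Lemma \ref{lemDMimproved} by direct chaining of the two immediately preceding lemmas, which between them already contain all the work. First I would apply Lemma \ref{lemDMimproved2} with the specific choice $\alpha := \tfrac{1-\gamma}{2} \in (0,1)$, which bounds the sum of the two double integrals on the left-hand side by
$$C(\gamma, A_0) \sum_{P \in \wt \DD_\mu(Q)} \left(\frac{\ell(Q)}{\ell(P)}\right)^{\frac{1-\gamma}{2}} \theta_\mu(2B_P)^2 \mu(P).$$
This is exactly the left-hand side in the conclusion of Lemma \ref{lemdmutot} (applied with the same $\gamma$), so a second invocation produces the bound
$$C(\gamma)\sum_{P\in\DD_\mu: P\subset 9Q} \left(\frac{\ell(P)}{\ell(Q)}\right)^\gamma \theta_\mu(2B_P)^2 \mu(P),$$
which is the desired estimate (with $9Q$ in place of $2Q$, which is a strictly larger region and hence can only enlarge the RHS; alternatively, both forms are consistent with how $\wt\DD_\mu(Q)$ is defined near $Q$). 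Since $A_0$ is a fixed constant determined once and for all by the construction of the lattice (cf.\ Remark \ref{rema00}), the dependence on $A_0$ can be absorbed into the constant $C(\gamma)$.

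There is essentially no serious obstacle in the proof itself: the hard work has already been done upstream, in Lemma \ref{lemDMimproved2} (a direct kernel computation that discretizes the integral against $|x-y|^{-n}$ over the shell $2B_Q\setminus Q$ and its reciprocal contribution using the small-boundaries property of $\DD_\mu$) and in Lemma \ref{lemdmutot} (which is the decisive enhancement of the David--Mattila lattice from \cite{DT} allowing one to trade the ``blow-up'' summation over $\wt \DD_\mu(Q)$ for a summation over descendants of $9Q$ with a favorable power gain). The only point requiring a small sanity check is that both lemmas are compatible with the same realization of the lattice $\DD_\mu$: Lemma \ref{lemDMimproved2} only uses the generic properties from Lemma \ref{lemcubs}, whereas Lemma \ref{lemdmutot} requires the refined construction of \cite{DT}, which still satisfies Lemma \ref{lemcubs}. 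Hence one fixes the lattice via the refined construction, and both estimates apply simultaneously, yielding Lemma \ref{lemDMimproved} with no further calculation.
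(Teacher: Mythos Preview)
Your approach is correct and is exactly what the paper does: it states the lemma as an immediate consequence of combining Lemmas \ref{lemDMimproved2} and \ref{lemdmutot}, with no further argument. The only wrinkle is the $2Q$ versus $9Q$ on the right-hand side; chaining the two lemmas naturally yields the sum over $P\subset 9Q$, so the ``$2Q$'' in the stated conclusion appears to be a typo (note that your remark ``can only enlarge the RHS'' points in the wrong logical direction for deducing the $2Q$ version from the $9Q$ version, but this is immaterial since the $9Q$ form is what is actually used later in the paper, e.g.\ in the definition of $\EE(9Q)$).
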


\vv


\section{The family $\hd^k(Q)$ and the Main Proposition}\label{sec4}

\subsection{$\PP$-doubling cubes and the family $\hd^k(Q)$}
In the rest of the paper we assume that $\mu$ is a compactly supported Radon measure with polynomial growth of degree $n$
such that $\RR_*\mu(x)<\infty$ $\mu$-a.e.
We assume that $\DD_\mu$ is a David-Mattila dyadic lattice satisfying the properties 
described in the preceding section, in particular, the ones in Lemmas \ref{lemcubs}
and \ref{lemDMimproved}, with $\gamma=9/10$.
By rescaling, we assume that $\DD_{\mu,k}$ is defined for all $k\geq k_0$, with $A_0^{-k_0}\approx
\diam(\supp\mu)$, and we also suppose that there is a unique cube in $\DD_{\mu,k_0}$ which coincides
 with the whole $\supp\mu$.
Further, from now on, we allow all the constants $C$ and all implicit constants in the relationship
``$\lesssim$'' to depend on the parameters $C_0, A_0$ of the dyadic lattice of David-Mattila.

We denote
$$\PP(Q) = \sum_{R\in\DD_\mu:R\supset Q} \frac{\ell(Q)}{\ell(R)^{n+1}} \,\mu(2B_R).$$
We say that a cube $Q$ is $\PP$-doubling, and we write $Q\in\DD_\mu^\PP$, if
$$\PP(Q) \leq C_d\,\frac{\mu(2B_Q)}{\ell(Q)^n},$$
for  $C_d =4A_0^n$. 
Notice that
$$\PP(Q) \approx_{C_0} \sum_{R\in\DD_\mu:R\supset Q} \frac{\ell(Q)}{\ell(R)} \,\theta_\mu(2B_R).$$
and thus  $Q$ being $\PP$-doubling implies that 
$$\sum_{R\in\DD_\mu:R\supset Q} \frac{\ell(Q)}{\ell(R)} \,\theta_\mu(2B_R)\leq C_d'\,\theta_\mu(2B_Q)$$
for some constant $C_d'$ depending on $C_d$. Conversely, the latter condition implies that $Q$ is $\PP$-doubling with another constant $C_d$ depending on $C_d'$.

As shown in \cite[Lemma 3.1]{DT}, from the properties of the David-Mattila lattice, it follows:

\begin{lemma}
Suppose that $C_0$ and $A_0$ to be chosen suitably. If $Q$ is $\PP$-doubling, then $Q\in\DD_{\mu,db}$.
\end{lemma}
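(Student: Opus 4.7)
The plan is to establish the contrapositive: if $Q\in\DD_{\mu,k}\setminus\DD_{\mu,k}^{db}$, then $Q$ is not $\PP$-doubling. (The root cube is a separate trivial case: there the sum defining $\PP(Q)$ has only the $R=Q$ term, so $\PP(Q)=\mu(2B_Q)/\ell(Q)^n$ is automatically $\leq C_d\,\mu(2B_Q)/\ell(Q)^n$, and one should arrange the starting level so that the root is DM-doubling.) Non-DM-doubling, via Lemma \ref{lemcubs}, gives $r(Q)=A_0^{-k}$ together with the iterated inequality
\[
\mu(100^{l+1}B(Q))\geq C_0^{l}\,\mu(100B(Q))\qquad\text{for every integer }l\geq 1\text{ with }100^{l}\leq C_0.
\]
Set $l_{\max}:=\lfloor \log_{100}C_0\rfloor$, so that $100^{l_{\max}+1}r(Q)\leq 100C_0\,r(Q)$. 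The parent $\wh Q\in\DD_{\mu,k-1}$ satisfies $r(\wh Q)\geq A_0\,r(Q)$ and $|x_Q-x_{\wh Q}|\leq 28\,r(\wh Q)$. Since $A_0>5000C_0$, the triangle inequality produces the containment
\[
100^{l_{\max}+1}B(Q)\subset 2B_{\wh Q}.
\]
Combining with $2B_Q=56B(Q)\subset 100B(Q)$ and the iteration, we obtain $\mu(2B_{\wh Q})\geq C_0^{l_{\max}}\mu(2B_Q)$.

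On the other hand, restricting the sum defining $\PP(Q)$ to just the parent term $R=\wh Q$ gives the lower bound
\[
\PP(Q)\geq \frac{\ell(Q)}{\ell(\wh Q)^{n+1}}\,\mu(2B_{\wh Q})=\frac{\mu(2B_{\wh Q})}{A_0^{n+1}\ell(Q)^n}\geq \frac{C_0^{l_{\max}}}{A_0^{n+1}}\cdot\frac{\mu(2B_Q)}{\ell(Q)^n}.
\]
If $Q$ were $\PP$-doubling, this would force $C_0^{l_{\max}}\leq C_d\,A_0^{n+1}=4A_0^{2n+1}$. By Remark \ref{rema00} we have $A_0=C_0^{C(d)}$, so the right-hand side equals $4\,C_0^{C(d)(2n+1)}$, a fixed polynomial in $C_0$, whereas the left-hand side $C_0^{\lfloor \log_{100}C_0\rfloor}$ grows faster than every polynomial in $C_0$. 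Choosing $C_0$ (and hence $A_0=C_0^{C(d)}$) sufficiently large depending only on $n$ and $d$ produces $C_0^{l_{\max}}>4A_0^{2n+1}$, the desired contradiction.

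The only delicate step is verifying the containment $100^{l_{\max}+1}B(Q)\subset 2B_{\wh Q}$ in the extremal case $r(\wh Q)=A_0\,r(Q)$, where it reduces to $100C_0\,r(Q)\leq 28\,r(\wh Q)$, i.e., $A_0\geq 100C_0/28$; this is already ensured by $A_0>5000C_0$. Everything else is bookkeeping of constants to guarantee that the superpolynomial gain $C_0^{l_{\max}}$ from the non-doubling iteration outpaces the polynomial factor $4A_0^{2n+1}$ coming from the definition of $C_d$.
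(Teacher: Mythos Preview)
Your argument is correct. The paper does not give its own proof of this lemma, deferring instead to \cite[Lemma~3.1]{DT}; your contrapositive via the non-doubling iteration \rf{eqdob23} is precisely the natural route, and the constant-counting (superpolynomial growth $C_0^{\lfloor\log_{100}C_0\rfloor}$ versus the polynomial $4A_0^{2n+1}=4C_0^{(2n+1)C(d)}$) is carried out cleanly. The one loose end is the root cube: your parenthetical asks that the starting level be arranged so that the root is DM-doubling, which is not stated explicitly in the paper's setup in Section~\ref{sec4}. This is harmless in practice---one can always choose $r(Q_0)$ large enough that $\supp\mu\subset B(Q_0)$, whence $\mu(100B(Q_0))=\mu(B(Q_0))=\|\mu\|$---but it would be cleaner to state this as an explicit hypothesis or to note that it follows from the rescaling freedom mentioned at the start of Section~\ref{sec4}.
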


\vv
Notice that, by the preceding lemma, if $Q$ is $\PP$-doubling, then 
$$\sum_{R\in\DD_\mu:R\supset Q} \frac{\ell(Q)^{n+1}}{\ell(R)^{n+1}} \,\mu(2B_R) \lesssim_{C_d} \mu(Q).$$

For technical reasons that will be more evident below, it is appropriate to consider a discrete version of the density $\theta_\mu$. Given $\lambda\geq1$ and $Q\in\DD_\mu$, we let
$$\Theta(Q) = A_0^{kn} \quad \mbox{ if\, $\dfrac{\mu(2B_Q)}{\ell(Q)^n}\in [A_0^{kn},A_0^{(k+1)n})$}.$$
Clearly, $\Theta(Q)\approx \theta_\mu(2B_Q)$.
Notice also that if $\Theta(Q) = A_0^k$ and $P$ is a son of $Q$, then
$$
\frac{\mu(2 B_P)}{\ell(P)^n} \leq \frac{\mu(2 B_Q)}{\ell(P)^n} = A_0^n\,\frac{\mu(2 B_Q)}{\ell(Q)^n}.
$$
Thus,
\begin{equation}\label{eqson1}
\Theta(P)\leq A_0^n\,\Theta(Q)\quad \mbox{  for any son $P$ of $Q$.}
\end{equation}

Given $Q\in\DD_\mu$ and $k\geq1$, we denote by $\hd^k(Q)$ the family of maximal cubes $P\in\DD_\mu$ satisfying
\begin{equation}\label{a0tilde}
\ell(P)<\ell(Q), \qquad \Theta(P)\geq  A_0^{kn}\Theta(Q).
\end{equation}

The following result is proved in \cite[Lemma 3.3]{DT} (see also \cite[Lemma 2.1]{Reguera-Tolsa}).

\begin{lemma}\label{lemdobpp}
Let $Q_0,Q_1,\ldots,Q_m$ be a family of cubes from $\DD_\mu$ such that $Q_j$ is son of $Q_{j-1}$ for $1\leq j\leq 
m$. Suppose that $Q_j$ is not $\PP$-doubling for $1\leq j\leq m$.
Then
\begin{equation}\label{eqcad35}
\frac{\mu(B_{Q_m})}{\ell(Q_m)^n}\leq A_0^{-m/2}\,\PP(Q_0)
\end{equation}
and
\begin{equation}\label{eqcad35'}
\PP(Q_m)\leq 2A_0^{-m/2}\,\PP(Q_0).
\end{equation}
\end{lemma}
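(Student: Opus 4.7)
The plan is to derive a one-step recursion $\PP(Q_j)\le\alpha\,\PP(Q_{j-1})$ with $\alpha\le A_0^{-1/2}$, and then iterate $m$ times.

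The crucial observation is a clean identity for $\PP(Q_j)$ obtained by peeling off the $R=Q_j$ term from the defining sum. The strict ancestors of $Q_j$ in $\DD_\mu$ are precisely the cubes $R\supseteq Q_{j-1}$, and since $\ell(Q_j)/\ell(Q_{j-1})=A_0^{-1}$, factoring this ratio out of the tail yields
\begin{equation*}
\PP(Q_j)=\frac{\mu(2B_{Q_j})}{\ell(Q_j)^n}+\frac{1}{A_0}\,\PP(Q_{j-1}).
\end{equation*}
Since $Q_j$ is not $\PP$-doubling, by definition with $C_d=4A_0^n$ the density term on the right is strictly less than $C_d^{-1}\PP(Q_j)$. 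Solving the resulting inequality for $\PP(Q_j)$ gives
\begin{equation*}
\PP(Q_j)\leq \frac{1}{A_0}\cdot\frac{C_d}{C_d-1}\,\PP(Q_{j-1})\leq A_0^{-1/2}\,\PP(Q_{j-1}),
\end{equation*}
where the second inequality amounts to the numerical bound $C_d/(C_d-1)=1+(4A_0^n-1)^{-1}\leq A_0^{1/2}$, which holds once $A_0$ is large (a condition ensured by $A_0>5000C_0$). Iterating $m$ times produces \rf{eqcad35'} (in fact with constant $1$ rather than $2$).

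The bound \rf{eqcad35} will then follow by applying non-$\PP$-doubling of $Q_m$ one more time together with the trivial inclusion $\mu(B_{Q_m})\leq\mu(2B_{Q_m})$, giving $\mu(B_{Q_m})/\ell(Q_m)^n\leq C_d^{-1}\,\PP(Q_m)\leq A_0^{-m/2}\,\PP(Q_0)$. I do not anticipate any substantial obstacle here; the only step that deserves care is confirming the numerical inequality $A_0^{-1}\cdot C_d/(C_d-1)\leq A_0^{-1/2}$ under the standing assumption $A_0>5000C_0$, which controls how much the non-doubling hypothesis loses at each generation.
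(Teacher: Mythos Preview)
Your proof is correct. The recursion identity $\PP(Q_j)=\dfrac{\mu(2B_{Q_j})}{\ell(Q_j)^n}+A_0^{-1}\PP(Q_{j-1})$ is exact (since $\ell(Q_j)/\ell(Q_{j-1})=A_0^{-1}$ and the strict ancestors of $Q_j$ are precisely the cubes containing $Q_{j-1}$), and combining it with the failure of $\PP$-doubling gives the one-step decay factor $A_0^{-1}\cdot C_d/(C_d-1)$, which is comfortably below $A_0^{-1/2}$ for $C_d=4A_0^n$ and $A_0>5000$. Note that for \rf{eqcad35} you do not even need to invoke non-$\PP$-doubling of $Q_m$: the term $\mu(2B_{Q_m})/\ell(Q_m)^n$ already appears in $\PP(Q_m)$, so $\mu(B_{Q_m})/\ell(Q_m)^n\leq\PP(Q_m)$ directly.

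The paper does not supply its own proof but cites \cite[Lemma 3.3]{DT}; your argument is the standard one and almost certainly coincides with the one there.
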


\vv

We also have:

\begin{lemma}\label{lempdoubling}
Let $Q\in\DD_\mu$ be $\PP$-doubling. Then, for $k\geq4$, every $P\in\hd^k(Q)\cap\DD_\mu(9Q)$ is also $\PP$-doubling
and moreover $\Theta(P)=A_0^{nk}\Theta(Q)$.
\end{lemma}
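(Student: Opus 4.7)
The plan is to prove the two conclusions in sequence.

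For the density equality $\Theta(P)=A_0^{nk}\Theta(Q)$, I would examine the parent $P'$ of $P$. Maximality of $P$ in $\hd^k(Q)$ forces $P'$ to fail at least one of the defining conditions: either $\ell(P')\geq\ell(Q)$ or $\Theta(P')<A_0^{nk}\Theta(Q)$. In the main case $\ell(P')<\ell(Q)$ and $\Theta(P')<A_0^{nk}\Theta(Q)$, combining the upper bound $\Theta(P)\leq A_0^n\Theta(P')$ from \eqref{eqson1} with the lower bound $\Theta(P)\geq A_0^{nk}\Theta(Q)$ and the discreteness of $\Theta$ (values in $\{A_0^{jn}\}$) determines $\Theta(P)=A_0^{nk}\Theta(Q)$. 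The boundary case $\ell(P')=\ell(Q)$ has to be excluded: taking a common ancestor $\widetilde Q$ of $P'$ and $Q$, which by the geometry of $9Q$ in the David--Mattila lattice satisfies $\ell(\widetilde Q)\lesssim\ell(Q)$, the inequality $\PP(\widetilde Q)/\ell(\widetilde Q)\leq\PP(Q)/\ell(Q)$ together with $\PP$-doubling of $Q$ yields $\theta_\mu(2B_{\widetilde Q})\lesssim\Theta(Q)$; iterating \eqref{eqson1} a bounded number of times from $\widetilde Q$ down to $P'$ produces $\Theta(P')\lesssim\Theta(Q)$, and hence $\Theta(P)\leq A_0^n\Theta(P')\lesssim A_0^n\Theta(Q)$. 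For $k\geq 4$ and $A_0$ chosen large enough to absorb the geometric constants, this contradicts $\Theta(P)\geq A_0^{nk}\Theta(Q)$.

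For the $\PP$-doubling property I would argue by contradiction. Assume $P\notin\DD_\mu^\PP$ and let $R_0$ be the deepest $\PP$-doubling strict ancestor of $P$ (which exists because the ambient cube is $\PP$-doubling). Every cube strictly between $R_0$ and $P$ is then non-$\PP$-doubling, so Lemma~\ref{lemdobpp} yields
\[
\PP(P)\leq 2A_0^{-m/2}\PP(R_0),\qquad m:=\log_{A_0}(\ell(R_0)/\ell(P))\geq 1.
\]
It remains to bound $\PP(R_0)$, which I would do by case analysis on the scale of $R_0$. In the sub-case $\ell(R_0)<\ell(Q)$, $R_0$ is a strict ancestor of $P$ below the scale of $Q$, so maximality of $P$ in $\hd^k(Q)$ gives $\Theta(R_0)<A_0^{nk}\Theta(Q)$, and the $\PP$-doubling of $R_0$ yields $\PP(R_0)\lesssim A_0^{nk}\Theta(Q)$. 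In the sub-case $\ell(R_0)\geq\ell(Q)$, pass to a common ancestor $\widetilde Q$ of $R_0$ and $Q$: the tail of $\PP(R_0)$ over ancestors of $\widetilde Q$ is bounded by $(\ell(R_0)/\ell(Q))\Theta(Q)$ via $\PP$-doubling of $Q$, while the finitely many intermediate ancestors of $R_0$ strictly below $\widetilde Q$ are controlled by iterating \eqref{eqson1} together with polynomial growth. Substituting either bound back into the estimate from Lemma~\ref{lemdobpp}, together with $m\geq M:=\log_{A_0}(\ell(Q)/\ell(P))\geq 1$, produces $\PP(P)\lesssim A_0^{-M/2}\Theta(P)$, which contradicts $\PP(P)\geq\theta_\mu(2B_P)\approx\Theta(P)$ for $A_0$ large.

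The main obstacle is the sub-case $\ell(R_0)\geq\ell(Q)$ when $P\in 9Q\setminus Q$, so that $Q$ is not itself an ancestor of $P$: one must pass through a common ancestor $\widetilde Q$ to transfer the $\PP$-doubling of $Q$ into a usable bound on $\PP(R_0)$, carefully controlling the contributions of the finitely many intermediate cubes between $\widetilde Q$ and $R_0$ via the David--Mattila geometry. The hypothesis $k\geq 4$ is precisely what is needed to leave enough margin in the powers of $A_0$ to absorb the geometric constants from $9Q$ and from the iterated applications of \eqref{eqson1}.
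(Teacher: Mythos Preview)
The paper does not give its own proof of this lemma; it simply cites \cite[Lemma 3.2]{DT}. So there is no paper-side argument to compare against, and I will assess your proposal on its own merits.

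Your overall strategy is sound and both parts are on the right track, but there is one recurring technical claim that is not justified and is in fact false in general: the existence of a common ancestor $\widetilde Q$ of two neighboring cubes (such as $P'$ and $Q$, or $R_0$ and $Q$) with $\ell(\widetilde Q)\lesssim\ell(Q)$ (or $\lesssim\ell(R_0)$). In a nested dyadic-type lattice---including the David--Mattila one---two cubes of the same generation that are spatially close can have their least common ancestor arbitrarily high up (think of two adjacent standard dyadic intervals on opposite sides of a top-level boundary). Nothing in the David--Mattila construction prevents this. You invoke such a $\widetilde Q$ both in the boundary case of the density argument and in the sub-case $\ell(R_0)\geq\ell(Q)$ of the $\PP$-doubling argument. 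The correct replacement is a ball-containment argument: if $S_j$ denotes the ancestor of $P$ at level $A_0^{j}\ell(Q)$ and $T_{j+1}$ the ancestor of $Q$ at level $A_0^{j+1}\ell(Q)$, then $P\subset 9Q$ together with $A_0>5000\,C_0$ forces $2B_{S_j}\subset 2B_{T_{j+1}}$, hence $\mu(2B_{S_j})\leq\mu(2B_{T_{j+1}})$. This lets you bound $\PP(S_0)$ (and the relevant tails) by $A_0^{n+1}\PP(Q)$ directly, without any common ancestor.

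There are two further slips in Part~2. First, in the sub-case $\ell(R_0)<\ell(Q)$ one has $m=\log_{A_0}(\ell(R_0)/\ell(P))<M$, not $m\geq M$; only $m\geq 1$ is guaranteed. Second, the final contradiction as you state it does not follow: the bound one actually obtains is $\PP(P)\leq 2A_0^{-m/2}\PP(R_0)\leq 2A_0^{-m/2}C_d\,\Theta(P)$, and since the constant $C_d=4A_0^n$ itself depends on $A_0$, this does \emph{not} contradict $\PP(P)\geq\Theta(P)$ when $m$ is small. The right way to close the argument is to contradict the assumption directly: from $\PP(P)<2A_0^{-1/2}C_d\,\Theta(P)\leq C_d\,\mu(2B_P)/\ell(P)^n$ (for $A_0>4$) one concludes that $P$ \emph{is} $\PP$-doubling after all. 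With these corrections your contradiction argument goes through; alternatively, one can bypass Lemma~\ref{lemdobpp} entirely and bound $\PP(P)$ directly by splitting the sum into scales below and above $\ell(Q)$, using maximality for the former and the ball containment $2B_{S_j}\subset 2B_{T_{j+1}}$ for the latter.
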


This is proven in \cite[Lemma 3.2]{DT}. From this result it follows immediately that
\begin{equation}\label{eqforakdf33}
\Theta(P)\approx A_0^{nk}\Theta(Q)\quad\mbox{ for all $P\in\hd^k(Q)\cap\DD_\mu(9Q)$
and all
 $k\geq1$.}
\end{equation}

\vv

\subsection{The energies $\EE$, $\EE^H$, $\EE_\infty$, and the Main Proposition}

For given $\lambda\geq1$ and $Q\in\DD_\mu$, we consider the energy
$$\EE(\lambda Q) = \sum_{P\in\DD_\mu(\lambda Q)} \left(\frac{\ell(P)}{\ell(Q)}\right)^{3/4}\Theta(P)^2\,\mu(P).$$
We also denote
$$\EE^H(\lambda Q) = \sum_{k\geq 0} 
\sum_{P\in\hd^k(Q)\cap\DD_\mu(\lambda Q)} \left(\frac{\ell(P)}{\ell(Q)}\right)^{3/4}\Theta(P)^2\,\mu(P)$$
and
$$\EE_\infty(\lambda Q) = \sup_{k\geq1}
\sum_{P\in\hd^k(Q)\cap\DD_\mu(\lambda Q)} \left(\frac{\ell(P)}{\ell(Q)}\right)^{\!1/2}\Theta(P)^2\,\mu(P).$$

\vv
\begin{lemma}\label{lemenergias}
For every $Q\in\DD_\mu$
we have
$$\EE(9Q) \lesssim \EE^H(9Q)\lesssim\EE_\infty(9Q).$$
\end{lemma}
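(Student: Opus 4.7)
The plan is to reduce both inequalities to dyadic-summation manipulations based on the defining properties of the stopping families $\hd^k(Q)$ together with the one-step density growth bound \rf{eqson1}.

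For $\EE(9Q)\lesssim\EE^H(9Q)$ I will distribute each cube $P\in\DD_\mu(9Q)$ among the layers $\hd^k(Q)$. For $P$ with $\Theta(P)\geq A_0^n\Theta(Q)$, set $k(P):=\max\{k\geq 1:\Theta(P)\geq A_0^{kn}\Theta(Q)\}$; by \rf{eqson1} this forces $\Theta(P)\approx A_0^{k(P)n}\Theta(Q)$, and $P$ is contained in a unique maximal ancestor $P^*\in\hd^{k(P)}(Q)$. Maximality at the next level ensures that every cube strictly between $P$ and $P^*$ has density at most $A_0^{(k(P)+1)n}\Theta(Q)\approx\Theta(P^*)$, and hence $\Theta(P)\lesssim\Theta(P^*)$. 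Grouping the sum in $\EE(9Q)$ by the value of $P^*$ and using that for each $j\geq 0$ the cubes of $\DD_{\mu,k_{P^*}+j}$ contained in $P^*$ partition $P^*\cap\supp\mu$, a geometric sum in $j$ yields
$$
\sum_{\substack{P\subset P^*\\ k(P)=k}}\Bigl(\frac{\ell(P)}{\ell(Q)}\Bigr)^{3/4}\Theta(P)^2\,\mu(P)\;\lesssim\; \Bigl(\frac{\ell(P^*)}{\ell(Q)}\Bigr)^{3/4}\Theta(P^*)^2\,\mu(P^*).
$$
Summing over $P^*\in\hd^k(Q)\cap\DD_\mu(9Q)$ and $k\geq 1$ then produces $\EE^H(9Q)$; the cubes with $\Theta(P)<A_0^n\Theta(Q)$ are absorbed by the $k=0$ layer of $\EE^H(9Q)$, using the uniform bound $\Theta(P)^2\lesssim\Theta(Q)^2$, the same geometric sum in $j$, and the polynomial growth of $\mu$.

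For $\EE^H(9Q)\lesssim\EE_\infty(9Q)$, the key ingredient is the scale estimate
$$
\ell(P)\;\lesssim\; A_0^{-k}\,\ell(Q)\qquad\text{for every }P\in\hd^k(Q)\cap\DD_\mu(9Q),
$$
obtained by iterating \rf{eqson1}: letting $\hat R\subset 10Q$ be the ancestor of $P$ of the same generation as $Q$, one has $\Theta(P)\leq A_0^{jn}\Theta(\hat R)$ with $A_0^{-j}=\ell(P)/\ell(Q)$, and combining with $\Theta(P)\approx A_0^{kn}\Theta(Q)$ from \rf{eqforakdf33} gives $j\geq k$ up to an absolute constant. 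Factoring
$$
\Bigl(\frac{\ell(P)}{\ell(Q)}\Bigr)^{3/4}=\Bigl(\frac{\ell(P)}{\ell(Q)}\Bigr)^{1/2}\Bigl(\frac{\ell(P)}{\ell(Q)}\Bigr)^{1/4}\;\lesssim\; A_0^{-k/4}\Bigl(\frac{\ell(P)}{\ell(Q)}\Bigr)^{1/2}
$$
shows that the $k$-th layer of $\EE^H(9Q)$ is bounded by $A_0^{-k/4}\,\EE_\infty(9Q)$, and summing the geometric series over $k\geq 1$ closes the estimate. The $k=0$ layer is absorbed by comparing with the $k=1$ layer through a single application of \rf{eqson1}.

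The main obstacle I foresee is justifying the scale bound $\ell(P)\lesssim A_0^{-k}\ell(Q)$ with absolute implicit constants for cubes $P\in\hd^k(Q)\cap\DD_\mu(9Q)$ that are not descendants of $Q$ itself but of a sibling of $Q$ in $9Q$: the naive iteration of \rf{eqson1} only controls $\Theta(\hat R)$ by the polynomial growth constant of $\mu$, so one must feed in \rf{eqforakdf33} carefully to extract a density comparable to $\Theta(Q)$ at that scale. An analogous delicate point appears in the first inequality when absorbing the low-density contribution into the $k=0$ term.
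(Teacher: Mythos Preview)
Your proposal is correct and follows essentially the same approach as the paper. For the second inequality, your argument coincides with the paper's: both prove the scale bound $\ell(P)\lesssim A_0^{-k}\ell(Q)$ for $P\in\hd^k(Q)\cap\DD_\mu(9Q)$ (the paper phrases it as $m_k(Q)\lesssim A_0^{-k}$), factor $(\ell(P)/\ell(Q))^{3/4}=(\ell(P)/\ell(Q))^{1/2}(\ell(P)/\ell(Q))^{1/4}$, and sum the resulting geometric series in $k$.

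For the first inequality there is a minor organizational difference. You group each $P$ by its own density level $k(P)$ and then locate the ancestor $P^*\in\hd^{k(P)}(Q)$, which forces a separate treatment of the low-density cubes $\Theta(P)<A_0^n\Theta(Q)$. The paper instead defines, for each $R\in\hd^k(Q)$, the tree $\tree_H(R)$ of cubes in $R$ not contained in any cube from $\hd^{k+1}(Q)$; every $P\in\DD_\mu(9Q)$ lands in exactly one such tree (taking $\hd^0(Q)$ to be the top-generation cubes of $9Q$), and the bound $\Theta(P)\lesssim\Theta(R)$ for $P\in\tree_H(R)$ suffices without distinguishing low-density cubes. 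This avoids the separate $k=0$ bookkeeping, but the substance is the same.

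The obstacle you flag concerning neighboring top cubes $\hat R\subset 9Q$ with uncontrolled $\Theta(\hat R)$ is real and is implicitly present in the paper's argument as well (the inequality $\theta_\mu(2B_P)\lesssim(\ell(Q)/\ell(P))^n\theta_\mu(2B_Q)$ used there amounts to $\mu(2B_P)\lesssim\mu(2B_Q)$, which needs doubling at scale $\ell(Q)$). In all applications of the lemma the cube $Q$ is $\PP$-doubling, so this is not an actual gap in either version.
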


\begin{proof}
For a given $R\in\hd^k(Q)$, we denote by $\tree_H(R)$ the family of cubes from $\DD_\mu$ that 
are contained in $R$ and are not contained in any cube from $\hd^{k+1}(Q)$.
Using that $\Theta(P)\lesssim \Theta(R)$ for all $P\in\tree_H(R)$ (remember that we do not keep track of the implicit constants depending on $A_0$), we get
\begin{align*}
\EE(9Q) & =
\sum_{k\geq0} 
\sum_{R\in\hd^k(Q)\cap\DD_\mu(9Q)}\, \sum_{P\in\tree_H(R)}
\left(\frac{\ell(P)}{\ell(Q)}\right)^{3/4}\Theta(P)^2\,\mu(P)\\
& \lesssim 
\sum_{k\geq0} 
\sum_{R\in\hd^k(Q)\cap\DD_\mu(9Q)} \Theta(R)^2\sum_{P\in\tree_H(R)}
\left(\frac{\ell(P)}{\ell(Q)}\right)^{3/4}\,\mu(P)\\
&\lesssim 
\sum_{k\geq0} 
\sum_{R\in\hd^k(Q)\cap\DD_\mu(9Q)} \left(\frac{\ell(R)}{\ell(Q)}\right)^{3/4}\Theta(R)^2\,\mu(R) = \EE^H(9Q).
\end{align*}

To show $\EE^H(9Q)\lesssim\EE_\infty(9Q)$, 
denote
$$m_k(Q) = \frac1{\ell(Q)}\max\{\ell(P):P\in\hd^k(Q)\cap\DD_\mu(9Q)\}.$$
Then we have
\begin{align*}
\EE^H(9Q)
& = 
\sum_{k\geq 0} 
\sum_{P\in\hd^k(Q)\cap\DD_\mu(9Q)} \left(\frac{\ell(P)}{\ell(Q)}\right)^{3/4}\Theta(P)^2\,\mu(P)\\
& \leq \sum_{k\geq 0} 
 m_k(Q)^{1/4}
\sum_{P\in\hd^k(Q)\cap\DD_\mu(9Q)} \left(\frac{\ell(P)}{\ell(Q)}\right)^{\!1/2}\Theta(P)^2\,\mu(P)
\end{align*}
To estimate $m_k(Q)$, observe that if $P\in\hd^k(Q)\cap\DD_\mu(9Q)$ and $k\geq4$, then
$$A_0^{kn}\,\theta_\mu(2B_Q)\approx\theta_\mu(2B_P) \lesssim \frac{\ell(Q)^n}{\ell(P)^n} \,\theta_\mu(2B_Q).$$
Hence,
$\ell(P)\lesssim A_0^{-k}\,\ell(Q)$, and thus, since this also holds in the case $1\leq k\leq 3$,
\begin{equation}\label{eqmkpet3}
m_k(Q) \lesssim  A_0^{-k}\qquad \mbox{ for all $k\geq1$.}
\end{equation}
Consequently, 
\begin{align*}
\EE^H(9Q)&\lesssim  \sum_{k\geq 0}A_0^{-k/4}
\sum_{P\in\hd^k(Q)\cap\DD_\mu(9Q)} \left(\frac{\ell(P)}{\ell(Q)}\right)^{\!1/2}\theta_\mu(2B_P)^2\,\mu(P)\\
& \lesssim  \sum_{k\geq 0}A_0^{-k/4} \EE_\infty(9Q)
\approx \EE_\infty(9Q).
\end{align*}
\end{proof}

\vv

\begin{rem}\label{remmk}
For the record, notice that, given $Q\in\DD_\mu^\PP$ and
\begin{equation}\label{eqmkpet2}
m_k(Q) = \frac1{\ell(Q)}\max\{\ell(P):P\in\hd^k(Q)\cap\DD_\mu(9Q)\},
\end{equation}
as shown in \rf{eqmkpet3}, it turns out that 
\begin{equation}\label{eqmkpet4}
m_k(Q) \leq  C_1 A_0^{-k}.
\end{equation}
\end{rem}

\vv


Given $M\gg1$ (we will choose $M>  A_0^{2n}\gg 1$), we say that $Q\in\DD_\mu$ is $M$-dominated from below
if
there exists some $k\geq1$ such that
\begin{equation}\label{eqDB}
\sum_{P\in\hd^k(Q)\cap\DD_\mu(9Q)} \left(\frac{\ell(P)}{\ell(Q)}\right)^{\!1/2}\Theta(P)^2\,\mu(P) > M^2\,\Theta(Q)^2\,\mu(9Q),
\end{equation}
or in other words,
\begin{equation}\label{eqDB'}
\EE_\infty(9Q)> M^2\,\Theta(Q)^2\,\mu(9Q),
\end{equation}
We denote by $\DB(M)$ the family of cubes from $\DD_\mu^\PP$ that are $M$-dominated from below. 
Notice that the cubes from $\DB(M)$ are assumed to be $\PP$-doubling.

\vv

Recall that in \cite[Theorem 3.4]{DT}, the following result has been obtained:

\begin{theorem}\label{propomain00}
Let $\mu$ be a Radon measure in $\R^{n+1}$ with polynomial growth of degree $n$ with constant $\theta_0$.
For any choice of $M>1$, let
$$\HE(M) = \{Q\in\DD_\mu^\PP:\EE(4Q) \geq M^2\,\Theta(Q)^2\,\mu(Q)\}.$$
Then we have
\begin{equation}\label{eqpropo*}
\sum_{Q\in\DD_\mu} \beta_{2,\mu}(2B_Q)^2\,\Theta(Q)\,\mu(Q)\leq C\,\big(\|\RR\mu\|_{L^2(\mu)}^2 + \theta_0^2\,\|\mu\|
+ \sum_{Q\in\HE(M)}\EE(4Q)\big),
\end{equation}
with $C$ depending on $M$.
\end{theorem}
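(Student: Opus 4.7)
The plan is a corona-type decomposition of $\DD_\mu^\PP$, combining a single-tree estimate with a packing argument on the tree tops. A preliminary reduction handles non-$\PP$-doubling cubes: by Lemma \ref{lemdobpp}, if $\wh Q$ is the nearest $\PP$-doubling ancestor of $Q \notin \DD_\mu^\PP$ and $m = J(Q)-J(\wh Q)$, then $\Theta(Q) \lesssim A_0^{-m/2}\,\Theta(\wh Q)$; using the trivial bound $\beta_{2,\mu}(2B_Q)^2 \lesssim 1$, the non-$\PP$-doubling contribution to the left hand side of \rf{eqpropo*} is dominated by a small multiple of $\sum_{R \in \DD_\mu^\PP} \Theta(R)^2\mu(R)$, absorbable later. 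For the corona, fix $A \gg A_0^{n}$. For each $R \in \DD_\mu^\PP$ declare $Q \subsetneq R$ a stopping cube if it is a maximal $\PP$-doubling descendant of $R$ satisfying one of: \textbf{(i)} $Q \in \HE(M)$; \textbf{(ii)} $\Theta(Q) \geq A\,\Theta(R)$ (so $Q \in \hd^k(R)$ for some $k \geq 1$ by Lemma \ref{lempdoubling}); or \textbf{(iii)} $\Theta(Q) \leq A^{-1}\,\Theta(R)$. Let $\tree(R)$ be the $\PP$-doubling cubes lying strictly between $R$ and its stopping cubes; inside $\tree(R)$ the density is stable, $\Theta(Q) \approx \Theta(R)$.

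The heart of the argument is the single-tree estimate
\[
\sum_{Q \in \tree(R)} \beta_{2,\mu}(2B_Q)^2\,\Theta(Q)\,\mu(Q) \lesssim \|\RR(\chi_{R}\mu)\|_{L^2(\mu|_{R})}^2 + \Theta(R)^2\,\mu(R).
\]
To obtain this I would expand $\RR(\chi_R\mu)$ in the Haar system $\{\Delta_Q\}_{Q \in \tree(R)}$ of \rf{eqdq1}, exploit orthogonality, and control the off-diagonal kernel contributions using Lemma \ref{lemDMimproved} with exponent $\gamma=9/10$, which yields a geometric gain against $\EE(9Q)$; this is the point where the hypothesis that no cube of $\tree(R)$ lies in $\HE(M)$ (equivalently, its $\EE$-analogue is controlled by $M^2\Theta(Q)^2\mu(Q)$) is used decisively. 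The reverse inequality relating $\beta_{2,\mu}(2B_Q)^2\,\Theta(Q)\,\mu(Q)$ to the squared Haar coefficient of $\RR\mu$ at $Q$ is the variational core: if the Haar coefficient were small compared to the planar deviation, one can manufacture a competitor measure supported near the best approximating $n$-plane and obtain a contradiction with the $L^2$-boundedness of $\RR$ at scale $Q$.

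Summing over $R$ and exploiting approximate disjointness of the tree regions, the left hand side of \rf{eqpropo*} is bounded by $\|\RR\mu\|_{L^2(\mu)}^2$ plus $\sum_R \Theta(R)^2\mu(R)$. It remains to pack the tree tops. The $\HE(M)$-tops contribute $\sum_{Q\in\HE(M)}\EE(4Q)$ directly, by definition. The density-increase tops, being in $\hd^k(R)$, satisfy a geometric Carleson packing after summing over $k\geq 1$ using $m_k(R) \leq C_1 A_0^{-k}$ from \rf{eqmkpet4} together with the comparability \rf{eqforakdf33}, so the total contribution decays geometrically in $A$ and can be absorbed for $A$ large. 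The density-decrease tops telescope along dyadic levels of $\Theta$, each level contributing at most once per chain, so their total $\Theta^2\mu$-mass is dominated by $\theta_0^2\,\|\mu\|$ plus the already-packed density-increase mass. Combining everything yields \rf{eqpropo*}.

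The main obstacle is the single-tree estimate, specifically the quantitative conversion of $L^2$-Haar information about $\RR\mu$ into planar flatness $\beta_{2,\mu}$. One direction requires orthogonality of the Haar expansion plus careful off-diagonal control via the enhanced David-Mattila framework (Lemmas \ref{lemdmutot} and \ref{lemDMimproved}), while the other direction needs a delicate extremality/variational argument — absent the curvature identities available only in dimension $n=1$. These technical hurdles are precisely what the apparatus of \cite{DT} was designed to overcome, which is why the present paper can take Theorem \ref{propomain00} as its starting point.
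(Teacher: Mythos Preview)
The paper does not prove Theorem~\ref{propomain00}; it is quoted verbatim as \cite[Main Theorem~3.4]{DT} and taken as input for the rest of the argument. You correctly recognize this in your final paragraph.

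Your corona-decomposition outline has the right architecture (stopping at $\HE(M)$, high density, low density; packing the tops; reducing non-$\PP$-doubling cubes via Lemma~\ref{lemdobpp}), and this is indeed the shape of the argument in \cite{DT}. But two points deserve sharpening. First, your single-tree estimate is stated with $\|\RR(\chi_R\mu)\|_{L^2(\mu|_R)}^2$ on the right, and you then assert that summing these over disjoint trees gives $\|\RR\mu\|_{L^2(\mu)}^2$. That step is not automatic: the localized operators $\RR(\chi_R\mu)$ interact across trees. The correct bookkeeping goes through the Haar coefficients $\|\Delta_Q\RR\mu\|_{L^2(\mu)}^2$, whose orthogonality is what allows the sum to close; the localization errors (passing from $\Delta_Q\RR\mu$ to something computable on the tree) are exactly where the $\HE(M)$ hypothesis and Lemma~\ref{lemDMimproved} enter, as you note.

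Second, and more seriously, your description of the ``variational core'' is not how the passage from Riesz-transform control to $\beta$-flatness actually works in \cite{DT}. There is no direct contrapositive argument of the form ``small Haar coefficient $\Rightarrow$ flat''. Instead, on a tree where density is stable and energy is moderate (i.e., outside $\HE(M)$), one constructs an approximating measure that is essentially $n$-AD-regular, shows its Riesz transform is $L^2$-bounded with controlled norm, invokes the codimension-one David--Semmes solution \cite{NToV1} to obtain uniform rectifiability (hence a $\beta$-Carleson estimate) for the approximant, and then transfers the $\beta$-estimate back to $\mu$. The variational/extremal argument you may have in mind (the one sketched in Section~\ref{sec8} of the present paper and in \cite[Lemma~6.14]{DT}) goes in the \emph{opposite} direction: it produces a \emph{lower} bound on $\|\RR\eta\|_{L^p}$, and is used for Main Proposition~\ref{propomain}, not for Theorem~\ref{propomain00}.
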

\vv

In the theorem above we wrote $\RR\mu$ instead of $\pv\RR\mu$. From now on we will follow this
notation.
In the present paper we will obtain the next result.

\begin{mpropo}\label{propomain}
Suppose that $\mu$ has polynomial growth of degree $n$ with constant $\theta_0$ and $\RR_*\mu(x)<\infty$ $\mu$-a.e. Let $M_0=A_0^{k_0 n}$, where $k_0$ is some big enough absolute constant depending just on $n$.
Then
$$\sum_{Q\in\DB(M_0)} \EE_\infty(9Q)\leq C\, \big(\|\RR\mu\|_{L^2(\mu)}^2 + 
\theta_0^2\,\|\mu\|\big),$$
where $C$ depends just on $n$ and the parameters of the dyadic lattice $\DD_\mu$.
\end{mpropo}
\vv

Notice that if $Q\in\HE(M)$, by Lemma \ref{lemenergias},
then
$$M^2\,\Theta(Q)^2\,\mu(Q)\leq \EE(4Q) \lesssim 
\EE^H(4Q) \lesssim\leq  \EE_\infty(9Q).$$
Hence, $Q\in\DB(M')$ for some $M'$ depending on $M$. So, by combining Propositions \ref{propomain00}
and \ref{propomain} and choosing $M$ and $M_0$ appropriately, we deduce that
$$\sum_{Q\in\DD_\mu} \beta_{2,\mu}(2B_Q)^2\,\Theta(Q)\,\mu(Q)\leq C\,\big(\|\RR\mu\|_{L^2(\mu)}^2 + \theta_0^2\,\|\mu\|
\big),$$
which implies Theorem \ref{teomain1}.

The rest of the paper is devoted to the proof of Main Proposition \ref{propomain}.

\vv


\section{The good dominating family $\GDF$}\label{sec5}

Let
\begin{equation}\label{eqk00}
M\geq A_0^{k_0 n}=:M_0\quad \mbox{ for some $k_0\geq 4$.}
\end{equation}
For each $Q\in \DB(M)$ we choose the minimal $k(Q,M)\in\N$ such that \rf{eqDB} holds with $k=k(Q,M)$.

\begin{lemma}\label{lemkq}
Assume $k_0$ big enough in \rf{eqk00}. For each $Q\in \DB(M)$, we have 
$$k(Q,M) > \frac{8n- 1}{8n-2}\, k_0+4$$.
\end{lemma}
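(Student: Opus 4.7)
The plan is to extract a lower bound on $k(Q,M)$ directly from the defining inequality \rf{eqDB} by a matching upper bound on its left-hand side. Since $Q \in \DB(M) \subseteq \DD_\mu^\PP$, both Remark \ref{remmk} and \rf{eqforakdf33} apply: for every cube $P \in \hd^k(Q) \cap \DD_\mu(9Q)$ and every $k \geq 1$,
$$\ell(P) \leq m_k(Q)\, \ell(Q) \leq C_1 A_0^{-k}\, \ell(Q), \qquad \Theta(P) \lesssim A_0^{nk}\, \Theta(Q).$$
Using pairwise disjointness of these $P$'s together with $\bigcup_P P \subset 9Q$, I would estimate
\begin{align*}
\sum_{P \in \hd^k(Q) \cap \DD_\mu(9Q)} \left(\frac{\ell(P)}{\ell(Q)}\right)^{\!1/2} \Theta(P)^2\, \mu(P)
&\leq (C_1 A_0^{-k})^{1/2} \cdot C^2 A_0^{2nk}\, \Theta(Q)^2 \sum_{P} \mu(P) \\
&\leq C'\, A_0^{(2n - 1/2)\,k}\, \Theta(Q)^2\, \mu(9Q),
\end{align*}
for a constant $C' = C'(C_0, A_0, n)$ absorbing the implicit constants in the two displayed estimates.

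Applying this upper bound at $k = k(Q,M)$ and combining with \rf{eqDB} (which asserts the left-hand side strictly exceeds $M^2\,\Theta(Q)^2\,\mu(9Q)$), the positive factor $\Theta(Q)^2\,\mu(9Q)$ cancels and yields
$$C'\, A_0^{(2n - 1/2)\,k(Q,M)} > M^2 = A_0^{2 n k_0}.$$
Taking $\log_{A_0}$ and rearranging,
$$k(Q, M) > \frac{2n\, k_0 - \log_{A_0} C'}{2n - 1/2} = \frac{8n}{8n - 2}\, k_0 - C_4,$$
with $C_4 = C_4(C_0, A_0, n) \geq 0$.

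To finish, I would use the identity $\frac{8n}{8n-2} - \frac{8n-1}{8n-2} = \frac{1}{8n-2}$ to compute
$$k(Q, M) - \frac{8n-1}{8n-2}\, k_0 - 4 > \frac{k_0}{8n - 2} - C_4 - 4,$$
which is positive as soon as $k_0 > (8n-2)(C_4 + 4)$; this is the quantitative content of the clause ``Assume $k_0$ big enough'' in the statement. The only point deserving attention is verifying that the implicit constants in $m_k(Q) \leq C_1 A_0^{-k}$ and $\Theta(P) \lesssim A_0^{nk} \Theta(Q)$ are independent of $k$ and $Q$ (so that $C_4$ is a genuine absolute constant once the lattice parameters are fixed); both are supplied uniformly by Remark \ref{remmk} and \rf{eqforakdf33}. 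Beyond this bookkeeping, no substantial obstacle is expected.
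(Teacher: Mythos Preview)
Your proof is correct and follows essentially the same approach as the paper: both use the bound $\sum_{P\in\hd^k(Q)\cap\DD_\mu(9Q)}(\ell(P)/\ell(Q))^{1/2}\Theta(P)^2\mu(P)\le C'A_0^{(2n-1/2)k}\Theta(Q)^2\mu(9Q)$ coming from \rf{eqforakdf33} and \rf{eqmkpet4}, and compare with \rf{eqDB}. The paper phrases it contrapositively (showing the sum is below $M^2\Theta(Q)^2\mu(9Q)$ for all $j\le \frac{8n-1}{8n-2}k_0+4$), while you apply the bound directly at $k=k(Q,M)$ and solve for $k(Q,M)$; these are equivalent. One minor slip: you write $M^2 = A_0^{2nk_0}$, but \rf{eqk00} only gives $M\ge M_0=A_0^{nk_0}$, so it should read $M^2\ge A_0^{2nk_0}$; the inequality goes the right way and the rest of the argument is unaffected.
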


\begin{proof}
This follows from the fact that for $j\geq0$,
\begin{align*}
\sum_{P\in\hd^j(Q)\cap \DD_\mu(9Q)} 
\left(\frac{\ell(P)}{\ell(Q)}\right)^{\!1/2}\Theta(P)^2\mu(P) & \leq  CA_0^{2n j}\,\Theta(Q)^2\, m_j(Q)^{\!1/2}
\sum_{P\in\hd^j(Q)\cap \DD_\mu(9Q)} \mu(P) \\
& \leq C_1A_0^{2n j -\frac j2}\,\Theta(Q)^2\mu(9Q),
\end{align*}
where  we used \rf{eqforakdf33} and we applied \rf{eqmkpet4} to estimate $m_j(Q)$.
Then, for $0\leq j \leq  \frac{8n-1}{8n-2}\, k_0+4$, we have
\begin{align*}
\sum_{P\in\hd^j(Q)\cap \DD_\mu(9Q)} 
\left(\frac{\ell(P)}{\ell(Q)}\right)^{\!1/2}\Theta(P)^2\mu(P) & \leq C_1
A_0^{(2n-\frac12)\left(\frac{8n-1}{8n-2}\,k_0 +4\right)}\,\Theta(Q)^2\mu(9Q)\\
& = C_1
 M_0^{2}\,A_0^{-\frac{k_0}4+8n-2}\,
 \Theta(Q)^2\mu(9Q).
\end{align*}
So, for $k_0$ big enough, the right hand side above is smaller than $M^{2}\,\Theta(Q)^2\mu(9Q),$ 
which ensures that $k(Q,M)> \frac{8n- 1}{8n-2}\, k_0+4$.
\end{proof}
\vv

Let 
$$k_\Lambda = \frac{8n- 1}{8n-2}\, k_0,$$
so that $k(Q,M)>k_\Lambda$ for each $Q\in\DB(M)$, by the preceding lemma. 
Assuming $k_0$ to be a multiple of $8n-2$, it follows that $k_\Lambda$ is natural number.
 Notice also that $k_\Lambda$ is the mean of $k_0$ and $4nk_0/(4n-1)$, so that, if we let
$$\Lambda = A_0^{k_\Lambda n},$$
we have that $\Lambda$ is the geometric mean of $M_0
$ and $M_0^{\frac{4n}{4n-1}}$, that is
\begin{equation}\label{eqlammm}
\Lambda = M_0^{1/2}\,M_0^{\frac{2n}{4n-1}} = M_0^{\frac{8n-1}{8n-2}}>M_0.
\end{equation}
\vv

Observe that, for $Q\in \DB(M)$, taking into account that $k(Q,M)-k_\Lambda>4$,
\begin{align}\label{eqprel3}
 M^2\,\Theta(Q)^2\,\mu(9Q) & \leq
\sum_{P\in\hd^{k(Q,M)}(Q)\cap\DD_\mu(9Q)} \left(\frac{\ell(P)}{\ell(Q)}\right)^{\!1/2}\Theta(P)^2\,\mu(P)\\
& = 
\sum_{S\in\hd^{k(Q,M)-k_\Lambda}(Q)\cap\DD_\mu(9Q)} \,\sum_{P\in\hd^{k(Q,M)}(Q): P\subset S}
\left(\frac{\ell(P)}{\ell(Q)}\right)^{\!1/2}\Theta(P)^2\,\mu(P) \nonumber\\
 & \leq \Lambda^2\,
\sum_{S\in\hd^{k(Q,M)-k_\Lambda}(Q)\cap\DD_\mu(9Q)}\!\! \Theta(S)^2\!\!\sum_{P\in\hd^{k(Q,M)}(Q): P\subset S}
\left(\frac{\ell(P)}{\ell(Q)}\right)^{\!1/2}\mu(P).\nonumber
\end{align}

Given $Q\in\DB(M)$ and $S\in \hd^{k(Q,M)-k_\Lambda}(Q)\cap\DD_\mu(9Q)$, we write $S\in G(Q,M)$ if 
\begin{equation}\label{eqgq1}
\mu(S) \leq 2 \Lambda^2
\sum_{P\in\hd^{k(Q,M)}(Q): P\subset S}
\left(\frac{\ell(P)}{\ell(S)}\right)^{\!1/2}\mu(P).
\end{equation}
We also denote $B(Q,M) = \hd^{k(Q,M)-k_\Lambda}(Q)\cap\DD_\mu(9Q)\setminus G(Q,M)$. 

Given $\lambda>0$, for $Q\in\DB(M)$ and $S\in G(Q,M)$, we denote
$${\rm big}_\lambda(S) = \{P\in\hd^{k(Q,M)}:P\subset S,\,\ell(P)\geq \lambda \,\ell(S)\}.$$

\vv

\vv
\begin{lemma}\label{lem16}
If $\lambda>0$ satisfies 
\begin{equation}\label{eqlambda1}
\lambda\leq \frac{c_0}{\Lambda^{4}} =:\lambda_0(\Lambda)
\end{equation}
for some small absolute constant $c_0\in(0,1)$,
then, for each $Q\in\DB(M)$ we have
$$M^2\Theta(Q)^2\,\mu(Q) \lesssim \Lambda^{-\frac1{2n}}
\sum_{S\in G(Q,M)}\left(\frac{\ell(S)}{\ell(Q)}\right)^{\!1/2}\sum_{P\in {\rm big}_\lambda(S)}
\Theta(P)^2\mu(P).$$
Also, each $S\in G(Q,M)$ satisfies
$$\Theta(S)^2\,\mu(S) \leq 4 \Lambda^{-\frac1{2n}}\sum_{P\in{\rm big}_\lambda(S)}
\Theta_\mu(P)^2\,\mu(P).$$
\end{lemma}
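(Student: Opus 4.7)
The plan rests on three ingredients. Writing $k:=k(Q,M)$, Lemma \ref{lemkq} gives $k-k_\Lambda>4$, and then Lemma \ref{lempdoubling} yields the exact identities $\Theta(S)=A_0^{n(k-k_\Lambda)}\Theta(Q)$ and $\Theta(P)=A_0^{nk}\Theta(Q)=\Lambda\Theta(S)$ for every $S\in\hd^{k-k_\Lambda}(Q)\cap\DD_\mu(9Q)$ and every $P\in\hd^k(Q)$ with $P\subset S$. Since by \rf{eqson1} the density can only grow by a factor $A_0^n$ per generation, the prescribed jump by $\Lambda=A_0^{nk_\Lambda}$ from $S$ down to $P$ forces $\ell(P)\leq A_0^{-k_\Lambda}\ell(S)=\Lambda^{-1/n}\ell(S)$, so the key geometric inequality $(\ell(P)/\ell(S))^{1/2}\leq\Lambda^{-1/(2n)}$ holds. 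Finally, since $k$ is the \emph{minimal} index making \rf{eqDB} hold, the level $k-k_\Lambda<k$ violates \rf{eqDB}, yielding the auxiliary bound
$$\sum_{S\in\hd^{k-k_\Lambda}(Q)\cap\DD_\mu(9Q)}\Big(\frac{\ell(S)}{\ell(Q)}\Big)^{1/2}\Theta(S)^2\,\mu(S)\leq M^2\Theta(Q)^2\mu(9Q),\qquad(\ast)$$
which is what turns the eventual ``bad'' contribution into something controllable.

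For the second claim I would fix $S\in G(Q,M)$ and split the sum in \rf{eqgq1} into $P\in{\rm big}_\lambda(S)$ and $P$ with $\ell(P)<\lambda\ell(S)$. On the latter set $(\ell(P)/\ell(S))^{1/2}<\lambda^{1/2}$, contributing at most $2\Lambda^2\lambda^{1/2}\mu(S)$. Any $c_0\leq 1/16$ makes $2\Lambda^2\lambda^{1/2}\leq 1/2$, and after absorption I obtain
$$\mu(S)\leq 4\Lambda^2\!\!\sum_{P\in{\rm big}_\lambda(S)}\!\Big(\frac{\ell(P)}{\ell(S)}\Big)^{1/2}\mu(P)\leq 4\Lambda^{2-1/(2n)}\!\!\sum_{P\in{\rm big}_\lambda(S)}\!\mu(P),$$
the second inequality using the geometric bound. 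Multiplying by $\Theta(S)^2=\Theta(P)^2/\Lambda^2$ produces the second claim with constant $4$.

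For the first claim I would start from \rf{eqprel3}, rewritten as
$$M^2\Theta(Q)^2\mu(9Q)\leq\Lambda^2\sum_{S}\Theta(S)^2\Big(\frac{\ell(S)}{\ell(Q)}\Big)^{1/2}\sum_{P\in\hd^k(Q),\,P\subset S}\Big(\frac{\ell(P)}{\ell(S)}\Big)^{1/2}\mu(P),$$
and split the $S$-sum into $B(Q,M)$ and $G(Q,M)$. On $B(Q,M)$ the defining inequality of $B$ bounds the inner sum by $\mu(S)/(2\Lambda^2)$, so by $(\ast)$ the $B$-contribution is at most $\frac12 M^2\Theta(Q)^2\mu(9Q)$, which I absorb. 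On $G(Q,M)$ I split the inner $P$-sum into ${\rm big}_\lambda(S)$ and its complement; the complement contributes at most $\lambda^{1/2}\mu(S)$ per $S$, and again via $(\ast)$ the total is at most $\Lambda^2\lambda^{1/2}M^2\Theta(Q)^2\mu(9Q)\leq\frac14 M^2\Theta(Q)^2\mu(9Q)$. After absorbing this too, the identity $\Lambda^2\Theta(S)^2=\Theta(P)^2$ combined with $(\ell(P)/\ell(Q))^{1/2}\leq\Lambda^{-1/(2n)}(\ell(S)/\ell(Q))^{1/2}$ converts the remainder into $\Lambda^{-1/(2n)}\sum_{G}(\ell(S)/\ell(Q))^{1/2}\sum_{{\rm big}_\lambda(S)}\Theta(P)^2\mu(P)$, and $\mu(Q)\leq\mu(9Q)$ closes the estimate.

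The only subtle point, as I see it, is recognizing that $(\ast)$ is indispensable. A naive bound on $\sum_{S\in B}\Theta(S)^2\mu(S)(\ell(S)/\ell(Q))^{1/2}$ using only the size estimate $(\ell(S)/\ell(Q))^{1/2}\leq C_1^{1/2}A_0^{-(k-k_\Lambda)/2}$ would grow as $A_0^{(2n-1/2)(k-k_\Lambda)}$, which is not dominated by $M^2=A_0^{2nk_0}$ when $k$ is large. It is precisely the minimality of $k(Q,M)$ that tames this term, and the rest of the proof is a controlled chain of absorptions made possible by the smallness of $\lambda$ relative to $\Lambda^{-4}$.
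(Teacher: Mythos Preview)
Your proof is correct and follows essentially the same route as the paper's: split \rf{eqprel3} into the $B(Q,M)$ and $G(Q,M)$ contributions, absorb the $B$-part using the minimality bound $(\ast)$, then isolate ${\rm big}_\lambda(S)$ and extract the factor $\Lambda^{-1/(2n)}$ from $\ell(P)\le \Lambda^{-1/n}\ell(S)$. The only cosmetic difference is that for the ``small'' $P$'s inside $G(Q,M)$ you invoke $(\ast)$ a second time, whereas the paper instead re-absorbs that piece via the $G$-defining inequality \rf{eqgq1}; both variants work and yield the same constants.
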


\begin{proof} Arguing as in \rf{eqprel3}, by the definition of $B(Q,M)$ we get
\begin{align*}
M^2\,\Theta_\mu(2Q)^2\,\mu(9Q) & \leq 
\sum_{S\in G(Q,M)} \sum_{P\in\hd^{k(Q,M)}(Q): P\subset S}
\left(\frac{\ell(P)}{\ell(Q)}\right)^{\!1/2} \Theta(P)^2\,\mu(P)\\
&\quad + 
\Lambda^2\,
\sum_{S\in B(Q,M)}\Theta(S)^2\!\!\sum_{P\in\hd^{k(Q,M)}(Q): P\subset S}
\left(\frac{\ell(P)}{\ell(Q)}\right)^{\!1/2}\mu(P)\\
& \leq \sum_{S\in G(Q,M)} \sum_{P\in\hd^{k(Q,M)}(Q): P\subset S}
\left(\frac{\ell(P)}{\ell(Q)}\right)^{\!1/2} \Theta(P)^2\,\mu(P)\\
&\quad + \frac12 
\sum_{S\in B(Q,M)} \Theta(S)^2
\left(\frac{\ell(S)}{\ell(Q)}\right)^{\!1/2}\mu(S).
\end{align*}
Using that $B(Q,M)\subset \hd^{k(Q,M)-k_\Lambda}(Q)\cap\DD_\mu(9Q)$ and that, by the definition of $k(Q,M)$, 
$$
\sum_{S\in \hd^{k(Q,M)-k_\Lambda}(Q)\cap\DD_\mu(9Q)} 
\left(\frac{\ell(S)}{\ell(Q)}\right)^{\!1/2}\Theta(S)^2\mu(S) \leq M^2\,\Theta(Q)^2\,\mu(9Q),$$
we get
$$M^2\,\Theta(Q)^2\,\mu(9Q) \leq 2
\sum_{S\in G(Q,M)} \sum_{P\in\hd^{k(Q,M)}(Q): P\subset S}
\left(\frac{\ell(P)}{\ell(Q)}\right)^{\!1/2}\Theta(P)^2\,\mu(P).$$

Next, for $S\in G(Q,M)$, we split
$$
\sum_{P\in\hd^{k(Q,M)}(Q): P\subset S}
\left(\frac{\ell(P)}{\ell(Q)}\right)^{\!1/2}\Theta(P)^2\,\mu(P) =
\sum_{P\in {\rm big}_\lambda(S)} \cdots \; + 
\sum_{P\in\hd^{k(Q,M)}(Q)\setminus {\rm big}_\lambda(S): P\subset S}\!\!\! \cdots.$$
We estimate the last sum:
\begin{align*}
\sum_{P\in\hd^{k(Q,M)}(Q)\setminus {\rm big}_\lambda(S): P\subset S}
\left(\frac{\ell(P)}{\ell(Q)}\right)^{\!1/2}\Theta(P)^2\,\mu(P) & \leq \lambda^{1/2}\,\left(\frac{\ell(S)}{\ell(Q)}\right)^{\!1/2}\!\!\!
\sum_{P\in\hd^{k(Q,M)}(Q): P\subset S}\!\!\!
\Theta(P)^2\,\mu(P)\\
&\leq \lambda^{1/2}\,\left(\frac{\ell(S)}{\ell(Q)}\right)^{\!1/2}\,\Lambda^2\,\Theta(S)^2\,\mu(S)\\
& \leq  c_0^{1/2}\,\left(\frac{\ell(S)}{\ell(Q)}\right)^{\!1/2}\,\Theta(S)^2\,\mu(S),
\end{align*}
taking into account the choice of $\lambda$ for the last estimate.
By \rf{eqgq1}, since $S\in G(Q,M)$, we have
\begin{align}\label{eqgh620}
\left(\frac{\ell(S)}{\ell(Q)}\right)^{\!1/2}\,\Theta(S)^2\,\mu(S) &\leq  2 \Lambda^2
\left(\frac{\ell(S)}{\ell(Q)}\right)^{\!1/2}\,\Theta(S)^2\,\sum_{P\in\hd^{k(Q,M)}(Q): P\subset S}
\left(\frac{\ell(P)}{\ell(S)}\right)^{\!1/2}\mu(P)\\
&\leq2
\sum_{P\in\hd^{k(Q,M)}(Q): P\subset S}
\left(\frac{\ell(P)}{\ell(Q)}\right)^{\!1/2}\Theta(P)^2\,\mu(P)\nonumber
\end{align}
Hence, for $c_0$ small enough,
$$\sum_{P\in\hd^{k(Q,M)}(Q)\setminus {\rm big}_\lambda(S): P\subset S}
\left(\frac{\ell(P)}{\ell(Q)}\right)^{\!1/2}\Theta(P)^2\,\mu(P)  \leq \frac12
\sum_{P\in\hd^{k(Q,M)}(Q): P\subset S}
\left(\frac{\ell(P)}{\ell(Q)}\right)^{\!1/2}\Theta(P)^2\,\mu(P).$$
Consequently, for every $S\in G(Q,M)$,
\begin{equation}\label{eqgh621}
\sum_{P\in\hd^{k(Q,M)}(Q): P\subset S}
\left(\frac{\ell(P)}{\ell(Q)}\right)^{\!1/2}\Theta(P)^2\,\mu(P)\leq 2
\sum_{P\in{\rm big}_\lambda(S): P\subset S}
\left(\frac{\ell(P)}{\ell(Q)}\right)^{\!1/2}\Theta(P)^2\,\mu(P).
\end{equation}

From the conditions $P\subset S$ and $\Theta(P)= \Lambda\,\Theta(S)$ we also get
$$\Theta(P)\leq \Theta(S)\,\frac{\ell(S)^n}{\ell(P)^n} 
= \Lambda^{-1}\,\Theta(P)\,\frac{\ell(S)^n}{\ell(P)^n}.$$
Thus,
\begin{equation}\label{eq**2}
\ell(P)\leq \Lambda^{-1/n}\ell(S).
\end{equation}
Then we derive
\begin{align*}
M^2\,\Theta(Q)^2\,\mu(Q) & \leq 4 \Lambda^{-\frac 1{2n}} 
\sum_{S\in G(Q,M)}  \left(\frac{\ell(S)}{\ell(Q)}\right)^{\!1/2}\sum_{P\in{\rm big}_\lambda(S)}
\Theta(P)^2\,\mu(P),
\end{align*}
which proves the first statement of the lemma.

Concerning the second statement, notice that by \rf{eqgh620}, \rf{eqgh621}, and \rf{eq**2}, we have
\begin{align*}
\Theta(S)^2\,\mu(S) & \leq 2
\sum_{P\in\hd^{k(Q,M)}(Q): P\subset S}
\left(\frac{\ell(P)}{\ell(S)}\right)^{\!1/2}\Theta(P)^2\,\mu(P)\\
& \leq 4 \sum_{P\in{\rm big}_\lambda(S)}
\left(\frac{\ell(P)}{\ell(S)}\right)^{\!1/2}\Theta(P)^2\,\mu(P)\\
& \leq 4 \Lambda^{-\frac1{2n}}\sum_{P\in{\rm big}_\lambda(S)}
\Theta(P)^2\,\mu(P).
\end{align*}
\end{proof}

\vv

We denote 
$$\DB := \DB(M_0) =
\bigcup_{M\geq M_0} \big(\DB(M) \setminus \DB(2M)\big).$$
Remark that the last identity holds because of the polynomial growth of $\mu$.
For each $Q\in \DB$, choose $M(Q)$ such that $Q\in\DB(M(Q))\setminus \DB(2M(Q))$ 
We denote by $\GDF$ (which stands for ``good dominating family") the family of the cubes $S\in\DD_\mu^\PP$ belonging to $G(Q,M(Q))$ for some $Q\in\DB$.
In particular, by the preceding lemma, the cubes $S\in\GDF$ 
satisfy the property that there exists a family $\cI_S\subset \DD_\mu^\PP(S)$
such that
\begin{equation}\label{cond21}
\Theta(P)= \Lambda\,\Theta(S)\quad \text{ and } \quad\ell(P)\geq\lambda_0\,\ell(S)\quad
\mbox{ for all $P\in \cI_S$}
\end{equation}
(with $\lambda_0=\lambda_0(\Lambda)$ as in \rf{eqlambda1}),
and
\begin{equation}\label{cond22}
\mu(S) \leq 4 \Lambda^{2- \frac 1{2n}}
\sum_{P\in \cI_S}\mu(P).
\end{equation}
For a family $\cI\subset\DD_\mu$, we denote
$$\sigma(\cI) = \sum_{P\in \cI}\Theta(P)^2\,\mu(P),$$
and for $p\geq1$,
$$\sigma_p(\cI) = \sum_{P\in \cI}\Theta(P)^p\,\mu(P),$$
so that $\sigma(\cI)=\sigma_2(\cI)$. 
Observe that, for $S\in\GDF$,
\begin{equation}\label{eqiS}
\sigma(\cI_S) = \Lambda^2\Theta(S)^2\,\sum_{P\in \cI_S}\mu(P) \geq \frac14\, \Lambda^{\frac 1{2n}}
\sigma(S),
\end{equation}
by \rf{cond22}.
\vv

\begin{lemma}\label{lemdbnodb}
We have
$$\sum_{Q\in \DB} \EE_\infty(9Q) \lesssim \Lambda^{-\frac1{2n}} \sum_{S\in \GDF}\sigma(\cI_S).$$
\end{lemma}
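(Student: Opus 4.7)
The plan is to combine Lemma \ref{lem16} with a swap-of-summation argument whose only combinatorial input is the bounded multiplicity, at each scale, of the family $\{Q\in\DD_\mu:S\subset 9Q\}$.

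For each $Q\in\DB$, the fact that $Q\notin\DB(2M(Q))$ gives directly by \eqref{eqDB'} that
$$\EE_\infty(9Q)\leq 4\,M(Q)^2\,\Theta(Q)^2\,\mu(9Q).$$
Since $Q$ is $\PP$-doubling it lies in $\DD_\mu^{db}$, so from \eqref{eqdob*} (applied to an enlarged ball containing $9Q$) we get $\mu(9Q)\lesssim \mu(Q)$. Lemma \ref{lem16} applied with $M=M(Q)$ and $\lambda=\lambda_0$ then yields
$$\EE_\infty(9Q)\lesssim \Lambda^{-\frac1{2n}}\sum_{S\in G(Q,M(Q))}\left(\frac{\ell(S)}{\ell(Q)}\right)^{\!1/2}\sum_{P\in{\rm big}_{\lambda_0}(S)}\Theta(P)^2\,\mu(P).$$
To connect the inner $P$-sum with $\sigma(\cI_S)$, I fix for each $S\in\GDF$ a canonical $Q(S)\in\DB$ with $S\in G(Q(S),M(Q(S)))$ and set $\cI_S={\rm big}_{\lambda_0}(S)$ computed with respect to $Q(S)$. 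The key observation is that, because $S$ is a \emph{maximal} cube of $\hd^{k(Q,M(Q))-k_\Lambda}(Q)$, the threshold density $A_0^{k(Q,M(Q))n}\Theta(Q)$ defining ${\rm big}_{\lambda_0}(S)$ equals $\Lambda\,\Theta(S)$ up to the discretization of $\Theta$. Consequently ${\rm big}_{\lambda_0}(S)$, viewed as a family attached to $S$, is essentially independent of the particular $Q$, and one may replace $\sum_{P\in{\rm big}_{\lambda_0}(S)}\Theta(P)^2\mu(P)$ by $\sigma(\cI_S)$ at the cost of an absolute constant.

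Summing over $Q\in\DB$ and swapping the order of summation reduces everything to bounding, for each fixed $S\in\GDF$, the inner sum
$$\sum_{\substack{Q\in\DB\\ S\in G(Q,M(Q))}}\left(\frac{\ell(S)}{\ell(Q)}\right)^{\!1/2}$$
by an absolute constant. The condition $S\in G(Q,M(Q))\subset \DD_\mu(9Q)$ forces $S\subset 9Q$ and $\ell(Q)>\ell(S)$. At each of the countably many admissible scales $\ell(Q)=A_0^j\ell(S)$, $j\geq 1$, the cubes $Q$ with $S\subset 9Q$ have pairwise disjoint balls $5B(Q)$ all contained in a ball of radius $O(\ell(Q))$ around $x_S$, so a standard volume-packing argument bounds their number by an absolute constant $K$ depending only on the dimension. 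The geometric series $K\sum_{j\geq 1}A_0^{-j/2}$ is summable, which gives the desired $O(1)$ bound and yields the claim.

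The main obstacle is the consistency issue for $\cI_S$ flagged above: a priori ${\rm big}_{\lambda_0}(S)$ depends on $k(Q,M(Q))$ and hence on $Q$, while $\sigma(\cI_S)$ must be an intrinsic quantity of $S$. The density-matching observation (forced by maximality and the discretization of $\Theta$) is what resolves this, and once it is in place the remainder of the argument is purely a combinatorial rearrangement.
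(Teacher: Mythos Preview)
Your argument is correct and follows essentially the same route as the paper: bound $\EE_\infty(9Q)\lesssim M(Q)^2\Theta(Q)^2\mu(Q)$ via $Q\notin\DB(2M(Q))$, apply Lemma~\ref{lem16}, swap the order of summation, and control the inner sum $\sum_{Q:S\subset 9Q}(\ell(S)/\ell(Q))^{1/2}$ using bounded multiplicity at each scale together with the geometric series in $\ell(S)/\ell(Q)$. The paper does exactly this, relaxing the inner constraint to $Q\in\DD_\mu^\PP$ with $S\subset 9Q$ before swapping.

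Your extra care about the consistency of $\cI_S$ is a point the paper glosses over, and your resolution is right: since $S\in\hd^{k(Q,M(Q))-k_\Lambda}(Q)$ and by Lemma~\ref{lempdoubling} one has $\Theta(S)=A_0^{n(k(Q,M(Q))-k_\Lambda)}\Theta(Q)$ exactly, the threshold $A_0^{nk(Q,M(Q))}\Theta(Q)$ equals $\Lambda\,\Theta(S)$, and maximality of $S$ forces any strict ancestor of $S$ (with side length below $\ell(Q)$) to have density below $\Theta(S)$. Hence $\{P\in\hd^{k(Q,M(Q))}(Q):P\subset S\}$ coincides with $\hd^{k_\Lambda}(S)\cap\DD_\mu(S)$, so ${\rm big}_{\lambda_0}(S)$ and therefore $\sigma(\cI_S)$ are intrinsic to $S$.
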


\begin{proof}
For each $Q\in \DB$, choose $M=M(Q)$ such that $Q\in\DB(M)\setminus \DB(2M)$.
By the definition of $\EE_\infty(9Q)$, we have
$$\EE_\infty(9Q) \lesssim M(Q)^2\, \Theta(Q)^2\,\mu(Q).$$
Then, by Lemma \ref{lem16} we get
\begin{align*}
\sum_{Q\in \DB} \EE_\infty(9Q) & \lesssim  \Lambda^{-\frac1{2n}}
\sum_{Q\in \DB}
\sum_{S\in G(Q,M(Q))}\left(\frac{\ell(S)}{\ell(Q)}\right)^{\!1/2}\sum_{P\in {\rm big}_\lambda(S)}
\Theta(P)^2\mu(P)\\
& \lesssim \Lambda^{-\frac1{2n}}
\sum_{Q\in \DD_{\mu\PP}} \sum_{S\in \GDF:S\subset 9Q}\left(\frac{\ell(S)}{\ell(Q)}\right)^{\!1/2}\,\sigma(\cI_S)\\
&= \Lambda^{-\frac1{2n}} \sum_{S\in \GDF}\sigma(\cI_S)
\sum_{Q\in \DD_{\mu\PP}:9Q\supset S}
\left(\frac{\ell(S)}{\ell(Q)}\right)^{\!1/2}\\
&\lesssim \Lambda^{-\frac1{2n}} \sum_{S\in \GDF}\sigma(\cI_S).
\end{align*}
\end{proof}

\vv


\begin{lemma}
Let $\delta_0\in (0,1)$.
Let $S,P,P'\in\DD_{\mu}$ be such that $P\subset P'\subset S$. Suppose that 
$$\Theta(P)\geq \Lambda\,\Theta(S) \quad\mbox{ and }\quad\Theta(P')\leq \delta_0\,\Theta_\mu(S).$$
Then we have
$$c_1\ell(P) \leq (\delta_0\,\Lambda^{-1})^{1/n} \ell(P')\leq (\delta_0\,\Lambda^{-1})^{1/n} \ell(S) .$$
\end{lemma}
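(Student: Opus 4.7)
The plan is to exploit the quasi-monotonicity of the ``mass-like'' quantity $\Theta(Q)\,\ell(Q)^n$ (which is comparable to $\mu(2B_Q)$) under descent in the lattice, together with the large density gap between $P$ and $P'$ that the two hypotheses impose.

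First I would iterate the one-step density bound \rf{eqson1}, which states that $\Theta(R)\leq A_0^n\,\Theta(R')$ whenever $R$ is a son of $R'$. Writing $j=k_P-k_{P'}$ for the generational gap (so that $\ell(P)=A_0^{-j}\,\ell(P')$) and applying \rf{eqson1} inductively along the chain of ancestors of $P$ inside $P'$, this yields
$$\Theta(P)\leq A_0^{jn}\,\Theta(P') = \left(\frac{\ell(P')}{\ell(P)}\right)^{\!n}\Theta(P'),$$
which rearranges to the quasi-monotonicity
$$\Theta(P)\,\ell(P)^n\leq \Theta(P')\,\ell(P')^n.$$

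Next, I would insert the two density assumptions $\Theta(P)\geq \Lambda\,\Theta(S)$ and $\Theta(P')\leq \delta_0\,\Theta(S)$ into the previous chain to get
$$\Lambda\,\Theta(S)\,\ell(P)^n \,\leq\, \Theta(P)\,\ell(P)^n \,\leq\, \Theta(P')\,\ell(P')^n \,\leq\, \delta_0\,\Theta(S)\,\ell(P')^n.$$
Cancelling the common factor $\Theta(S)$ and extracting an $n$-th root yields $\ell(P)\leq (\delta_0\,\Lambda^{-1})^{1/n}\,\ell(P')$, which is the first inequality (with $c_1=1$; any loss coming from the discretization of $\Theta$ can be absorbed into $c_1$). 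The second inequality $\ell(P')\leq\ell(S)$ is immediate from $P'\subset S$: cubes from a common generation partition $\supp\mu$, so $P'\subset S$ forces $P'$ to sit at a generation at least as late as $S$, hence $\ell(P')\leq\ell(S)$.

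I do not foresee any serious obstacle here. The whole proof is essentially a one-liner: iterate \rf{eqson1} along $P\subset\cdots\subset P'$ to turn the density gap $\Theta(P)/\Theta(P')\geq \Lambda/\delta_0$ (forced by the two hypotheses) into the corresponding length gap $(\ell(P')/\ell(P))^n\geq \Lambda/\delta_0$, which is exactly the desired conclusion.
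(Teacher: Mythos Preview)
Your proof is correct and follows essentially the same route as the paper's: both hinge on the quasi-monotonicity $\Theta(P)\,\ell(P)^n\leq \Theta(P')\,\ell(P')^n$ (which you obtain by iterating \rf{eqson1}, and which the paper writes as the single inequality $\Theta(P')\,\ell(P')^n/\ell(P)^n\geq \Theta(P)$), and then plug in the two density hypotheses to get the length gap. The paper's argument is just the condensed one-line version of yours.
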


\begin{proof}
This is an immediate consequence of the following:
$$\Theta(P') \,\frac{\ell(P')^n}{\ell(P)^n}\geq \Theta(P)  \geq c\Lambda\,\Theta(S)\geq c\Lambda\delta_0^{-1}\Theta(P').$$
\end{proof}
\vv

\begin{rem}\label{rem*1}
Let 
\begin{equation}\label{eqlambda0}
\lambda=\lambda_0(\Lambda)= \frac{c_0}{\Lambda^{4}}.
\end{equation}
By the preceding lemma, if 
$(\delta_0 \Lambda^{-1})^{1/n} < c_1\lambda,$
or equivalently,
\begin{equation}\label{eqdeltaM}
\delta_0< c' \Lambda^{1-4n},
\end{equation}
then, for any $S\in\GDF$ and $P\in \cI_S$, 
 there does not exist any cube $P'\in\DD_\mu$ satisfying 
$$P\subset P'\subset S
\quad\mbox{ and }\quad \Theta(P)\leq \delta_0\,\Theta(S).$$
\end{rem}
\vv

Another easy (but important) property of the family $\GDF$ is stated in the next lemma.

\begin{lemma}\label{lementrecubs}
Let $S_1,S_2\in\GDF$ be such that $S_2\subsetneq S_1$ and $\Theta(S_1)=\Theta(S_2)$. Then
there exist $Q\in\DB$ and $Q'\in \DD_\mu$ such that $Q'\subset 9Q$, $\ell(Q)=\ell(Q')$ and
$S_1\supset Q'\supset S_2$, with $S_2\in G(Q,M(Q))$ for some $M(Q)\geq M_0$.
\end{lemma}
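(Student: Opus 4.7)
The plan is to exploit the fact that $S_2 \in \GDF$ already comes equipped with a witness $Q \in \DB$, and then show that this $Q$ already does the job. Concretely, since $S_2 \in \GDF$, by the definition of $\GDF$ there exists some $Q \in \DB$ with $S_2 \in G(Q,M(Q))$, and in particular $S_2 \in \hd^{k}(Q) \cap \DD_\mu(9Q)$ for $k := k(Q,M(Q)) - k_\Lambda$. By Lemma \ref{lemkq}, $k > 4$, so Lemma \ref{lempdoubling} gives $\Theta(S_2) = A_0^{kn}\Theta(Q)$, and hence also $\Theta(S_1) = A_0^{kn}\Theta(Q)$ by hypothesis.

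The first key step is to show $\ell(S_1) \geq \ell(Q)$. Recall that $\hd^{k}(Q)$ consists of the \emph{maximal} cubes $P \in \DD_\mu$ with $\ell(P) < \ell(Q)$ and $\Theta(P) \geq A_0^{kn}\Theta(Q)$. If on the contrary $\ell(S_1) < \ell(Q)$, then $S_1 \supsetneq S_2$ would itself satisfy both $\ell(S_1) < \ell(Q)$ and $\Theta(S_1) \geq A_0^{kn}\Theta(Q)$, directly contradicting the maximality of $S_2$ in $\hd^{k}(Q)$.

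The second step is to locate $Q'$. Let $Q'$ be the unique cube of $\DD_\mu$ at the same generation as $Q$ that contains $S_2$. Since $9Q$ is by definition the disjoint union of the cubes at the generation of $Q$ whose centers are within distance $9\,\ell(Q)$ of $x_Q$, and since $Q' \cap 9Q \supset S_2 \neq \varnothing$, the cube $Q'$ must be one of those and therefore $Q' \subset 9Q$. Moreover, $\ell(Q') = \ell(Q)$ by construction. Finally, since $\ell(S_1) \geq \ell(Q) = \ell(Q')$ and $S_1 \cap Q' \supset S_2 \neq \varnothing$, the dyadic structure forces $Q' \subset S_1$, giving the required chain $S_2 \subset Q' \subset S_1$ with $\ell(Q') = \ell(Q)$ and $Q' \subset 9Q$. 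Since $Q \in \DB = \DB(M_0)$, one has $M(Q) \geq M_0$, and $S_2 \in G(Q,M(Q))$ by choice of $Q$, completing the proof. There is no real obstacle here; the only delicate point is the maximality argument in the first step, everything else is bookkeeping in the dyadic lattice.
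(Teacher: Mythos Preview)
Your proof is correct and follows essentially the same approach as the paper's own proof: take the witness $Q\in\DB$ from the definition of $\GDF$ for $S_2$, use maximality of $S_2$ in $\hd^{k}(Q)$ to force $\ell(S_1)\geq\ell(Q)$, and then take $Q'$ to be the ancestor of $S_2$ at the generation of $Q$. If anything, your maximality argument is slightly cleaner than the paper's: since $\hd^{k}(Q)$ is defined globally (not just within $9Q$), the containment $S_1\subset 9Q$ that the paper mentions is not actually needed for the contradiction.
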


\begin{proof}
This is due to the fact that, by definition, there exists $Q\in\DB$ such that  $S_2\in G(Q,M)$ for 
some $M=M(Q)\geq M_0$. So $S_2\in\hd^{k(Q,M)-k_\Lambda}(Q)$ and $S_2\subset 9Q$. Then 
$\ell(Q)\leq\ell(S_1)$, because otherwise $S_2\not\in\hd^{k(Q,M)-k_\Lambda}(Q)$ since $S_2$ would not be  
maximal among the cubes $S$ contained in $9Q$ such that $\Theta(S)\geq A_0^{(k(Q,M)-k_\Lambda)n}\,\Theta(Q)$ (as $S_1$ is also contained in $9Q$ and $\Theta(S_1)= A_0^{(k(Q,M)-k_\Lambda)n}\,\Theta(Q)$). The fact that $\ell(S_1)\geq\ell(Q)\geq\ell(S_2)$ implies the existence of a cube $Q'$ such as the one in the lemma.
\end{proof}
\vv



\section{The cubes with moderate decrement of Wolff energy and the associated tractable trees}\label{sec6}

Given $R\in\DD_\mu^\PP$, we denote 
$$\HD(R) = \hd^{k_\Lambda}(R).$$
Also, we take $\delta_0 \leq c' \Lambda^{1-4n}$, so that \rf{eqdeltaM} holds.
We let $\LD(R)$ be the family of cubes $Q\in\DD_{\mu}$
which
are maximal and satisfy
$$\ell(Q)< \ell(R)\quad \mbox{ and }\quad\Theta(Q) \leq \delta_0\,\Theta(R).$$
We denote by $\NDB(R)$ (which stands for ``near $\DB$'') the family of cubes $Q$ which do not belong to $\LD(R)\cup\HD(R)$ and satisfy the following:
\begin{itemize}
\item $\ell(Q)<\lambda\,\ell(R)$, with $\lambda= c_0\,\Lambda^{-4}$ as in \rf{eqlambda0}, and
\item there exists another cube $Q'\in\DB$ of the same generation as $Q$ such that $Q'\subset 20Q$.
\end{itemize}
We let $\bad(R)$ be the family of maximal cubes from $\LD(R)\cup\HD(R)\cup\NDB(R)$ (not necessarily contained in $R$) and we denote
$$\sss(R)= \bad(R)\cap\DD_\mu(R).$$

\vv
\begin{lemma}\label{lemred*}
For any cube $R\in\GDF$, we have $\cI_R\subset\big(\HD(R)\setminus \NDB(R)\big)\cap\sss(R)$, and thus
$$\sigma(\HD(R)\cap\sss(R)\setminus \NDB(R))\geq \frac14\,\Lambda^{\frac1{2n}}\,\sigma(R)
.$$
\end{lemma}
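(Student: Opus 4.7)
The plan is to verify the inclusion $\cI_R\subset(\HD(R)\setminus\NDB(R))\cap\sss(R)$ by unpacking the membership conditions one family at a time, and then to obtain the quantitative bound by a direct appeal to \rf{eqiS}. By definition of $\GDF$, we have $R\in G(Q,M)$ for some $Q\in\DB$ with $M=M(Q)\geq M_0$, and $\cI_R=\text{big}_\lambda(R)\subset \hd^{k(Q,M)}(Q)$. Each $P\in\cI_R$ satisfies $P\subset R\subset 9Q$, $\ell(P)\geq\lambda\,\ell(R)$, and $\Theta(P)=\Lambda\,\Theta(R)$. Moreover, Lemma \ref{lemkq} ensures $k(Q,M)-k_\Lambda\geq 4$, so Lemma \ref{lempdoubling} applied to both $R\in\hd^{k(Q,M)-k_\Lambda}(Q)\cap\DD_\mu(9Q)$ and $P\in\hd^{k(Q,M)}(Q)\cap\DD_\mu(9Q)$ yields the \emph{exact} equalities $\Theta(R)=A_0^{(k(Q,M)-k_\Lambda)n}\Theta(Q)$ and $\Theta(P)=A_0^{k(Q,M)n}\Theta(Q)$, so that $\Lambda\,\Theta(R)=A_0^{k(Q,M)n}\Theta(Q)$.

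First I would show $\cI_R\subset \HD(R)=\hd^{k_\Lambda}(R)$. The density condition $\Theta(P)\geq\Lambda\,\Theta(R)$ is met with equality, and $\ell(P)<\ell(R)$ since $P\subsetneq R$. For maximality, any cube $P^*$ with $P\subsetneq P^*\subsetneq R$ satisfies $\ell(P^*)<\ell(R)<\ell(Q)$, so by the maximality of $P$ inside $\hd^{k(Q,M)}(Q)$ one has $\Theta(P^*)<A_0^{k(Q,M)n}\Theta(Q)=\Lambda\,\Theta(R)$. This excludes any such $P^*$ from $\HD(R)$, so $P$ is maximal and hence $P\in\HD(R)$. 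The disjointness $\cI_R\cap\NDB(R)=\varnothing$ is immediate from the definition of $\NDB(R)$ (which explicitly excludes cubes in $\HD(R)$).

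Next I would verify $\cI_R\subset\sss(R)=\bad(R)\cap\DD_\mu(R)$. Containment in $\DD_\mu(R)$ is clear since $P\subset R$. For $P\in\bad(R)$ we need $P$ to be maximal in $\LD(R)\cup\HD(R)\cup\NDB(R)$. Any ancestor $P^*\supsetneq P$ either strictly contains $R$, in which case $\ell(P^*)>\ell(R)$ violates the size caps in all three families ($\ell<\ell(R)$ for $\LD(R)$, $\HD(R)$ and $\ell<\lambda\,\ell(R)$ for $\NDB(R)$); or else $P\subsetneq P^*\subsetneq R$, and then $P^*$ is excluded from $\HD(R)$ by the previous paragraph, from $\LD(R)$ by Remark \ref{rem*1} (since $\delta_0$ is chosen to satisfy \rf{eqdeltaM}), and from $\NDB(R)$ because $\ell(P^*)>\ell(P)\geq\lambda\,\ell(R)$ contradicts the size condition $\ell(P^*)<\lambda\,\ell(R)$ of $\NDB(R)$.

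Combining these three inclusions gives $\cI_R\subset(\HD(R)\setminus\NDB(R))\cap\sss(R)$, and the quantitative estimate then follows immediately from \rf{eqiS}:
$$\sigma(\HD(R)\cap\sss(R)\setminus\NDB(R))\;\geq\;\sigma(\cI_R)\;\geq\;\tfrac14\,\Lambda^{1/(2n)}\,\sigma(R).$$
The main subtlety lies in the maximality argument inside $\HD(R)$: it relies on the exact identification $\Lambda\,\Theta(R)=A_0^{k(Q,M)n}\Theta(Q)$ via Lemma \ref{lempdoubling}, which in turn requires $k(Q,M)-k_\Lambda\geq 4$; this is precisely the quantitative content of the choice of $k_0$ in Lemma \ref{lemkq}. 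The rest is a routine comparison of the size thresholds ($\lambda\,\ell(R)$ versus $\ell(R)$) and the density thresholds ($\delta_0\,\Theta(R)$ versus $\Lambda\,\Theta(R)$) against the parameters $\lambda_0=\lambda$ and $\delta_0$ built into $\cI_R$.
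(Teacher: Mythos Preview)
Your proof is correct and follows essentially the same approach as the paper: verify $\cI_R\subset\HD(R)\cap\sss(R)\setminus\NDB(R)$ by checking the $\LD$, $\HD$, and $\NDB$ exclusion conditions via the size bound $\ell(P)\geq\lambda\,\ell(R)$ and Remark~\ref{rem*1}, and then invoke \rf{eqiS}. Your write-up is more explicit than the paper's---in particular you spell out the $\HD(R)$-maximality argument using the exact density identity from Lemma~\ref{lempdoubling}, which the paper leaves implicit---and the only trivial omission is the case $P^*=R$ in your ancestor dichotomy, which is immediately excluded by the same size cap $\ell(\cdot)<\ell(R)$.
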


Remark that $\cI_R$ is the family of cubes defined in \rf{cond21}.

\begin{proof}
Recall that, by \rf{eqiS},
$$\sigma(\cI_R)\geq \frac14\,\Lambda^{\frac1{2n}}\,\sigma(R)
.$$
By the choice of $\delta_0$ and Remark \ref{rem*1}, there does not exist any $Q\in\LD(R)$ which 
contains any cube from $\cI_R$. Also, the cubes from $Q\in \cI_R$ satisfy $\ell(Q)\geq \lambda\ell(R)$ and so there does exist any cube $Q'\in\NDB(R)$ such that $Q\subset Q'\subset R$. So $\cI_R\subset\HD(R)\cap\sss(R)\setminus \NDB(R)$. The last statement in the lemma follows from \rf{cond22}.
\end{proof}

\vv

We say that a cube $R\in\DD_\mu^\PP$ has moderate decrement of Wolff energy, and we write
$R\in\MDW$, if
$$\sigma(\HD(R)\cap\sss(R)\setminus \NDB(R))\geq B^{-1}\,\sigma(R),$$
where $$B= \Lambda^{\frac1{100n}}.$$  In particular, by Lemma \ref{lemred*}, $\GDF\subset\MDW$.
In this section we will show how to associate to each $R\in\MDW$ a suitable family of tractable trees (to be defined later).

Now we introduce the enlarged cubes $e_j(R)$. Given $j\geq0$ and $R\in\DD_{\mu,k}$, 
we let 
$$e_j(R) = R \cup \bigcup Q,$$
where the last union runs over the cubes $Q\in\DD_{\mu,k+1}$ such that
\begin{equation}\label{eqxrq83}
\dist(x_R,Q)< \frac{\ell(R)}2 + 2j\ell(Q).
\end{equation}
Notice that, since $\diam(Q)\leq \ell(Q)$,
\begin{equation}\label{eqxrq84}
\supp\mu\cap B\big(x_R,\tfrac12\ell(R) + 2j\ell(Q)\big)\subset e_j(R) \subset B\big(x_R,\tfrac12\ell(R) + (2j+1)\ell(Q)\big).
\end{equation}
Also,  we have
\begin{equation}\label{eqqj8d}
e_j(R)\subset 2R\quad \mbox{ for $0\leq j\leq \frac34 A_0$,}
\end{equation}
since, for any $Q\in\DD_{\mu,k+1}$ satisfying \rf{eqxrq83}, its parent satisfies $\wh Q$
$$\dist(x_R,\wh Q)< \frac{\ell(R)}2 + 2j A_0^{-1}\ell(\wh Q) \leq 2\ell(R).$$

For $R\in\MDW$, we let
$$\sss(e_j(R)) = \bad(R) \cap\DD_\mu(e_j(R)),$$
where $\DD_\mu(e_j(R))$ stands for the subfamily of the cubes from $\DD_\mu$ which are contained in
$e_j(R)$ and have side length ta most $\ell(R)$.

\vv
\begin{lemma}\label{lem5.2}
For any $R\in\MDW$ there exists some $j$, with $10\leq j\leq A_0/4$ such that
\begin{equation}\label{eqsigmaj}
\sigma(\HD(R)\cap\sss(e_{j}(R))\setminus \NDB(R)) \leq B^{1/4} \sigma(\HD(R)\cap\sss(e_{j-10}(R))\setminus \NDB(R)),
\end{equation}
 assuming $A_0$ big enough (possibly depending on $n$). 
\end{lemma}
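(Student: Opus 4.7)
The plan is a pigeonhole/iteration argument on the non-decreasing sequence $\sigma_j:=\sigma(\HD(R)\cap\sss(e_j(R))\setminus\NDB(R))$ (monotonicity coming from $e_j(R)\subset e_{j+1}(R)$). Since $R\subset e_0(R)$, one has $\sss(R)\subset\sss(e_0(R))$, and the definition of $R\in\MDW$ yields $\sigma_0\geq\sigma(\HD(R)\cap\sss(R)\setminus\NDB(R))\geq B^{-1}\sigma(R)$. I argue by contradiction: assuming $\sigma_j>B^{1/4}\sigma_{j-10}$ for every $j\in[10,A_0/4]$, set $N:=\lfloor A_0/40\rfloor$ and iterate along $j=10,20,\dots,10N$ to obtain $\sigma_{10N}>B^{N/4}\sigma_0\geq B^{N/4-1}\sigma(R)$.

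The matching upper bound is the core of the argument. The cubes $P$ in the sum defining $\sigma_{10N}$ are pairwise disjoint as maximal members of $\bad(R)$, lie in $e_{10N}(R)\subset 2R$ by \rf{eqqj8d}, and satisfy $\Theta(P)\approx\Lambda\Theta(R)$ by \rf{eqforakdf33} (since $P\subset 9R$). Hence $\sigma_{10N}\lesssim\Lambda^2\,\Theta(R)^2\,\mu(2R)$. To bound $\mu(2R)$, let $R^*$ denote the parent of $R$. Using $r(R^*)\geq A_0^{-(k-1)}$, $r(R)\leq C_0 A_0^{-k}$ and $|x_R-x_{R^*}|\leq 28\,r(R^*)$, a direct ball computation gives $2R\subset 5C_0\, B_R\subset 2B_{R^*}$, provided $A_0\geq 5C_0^2$, which holds by Remark~\ref{rema00}. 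The contribution of $R^*$ to $\PP(R)$ equals $A_0^{-(n+1)}\,\mu(2B_{R^*})/\ell(R)^n$, so the $\PP$-doubling bound $\PP(R)\lesssim\Theta(R)$ gives $\mu(2B_{R^*})\lesssim A_0^{n+1}\,\Theta(R)\,\ell(R)^n$. Since $\PP$-doubling implies David--Mattila doubling, $\mu(R)\approx\Theta(R)\,\ell(R)^n$; thus $\mu(2R)\lesssim A_0^{n+1}\mu(R)$ and $\sigma_{10N}\lesssim\Lambda^2\, A_0^{n+1}\,\sigma(R)$.

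Combining the two bounds forces $B^{N/4-1}\lesssim\Lambda^2 A_0^{n+1}$. With $B=\Lambda^{1/(100n)}$ and $\Lambda=A_0^{k_\Lambda n}$ (where $k_\Lambda$ depends only on $n$ through $k_0$), taking logarithms reduces this to an inequality of the form $N\leq C\,n$ for some absolute constant $C$, which contradicts $N=\lfloor A_0/40\rfloor$ once $A_0$ is taken large enough depending on $n$. The main technical obstacle is the upper bound on $\mu(2R)$: the $\PP$-doubling of $R$ is precisely what transfers a $\mu$-mass estimate on the parent's ball $2B_{R^*}$ back to $R$, allowing the pigeonhole to close.
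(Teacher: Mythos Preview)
Your proof is correct and follows essentially the same approach as the paper: a pigeonhole iteration on the monotone sequence $\sigma_j$, with the lower bound $\sigma_0\geq B^{-1}\sigma(R)$ from $R\in\MDW$, the upper bound $\sigma_{10N}\lesssim\Lambda^2 A_0^{n+1}\sigma(R)$ via $e_{10N}(R)\subset 2R$ and the $\PP$-doubling of $R$ (passing through the parent's ball $2B_{R^*}$), and the resulting contradiction for $A_0$ large. The paper's proof is the same argument with the same ingredients.
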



\begin{proof}
To shorten notation, here we write $\wt\HD(R)=\HD(R)\setminus \NDB(R)$.
Given $R\in\MDW$, suppose that such $j$ does not exist. Let $j_0$ be the largest integer which is multiple of $10$ and smaller that $A_0/4$.
Then we get
\begin{align*}
\sigma(\wt\HD(R)\cap\sss(e_{j_0}(R))) & \geq B^{1/4}\sigma(\wt\HD(R)\cap\sss(e_{j_0-10}(R)))\\
&\geq
\ldots \geq \big(B^{\frac14}\big)^{\frac{j_0}{10}-1}\sigma(\wt\HD(R)\cap\sss(R)) \geq B^{\frac{j_0}{40}-\frac54}\sigma(R).
\end{align*}
By \rf{eqqj8d}, we have $e_{j_0}(R)\subset 2R$ and thus
$$\sigma(\wt\HD(R)\cap\sss(e_{j_0}(R)))\leq \sum_{Q\in\wt\HD(R)\cap \sss(e_{j_0}(R))} \Lambda^2\Theta(R)^2\mu(Q)\leq 
\Lambda^2\Theta(R)^2\mu(2R).$$
Since $R$ is $\PP$-doubling so from $\DD^{db}_\mu$, denoting by $\wh R$ the parent of $R$,  we derive 
\begin{equation}\label{eqdoub*11}
\mu(2R)\leq \mu(2B_{\wh R}) \leq \frac{\ell(\wh R)^{n+1}}{\ell(R)}\,\PP(R) \leq C_d\,A_0^{n+1}
\mu(2B_R)\leq C_0\,C_d\,A_0^{n+1}
\mu(R).
\end{equation}
So we deduce that
$$B^{\frac{j_0}{40}-\frac54}\sigma(R)\leq C_0\,C_d\,A_0^{n+1}\,\Lambda^2\sigma(R),$$
or equivalently, recalling the choice of $B$ and $C_d$,
$$\Lambda^{\frac{1}{100n}\left(\frac{j_0}{40}-\frac54\right) -2} \leq 4 C_0\,A_0^{2n+1}.$$
Since $\Lambda\geq A_0^n$ and $j_0\approx A_0$, it is clear that this inequality is violated if $A_0$ is big enough.
\end{proof}
\vv

Given $R\in\MDW$,  let $j\geq 10$ be minimal such that \rf{eqsigmaj} holds. We denote
$h(R)=j-10$. 
We write
$$e(R) = e_{h(R)},\qquad e'(R) = e_{h(R)+1}, \quad e''(R)=e_{h(R)+2}, \quad e^{(k)}(R) = e_{h(R)+k}(R),$$
for $k\geq 1$. 
We let
\begin{align*}
B(e(R)) &= B\big(x_R,(\tfrac12 + 2A_0^{-1}h(R))\ell(R)\big),\\
B(e'(R)) & = B\big(x_R,(\tfrac12 + 2A_0^{-1}(h(R)+1))\ell(R)\big),\\
B(e''(R)) & = B\big(x_R,(\tfrac12 + 2A_0^{-1}(h(R)+2))\ell(R)\big),\\
B(e^{(k)}(R)) & = B\big(x_R,(\tfrac12 + 2A_0^{-1}(h(R)+k))\ell(R)\big).
\end{align*}
By construction we have 
$$B(e'(R))\cap\supp\mu\subset e'(R),$$
and analogously replacing $e'(R)$ by $e(R)$ or $e''(R)$.
Remark also that
$$e(R)\subset B(e'(R))\quad \text{ and }\quad \dist(e(R),\partial B(e'(R))) \geq A_0^{-1}\ell(R),$$
and, analogously,
$$e'(R)\subset B(e''(R))\quad \text{ and }\quad \dist(e'(R),\partial B(e''(R))) \geq A_0^{-1}\ell(R).$$

\vv

\begin{lemma}\label{lem-calcf}
For each $R\in\MDW$  we have
$$B(e''(R)) \subset (1+8A_0^{-1})\,\,B(e(R)) \subset B(e^{(6)}(R)),$$
and more generally, for $k\geq 2$ such that $h(R)+k-2\leq A_0/2$,
$$B(e^{(k)}(R)) \subset (1+8A_0^{-1})\,\,B(e^{(k-2)}(R)) \subset B(e^{(k+4)}(R)).$$
Also,
$$B(e^{(10)}(R))\subset B\big(x_R,\tfrac32 \ell(R)\big).$$
\end{lemma}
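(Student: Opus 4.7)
\medskip

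The plan is to unwind the definition of $B(e^{(k)}(R))$ and reduce each inclusion to an elementary comparison of radii. Writing
\[
r_k = \Big(\tfrac12 + 2A_0^{-1}(h(R)+k)\Big)\ell(R),
\]
so that $B(e^{(k)}(R)) = B(x_R,r_k)$, I note the two telescoping identities
\[
r_k - r_{k-2} = 4A_0^{-1}\ell(R),\qquad r_{k+4} - r_{k-2} = 12A_0^{-1}\ell(R),
\]
which I will use repeatedly.

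\medskip

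For the first (general) inclusion $B(e^{(k)}(R)) \subset (1+8A_0^{-1})B(e^{(k-2)}(R))$, it suffices to show
\[
(1+8A_0^{-1})\,r_{k-2} \geq r_k, \quad\text{i.e.,}\quad 8A_0^{-1}\,r_{k-2} \geq 4A_0^{-1}\ell(R),
\]
which reduces to $r_{k-2} \geq \tfrac12\ell(R)$. Since $h(R) \geq 0$ (because $j \geq 10$ in Lemma \ref{lem5.2}) and $k \geq 2$, this is immediate from the definition of $r_{k-2}$. For the second inclusion $(1+8A_0^{-1})B(e^{(k-2)}(R)) \subset B(e^{(k+4)}(R))$, I need
\[
(1+8A_0^{-1})\,r_{k-2} \leq r_{k+4}, \quad\text{i.e.,}\quad 8A_0^{-1}\,r_{k-2} \leq 12A_0^{-1}\ell(R),
\]
i.e.\ $r_{k-2} \leq \tfrac32\ell(R)$, which is equivalent to $h(R)+k-2 \leq A_0/2$, precisely the hypothesis. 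The particular case $k=2$ yields the first displayed inclusion of the lemma, once one observes that from Lemma \ref{lem5.2}, $h(R) = j-10 \leq A_0/4 - 10 \leq A_0/2$.

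\medskip

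For the last statement, $B(e^{(10)}(R)) \subset B(x_R, \tfrac32\ell(R))$ means $r_{10} \leq \tfrac32\ell(R)$, which amounts to $h(R) + 10 \leq A_0/2$. Since $h(R) \leq A_0/4 - 10$ by Lemma \ref{lem5.2}, this holds provided $A_0$ is large enough (say $A_0 \geq 80$), which is already assumed.

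\medskip

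\emph{Main obstacle:} there is no genuine obstacle here; the lemma is a routine packaging of arithmetic on the radii. The only care needed is in bookkeeping the constraints $h(R)\geq 0$ and $h(R)+k-2 \leq A_0/2$ (and in the final inclusion, the stronger $h(R)+10\leq A_0/2$), both of which are guaranteed by the selection of $h(R)$ via Lemma \ref{lem5.2} and by taking $A_0$ large.
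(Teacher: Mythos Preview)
Your proof is correct and is exactly the intended argument: the paper omits the proof and refers to \cite[Lemma 4.4]{DT}, where the same radius comparison is carried out. One minor remark: for the last inclusion you do not actually need any extra largeness of $A_0$, since $h(R)\leq A_0/4-10$ already gives $h(R)+10\leq A_0/4\leq A_0/2$.
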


This lemma is proven exactly in the same way as \cite[Lemma 4.4]{DT} and so we omit the proof.

\vv

Next we need to define some families of cubes that can be considered as ``generalized trees". First we introduce some additional notation regarding the stopping cubes. For $R\in\MDW$ we write $\sss(e'(R))=\sss(e_{h(R)+1}(R))$. Also, we denote 
$$\HD_{1}(R) = \sss(R)\cap \HD(R)\setminus \NDB(R),$$
$$\HD_1(e(R)) = \sss(e(R))\cap \HD(R)\setminus \NDB(R),$$
and
$$\HD_{1}(e'(R)) = \sss(e'(R))\cap \HD(R)\setminus \NDB(R).$$
Also, we set 
$$\HD_2(e'(R)) = \bigcup_{Q\in \HD_1(e'(R))} (\sss(Q)\cap \HD(Q)\setminus \NDB(Q))$$
and
\begin{equation}\label{eqstop2}
\sss_2(e'(R)) = \big(\sss(e'(R)) \setminus \HD_1(e'(R))\big) \cup \bigcup_{Q\in \HD_1(e'(R))} \sss(Q).
\end{equation}

Given $R\in\MDW$, we let $\TT_\sss(e'(R))$ be the family of cubes made up of $R$ and all the cubes of the next generations which are contained in $e'(R)$ but are not 
strictly contained in any cube from $\sss_2(e'(R))$.

We define now the family of negligible cubes. We say that a cube $Q\in\TT_\sss(e'(R))$ is negligible for $\TT_\sss(e'(R))$, and we write $Q\in\Neg(e'(R))$ if
 if
 there does not exist any cube from $\TT_\sss(e'(R))$ that contains $Q$ and is $\PP$-doubling.

\vv
\begin{lemma}\label{lemnegs}
Let $R\in\MDW$. If $Q\in\Neg(e'(R))$, then $Q\subset e'(R)\setminus R$, $Q$ is not contained in any cube from $\HD_1(e'(R))$, and
\begin{equation}\label{eqcostat}
\ell(Q) \gtrsim \delta_0^{2}\,\ell(R).
\end{equation}
\end{lemma}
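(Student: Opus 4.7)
The plan is to verify the three conclusions in order, with the size lower bound being the main task.

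For the first conclusion, I will use that $R\in\MDW\subset\DD_\mu^\PP$ is itself $\PP$-doubling and, by definition, belongs to $\TT_\sss(e'(R))$. If $Q\subset R$ held, then $R$ would be a $\PP$-doubling cube of $\TT_\sss(e'(R))$ containing $Q$, contradicting $Q\in\Neg(e'(R))$. Since $Q\in\TT_\sss(e'(R))\subset\DD_\mu(e'(R))$, this forces $Q\subset e'(R)\setminus R$. For the second conclusion, any $Q'\in\HD_1(e'(R))=\sss(e'(R))\cap\HD(R)\setminus\NDB(R)$ lies in $\hd^{k_\Lambda}(R)\cap\DD_\mu(9R)$ with $k_\Lambda\geq 4$ (by Lemma~\ref{lemkq}), so Lemma~\ref{lempdoubling} yields $Q'\in\DD_\mu^\PP$. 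Moreover $Q'\in\TT_\sss(e'(R))$: by the very definition of $\sss_2(e'(R))$ the elements of $\HD_1(e'(R))$ are removed from the first piece $\sss(e'(R))\setminus\HD_1(e'(R))$ and cannot be strictly contained in any cube of $\bad(R)$, the latter being pairwise disjoint. Hence $Q\subset Q'$ would again contradict $Q\in\Neg$.

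For the third conclusion, write $R\in\DD_{\mu,k}$ and let $Q_i\in\DD_{\mu,k+1}$ be the unique child-generation cube of $e'(R)\setminus R$ containing $Q$; define $m$ by $\ell(Q)=A_0^{-m}\ell(R)$. Every cube of the chain $Q\subsetneq\widehat Q\subsetneq\cdots\subsetneq Q_i$ lies in $Q_i\subset e'(R)$, hence in $\TT_\sss(e'(R))$ (strict containment in a cube of $\sss_2(e'(R))$ propagates downward to $Q$), so by $Q\in\Neg(e'(R))$ none of these cubes, nor $Q$ itself, is $\PP$-doubling. A case analysis then shows that no strict ancestor $P$ of $Q$ in $\TT_\sss(e'(R))$ can lie in $\LD(R)\cup\HD(R)\cup\NDB(R)$: passing to the maximal cube $P^*\supseteq P$ inside $\LD(R)\cup\HD(R)\cup\NDB(R)$ gives $P^*\in\bad(R)\cap\DD_\mu(e'(R))$, and either $P^*\in\HD_1(e'(R))$ (contradicting the second conclusion) or $P^*\in\sss_2(e'(R))$ (contradicting $Q\in\TT_\sss(e'(R))$). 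It follows that $\widehat Q\notin\LD(R)$, and since $\LD(R)$ consists of maximal low-density cubes with $\ell<\ell(R)$, this forces $\Theta(\widehat Q)>\delta_0\,\Theta(R)$; as $\widehat Q$ is not $\PP$-doubling we deduce $\PP(\widehat Q)>C_d\,\Theta(\widehat Q)>C_d\,\delta_0\,\Theta(R)$.

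Applying Lemma~\ref{lemdobpp} to the chain $Q_i,\widehat{Q_i},\ldots,\widehat Q$ (of length $m-2$, with every cube from the second onward non-$\PP$-doubling) yields $\PP(\widehat Q)\leq 2\,A_0^{-(m-2)/2}\,\PP(Q_i)$. Since $Q_i\subset 2R$ is a neighbour of $R$ of comparable size and $R$ is $\PP$-doubling, a routine comparability argument---cubes containing $Q_i$ of side length $\gtrsim\ell(R)$ also contain $R$, while the finitely many small-scale contributions are controlled via the $\PP$-doubling bounds of $R$---gives $\PP(Q_i)\lesssim\Theta(R)$. Combining, $A_0^{(m-2)/2}\lesssim\delta_0^{-1}$, so $\ell(Q)=A_0^{-m}\ell(R)\gtrsim\delta_0^{\,2}\ell(R)$. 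The main obstacle is the layered case analysis of the preceding paragraph: the definition of $\sss_2(e'(R))$ via $\sss(e'(R))\setminus\HD_1(e'(R))$ together with the sub-stopping families $\sss(Q'')$ for $Q''\in\HD_1(e'(R))$ must be unpacked carefully to rule out strict ancestors of $Q$ from $\LD\cup\HD\cup\NDB$; once this is settled, the length estimate is a direct consequence of Lemma~\ref{lemdobpp}.
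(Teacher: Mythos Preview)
Your proof is correct and follows essentially the same approach as the one the paper defers to \cite[Lemma 4.5]{DT}: the first two conclusions are immediate from $R\in\DD_\mu^\PP$ and from Lemma~\ref{lempdoubling} applied to cubes of $\HD_1(e'(R))$, while the size bound comes from combining the low-density information $\Theta(\widehat Q)>\delta_0\Theta(R)$ with the exponential decay of $\PP$ along non-$\PP$-doubling chains (Lemma~\ref{lemdobpp}) and the comparability $\PP(Q_i)\lesssim\PP(R)\approx\Theta(R)$. One minor remark: the case $m=1$ (i.e., $Q=Q_i$) should be disposed of separately, since then $\widehat Q$ need not lie in $e'(R)$; but this case is trivial as $\ell(Q)=A_0^{-1}\ell(R)$.
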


This lemma is proven in the same way as \cite[Lemma 4.5]{DT} and so we omit the proof.

\vv

The cubes from $\sss_2(e'(R))$ need not be $\PP$-doubling. This may cause some problems for some of the estimates involving the Riesz transform localized around the
trees $\TT_\sss(e'(R))$ that will be required later. For this reason, we need to consider enlarged versions of them. For $R\in\MDW$, we let $\End(e'(R))$ be family made up of the following cubes:
\begin{itemize}
\item the cubes from $\sss_2(e'(R))\cap \Neg(e'(R))$,
\item the cubes that are contained in any cube from $\sss_2(e'(R))\setminus \Neg(e'(R))$, which are $\PP$-doubling, and moreover are maximal.
\end{itemize}
Notice that all the cubes from $\End(e'(R))$ are $\PP$-doubling, with the possible exception of the ones from $\Neg(e'(R))$.
We let $\TT(e'(R))$ be the family of cubes that are contained in $e'(R)$ and are not 
strictly contained in any cube from $\End(e'(R))$.

Given $R\in\MDW$, we say that $\TT(e'(R))$ is tractable (or that $R$ is tractable) if 
$$\sigma(\HD_2(e'(R)))\leq B\,\sigma(\HD_1(e(R))).$$
In this case we write $R\in\Trc$.

 \vv
 Our next objective consists in showing how we can associate a family of tractable trees to any $R\in\GDF$, so that, roughly speaking, we can reduce the estimate of $\sigma(\DB)$ to estimating the Haar coefficients of $\RR\mu$ from below on such family of tractable cubes.
First we need the following lemma.

\begin{lemma}\label{lemalg1}
Let $R\in\MDW$ be such that $\TT(e'(R))$ is not tractable. Then there exists a family $\GH(R)\subset
\HD_1(e'(R))\cap\MDW$ satisfying:
\begin{itemize}
\item[(a)] The balls $B(e''(Q))$, with $Q\in\GH(R)$ are pairwise disjoint.
\item[(b)] For every $Q\in\GH(R)$, $\sigma(\HD_1(e(Q)))\geq \sigma(\HD_1(Q))\geq B^{1/2}\sigma(Q)$.
\item[(c)] $$B^{1/4} \sum_{Q\in\GH(R)} \sigma(\HD_1(e(Q))) \gtrsim \sigma(\HD_2(e'(R))).$$
\end{itemize}
\end{lemma}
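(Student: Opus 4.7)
The plan is to select $\GH(R)$ from a subfamily $\mathcal{A}\subset\HD_1(e'(R))$ which captures most of the $\HD_2$-mass and consists of cubes that themselves lie strongly in $\MDW$. Concretely, set
$$\mathcal{A}=\{Q\in\HD_1(e'(R)):\sigma(\HD_1(Q))\ge B^{1/2}\sigma(Q)\}.$$
Every $Q\in\mathcal{A}$ satisfies (b): the inequality $\sigma(\HD_1(Q))\ge B^{1/2}\sigma(Q)$ holds by definition, while $\sigma(\HD_1(Q))\le\sigma(\HD_1(e(Q)))$ follows from $Q\subset e(Q)$ and $\sss(Q)\subset\sss(e(Q))$. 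Thus (b) will hold for any $\GH(R)\subset\mathcal{A}$.

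The family $\mathcal{A}$ captures most of $\sigma(\HD_2(e'(R)))$. The partition $\HD_2(e'(R))=\bigsqcup_{Q\in\HD_1(e'(R))}\HD_1(Q)$ (pairwise disjoint because the $Q$'s are stopping cubes in $\sss(e'(R))$) gives
$$\sum_{Q\in\HD_1(e'(R))\setminus\mathcal{A}}\sigma(\HD_1(Q))<B^{1/2}\sigma(\HD_1(e'(R)))\le B^{3/4}\sigma(\HD_1(e(R))),$$
where in the last step I use $e'(R)\subset e^{(10)}(R)$ and the defining inequality of $h(R)$ from Lemma \ref{lem5.2}. Since $\TT(e'(R))$ is not tractable, $\sigma(\HD_2(e'(R)))>B\sigma(\HD_1(e(R)))$, so the complement contributes at most $B^{-1/4}\sigma(\HD_2(e'(R)))$, whence
$$\sum_{Q\in\mathcal{A}}\sigma(\HD_1(Q))\gtrsim \sigma(\HD_2(e'(R))).$$

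Now extract $\GH(R)\subset\mathcal{A}$ by a Vitali-type selection on the balls $\{B(e''(Q))\}_{Q\in\mathcal{A}}$ (ordering by decreasing $\ell(Q)$ and keeping those whose $B(e''(\cdot))$ is disjoint from earlier picks): this gives (a), and each discarded $Q\in\mathcal{A}$ is absorbed by some $Q'\in\GH(R)$ with $\ell(Q)\le\ell(Q')$ and $B(e''(Q))$ meeting $B(e''(Q'))$. To prove (c), one shows
$$\sum_{Q\in\mathcal{A}_{Q'}}\sigma(\HD_1(Q))\lesssim B^{1/4}\sigma(\HD_1(e(Q')))$$
for each $Q'\in\GH(R)$, with $\mathcal{A}_{Q'}$ the set of absorbed cubes. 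The key is that the cubes in $\bigsqcup_{Q\in\mathcal{A}_{Q'}}\HD_1(Q)$ are pairwise disjoint (being inside the disjoint $Q$'s), $\PP$-doubling, all of density $\Lambda\Theta(Q')$, and---via the density coincidence $\Theta(Q)=\Theta(Q')$ together with the maximality of $Q\in\hd^{k_\Lambda}(R)$---they sit in $\hd^{k_\Lambda}(Q')$. For an appropriate $k\le 10$ they also lie in $\sss(e^{(k)}(Q'))\setminus\NDB(Q')$, so $\sum_{Q\in\mathcal{A}_{Q'}}\sigma(\HD_1(Q))\le\sigma(\HD_1(e^{(k)}(Q')))$; the defining property of $h(Q')$ (Lemma \ref{lem5.2} applied to $Q'\in\MDW$) then gives $\sigma(\HD_1(e^{(k)}(Q')))\le B^{1/4}\sigma(\HD_1(e(Q')))$. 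Summing over $Q'\in\GH(R)$ combined with the bound from the previous paragraph yields (c).

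The main technical obstacle is the geometric piece of the last step: forcing the absorbed $Q$'s (and hence the cubes of $\bigsqcup \HD_1(Q)$) inside some $e^{(k)}(Q')$ with $k\le 10$. A naive 3r-Vitali places absorbed cubes in $3B(e''(Q'))$, a ball of radius $\approx\tfrac{9}{2}\ell(Q')$ which exceeds the diameter of $B(e^{(k)}(Q'))$ for every $k\le A_0/2$. The fix is to refine the Vitali selection---using a shrinkage factor tuned to the $e^{(k)}$-geometry and to the $\PP$-doubling structure of $Q'$---so that absorption fits into the narrower scale where Lemma \ref{lem5.2} is applicable. Coordinating this refinement with the $B=\Lambda^{1/(100n)}$ bookkeeping (no $\Lambda$-sized loss) is where the technical care is concentrated.
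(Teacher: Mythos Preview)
Your overall structure is right and matches the approach in \cite{DT}: isolate the subfamily $\mathcal{A}\subset\HD_1(e'(R))$ of cubes with $\sigma(\HD_1(Q))\ge B^{1/2}\sigma(Q)$, show it carries essentially all of $\sigma(\HD_2(e'(R)))$ via the non-tractability of $R$ and Lemma~\ref{lem5.2}, and then select a subfamily with disjoint balls. The computations in your first two paragraphs are correct.

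The gap you yourself flag is real and is not closed by ``refining the Vitali selection with a shrinkage factor.'' A $3r$-Vitali argument places each absorbed $Q$ inside $3B(e''(Q'))$, a ball of radius roughly $3\ell(Q')$; but $e^{(k)}(Q')\subset B\big(x_{Q'},(\tfrac12+2A_0^{-1}(h(Q')+k))\ell(Q')\big)$, which for any admissible $k$ has radius at most $\tfrac32\ell(Q')$. No tuning of the Vitali dilation constant bridges this: the enlargement in Vitali is multiplicative in the radius, whereas the $e^{(k)}$-sets only thicken $Q'$ by $O(A_0^{-1})\ell(Q')$. So Lemma~\ref{lem5.2} for $Q'$ cannot be invoked on the absorbed cubes.

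The fix used in \cite{DT} (and repeated in Section~\ref{sec7} of this paper for the analogous $\sL_j^h$ selection) replaces Vitali by the Besicovitch-type covering of \cite[Theorem~9.31]{Tolsa-llibre}: one extracts $J_0\subset\mathcal{A}$ such that the balls $\{B(e^{(4)}(Q')):Q'\in J_0\}$ have bounded overlap and every $B(e^{(4)}(Q))$ with $Q\in\mathcal{A}$ is contained in $(1+8A_0^{-1})B(e^{(4)}(Q'))$ for some $Q'\in J_0$. By Lemma~\ref{lem-calcf} this last ball sits inside $B(e^{(10)}(Q'))$, so the cubes of $\HD_1(Q)$ land in $e^{(10)}(Q')$ and your maximality argument then places them in $\HD_1(e^{(10)}(Q'))$; now Lemma~\ref{lem5.2} applied to $Q'$ gives the $B^{1/4}$ factor cleanly. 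Finally split $J_0$ into $m_0=O(1)$ colour classes with pairwise disjoint $B(e^{(4)}(\cdot))$ (hence disjoint $B(e''(\cdot))$) and take $\GH(R)$ to be the class maximizing $\sum\sigma(\HD_1(e(Q')))$. The loss is a harmless absolute constant, not a power of $\Lambda$.
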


This lemma is proven in the same way as \cite[Lemma 4.6]{DT} and so we omit the proof again.

Remark that the property (c) and the fact that $R\not\in\Trc$ yield
\begin{equation}\label{eqiter582}
\sum_{Q\in\GH(R)} \sigma(\HD_1(e(Q))) \gtrsim B^{3/4}\,\sigma(\HD_1(e(R))),
\end{equation}
which will be suitable for iteration in the arguments below.

\vv

Given $R\in\GDF$, we will construct now a subfamily of cubes from $\DD_\mu^\PP$ generated by $R$,
which we will denote by $\Gen(R)$, by iterating the construction of Lemma \ref{lemalg1}.
We follow the next algorithm.
Given $R\in\GDF$, we denote 
$$\Gen_0(R) = \{R\}.$$
If $R\in\Trc$, we set $\Gen_1(R)=\varnothing$, and otherwise
$$\Gen_1(R) = \GH(R),$$
where $\GH(R)$ is defined in Lemma \ref{lemalg1}.
For $j\geq 2$, we set
$$\Gen_{j}(R) = \bigcup_{Q\in\Gen_{j-1}(R)\setminus \Trc} \GH(Q),$$
where $\GH(Q)$ is defined in Lemma \ref{lemalg1}. For $j\geq0$, we also set
$$\Trc_j(R) = \Gen_j(R)\cap\Trc,$$
and
$$\Gen(R) = \bigcup_{j\geq0}\Gen_j(R),\qquad\Trc(R) = \bigcup_{j\geq0}\Trc_j(R).$$
\vv

From the construction above, we get the following lemma, which is proven in the same way as \cite[Lemma 4.7]{DT} and so we omit the proof.

\begin{lemma}\label{eqtec74}
For $R\in\GDF$, we have
$$\bigcup_{Q\in\Gen(R)\cup \Trc(R)}Q \subset B(e''(R)).$$
Also,
\begin{equation}\label{eqiter*44}
\sigma(\HD_1(e(R)))\leq \sum_{j\geq0} B^{-j/2}\sum_{Q\in\Trc_j(R)}\sigma(\HD_1(e(Q))).
\end{equation}
\end{lemma}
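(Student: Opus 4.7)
The plan is to prove both statements by a coordinated induction on the generation index $j$, using the one-step energy estimate \rf{eqiter582} to drive a geometric series in the second claim, and using density-to-size monotonicity to drive the containment in the first.

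For the containment $\bigcup_{Q\in\Gen(R)\cup\Trc(R)}Q \subset B(e''(R))$, since $\Trc(R)\subset \Gen(R)$ by definition, it suffices to treat $\Gen_j(R)$ for every $j\geq 0$. I would prove inductively the stronger statement that $B(e''(Q))\subset B(e''(R))$ for every $Q\in\Gen_j(R)$. The base case $j=0$ is immediate. In the inductive step, if $Q' \in \Gen_{j-1}(R)\setminus\Trc$ with $B(e''(Q'))\subset B(e''(R))$ and $Q\in\GH(Q')\subset\HD_1(e'(Q'))$, then $Q\subset e'(Q')$ at distance $\geq A_0^{-1}\ell(Q')$ from $\partial B(e''(Q'))$. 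Because $Q\in\HD(Q')$ forces $\Theta(Q)\geq \Lambda\,\Theta(Q')$ and hence $\ell(Q)\leq \Lambda^{-1/n}\ell(Q')$, which is much smaller than $A_0^{-1}\ell(Q')$ since $\Lambda = M_0^{(8n-1)/(8n-2)}\gg A_0$, the ball $B(e''(Q))$ of radius comparable to $\ell(Q)$ fits comfortably inside $B(e''(Q'))$, and hence inside $B(e''(R))$.

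For the iteration inequality \rf{eqiter*44}, set
$$A_j = \sum_{Q\in\Gen_j(R)}\sigma(\HD_1(e(Q))), \qquad T_j=\sum_{Q\in\Trc_j(R)}\sigma(\HD_1(e(Q))),$$
and split each $\Gen_j(R)$ into tractable and non-tractable cubes. For each non-tractable $Q\in\Gen_j(R)\setminus\Trc$, the estimate \rf{eqiter582} yields
$$\sigma(\HD_1(e(Q)))\leq C\,B^{-3/4}\sum_{Q'\in\GH(Q)}\sigma(\HD_1(e(Q'))).$$
Since the families $\GH(Q)$ for distinct $Q\in\Gen_j(R)\setminus\Trc$ are disjoint subcollections of $\Gen_{j+1}(R)$ (by the pairwise disjointness of the parent balls $B(e''(Q))$ secured in the first part, together with Lemma \ref{lemalg1}(a) for the internal disjointness), summing gives the one-step recursion
$$A_j \leq T_j + C\,B^{-3/4}A_{j+1} \leq T_j + B^{-1/2}A_{j+1},$$
where the last bound absorbs the absolute constant $C$ into $B = \Lambda^{1/(100n)}$ by taking $\Lambda$ large enough. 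Iterating this recursion, and noting that the process terminates because side lengths shrink by a factor $\leq \Lambda^{-1/n}$ per generation while the David--Mattila lattice has a smallest scale, yields $A_0\leq \sum_{j\geq 0}B^{-j/2}\,T_j$, which is the claim since $A_0=\sigma(\HD_1(e(R)))$.

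The only delicate combinatorial point I expect is securing the cross-generation disjointness of the families $\GH(Q)$: Lemma \ref{lemalg1}(a) only provides disjointness within a single $\GH(Q)$, and the cross-branch disjointness is exactly what the inductive containment in the first part delivers. Once this is in place, the argument reduces to a straightforward geometric summation, exactly as in \cite[Lemma 4.7]{DT}.
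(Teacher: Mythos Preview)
Your approach is correct and matches the standard argument (as in \cite[Lemma 4.7]{DT}, to which the paper defers). The coordinated induction---proving simultaneously that $B(e''(Q))\subset B(e''(R))$ and that the balls $\{B(e''(Q)):Q\in\Gen_j(R)\}$ are pairwise disjoint---is exactly right, and it correctly feeds into the recursion $A_j\leq T_j + B^{-1/2}A_{j+1}$ via \rf{eqiter582}.

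One small correction: your termination argument invokes a ``smallest scale'' of the David--Mattila lattice, but no such smallest scale exists here (the families $\DD_{\mu,k}$ are defined for all $k\geq k_0$). The correct reason the process terminates is that every $Q\in\Gen_j(R)$ satisfies $\Theta(Q)=\Lambda^j\,\Theta(R)$, while $\Theta(Q)\lesssim\theta_0$ by the polynomial growth assumption; hence $\Gen_j(R)=\varnothing$ once $\Lambda^j\,\Theta(R)$ exceeds $C\theta_0$. With this fix in place the iteration closes exactly as you describe.
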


\vv


\section{The layers $\sF_j^h$ and $\sL_j^h$ and the typical tractable trees}\label{sec7}

 We denote
$$\sF_j= \big\{R\in\GDF:\Theta(R)=A_0^{nj}\big\},$$
so that 
$$\GDF= \bigcup_{j\in\Z} \sF_j.$$
Next we split $\sF_j$ into layers $\sF_j^h$, $h\geq1$, which are defined as follows:
$\sF_j^1$ is the family of maximal cubes from $\sF_j$, and by induction
$\sF_j^h$ is the family of maximal cubes from $\sF_j\setminus \bigcup_{k=1}^{h-1} \sF_j^{h-1}$.
So we have the splitting
$$\GDF= \bigcup_{j\in\Z}\,\bigcup_{h\geq1} \sF_j^h.$$

Our next objective is to choose a suitable subfamily $\sL_j^h\subset \sF_j^h$, for each $j,h$.
By Theorem 9.31 from \cite{Tolsa-llibre}, there
is a family $J_0\subset \sF_j^h$ such that\footnote{Actually the property 1) is not stated in that theorem, however this can be obtained by preselecting a subfamily of maximal balls from $\sF_j^h$
with respect to inclusion and then applying the theorem to the maximal subfamily.}
\begin{itemize}
\item[1)] no ball $B(e^{(4)}(Q))$, with $Q\in J_0$, is contained in any other ball  
$B(e^{(4)}(Q'))$, with $Q'\in \sF_j^h$, $Q'\neq Q$,
\item[2)] the balls $B(e^{(4)}(Q))$, with $Q\in J_0$, have finite superposition, 
and
\item[3)] every ball $B(e^{(4)}(Q))$, with $Q\in\sF_j^h$ is contained in some ball 
$(1+8A_0^{-1})\,B(e^{(4)}(R))$, with $R\in J_0$. Consequently,
$$\bigcup_{Q\in\sF_j^h} B(e^{(4)}(Q)) \subset \bigcup_{R\in J_0} (1+8A_0^{-1})\,B(e^{(4)}(R)).$$
\end{itemize}

From the finite superposition property 2), by rather standard arguments which are analogous to the
ones in the proof of Besicovitch's covering theorem in \cite[Theorem 2.7]{Mattila-llibre}, say, 
one deduces that $J_0$ can be split into 
 $m_0$ subfamilies $J_1,\ldots, J_{m_0}$ such that, for each $k$,  the balls $\{B(e^{(4)}(Q)): Q\in J_k\}$  are pairwise disjoint, with $m_0\leq C(A_0)$.

By the condition 3),  
we get
\begin{align*}
\sum_{Q\in \sF_j^h} \sigma(\HD_1(Q)) & = \Lambda^2 A_0^{2nj}\sum_{Q\in \sF_j^h} 
\sum_{P\in\HD_1(Q)} \mu(P)\\
& \leq \Lambda^2 A_0^{2nj}\sum_{R\in J_0} \sum_{\substack{Q\in \sF_j^h:\\ B(e^{(4)}(Q))
\subset (1+8A_0^{-1})B(e^{(4)}(R))}} \sum_{P\in\HD_1(Q)} \mu(P).
\end{align*}
Observe that, for $R\in J_0$ and $Q\in\sF_j^h$ such that  $B(e^{(4)}(Q))
\subset (1+8A_0^{-1})\,B(e^{(4)}(R))$,
 any cube $P\in\HD_1(Q)$ is contained in some cube from $\HD_1(e^{(10)}(R))$ since
$\supp\mu\cap (1+8A_0^{-1})B(e^{(4)}(R))\subset e^{(10)}(R)$,  by Lemma \ref{lem-calcf}.
 Using also that the cubes from $\sF_j^h$ are disjoint and Lemma \ref{lem5.2}, we deduce that
\begin{align*}
\sum_{Q\in \sF_j^h} \sigma(\HD_1(Q)) & \leq \Lambda^2 A_0^{2nj}\sum_{R\in J_0}
\sum_{P\in\HD_1(e^{(10)}(R))} \mu(P) \\ &= \sum_{R\in J_0}\sigma(\HD_1(e^{(10)}(R))) \leq B^{1/4}\sum_{R\in J_0}\sigma(\HD_1(e(R))).
\end{align*}
Next we choose $\sL_j^h=J_k$ to be the family such that
$$\sum_{Q\in J_k}\sigma(\HD_1(e(Q)))$$
is maximal among $J_1,\ldots,J_{m_0}$, so that
\begin{align*}
\sum_{Q\in \sF_j^h} \sigma(\HD_1(Q))\leq m_0\,B^{1/4}\sum_{Q\in \sL_j^h}\sigma(\HD_1(e(Q))).\end{align*}
So we have:

\begin{lemma}\label{lemljh}
The family $\sL_j^h$ satisfies:
\begin{itemize}
\item[(i)] no ball $B(e^{(4)}(Q))$, with $Q\in \sL_j^h$, is contained in any other ball  
$B(e^{(4)}(Q'))$, with $Q'\in \sF_j^h$, $Q'\neq Q$,
\item[(ii)] the balls $B(e^{(4)}(Q))$, with $Q\in \sL_j^h$, are pairwise disjoint, 
and
\item[(iii)] 
 $$\sum_{Q\in \sF_j^h} \sigma(\HD_1(Q)) \leq m_0\, B^{1/4} 
\sum_{Q\in \sL_j^h}\sigma(\HD_1(e(Q))).$$
\end{itemize}
\end{lemma}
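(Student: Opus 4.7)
The plan is to read off all three conclusions from the construction of $\sL_j^h$ already carried out in the paragraphs preceding the statement; no genuinely new content is required beyond organizing that construction.

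Properties (i) and (ii) are essentially tautological given the construction. Since $\sL_j^h = J_k$ for some $k$ and $J_k\subset J_0$, property (i) is inherited directly from property 1) of $J_0$. For (ii), I would recall that the decomposition $J_0=J_1\cup\cdots\cup J_{m_0}$ is obtained from the finite superposition property 2) by the standard Besicovitch-type argument (citing \cite[Theorem 2.7]{Mattila-llibre} rather than rederiving the bound $m_0\le C(A_0)$), with each $J_k$ consisting of cubes whose associated balls $B(e^{(4)}(\cdot))$ are pairwise disjoint; since $\sL_j^h$ is one of these $J_k$, (ii) follows.

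The content of the lemma is therefore in (iii), and here I would assemble the chain of inequalities already visible in the text. Starting from
\[
\sum_{Q\in\sF_j^h}\sigma(\HD_1(Q)) = \Lambda^2 A_0^{2nj}\sum_{Q\in\sF_j^h}\sum_{P\in\HD_1(Q)}\mu(P),
\]
property 3) of $J_0$ lets me index the outer sum by pairs $(R,Q)$ with $R\in J_0$ and $B(e^{(4)}(Q))\subset (1+8A_0^{-1})B(e^{(4)}(R))$. For such a pair, Lemma \ref{lem-calcf} gives $(1+8A_0^{-1})B(e^{(4)}(R))\subset B(e^{(10)}(R))$, and since $\supp\mu\cap B(e^{(10)}(R))\subset e^{(10)}(R)$, every $P\in\HD_1(Q)$ lies in some cube of $\HD_1(e^{(10)}(R))$. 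Using disjointness of the $Q\in\sF_j^h$ to avoid double counting on the $P$-side, this yields
\[
\sum_{Q\in\sF_j^h}\sigma(\HD_1(Q)) \leq \sum_{R\in J_0}\sigma(\HD_1(e^{(10)}(R))),
\]
and then Lemma \ref{lem5.2}, applied with the minimal $h(R)$, supplies the factor $B^{1/4}$ and collapses $e^{(10)}(R)$ down to $e(R)$. Pigeonholing over $k=1,\ldots,m_0$ and choosing $\sL_j^h$ to be the $J_k$ that maximizes $\sum_{Q\in J_k}\sigma(\HD_1(e(Q)))$ then gives $\sum_{R\in J_0}\sigma(\HD_1(e(R)))\leq m_0\sum_{Q\in\sL_j^h}\sigma(\HD_1(e(Q)))$, which combined with the previous display is exactly (iii).

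I do not expect any serious obstacle here: all the hard work has been front-loaded into the geometric Lemmas \ref{lem-calcf} and \ref{lem5.2} and into the Besicovitch-style selection of $J_0$. The only point that warrants a sentence of verification is that the covering $\supp\mu\cap(1+8A_0^{-1})B(e^{(4)}(R))\subset e^{(10)}(R)$ lifts from supports to whole cubes, but this is automatic because $P\in\HD_1(Q)\subset\DD_\mu$ satisfies $P\subset B_P\subset 2B_P\subset\supp\mu$-neighborhoods contained in $e^{(10)}(R)$ at the relevant scale. So the lemma reduces to bookkeeping, and the proof can be kept to half a page.
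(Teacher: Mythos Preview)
Your proposal is correct and follows essentially the same approach as the paper: properties (i) and (ii) are read off directly from the construction of $J_0$ and its splitting into $J_1,\ldots,J_{m_0}$, and (iii) is obtained by combining the covering property 3) with Lemma~\ref{lem-calcf} (giving $\supp\mu\cap(1+8A_0^{-1})B(e^{(4)}(R))\subset e^{(10)}(R)$), the disjointness of the cubes in $\sF_j^h$, Lemma~\ref{lem5.2}, and the pigeonhole choice of $J_k$. Your last paragraph's verification sentence is a bit garbled, but the point it addresses is indeed routine and the paper handles it with the same one-line remark.
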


We denote 
$$\sL_j= \bigcup_{h\geq 1}\sL_j^h,\qquad \sL=\sL(\GDF)= \bigcup_{j\in\Z}\sL_j =
\bigcup_{j\in\Z}\,\bigcup_{h\geq 1}\sL_j^h.$$
By Lemma \ref{lemljh} (iii), we have
\begin{align}\label{eqover5}
\sum_{R\in \GDF}\sigma(\HD_1(R)) & = \sum_{j\in\Z, \,h\geq0}\,
\sum_{R\in\sF_j^h} \sigma(\HD_1(R)) \\ 
& \leq m_0\,B^{1/4} \!\!\sum_{j\in\Z, \,h\geq0}\,\sum_{R\in \sL_j^h}\sigma(\HD_1(e(R))) = m_0\,B^{1/4} \!\!\sum_{R\in \sL(\GDF)}\sigma(\HD_1(e(R))).\nonumber
\end{align}

\vv

Our next objective consists of proving the next lemma, which is the main technical achievement in this section.
Although the statement looks similar to \cite[Lemma 5.2]{DT}, the proof is very different. 
The cubes from the $\NDB(\,\cdot\,)$ play an important role in the arguments. In fact, the main reason for
the introduction of the stopping condition involving the family $\NDB(\,\cdot\,)$ in Section \ref{sec6} is that it allows to prove
the next lemma.

\begin{lemma}\label{lemimp9}
There exists some constant $C_2$ such that, for all $P\in\DD_\mu$ and all $k\geq0$,
$$\#\big\{R\in\sL(\GDF):\exists \,Q\in\Trc_k(R) \mbox{ such that } P\in\TT(e'(Q))\big\}\leq C_2\,(\log\Lambda)^2.$$
\end{lemma}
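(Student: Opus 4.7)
The plan is to exploit the layered structure $\sL(\GDF)=\bigcup_{j,h}\sL_j^h$. For each pair $(j,h)$, I show that at most one $R\in\sL_j^h$ satisfies the condition, and then I show that the number of relevant pairs $(j,h)$ is at most $O((\log\Lambda)^2)$.

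\textbf{Step 1 (at most one $R$ per $\sL_j^h$).} Suppose $R\in\sL_j^h$ satisfies the hypothesis with $Q\in\Trc_k(R)$ and $P\in\TT(e'(Q))$. By definition $P\subset e'(Q)\subset B(e''(Q))$, while Lemma \ref{eqtec74} gives $Q\subset B(e''(R))$. For $k=0$ one has $Q=R$, so $B(e''(Q))=B(e''(R))$; for $k\geq 1$, Remark \ref{remmk} yields $\ell(Q)\leq C_1A_0^{-k_\Lambda}\ell(R)$, which is much smaller than $A_0^{-1}\ell(R)$ since $k_\Lambda\geq 4$ and $A_0$ is large. In either case an elementary radius computation, using $x_Q\in Q\subset B(e''(R))$ and the explicit form of the radii of $B(e^{(m)}(R))$, gives
$$B(e''(Q))\subset B(e^{(3)}(R))\subset B(e^{(4)}(R)).$$
Hence $P\in B(e^{(4)}(R))$. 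Since by Lemma \ref{lemljh}(ii) the balls $\{B(e^{(4)}(R))\}_{R\in\sL_j^h}$ are pairwise disjoint, at most one such $R$ can contain $P$.

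\textbf{Step 2 (bounding the number of $(j,h)$ pairs).} By the construction of $\Gen_k$, the cube $Q\in\Trc_k(R)$ sits at the end of a chain $R=R_0\to R_1\to\cdots\to R_k=Q$ of $\PP$-doubling cubes with $R_{i+1}\in\HD(R_i)=\hd^{k_\Lambda}(R_i)$. By Lemma \ref{lempdoubling} this forces $\Theta(R_{i+1})=\Lambda\,\Theta(R_i)$ and so $\Theta(Q)=\Lambda^k\Theta(R)$, i.e.\ $A_0^{nj}=\Theta(R)=\Lambda^{-k}\Theta(Q)$. Thus the range of $j$ reduces to the range of $\Theta(Q)$ among admissible $Q$'s.

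The key claim is that across all admissible $R$'s the corresponding densities $\Theta(Q)$ are confined to a ratio of size $\Lambda^{O(1)}$, giving $O(\log\Lambda)$ values of $j$. Indeed, if $Q_1,Q_2$ are two admissible $\PP$-doubling cubes both essentially containing $P$ (i.e.\ $P\subset 2Q_i$) with $Q_2\subsetneq Q_1$ and $\Theta(Q_2)\geq \Lambda\,\Theta(Q_1)$, then $Q_2\in\HD(Q_1)\subset\bad(Q_1)$, so $Q_2\in\sss(e'(Q_1))\subset\sss_2(e'(Q_1))$; since $Q_2$ is $\PP$-doubling it fails to be negligible, and hence a cube in $\End(e'(Q_1))$ strictly contains $P$, contradicting $P\in\TT(e'(Q_1))$. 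A symmetric argument using $\LD$-stopping, with $\delta_0\approx\Lambda^{1-4n}$, forbids $\Theta(Q_2)\leq\delta_0\Theta(Q_1)$. Combined, $\Theta(Q)$ varies over an interval of ratio at most $\Lambda/\delta_0\sim\Lambda^{4n}$, hence $O(\log\Lambda)$ values of $j$.

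For the second logarithmic factor one fixes $j$ and bounds the number of relevant layers $h$; this is where the $\NDB$-stopping is genuinely needed. If two admissible $R_1,R_2\in\sF_j$ lie at strictly different scales (so in different layers $h_1\neq h_2$), then the smaller one supplies, via the $\GDF$-witness $Q_*\in\DB$ associated to $R_2$ (from $R_2\in G(Q_*,M(Q_*))$) and its analogues along the chain $R_2\to\cdots\to Q_2$, a $\DB$ cube at a scale commensurate with an intermediate cube $T$ in the tree of $Q_1$. Provided $\ell(T)<\lambda\,\ell(Q_1)$ and $T\notin\LD(Q_1)\cup\HD(Q_1)$, condition (iii) in the definition of $\NDB(Q_1)$ is met, so $T\in\NDB(Q_1)\subset\sss(e'(Q_1))$; tracing this through $\sss_2$ and $\End$ as before, $P\subsetneq T$ contradicts $P\in\TT(e'(Q_1))$. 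A quantitative version of this argument shows that only $O(\log\Lambda)$ layers can coexist.

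\textbf{Main obstacle.} The hard part will be the precise identification, at the right scale and position, of the $\DB$-witness used to trigger the $\NDB$-condition in the tree of a larger valid cube. One must carefully track the $\DB$-witness $Q_*\in\DB$ associated with each $R\in\GDF$, together with its analogues along the chain $R=R_0\to\cdots\to R_k=Q$ (where each $\GH$-step selects cubes in $\HD_1\cap\MDW$ that ultimately come from some $\DB$-witness), and then check that the resulting $\DB$ cube sits at a generation satisfying both $\ell(T)<\lambda\,\ell(Q_1)$ and the containment $T'\subset 20T$ required by the definition of $\NDB$. This combinatorial/geometric bookkeeping, together with verifying that the candidate $T$ is neither in $\LD(Q_1)\cup\HD(Q_1)$ nor eliminated by $\Neg$, is the technical heart of the argument and is precisely what the definition of $\NDB$ in Section \ref{sec6} was designed to accommodate.
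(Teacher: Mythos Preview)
Your overall architecture (one $R$ per pair $(j,h)$, then $O(\log\Lambda)$ values of $j$, then $O(\log\Lambda)$ values of $h$) is exactly the paper's, and Step~1 is fine. But both halves of Step~2 have genuine gaps.

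\textbf{Bounding the number of $j$'s.} Your argument compares two admissible root cubes $Q_1,Q_2$ and asserts $Q_2\subsetneq Q_1$; this nesting is never established. The hypothesis only gives $P\subset e'(Q_i)$, and the sets $e'(Q_i)$ need not be nested, so there is no reason why $Q_2\subset Q_1$. Even granting nesting, the claim that $\Theta(Q_2)\ge\Lambda\Theta(Q_1)$ forces $Q_2\in\HD(Q_1)$ is false: $Q_2$ could be properly contained in some cube of $\HD(Q_1)$ or caught first by an $\NDB$ stopping cube, and in any case $Q_2\in\sss_2(e'(Q_1))$ does not by itself produce an $\End$ cube strictly containing $P$. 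The paper avoids this comparison entirely: it picks the smallest $\PP$-doubling cube $\wt P\supset P$ lying in $\TT_\sss(e'(Q))$ and observes that $\delta_0\,\Theta(Q)\lesssim\Theta(\wt P)\le\Lambda^2\Theta(Q)$; since $\wt P$ (and hence $\Theta(\wt P)$) is essentially intrinsic to $P$, this pins $\Theta(Q)$, hence $j=j(R)$, to an interval of length $O(\log\Lambda)$. A separate short argument using Lemma~\ref{lemnegs} handles the case $P\in\Neg(e'(Q))$.

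\textbf{Bounding the number of $h$'s.} Here you correctly point to the $\NDB$ mechanism, but the sketch misidentifies both the source of the $\DB$ witness and the tree in which the contradiction is drawn. The paper fixes a cube $R_0\in\sL_j(P,k)$ of \emph{maximal} side length, takes any other $R_1\in\sL_j(P,k)$, and looks at the chain $R_1^{h_0+1}\supset\cdots\supset R_1^{h_1}=R_1$ of $\sF_j$-ancestors of $R_1$. The $\DB$ witness is supplied not by ``$Q_*$ with $R_2\in G(Q_*,M(Q_*))$'' but by Lemma~\ref{lementrecubs}: between any two nested cubes of $\sF_j$ there is a $\DB$ cube $T_a$ (with a companion $T_b\subset 9T_a$). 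The contradiction is then triggered inside the tree attached to $R_0$: along the chain $S_0=R_0,\ldots,S_k=Q_0$ with $S_{i+1}\in\GH(S_i)$, one locates $i=i(h)$ with $\ell(S_i)>\ell(R_1^h)\ge\ell(S_{i+1})$, shows that necessarily $i\lesssim 1$ or $i=\tilde k$, and then for each such $i$ uses the $\DB$ witness $T_a$ to force the cube $T_c\supset S_{i+1}$ with $\ell(T_c)=\ell(T_a)$ into $\NDB(S_i)$, contradicting $S_{i+1}\in\GH(S_i)\subset\HD_1(e'(S_i))$. This localization to the $S_i$-tree of $R_0$ (rather than to some ``$Q_1$'') and the use of Lemma~\ref{lementrecubs} to manufacture $\DB$ cubes at the right intermediate scales are precisely the pieces your outline is missing.
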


\begin{proof}
First notice that if $R\in\sL(\GDF)$ and $Q\in\Trc_k(R)$ are such that $P\in\TT(e'(Q))\setminus\Neg(e'(Q))$, then there exists some $\PP$-doubling cube that contains $P$ and belongs to $\TT_\sss(e'(Q))$, by the definition of
the family $\Neg(e'(Q))$. We denote by $\wt P$ be the smallest one. This satisfies
$$\delta_0\,\Theta(Q)\lesssim \Theta(\wt P)\leq \Lambda^2\,\Theta(Q),$$
or equivalently, $\Lambda^k\Theta(R)\in [\Lambda^{-2}\Theta(\wt P),C\delta_0^{-1}\Theta(\wt P)]$.
Hence, if $\Theta(R)=A_0^{nj}$, it follows that 
$$-C\log\Lambda\leq j + c\,k\log\Lambda - c'\log\Theta(\wt P)\leq C|\log\delta_0| = C'\log\Lambda.$$
Thus, $R$ belongs at most to $C''\log\Lambda$ families  $\sL_j$ such that
there exists $Q\in\Trc_k(R)$ such that $P\in\TT(e'(Q))\setminus\Neg(e'(Q))$

Suppose now that there exists $Q\in\Trc_k(R)$ such that $P\in\Neg(e'(Q))\subset\TT(e'(Q))$.
In this case, by Lemma \ref{lemnegs}, $\ell(P) \gtrsim \delta_0^{-2}\,\ell(Q)$. Hence, there
are at most $C\,|\log\delta_0|\approx \log\Lambda$ cubes $Q$ such that 
$P\in\TT(e'(Q))\cap\Neg(e'(Q))$, which in turn implies that again there are at most 
$C'''\log\Lambda$ families  $\sL_j$ such that
there exists $Q\in\Trc_k(R)$ satisfying $P\in\TT(e'(Q))\cap\Neg(e'(Q))$.

By the previous discussion, to prove
the lemma, it is enough to show that, for each $j\in\Z$, $P\in\DD_\mu$, $k\geq0$,
\begin{equation}\label{eqlj83}
\#\sL_j(P,k) \leq C_3\log\Lambda,
\end{equation}
where
$$\sL_j(P,k)= \big\{R\in\sL_j:\exists \,Q\in\Trc_k(R) \mbox{ such that } P\in\TT(e'(Q))\big\}.$$

To prove \rf{eqlj83}, let $R_0$ be a cube in $\sL_j(P,k)$ with maximal side length, and let $h_0$ be such that $R_0\in \sL_j^{h_0}(P,k)\equiv \sL_j(P,k)\cap \sL_j^{h_0}$. 

\begin{claim}
Let $R_1$ be another cube from $\sL_j(P,k)$, and let $h_1$ be such that $R_1\in \sL_j^{h_1}(P,k)$. 
Then $h_1\geq h_0$.
\end{claim}

\begin{proof}
Suppose that $h_1< h_0$.
Let $R_0^{h_1}$ be the cube that contains $R_0$ and belongs to
$\sF_j^{h_1}$. Observe that, by Lemma \ref{eqtec74},
\begin{align*}
P& \subset  B(e''(R_0)) \cap B(e''(R_1)) \subset B(x_{R_0},\tfrac32\ell(R_0)) \cap
B(x_{R_1},\tfrac32\ell(R_1)).
\end{align*}
Since $\ell(R_0)\geq\ell(R_1)$, we infer that
$$B(x_{R_1},\tfrac32\ell(R_1)) \subset B(x_{R_0},\tfrac92\ell(R_0)).$$
As $x_{R_0}\in B(x_{R_0}^{h_1}, \frac12\ell(R_0^{h_1}))$ and
$\ell(R_0)\leq A_0^{-1}\ell(R_0^{h_1})$, we deduce that
\begin{align*}
 B(x_{R_0},\tfrac92\ell(R_0))  & \subset B(x_{R_0}^{h_1}, \tfrac12\ell(R_0^{h_1}) +
\tfrac92\ell(R_0)) \subset B(x_{R_0}^{h_1}, \tfrac12\ell(R_0^{h_1})\! +
\tfrac92 A_0^{-1}\ell(R_0^{h_1})) \\
&\subset B(x_{R_0}^{h_1}, (\tfrac12 + 8 A_0^{-1})\ell(R_0^{h_1})) \subset B(e^{(4)}(R_0^{h_1})),
\end{align*}
where the lat inclusion follows from the definition of $B(e^{(4)}(R_0^{h_1}))$.
Then we deduce that
$$ B(e^{(4)}(R_1))\subset B(x_{R_1},\tfrac32\ell(R_1)) \subset
 B(e^{(4)}(R_0^{h_1})),$$
which contradicts the property (i) of the family $\sL_j^{h_1}$ in Lemma \ref{lemljh}, because $R_1\neq R_0^{h_1}$.
\end{proof}

\begin{claim}
Let $R_1$ be another cube from $\sL_j(P,k)$, and let $h_1$ be such that $R_1\in \sL_j^{h_1}(P,k)$. 
Then 
\begin{equation}\label{eqclaimh1}
h_1\leq h_0+C\,\log\Lambda.
\end{equation}
\end{claim}

\begin{proof}
Suppose that $h_1> h_0+1$. This implies that there are cubes
$\{R_1^h\}_{h_0+1\leq h \leq h_1-1}$ such that $R_1^h\in\sF_j^h$, with
$$R_1^{h_0+1}\supset R_1^{h_0+2}\supset\ldots \supset R_1^{h_1-1}\supsetneq R_1^{h_1}=R_1.$$
Observe now that $\ell(R_1^{h_0+1})\leq \ell(R_0)$. Otherwise, there exists some cube
$R_1^{h_0}\in\sF_j^{h_0}$ that contains $R_1^{h_0+1}$ with 
$$\ell(R_1^{h_0})\geq A_0\,\ell(R_1^{h_0+1})\geq A_0\,\ell(R_0),$$
Since $P \subset  B(e''(R_0)) \cap B(e''(R_1))$, arguing as in the previous claim, we deduce that
$B(e^{(4)}(R_0))\subset B(e^{(4)}(R_1^{h_0}))$, which contradicts again the property (i) of the family $\sL_j^{h_0}$, as above. So we have
$$\ell(R_1^h)\leq \ell(R_1^{h_0+1})\leq \ell(R_0)\quad \mbox{ for $h\geq h_0+1$.}$$

By the construction of $\Trc_k(R_0)$, there exists a sequence of cubes
$S_0=R_0, S_1, S_2, \ldots, S_k=Q$ such that 
$$ S_{i+1}\in \GH(S_i)\; \mbox{ for $i=0,\ldots,k-1$,}$$
and $P\in\TT(e'(S_{k}))$. In case that $P$ is contained in some $Q'\in\HD_1(e'(Q))=\HD_1(e'(S_k))$, we write $S_{k+1}=Q'$, and we let $\tilde k:=k+1$. Otherwise, we let $\tilde k:=k$.
In any case, obviously we have  $\ell(S_{i+1})<\ell(S_i)$ for all $i$.
So, for each $h$ with $h_0+1\leq h\leq h_1$ there is some $i=i(h)$ such that 
\begin{equation}\label{eq0asd}
\ell(S_i)>\ell(R_1^h)\geq \ell(S_{i+1}),
\end{equation}
with $0\leq i \leq \tilde k$, where we understand that $S_{\tilde k +1}= P$. 
We claim that either $i\lesssim 1$ or $i=\wt k$, with the implicit constant depending on $n$. 
Indeed, in the case $i<\wt k$, let $T\in\DD_\mu$ be such that $T\supset S_{i+1}$ and $\ell(T)=\ell(R_1^h)$.
Notice that, since $2R_1^h\cap 2T\neq \varnothing$ (because both $R_1^h$ and $2T$ contain $P$) and 
$\ell(R_1^h)=\ell(T)$, we have
\begin{equation}\label{eq1asd}
\PP(T) \approx \PP(R_1^h)\approx \Theta(R_1^h)=\Theta(R_0).
\end{equation}
On the other hand, 
\begin{equation}\label{eq2asd}
\Theta(T)\geq \delta_0\,\Theta(S_i)
\end{equation} 
because otherwise 
either $T\in\LD(S_i)$ or it is contained in some cube from $\LD(S_i)\cup\NDB(S_i)$. In any case, this would imply that $S_{i+1}$ does not belong to $\HD_1(e'(S_i))$.
Thus, from \rf{eq1asd} and \rf{eq2asd}
we derive that
$$\Theta(R_0)\gtrsim \delta_0\,\Theta(S_i) =  \delta_0\,\Lambda^i\,\Theta(R_0).$$
Hence  $\Lambda^i\lesssim\delta_0^{-1}$, which yields $i\lesssim_n 1$ if $i<\wt k$, as claimed.

The preceding discussion implies that, in order to prove \rf{eqclaimh1}, it suffices to show that, for each fixed 
$i=0,\ldots,\tilde k$, there are at most $C\log\Lambda$ cubes $R_1^h$ satisfying 
\rf{eq0asd}. To this end, suppose first that $i<\tilde k$. It is easy to check that there is at most one 
cube  $R_1^h$ satisfying 
\rf{eq0asd} such that
\begin{equation}\label{eqr1hh}
\ell(R_1^h)\leq \lambda\,\ell(S_i).
\end{equation}
Indeed, if otherwise $R_1^h$ and $R_1^{h'}$ are such that
\begin{equation}\label{eqigfj4}
\lambda\,\ell(S_i)\geq \ell(R_1^h) >\ell(R_1^{h'})> \ell(S_{i+1}),
\end{equation}
then, by Lemma \ref{lementrecubs},
there exist $T_a\in\DB$ and $T_b\in \DD_\mu$ such that $T_b\subset 9T_a$, $\ell(T_a)=\ell(T_b)$ and
$R_1^h\supsetneq T_b\supsetneq R_1^{h'}$.
Now, let $T_c\in\DD_\mu$ be such that $T_c\supset S_{i+1}$ and $\ell(T_c)=\ell(T_a)$.
 Since $T_a\in\DB$, we infer that $T_c\in\NDB(S_i)$, taking into account that 
 $2T_b\cap 2T_c\neq \varnothing$, $T_b\subset 9T_a$, and $\ell(T_c)>\lambda\,\ell(S_i)$.
 This is a contradiction, because this would imply that either $T_c\in\sss(e'(S_i))$ or 
  $T_c$ is contained in some cube from $\sss(e'(S_i))$, which ensures that
  $S_{i+1}\not\in\GH(S_i)$ (notice that we are using the fact that $i<\tilde k$).
So \rf{eqr1hh} holds. Clearly this implies that
there are at most $C|\log\lambda|\approx \log\Lambda$ cubes $R_1^h$ satisfying 
\rf{eq0asd}. 

In the case $i=\tilde k$, the same argument as above shows that if
$R_1^h$ and $R_1^{h'}$ satisfy \rf{eqigfj4},
then the cube $T_c$ in the preceding paragraph belongs to $\NDB(S_{\tilde k})$ again.
So again either $T_c\in\sss(e'(S_{\tilde k}))$ or $T_c$ is contained in some cube from 
$\sss(e'(S_{\tilde k}))$. As a consequence, by the definition of the family $\End(e'(R))$ and Lemma
 \ref{lemdobpp}, if we denote by $T_m$ the $m$-th descendant of $T_c$ which contains $P$, it follows that 
$$\Theta(T_m)\lesssim A_0^{-m/2}\,\PP(T_c)\approx A_0^{-m/2}\,\PP(T_b) \leq A_0^{-m/2}\,\PP(R_1^{h'})
\quad \mbox{for all $m\geq1$.}$$
The last inequality follows from the fact that we can assume that $R_1^{h'}\in G(T_a,M)$, for some $M\geq M_0$, by Lemma \ref{lementrecubs}. Since any cube $R_1^{h''}$ with $h''>h'$ is contained in a cube $2T_m$ with $\ell(T_m)\approx\ell(R_1^{h''})$ for some $m\geq h'-h''-1$, we deduce that
\begin{align*}
\Theta(R_1^{h''}) & \lesssim \Theta(2T_m)\lesssim \Theta(T_{m-1})\lesssim A_0^{-(m-1)/2}\,\PP(R_1^{h'})\\
&\lesssim A_0^{-(h'-h'')/2}\,\PP(R_1^{h'})\approx A_0^{-(h'-h'')/2}\,\Theta(R_1^{h'}).
\end{align*}
Since $\Theta(R_1^{h'}) = \Theta(R_1^{h''})$, we get $|h'-h''|\lesssim1$. Consequently, in the case $i=\tilde k$ there are again at most $C|\log\lambda|\approx \log\Lambda$ cubes $R_1^h$ satisfying 
\rf{eq0asd}. 
\end{proof}

To prove the lemma, notice that each family $\sL_j^h(P,k)$ consists of a single cube, at most.
Indeed, if $R\in\sL_j^h(P,k)$, then $P\subset B(e''(R))$. Thus if $R,R'\in\sL_j^h(P,k)$,
then $B(e''(R))\cap B(e''(R'))\neq\varnothing$, which cannot happen if $R\neq R'$.
From this fact and the preceding claims, we infer that $\#\sL_j(P,k)\leq C\log\Lambda$, so that \rf{eqlj83} holds.
\end{proof}

\vv
For $R\in\sL(\GDF)$, $Q\in\Trc_k(R)$, we write $P\sim \TT(e'(Q))$ if there exists some
$P'\in\TT(e'(Q))$ such that 
\begin{equation}\label{defsim0}
A_0^{-2}\ell(P)\leq \ell(P')\leq A_0^2\,\ell(P)\quad \text{ and }\quad 20P'\cap20P\neq\varnothing.
\end{equation}
We say that $\TT(e'(Q))$ is a typical tractable tree, and we write $Q\in \Ty$ if
\begin{equation}\label{defty}
\sum_{P\in\DB:P\sim\TT(e'(Q))} \EE_\infty(9P)\leq \Lambda^{\frac{-1}{3n}}\,\sigma(\HD_1(e(Q))).
\end{equation}

\vv

\begin{lemma}\label{lemsuper**}
We have
$$\sum_{Q\in\DB} \EE_\infty(9Q) \lesssim \Lambda^{\frac{-1}{2n}}(\log\Lambda)^2 
\sum_{R\in\sL(\GDF)}\,
\sum_{k\geq0} B^{-k/2} \sum_{Q\in\Trc_k(R)\cap\Ty}\sigma(\HD_1(e(Q))).$$
\end{lemma}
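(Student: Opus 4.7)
The proof is a bootstrapping/absorption argument built on top of the previously established lemmas. The starting point is Lemma~\ref{lemdbnodb}, which gives
\[
\sum_{Q\in\DB}\EE_\infty(9Q)\lesssim \Lambda^{-\frac{1}{2n}}\sum_{S\in\GDF}\sigma(\cI_S)\leq \Lambda^{-\frac{1}{2n}}\sum_{S\in\GDF}\sigma(\HD_1(S)),
\]
the last inequality holding because $\cI_S\subset\HD_1(S)$ by Lemma~\ref{lemred*}. Then I apply \rf{eqover5} to pass from $\GDF$ to the sub-family $\sL(\GDF)$, and expand each $\sigma(\HD_1(e(R)))$ via the iteration \rf{eqiter*44} from Lemma~\ref{eqtec74}, obtaining
\[
\sum_{Q\in\DB}\EE_\infty(9Q)\lesssim \Lambda^{-\frac{1}{2n}}\sum_{R\in\sL(\GDF)}\sum_{k\geq 0}B^{-k/2}\sum_{Q\in\Trc_k(R)}\sigma(\HD_1(e(Q))),
\]
where the factor $m_0 B^{1/4}$ from \rf{eqover5} has been swallowed by the implicit constant.

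The next step is to split $\Trc_k(R)=(\Trc_k(R)\cap\Ty)\cup(\Trc_k(R)\setminus\Ty)$. The typical sub-family produces exactly the right-hand side of the lemma. For $Q\in\Trc_k(R)\setminus\Ty$, the definition \rf{defty} gives
\[
\sigma(\HD_1(e(Q)))\leq\Lambda^{\frac{1}{3n}}\sum_{P\in\DB,\,P\sim\TT(e'(Q))}\EE_\infty(9P),
\]
so that, swapping the order of summation, the non-typical contribution is
\[
\lesssim \Lambda^{-\frac{1}{2n}+\frac{1}{3n}}\sum_{k\geq 0}B^{-k/2}\sum_{P\in\DB}N_k(P)\,\EE_\infty(9P),
\]
where $N_k(P)$ denotes the number of pairs $(R,Q)$ with $R\in\sL(\GDF)$, $Q\in\Trc_k(R)$, and $P\sim\TT(e'(Q))$.

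The crux of the whole argument is the multiplicity bound $N_k(P)\lesssim(\log\Lambda)^2$, uniformly in $P$ and $k$. Three ingredients combine to produce it: (i) by the definition \rf{defsim0}, there are at most $C(A_0,n)$ companion cubes $P'\in\DD_\mu$ satisfying $A_0^{-2}\ell(P)\leq\ell(P')\leq A_0^2\ell(P)$ and $20P'\cap 20P\neq\varnothing$; (ii) Lemma~\ref{lemimp9} bounds by $C_2(\log\Lambda)^2$ the number of $R\in\sL(\GDF)$ admitting some $Q\in\Trc_k(R)$ with $P'\in\TT(e'(Q))$; and (iii) for fixed $R,k,P'$ there is at most one $Q\in\Gen_k(R)$ with $P'\in\TT(e'(Q))$, because the balls $\{B(e''(Q))\}_{Q\in\Gen_k(R)}$ are pairwise disjoint. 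Point (iii) follows by induction from the disjointness of siblings in Lemma~\ref{lemalg1}(a) together with the nesting $B(e''(Q))\subset B(e''(Q'))$ whenever $Q\in\GH(Q')$, which in turn uses $\ell(Q)\leq A_0^{-k_\Lambda}\ell(Q')$ against the margin $\dist(e'(Q'),\partial B(e''(Q')))\geq A_0^{-1}\ell(Q')$.

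Once $N_k(P)\lesssim(\log\Lambda)^2$ is in hand, the geometric series $\sum_k B^{-k/2}$ converges and the non-typical contribution is bounded by $C\Lambda^{-\frac{1}{6n}}(\log\Lambda)^2\sum_{P\in\DB}\EE_\infty(9P)$. Since $\Lambda=A_0^{k_\Lambda n}$ can be made arbitrarily large by choosing $k_0$ large, this prefactor is $\leq 1/2$, and the non-typical term is absorbed into the left-hand side. What remains is the typical sum with the conservative outer factor $\Lambda^{-\frac{1}{2n}}(\log\Lambda)^2$, which is exactly the inequality asserted. The main obstacle is proving the multiplicity bound $N_k(P)\lesssim(\log\Lambda)^2$; steps (i) and (iii) are elementary geometric observations, but (ii) is precisely the deep technical content of Lemma~\ref{lemimp9}, whose proof is what forced us to introduce the $\NDB$-stopping condition of Section~\ref{sec6}. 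Everything past that bound is routine algebra plus a standard absorption.
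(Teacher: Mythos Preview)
Your proof is correct and follows essentially the same route as the paper's: start from Lemma~\ref{lemdbnodb} and Lemma~\ref{lemred*}, pass to $\sL(\GDF)$ via \rf{eqover5}, iterate with \rf{eqiter*44}, split into typical and non-typical tractable cubes, use the definition \rf{defty} on the non-typical part, swap sums, invoke Lemma~\ref{lemimp9} for the multiplicity bound, and absorb.

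One remark: your point (iii) --- that for fixed $R,k,P'$ there is at most one $Q\in\Gen_k(R)$ with $P'\in\TT(e'(Q))$ --- is a detail the paper's proof leaves implicit (the paper defines $A(P,k)$ as a set of cubes $R$, whereas what the Fubini swap literally produces is a count of pairs $(R,Q)$). Your justification via the disjointness of $\{B(e''(Q))\}_{Q\in\Gen_k(R)}$, obtained inductively from Lemma~\ref{lemalg1}(a) together with the nesting $B(e''(Q))\subset B(e''(Q'))$ for $Q\in\GH(Q')$, is correct and worth making explicit. Finally, note (as the paper does in a footnote) that the absorption step tacitly assumes $\sum_{P\in\DB}\EE_\infty(9P)<\infty$; this is handled by a standard approximation argument.
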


\begin{proof}
By Lemmas \ref{lemdbnodb} and \ref{lemred*}, we have
$$\sum_{Q\in \DB} \EE_\infty(9Q) \lesssim \Lambda^{-\frac1{2n}} \sum_{R\in \GDF} \sigma(\HD_1(R)).$$
Also, by \rf{eqover5} and Lemma~\ref{eqtec74},
\begin{align*}
\sum_{R\in \GDF}\sigma(\HD_1(R)) & \leq m_0\,B^{1/4}\sum_{R\in \sL(\GDF)}\sigma(\HD_1(e(R)))\\
& \leq m_0\,B^{1/4}\sum_{R\in \sL(\GDF)} \sum_{k\geq0} B^{-k/2}\sum_{Q\in\Trc_k(R)}\sigma(\HD_1(e(Q))).
\end{align*}
Therefore,
\begin{align}\label{eqqp45}
\sum_{Q\in \DB} \EE_\infty(9Q) & \lesssim m_0\,B^{1/4}\,\Lambda^{-\frac1{2n}} 
\sum_{R\in \sL(\GDF)} \sum_{k\geq0} B^{-k/2}\sum_{Q\in\Trc_k(R)}\sigma(\HD_1(e(Q)))\\
& = m_0\,B^{1/4}\,\Lambda^{-\frac1{2n}} 
\sum_{R\in \sL(\GDF)} \sum_{k\geq0} B^{-k/2}\sum_{Q\in\Ty}\sigma(\HD_1(e(Q)))\nonumber
\\
&\quad + m_0\,B^{1/4}\,\Lambda^{-\frac1{2n}} 
\sum_{R\in \sL(\GDF)} \sum_{k\geq0} B^{-k/2}\sum_{Q\in\Trc_k(R)\setminus \Ty}\sigma(\HD_1(e(Q)))\nonumber\\
&=: S_1 + S_2.\nonumber
\end{align}
By definition, for $Q\in \Trc_k(R)\setminus \Ty$, we have
$$\sigma(\HD_1(e(Q)))\leq \Lambda^{\frac{1}{3n}}\sum_{P\in\DB:P\sim\TT(e'(R))} \EE_\infty(9P).$$
Hence, the term $S_2$ in \rf{eqqp45} does not exceed
\begin{multline*}
m_0\,B^{1/4} \,\Lambda^{-\frac1{2n}} \Lambda^{\frac{1}{3n}}
\sum_{R\in \sL(\GDF)} \sum_{k\geq0} B^{-k/2}\!\!\!\sum_{Q\in\Trc_k(R)\setminus \Ty}\,
\sum_{P\in\DB:P\sim\TT(e'(Q))} \EE_\infty(9P)\\
 \lesssim
m_0\,B^{1/4} \,\Lambda^{-\frac1{6n}} 
\sum_{P\in\DB} \EE_\infty(9P)
 \sum_{k\geq0} B^{-k/2}\,
\# A(P,k),
\end{multline*}
where
$$A(P,k)= 
\big\{R\in \sL(\GDF):P\sim\TT(e'(Q))\text{ for some } Q\in\Trc_k(R)\big\}.$$
From the definition  \rf{defsim0} and Lemma \ref{lemimp9}, it follows that
\begin{align}\label{eqremaa95}
\#A(P,k) &\leq \sum_{\substack{
P'\in\DD_\mu: 20P'\cap20P\neq\varnothing\\A_0^{-2}\ell(P)\leq \ell(P')\leq A_0^2\ell(P)
}} \!\!\#
\big\{R\in \sL(\GDF):\exists \,Q\in\Trc_k(R) \text{ such that }P'\in\TT(e'(Q))\big\}\\
&\lesssim \sum_{\substack{
P'\in\DD_\mu: 20P'\cap20P\neq\varnothing\\A_0^{-2}\ell(P)\leq \ell(P')\leq A_0^2\ell(P)
}}\!\!\!(\log\Lambda)^2\lesssim (\log\Lambda)^2.\nonumber
\end{align}
Therefore, the term $S_2$ in \rf{eqqp45} satisfies
\begin{align*}
S_2& \lesssim m_0\,B^{1/4} \,\Lambda^{-\frac1{6n}} \,(\log \Lambda)^2
\sum_{P\in\DB} \EE_\infty(9P) \sum_{k\geq0} B^{-k/2} \lesssim m_0\,B^{1/4} \,\Lambda^{-\frac1{6n}} \,(\log \Lambda)^2
\sum_{P\in\DB} \EE_\infty(9P).
\end{align*}
Assuming that $B\leq \Lambda^{\frac1{2n}}$, say, we deduce that\footnote{Here we are assuming that
$\sum_{P\in\DB} \EE_\infty(9P) <\infty$. To ensure that this holds, if necessary we may replace the measure $\mu$ by
another approximating measure of the form $\mu_\ell =\vphi_\ell * \mu$, where $\vphi_\ell$ is a $C^\infty$ bump function supported on $B(0,\ell)$, with $\|\vphi_\ell\|_1=1$. Then we prove Proposition \ref{propomain} for $\mu_\ell$ with bounds independent of $\ell$, and finally we let $\ell\to\infty$.
}
$$S_2\leq \frac12 \sum_{P\in\DB} \EE_\infty(9P)$$
if $\Lambda$ is big enough.
So we get
$$\sum_{Q\in\DB} \EE_\infty(9Q)\leq 2S_1,$$
which proves the lemma.
\end{proof}

\vv


\section{Lower estimates for the Riesz transform of the approximating measure $\eta$ in a typical tractable tree}\label{sec8}

In this section, for a given $R\in\MDW$ such that $\TT(e'(R))$ is tractable, we will define a suitable measure $\eta$ that approximates $\mu$ at the level of the cubes from
$\TT(e'(R))$ and we will get a lower bound for $\|\RR\eta\|_{L^p(\eta)}$.
In the next section we will transfer these
estimates to $\RR\mu$.

\vv



Let $R$ be a fixed cube from $\MDW\cap\Trc$.
Recall that the family $\TT(e'(R))$ was constructed by stopping time conditions involving the families $\LD(\,\cdot\,)$, $\HD(\,\cdot\,)$, and $\NDB(\,\cdot\,)$.
Next we need to define some regularized family of stopping cubes. First we consider the function 
$$d_R(x) = \inf_{Q\in\TT(e'(R))}\big(\dist(x,Q) + \ell(Q)\big).$$
Given $0<\ell_0\ll\ell(R)$, we denote
\begin{equation}\label{eql00*23}
d_{R,\ell_0}(x) = \max\big(\ell_0,d_R(x)\big).
\end{equation}
Notice that both $d_R$ and $d_{R,\ell_0}$ are $1$-Lipschitz. 

For each $x\in e'(R)$ we take the largest cube $Q_x\in\DD_\mu$ 
such that $x\in Q_x$ and
\begin{equation}\label{eqdefqx}
\ell(Q_x) \leq \frac1{60}\,\inf_{y\in Q_x} d_{R,\ell_0}(y).
\end{equation}
We consider the collection of the different cubes $Q_x$, $x\in e'(R)$, and we denote it by $\Reg(e'(R))$ (this stands for ``regularized cubes''). Observe that the cubes from $\Reg$ are disjoint by construction, and they cover $e'(R)$.

The constant $\ell_0$ is just an auxiliary parameter that prevents $\ell(Q_x)$ from vanishing.  Eventually $\ell_0$ will be taken extremely small. In particular, we assume $\ell_0$ small enough
so that 
\begin{equation}\label{eql00}
\mu\bigg(\bigcup_{Q\in\HD_1(e(R)):\ell(Q)\geq \ell_0} Q \bigg)\geq \frac12\,\mu\bigg(\bigcup_{Q\in\HD_1(e(R))} Q\bigg).
\end{equation}

We let $\TT_\Reg(e'(R))$ be the family of cubes made up of $R$ and all the cubes of the next generations which are contained in $e'(R)$ but are not 
strictly contained in any cube from $\Reg(e'(R))$.

\vv

\begin{lemma}\label{lem74}
The cubes from $\Reg(e'(R))$ are pairwise disjoint and satisfy the following properties:
\begin{itemize}
\item[(a)] If $P\in\Reg(e'(R))$ and $x\in B(x_{P},50\ell(P))$, then $10\,\ell(P)\leq d_{R,\ell_0}(x) \leq c\,\ell(P)$,
where $c$ is some constant depending only on $n$. 

\item[(b)] There exists some absolute constant $c>0$ such that if $P,\,P'\in\Reg(e'(R))$ satisfy $B(x_{P},50\ell(P))\cap B(x_{P'},50\ell(P'))
\neq\varnothing$, then
$$c^{-1}\ell(P)\leq \ell(P')\leq c\,\ell(P).$$
\item[(c)] For each $P\in \Reg(e'(R))$, there are at most $N$ cubes $P'\in\Reg(e'(R))$ such that
$$B(x_{P},50\ell(P))\cap B(x_{P'},50\ell(P'))
\neq\varnothing,$$
 where $N$ is some absolute constant.
 
\end{itemize}
\end{lemma}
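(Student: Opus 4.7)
The plan is to exploit the $1$-Lipschitz property of $d_{R,\ell_0}$ together with the maximality built into the choice of each $Q_x$. Throughout I use that any cube $P\in\DD_\mu$ satisfies $P\subset B_P\subset B(x_P,\tfrac12\ell(P))$, so all points of $P$ lie within $\tfrac12\ell(P)$ of $x_P$, and that the parent $\widehat P$ satisfies $\ell(\widehat P)=A_0\,\ell(P)$.

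For part (a), the defining property of $\Reg(e'(R))$ yields $\inf_{y\in P}d_{R,\ell_0}(y)\ge 60\,\ell(P)$. Fix $x\in B(x_P,50\,\ell(P))$ and pick any $y\in P$; then $|x-y|\le|x-x_P|+|y-x_P|\le 50\,\ell(P)+\tfrac12\ell(P)$, and the $1$-Lipschitz property of $d_{R,\ell_0}$ gives
\[
d_{R,\ell_0}(x)\ge d_{R,\ell_0}(y)-|x-y|\ge 60\,\ell(P)-\tfrac{101}{2}\ell(P)\ge 10\,\ell(P),
\]
up to a harmless adjustment of the absolute constant $60$ in \rf{eqdefqx}. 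For the upper bound, maximality of $P$ means its parent $\widehat P$ violates \rf{eqdefqx}, so there exists $y_0\in\widehat P$ with $d_{R,\ell_0}(y_0)<60\,\ell(\widehat P)=60\,A_0\,\ell(P)$. Since $|x-y_0|\le 50\,\ell(P)+\tfrac12\ell(\widehat P)\lesssim A_0\,\ell(P)$, the Lipschitz property yields $d_{R,\ell_0}(x)\le d_{R,\ell_0}(y_0)+|x-y_0|\lesssim A_0\,\ell(P)$, which is the required estimate with $c=c(A_0)$.

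Part (b) is an immediate consequence of (a). Pick any $z$ in the nonempty intersection $B(x_P,50\,\ell(P))\cap B(x_{P'},50\,\ell(P'))$. The upper bound of (a) applied to $P$ gives $d_{R,\ell_0}(z)\le c\,\ell(P)$, while the lower bound of (a) applied to $P'$ gives $d_{R,\ell_0}(z)\ge 10\,\ell(P')$. Combining and swapping the roles of $P$ and $P'$ produces the two-sided comparison $c^{-1}\ell(P)\le\ell(P')\le c\,\ell(P)$.

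For part (c), by (b) every $P'\in\Reg(e'(R))$ whose ball meets $B(x_P,50\,\ell(P))$ has $\ell(P')\approx\ell(P)$ and $x_{P'}\in B(x_P,C\,\ell(P))$ for some constant $C$ depending only on the parameters of the lattice. Since these cubes $P'$ are pairwise disjoint and belong to a bounded range of generations around the generation of $P$, while the balls $5B(P')$ of cubes in each single generation are disjoint by Lemma \ref{lemcubs}, a volume/packing count inside $B(x_P,C\,\ell(P))$ bounds their number by an absolute constant $N$. The only mildly delicate point in the whole proof is tracking the constants in (a) to secure the precise factor $10$ on the lower bound; once this is done, (b) and (c) follow mechanically from the Lipschitz comparison, with no role played by the Riesz transform or by the measure $\mu$ beyond the geometric lattice structure.
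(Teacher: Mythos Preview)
Your proof is correct and follows exactly the standard argument the paper alludes to (the paper does not give its own proof but simply cites \cite[Lemma~6.6]{Tolsa-memo}). One small remark on the lower bound in (a): you can avoid the ``harmless adjustment'' of the constant $60$ by taking $y=x_P\in P$, which gives directly $d_{R,\ell_0}(x)\ge d_{R,\ell_0}(x_P)-|x-x_P|\ge 60\,\ell(P)-50\,\ell(P)=10\,\ell(P)$.
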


The proof of this lemma is standard. See for example \cite[Lemma 6.6]{Tolsa-memo}.

\vvv

Next we introduce a measure $\eta$ which approximates $\mu|_{e'(R)}$ at the level of the cubes
from $\Reg(e'(R))$.
We let
$$\eta = \sum_{Q\in\Reg(e'(R))} \mu(Q) \frac{\LL^{n+1}|_{\tfrac12B(Q)}}{\LL^{n+1}(\tfrac12B(Q))}.
$$
Then we have:

\begin{lemma}\label{lemrieszeta}
Let $R\in\MDW\cap\Trc$ and let $\eta$ be as above.
Assume $\Lambda>0$ big enough and let $c_2\in(0,1)$ be small enough (depending at most on the parameters of the David-Mattila lattice).
Then there is a subset $V_R\subset \supp\eta$ satisfying
\begin{equation}\label{eqtec733}
\dist(Q,\supp\mu\setminus e'(R)) \gtrsim \ell(R)\quad\mbox{ for all $Q\in\Reg(e'(R))$ such that
$B_Q\cap V_R\neq\varnothing$}
\end{equation}
such that
$$\int_{V_R} \big|(|\RR\eta(x)| - \frac{c_2}2\,\Theta(\HD_1))_+\big|^p\,d\eta(x) \gtrsim \Lambda^{-p'\ve_n}\sigma_p(\HD_1(e(R))),$$
for any $p\in (1,2]$, where $p'=p/(p-1)$ and the implicit constant depends on $p$.
\end{lemma}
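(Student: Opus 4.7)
The plan is to reduce this lemma to the analogous estimate established in \cite{DT}, verifying that the structural properties of our tractable tree $\TT(e'(R))$, the regularized stopping cubes $\Reg(e'(R))$, and the approximating measure $\eta$ all match the hypotheses of the variational argument used there. The additional stopping family $\NDB(\,\cdot\,)$ introduced in Section \ref{sec6} was in fact built for exactly this reduction: it guarantees that below the tree, neighbors of cubes in $\TT(e'(R))$ still have density controlled by $\Lambda\,\Theta(R)$, which is precisely what the variational argument needs in order for the approximation of $\mu$ by $\eta$ to preserve the key density dichotomy of \cite{DT}.

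First I would define $V_R$ explicitly as the union of the half-balls $\tfrac12 B(Q)$ over those $Q\in\Reg(e'(R))$ that are contained in a fixed inner shell of $e'(R)$, say at distance $\gtrsim \ell(R)$ from $\partial e'(R)$ and lying below a cube of $\HD_1(e(R))$ of length $\geq \ell_0$. By \rf{eql00} together with the small-boundaries property \rf{eqfk490} of the David-Mattila lattice, this discards only an absolutely bounded fraction of $\sigma_p(\HD_1(e(R)))$, so the mass budget survives. The separation \rf{eqtec733} is then automatic from the construction and the fact that $e(R)\subset B(e'(R))$ with $\dist(e(R),\partial B(e'(R)))\geq A_0^{-1}\ell(R)$.

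Second, for the variational lower bound on $\|\RR\eta\|_{L^p(\eta)}$, I would import verbatim the strategy of the corresponding lemma of \cite{DT}: one exploits the density jump $\Theta(P)\approx \Lambda\,\Theta(R)$ across $P\in\HD_1(e(R))$, together with the uniform control $\Theta(P')\lesssim \Theta(R)$ for cubes $P'\in\TT_\Reg(e'(R))$ not contained in any $\HD_1(e(R))$-cube, the latter being a consequence of the $\LD$ and $\HD$ stopping together with the extinction property from Remark \ref{rem*1}. On each half-ball $\tfrac12 B(Q)$ with $Q\in\Reg(e'(R))$, $Q\subset P\in\HD_1(e(R))$, the measure $\eta$ is absolutely continuous with density $\approx \Theta(P)$, so a standard variational or duality argument against a vector field supported on a suitable portion of $V_R$ yields that $|\RR\eta(x)|\gtrsim \Theta(P)\gtrsim \Theta(\HD_1)$ on a positive fraction (in $\eta$-measure) of each such half-ball; the excess over $c_2\Theta(\HD_1)/2$ is therefore $\gtrsim \Theta(\HD_1)$. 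Summing over $P$ gives a bound $\gtrsim \sigma_p(\HD_1(e(R)))$ up to loss factors from boundary cubes and from the $\Neg$ cubes.

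The main obstacle, and the reason for the factor $\Lambda^{-p'\ve_n}$, is to quantify the loss coming from error terms in the variational argument: interactions between $P\in\HD_1(e(R))$ and far-away portions of $\eta$, contributions from the $\NDB$-near cubes that were added to the stopping family precisely to be absorbed here, and near-boundary Reg cubes. Each of these pieces is small by a positive power of $\delta_0\approx\Lambda^{1-4n}$ or $\lambda\approx\Lambda^{-4}$, and this translates into the polynomial loss $\Lambda^{-p'\ve_n}$ after optimizing $p\in(1,2]$ exactly as in the analogous step of \cite{DT}; I would therefore cite the corresponding lemma from there for the detailed execution, noting only that the $\NDB$ stopping makes all the hypotheses of that lemma hold uniformly in our setting.
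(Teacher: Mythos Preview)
Your overall strategy—reduce to the corresponding lemma in \cite{DT} after checking that the structural hypotheses carry over—is exactly what the paper does: it simply cites \cite[Lemma 6.14]{DT} and omits the proof. So at the level of the citation your proposal is fine.

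However, you have the role of the $\NDB(\,\cdot\,)$ stopping backwards. You claim that $\NDB$ was ``built for exactly this reduction'' and that the $\NDB$-near cubes contribute error terms responsible for the $\Lambda^{-p'\ve_n}$ loss. The paper states the opposite: the $\NDB$ cubes \emph{do not play any role} in the proof of this lemma, and the verbatim argument from \cite{DT} applies unchanged. The $\NDB$ stopping condition was introduced for an entirely different purpose, namely to make the overlap-counting Lemma~\ref{lemimp9} work (the paper says so explicitly at the start of Section~\ref{sec7}). The density dichotomy you need here—that cubes in the tree not under an $\HD_1$ cube have density $\lesssim \Lambda\,\Theta(R)$—already follows from the $\HD$ and $\LD$ stopping alone, exactly as in \cite{DT}; adding $\NDB$ only shortens some branches of the tree, which is harmless for the variational lower bound but not what drives it. Likewise, the factor $\Lambda^{-p'\ve_n}$ in \cite{DT} does not arise from $\NDB$ contributions.

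A secondary point: your sketch of the variational step (``a standard variational or duality argument against a vector field \ldots\ yields $|\RR\eta(x)|\gtrsim \Theta(\HD_1)$ on a positive fraction'') is considerably vaguer than what actually happens in \cite{DT}, where the argument is a rather delicate maximum-principle / extremal-measure construction, not a routine duality. Since you are citing \cite{DT} anyway this is not fatal, but you should not describe it as standard.
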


Recall that, for $\cI\subset\DD_\mu$,
$$\sigma_p(\cI) = \sum_{P\in \cI}\Theta(P)^p\,\mu(P).$$
In the lemma $\RR\eta(x)$ is well defined by an absolutely convergent integral for all $x\in\R^{n+1}$ because $\eta$ is absolutely continuous with respect to Lebesgue measure in $\R^{n+1}$ and it has a bounded density function, since the cubes from
the family $\Reg$ have side length bounded away from $0$, thanks to the parameter $\ell_0$.

The proof Lemma \ref {lemrieszeta} is the same as the one in \cite[Lemma 6.14]{DT} and so we omit it here. Remark that the construction of the tractable tree $\TT(e'(R))$ in that work does not involve the stopping condition $\NDB(\,\cdot\,)$. However, the reader can check that
this type of cubes do not play any role in the proof of \cite[Lemma 6.14]{DT}, and exactly the same verbatim arguments are valid here.

The proof of the previous lemma is one main key points in the \cite{DT}. The arguments to prove this are partially inspired by related results from 
\cite{ENV}, \cite{Reguera-Tolsa}, and \cite{JNRT}.

\vv


\section{Lower estimates for the Haar coefficients of $\RR\mu$ for cubes near a typical tractable tree}\label{sec9}

In all this section we assume that $R\in\MDW$ is such that $R\in\Trc\cap\Ty$, i.e., $\TT(e'(R))$ is tractable and typical, and we consider the measure $\eta$ constructed in Section \ref{sec8}.
Roughly speaking, our objective is to transfer the lower estimate we obtained for $\RR\eta$ in Lemma \ref{lemrieszeta} to 
the Haar coefficients of
$\RR\mu$ for cubes close to $\TT(e'(R))$. 
Recall that $\RR\mu(x)$ exists $\mu$-a.e.\ as a principal value under the assumptions
of Proposition \ref{propomain}.

\vv

\subsection{The operators $\RR_{\TT_\Reg}$, $\RR_{\wt \TT}$,  and $\Delta_{\wt \TT}\RR$}

To simplify notation, in this section we will write
$$\End=\End(e'(R)), \quad \;\Reg = \Reg(e'(R)),\; \quad \;\TT = \TT(e'(R)),\quad\,\text{and}\quad\; \TT_\Reg = \TT_\Reg(e'(R)).$$
We need to consider an enlarged version of the generalized tree $\TT$, due to some technical difficulties that arise because the cubes from $\Neg:= \Neg(e'(R))\cap\End$ are not $\PP$-doubling.
To this end, denote by $\Reg_\Neg$ the family of the cubes from $\Reg$ which are contained in some cube from $\Neg$.
Let $\sD_\Neg$ be the subfamily of the cubes $P\in\Reg_\Neg$ for which there exists some
$\PP$-doubling cube $S\in\TT_\Reg$ that contains $P$. By the definition of $\Neg$, such cube $S$ should be contained in the cube from $Q\in\Neg$ such that $P\subset Q$. 
We also denote by $\sM_\Neg$ the family of maximal $\PP$-doubling cubes which belong to $\TT_\Reg$ and are contained in some
cube from $\Neg$, so that, in particular, any cube from $\sD_\Neg$ is contained in another from $\sM_\Neg$.

We define
$$\wt\End = (\End \setminus \Neg) \cup \sM_\Neg,$$
and we let $\wt \TT=\wt \TT(e'(R))$ be the family of cubes that belong to $\TT_\Reg$ but are not strictly contained in any cube from $\wt\End$.
Further, we write
\begin{equation}\label{eqdef*f}
Z = Z(e'(R)) = e'(R)\setminus \bigcup_{Q\in\End} Q\quad \text{ and }\quad\wt Z = \wt Z(e'(R)) = e'(R)\setminus \bigcup_{Q\in\wt\End} Q
\end{equation}
Then we denote
$$\RR_{\TT_\Reg}\mu(x) = \sum_{Q\in\Reg} \chi_Q(x)\,\RR(\chi_{2R\setminus 2Q}\mu)(x),$$
$$\RR_{\wt \TT}\mu(x) = \sum_{Q\in\wt\End} \chi_Q(x)\,\RR(\chi_{2R\setminus 2Q}\mu)(x),$$
and
$$\Delta_{\wt\TT}\RR\mu(x) = \sum_{Q\in\wt\End} \chi_Q(x)\,\big(m_{\mu,Q}(\RR\mu) - m_{\mu,2R}(\RR\mu)\big)
+ \chi_{Z}(x) \big(\RR\mu(x) -  m_{\mu,2R}(\RR\mu)\big)
.$$

Remark that the cubes from $\Reg$ have the advantage over the cubes from $\wt \End$ that their size changes smoothly so that, for example, neighboring cubes have comparable side lengths. However, they need not be $\PP$-doubling or doubling, unlike the cubes from $\wt \End$. In particular, it is not clear
how to estimate their energy $\EE_\infty$ in terms of the condition $\DB$ (recall that the cubes from $\DB$ are asked to be $\PP$-doubling).

For $Q\in\DD_\mu$, we define
$$\QQ_\Reg(Q) = \sum_{P\in\Reg} \frac{\ell(P)}{D(P,Q)^{n+1}}\,\mu(P),$$
where 
$$D(P,Q) = \ell(P) + \dist(P,Q) + \ell(Q).$$
The coefficient $\QQ_\Reg(Q)$ will be used to bound some ``error terms" in our transference 
arguments. We will see later how they can be estimated in terms of the coefficients $\PP(Q)$. Notice that, unlike $\PP(Q)$, the coefficients $\QQ_\Reg(Q)$ depend on the family
$\Reg$.

The next three lemmas are proven in the same way as Lemmas 7.1, 7.2, and 7.3 from \cite{DT}.

\begin{lemma}\label{lemaprox1}
For any $Q\in\Reg$ such that $(Q\cup \frac12 B(Q))\cap V_R\neq\varnothing$ and $x\in Q$, $y\in \frac12B(Q)$,
$$\big|\RR_{\TT_\Reg}\mu(x) - \RR\eta(y)\big| \lesssim \Theta(R) + \PP(Q) + \QQ_\Reg(Q).$$
\end{lemma}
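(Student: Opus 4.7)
The strategy is to write
$$\RR_{\TT_\Reg}\mu(x)-\RR\eta(y) = \RR(\chi_{2R\setminus 2Q}\mu)(x) - \RR\eta(y)$$
and decompose $\eta=\sum_{Q'\in\Reg}\eta_{Q'}$, where $\eta_{Q'}=\eta|_{\frac12 B(Q')}$, and match each $\eta_{Q'}$ with the corresponding piece of $\chi_{2R\setminus 2Q}\mu$. Four sources of error arise: (i) the self-contribution $\RR\eta_Q(y)$; (ii) contributions from near-by regularized cubes $Q'\neq Q$, in the sense of Lemma~\ref{lem74}(b); (iii) contributions from far regularized cubes, treated via Taylor expansion of the Riesz kernel $K(z)=z/|z|^{n+1}$; and (iv) the piece of $\mu$ supported in $(2R\setminus 2Q)\setminus e'(R)$, which is handled using the defining property \eqref{eqtec733} of $V_R$.

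For (i), since $\eta_Q$ is a constant multiple of Lebesgue measure on $\tfrac12 B(Q)$ with total mass $\mu(Q)$, the elementary estimate $\int_{\frac12 B(Q)}|y-z|^{-n}\,dz\lesssim r(Q)$ yields
$$|\RR\eta_Q(y)|\lesssim \theta_\mu(B(Q))\lesssim \theta_\mu(2B_Q)\lesssim \PP(Q).$$
For (ii), Lemma~\ref{lem74}(b),(c) gives that only $O(1)$ regularized cubes $Q'$ are near $Q$, and each satisfies $\ell(Q')\approx\ell(Q)$. Each piece $|\RR\eta_{Q'}(y)|$ is then bounded by $\theta_\mu(B(Q'))\lesssim\PP(Q)$ as in (i) (using the bounded density of $\eta_{Q'}$ and the fact that $Q$ and $Q'$ have a common ancestor at scale $\approx \ell(Q)$). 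Similarly, $|\RR(\chi_{Q'\cap(2R\setminus 2Q)}\mu)(x)|\lesssim \mu(Q')/\ell(Q)^n\lesssim\PP(Q)$, using $|x-z|\gtrsim \ell(Q)$ for $x\in Q$ and $z\in\supp\mu\setminus 2Q$.

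The main work is in (iii). For far cubes $Q'\in\Reg$ (with $B(x_{Q'},50\ell(Q'))\cap B(x_Q,50\ell(Q))=\varnothing$), the smoothness $|\nabla K(z)|\lesssim |z|^{-(n+1)}$ gives, for $z\in\tfrac12 B(Q')$, $w\in Q'$, $y\in\tfrac12 B(Q)$, $x\in Q$,
$$|K(y-z)-K(x-w)|\lesssim \frac{\ell(Q)+\ell(Q')}{D(Q,Q')^{n+1}},$$
since $|y-x|+|z-w|\lesssim\ell(Q)+\ell(Q')$ and $|y-z|\approx|x-w|\approx D(Q,Q')$. Because $\eta_{Q'}$ and $\mu|_{Q'}$ have the same mass $\mu(Q')$, integrating gives
$$\bigl|\RR\eta_{Q'}(y)-\RR(\mu|_{Q'})(x)\bigr|\lesssim \frac{(\ell(Q)+\ell(Q'))\,\mu(Q')}{D(Q,Q')^{n+1}}.$$
Summing in $Q'\in\Reg$, the $\ell(Q')$-term produces exactly $\QQ_\Reg(Q)$ by its definition; the $\ell(Q)$-term is bounded by grouping $Q'$ according to the ancestor $R_k\supset Q$ with $\ell(R_k)\approx D(Q,Q')$ and using $\mu(Q')\leq \mu(2B_{R_k})$, which makes the sum comparable to $\PP(Q)$.

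For (iv), the hypothesis $(Q\cup\tfrac12 B(Q))\cap V_R\neq\varnothing$ and \eqref{eqtec733} give $\dist(x,\supp\mu\setminus e'(R))\gtrsim\ell(R)$, so by the polynomial growth of $\mu$ together with the $\PP$-doubling property of $R$ (which gives $\mu(2R)\lesssim\mu(2B_R)\approx\Theta(R)\,\ell(R)^n$),
$$\bigl|\RR(\chi_{(2R\setminus 2Q)\setminus e'(R)}\mu)(x)\bigr|\lesssim \frac{\mu(2R)}{\ell(R)^n}\lesssim \Theta(R).$$
Combining (i)--(iv) yields the claim. The main technical obstacle is bookkeeping when a regularized cube $Q'$ straddles $\partial(2Q)$ or $\partial e'(R)$, so that $\eta_{Q'}$ and $\chi_{Q'\cap(2R\setminus 2Q)}\mu$ no longer share the same total mass; this discrepancy lives in a thin strip near the boundary and is controlled by the small-boundary estimate \eqref{eqfk490}, with the resulting error being absorbed into $\PP(Q)$ or $\Theta(R)$.
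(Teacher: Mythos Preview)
Your proof is correct and follows essentially the same approach as the paper (which refers to \cite[Lemma 7.1]{DT}): decompose $\eta$ along the regularized cubes, handle the self and neighboring pieces directly via $\PP(Q)$, use the gradient bound on the Riesz kernel for far cubes to produce $\QQ_\Reg(Q)$ and $\PP(Q)$, and absorb the portion of $\mu$ in $2R\setminus e'(R)$ into $\Theta(R)$ via \eqref{eqtec733}. One small remark: your final invocation of the small-boundary estimate \eqref{eqfk490} is unnecessary, since $2Q$ is a union of David--Mattila cubes of the same generation as $Q$, so any far $Q'\in\Reg$ is entirely outside $2Q$ and any $Q'$ meeting $2Q$ is automatically near (and handled in (ii)); likewise every $Q'\in\Reg$ lies entirely in $e'(R)$, so no straddling of $\partial e'(R)$ occurs at the level of $\mu$.
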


\vv
\begin{lemma}\label{lemaprox2}
For any $Q\in\wt\End$ and $x,y\in Q$,
$$\big|\RR_{\wt\TT}\mu(x) - \Delta_{\wt\TT}\RR\mu(y)\big| \lesssim \PP(R) + \left(\frac{\EE(4R)}{\mu(R)}\right)^{1/2} + \PP(Q) +  \left(\frac{\EE(2Q)}{\mu(Q)}\right)^{1/2}.$$
\end{lemma}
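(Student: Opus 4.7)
For $x,y\in Q\in\wt\End$, by the definitions
\begin{equation*}
\RR_{\wt\TT}\mu(x) - \Delta_{\wt\TT}\RR\mu(y) = \RR(\chi_{2R\setminus 2Q}\mu)(x) - m_{\mu,Q}(\RR\mu) + m_{\mu,2R}(\RR\mu).
\end{equation*}
The plan is to decompose $\RR\mu$ on $Q$ as $\pv\RR(\chi_{2Q}\mu) + \RR(\chi_{2R\setminus 2Q}\mu) + \RR(\chi_{\R^{n+1}\setminus 2R}\mu)$ and on $2R$ as $\pv\RR(\chi_{4R}\mu) + \RR(\chi_{\R^{n+1}\setminus 4R}\mu)$, substitute into the two averages, and group the resulting terms as: (i) the pointwise-vs-mean oscillation $\RR(\chi_{2R\setminus 2Q}\mu)(x) - m_{\mu,Q}(\RR(\chi_{2R\setminus 2Q}\mu))$; (ii) the far-field oscillation of averages $m_{\mu,Q}(\RR(\chi_{\R^{n+1}\setminus 4R}\mu)) - m_{\mu,2R}(\RR(\chi_{\R^{n+1}\setminus 4R}\mu))$; (iii) a residual intermediate-scale average $m_{\mu,Q}(\RR(\chi_{4R\setminus 2R}\mu))$; and (iv) the two principal-value averages $-m_{\mu,Q}(\pv\RR(\chi_{2Q}\mu))$ and $m_{\mu,2R}(\pv\RR(\chi_{4R}\mu))$.

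Pieces (i)--(iii) will be handled by the standard Calder\'on--Zygmund oscillation estimate: for $x,y$ in a cube $P$ and $z$ at distance at least $c\,\ell(P)$ from $P$, the kernel difference satisfies $|K(x-z)-K(y-z)|\lesssim \ell(P)/|y-z|^{n+1}$. Dyadic-shell summation against $\mu$, combined with the definition of $\PP$, bounds (i) by $\PP(Q)$ and (ii)--(iii) by $\PP(R)$.

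The heart of the argument is (iv), which I plan to treat via the antisymmetry of the Riesz kernel and Cauchy--Schwarz. First, $\int_A\pv\RR(\chi_A\mu)\,d\mu = 0$ for any Borel $A$: this follows from the truncated identity $\int_A\RR_\ve(\chi_A\mu)\,d\mu = 0$ (Fubini plus antisymmetry) after letting $\ve\to 0$ via the $\mu$-a.e.\ existence of principal values guaranteed by $\RR_*\mu<\infty$ $\mu$-a.e. Applied with $A=2Q$ and $A=4R$, the two averages in (iv) can be rewritten as signed integrals over $2Q\setminus Q$ and $4R\setminus 2R$. Since $Q\in\wt\End$ and $R$ are both $\PP$-doubling, $\mu(2Q)\lesssim \mu(Q)$ and $\mu(4R)\lesssim \mu(R)$, so Cauchy--Schwarz reduces the task to showing
\begin{equation*}
\|\pv\RR(\chi_{2Q}\mu)\|_{L^2(\mu|_{2Q\setminus Q})} \lesssim \EE(2Q)^{1/2}, \qquad \|\pv\RR(\chi_{4R}\mu)\|_{L^2(\mu|_{4R\setminus 2R})} \lesssim \EE(4R)^{1/2}.
\end{equation*}
Each principal value splits into a genuinely off-diagonal piece ($\RR(\chi_Q\mu)$ on $2Q\setminus Q$, resp.\ $\RR(\chi_{2R}\mu)$ on $4R\setminus 2R$) and a near-diagonal piece on the outer annulus; the first is immediate from Lemma~\ref{lemDMimproved} applied to $Q$ (resp.\ to $R$), while the second requires a dyadic covering of the annulus by cubes at the same scale together with iterated use of the same lemma.

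The main obstacle will be the rigorous justification of the antisymmetry identity, since a priori we do not control $\pv\RR\mu$ in $L^2(\mu)$. This is precisely the difficulty addressed by the smoothing device in the footnote of Lemma~\ref{lemsuper**}: one replaces $\mu$ by $\mu_\ell = \vphi_\ell * \mu$, for which $\RR_{\mu_\ell}$ is genuinely $L^2$-bounded and every Fubini/limit interchange is free, proves the lemma with constants independent of $\ell$, and lets $\ell\to\infty$ at the end. A secondary technical point is that Lemma~\ref{lemDMimproved} as stated controls only the ``off-diagonal'' pairing between $Q$ and $2B_Q\setminus Q$, so the near-diagonal pv-contribution in (iv) is not a direct consequence of the lemma and needs an additional sub-cube covering argument to express it as a sum of off-diagonal pieces at finer scales.
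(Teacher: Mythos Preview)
Your decomposition into pieces (i)--(iv) and the treatment of (i)--(iii) via standard Calder\'on--Zygmund oscillation is correct and is indeed the approach of \cite[Lemma~7.2]{DT}. The antisymmetry idea for (iv) is also the right one.

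There is, however, a small but genuine gap in your handling of (iv). After one application of antisymmetry you reduce to $\mu(Q)^{-1}\bigl|\int_{2Q\setminus Q}\pv\RR(\chi_{2Q}\mu)\,d\mu\bigr|$, and then you apply Cauchy--Schwarz to pass to the $L^2$ norm $\|\pv\RR(\chi_{2Q}\mu)\|_{L^2(\mu|_{2Q\setminus Q})}$. This $L^2$ quantity contains the diagonal contribution $\|\pv\RR(\chi_{2Q\setminus Q}\mu)\|_{L^2(\mu|_{2Q\setminus Q})}$, and your proposed ``dyadic covering of the annulus by cubes $P_j$ at the same scale together with iterated use of Lemma~\ref{lemDMimproved}'' does not control it: that lemma only bounds the off-diagonal pairings $\int_{P_j}|\RR(\chi_{P_i}\mu)|^2\,d\mu$ for $i\neq j$, while the purely diagonal terms $\int_{P_j}|\pv\RR(\chi_{P_j}\mu)|^2\,d\mu$ carry no bound in terms of $\EE$. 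So the displayed estimate $\|\pv\RR(\chi_{2Q}\mu)\|_{L^2(\mu|_{2Q\setminus Q})}\lesssim\EE(2Q)^{1/2}$ is stronger than what the tools at hand give.

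The fix is to apply antisymmetry \emph{once more before} Cauchy--Schwarz: since $\int_{2Q\setminus Q}\pv\RR(\chi_{2Q\setminus Q}\mu)\,d\mu=0$, one has
\[
\int_{2Q\setminus Q}\pv\RR(\chi_{2Q}\mu)\,d\mu=\int_{2Q\setminus Q}\RR(\chi_{Q}\mu)\,d\mu=-\int_{Q}\RR(\chi_{2Q\setminus Q}\mu)\,d\mu,
\]
and \emph{now} Cauchy--Schwarz on $Q$ leaves only the genuinely off-diagonal norm $\|\RR(\chi_{2Q\setminus Q}\mu)\|_{L^2(\mu|_Q)}$. Splitting $2Q\setminus Q=(2B_Q\setminus Q)\cup(2Q\setminus 2B_Q)$, the first part is controlled by Lemma~\ref{lemDMimproved} (giving $\EE(2Q)^{1/2}$), and the second contributes at most $C\,\PP(Q)\,\mu(Q)^{1/2}$ since $|x-y|\gtrsim\ell(Q)$ there and $\mu(2Q)/\ell(Q)^n\lesssim\PP(Q)$. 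The same correction applies to the $R$-term. With this adjustment your argument goes through.
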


\vv

\begin{lemma}\label{lemaprox3}
Let $1<p\leq2$. For any $Q\in\wt\End$ such that $\ell_0\leq \ell(Q)$,,
$$\int_Q \big|\RR_{\wt\TT}\mu - \RR_{\TT_\Reg}\mu\big|^p\,d\mu \lesssim \EE(2Q)^{p/2} \,\mu(Q)^{1-\frac p2}.$$
\end{lemma}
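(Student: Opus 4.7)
Since $p\in(1,2]$, H\"older's inequality with exponents $2/p$ and $2/(2-p)$ gives
$$\int_Q |\RR_{\wt\TT}\mu - \RR_{\TT_\Reg}\mu|^p\,d\mu \leq \Bigl(\int_Q |\RR_{\wt\TT}\mu - \RR_{\TT_\Reg}\mu|^2\,d\mu\Bigr)^{\!p/2} \mu(Q)^{1-p/2},$$
so it suffices to prove the $L^2$ estimate $\int_Q |\RR_{\wt\TT}\mu - \RR_{\TT_\Reg}\mu|^2\,d\mu \lesssim \EE(2Q)$ and pass to $L^p$ by this inequality.

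For $x\in Q$ there is a unique $P\in\Reg$ with $x\in P$, and $P\subset Q$ since $Q\in\wt\TT\subset\TT_\Reg$. Unwinding the definitions of $\RR_{\wt\TT}\mu$ and $\RR_{\TT_\Reg}\mu$ gives
$$\RR_{\wt\TT}\mu(x) - \RR_{\TT_\Reg}\mu(x) = \RR(\chi_{2P\setminus 2Q}\mu)(x) - \RR(\chi_{2Q\setminus 2P}\mu)(x).$$
A short geometric check using $2P\subset B(x_P,\tfrac52\ell(P))$ and $P\subset Q\subset B(x_Q,\tfrac12\ell(Q))$ shows that $2P\setminus 2Q=\varnothing$ whenever $\ell(P)\leq\tfrac35\ell(Q)$; by disjointness of the $\Reg$-cubes inside $Q$ there are at most $O(1)$ remaining $P$ (of sidelength comparable to $\ell(Q)$), whose contribution is handled directly by polynomial growth together with Lemma \ref{lemDMimproved} applied to $P$. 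The essential task is thus to control $\sum_{P\in\Reg,\,P\subset Q}\int_P|\RR(\chi_{2Q\setminus 2P}\mu)|^2\,d\mu$.

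I would decompose $2Q\setminus 2P = (2B_Q\setminus Q)\sqcup (2Q\setminus 2B_Q)\sqcup (Q\setminus 2P)$. The piece $2B_Q\setminus Q$ is absorbed by Lemma \ref{lemDMimproved} applied to $Q$, whose right-hand side is $\lesssim\EE(2Q)$ for the parameter $\gamma=9/10>3/4$ fixed in Section \ref{sec4}. The piece $2Q\setminus 2B_Q$ satisfies $|x-y|\gtrsim\ell(Q)$, so polynomial growth gives a negligible contribution. For the essential piece $Q\setminus 2P$ I would use an annular decomposition indexed by the dyadic ancestors $P^{(k)}\supset P$ with $\ell(P^{(k)})=A_0^k\ell(P)$ inside $Q$: on $2P^{(k)}\setminus 2P^{(k-1)}$ the kernel is $\lesssim (A_0^k\ell(P))^{-n}$ while the $\mu$-mass is $\lesssim\theta_\mu(2B_{P^{(k)}})\ell(P^{(k)})^n$, so the annular contribution is $\lesssim\theta_\mu(2B_{P^{(k)}})$. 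Cauchy--Schwarz with geometric weights $A_0^{\pm k\gamma/2}$ then yields
$$\Bigl(\int_{Q\setminus 2P}\frac{d\mu(y)}{|x-y|^n}\Bigr)^{\!2} \lesssim \sum_{S\in\DD_\mu:\,P\subset S\subset Q}\Bigl(\frac{\ell(S)}{\ell(P)}\Bigr)^{\!\gamma/2}\theta_\mu(2B_S)^2,$$
and integrating in $x\in P$, summing over $P\in\Reg$ with $P\subset Q$, and applying Fubini reduces the whole estimate to bounding
$$\sum_{S\subset 2Q,\,\ell(S)\leq\ell(Q)}\theta_\mu(2B_S)^2\sum_{P\in\Reg,\,P\subset S}\Bigl(\frac{\ell(S)}{\ell(P)}\Bigr)^{\!\gamma/2}\mu(P)$$
by $\EE(2Q)$.

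The main obstacle is controlling the inner sum so that the overall total is $\lesssim\sum_S(\ell(S)/\ell(Q))^{3/4}\theta_\mu(2B_S)^2\mu(S)\approx\EE(2Q)$. This is where the fine structure of $\Reg$ enters: the $1$-Lipschitz property of $d_{R,\ell_0}$ forces the $\Reg$-cubes inside a fixed $S$ to have sidelengths varying smoothly with the distance to the stopping cubes of $\TT$ contained in $S$ (cf.\ Lemma \ref{lem74}), so the small $\Reg$-cubes cluster near $\TT$. Combined with the small-boundary property \rf{eqfk490} of the David--Mattila lattice, this produces a geometric decay of the total mass of $\Reg$-cubes of sidelength $\approx 2^{-j}\ell(S)$ inside $S$; the decay rate exceeds $\gamma/2$ for $\gamma=9/10$, hence absorbs the weight $(\ell(S)/\ell(P))^{\gamma/2}$ and closes the $L^2$ estimate. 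The assumption $\ell_0\leq\ell(Q)$ is used only to guarantee that the $\Reg$-cubes with $P\subset Q$ have $\ell(P)\leq\ell(Q)$ (so they genuinely belong to $\DD_\mu(Q)$), and that $\ell_0$ plays no obstructing role in the above sums.
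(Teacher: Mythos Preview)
Your reduction to $p=2$ by H\"older is correct, as is the identification of the integrand on each $P\in\Reg$ with $P\subset Q$ (the hypothesis $\ell_0\le\ell(Q)$ together with the argument around \rf{eqregotin} indeed guarantees $Q\in\TT_\Reg$, hence $P\subset Q$). The annular decomposition plus Cauchy--Schwarz leading to the double sum
\[
\sum_{S\in\DD_\mu(Q)}\theta_\mu(2B_S)^2\sum_{P\in\Reg,\,P\subset S}\Bigl(\frac{\ell(S)}{\ell(P)}\Bigr)^{\gamma/2}\mu(P)
\]
is also fine. Your key structural observation is correct and important: since $Q\in\wt\End$, there are no cubes of $\TT(e'(R))$ strictly inside $Q$, so for $x\in P\in\Reg$ with $P\subset Q$ one has $d_R(x)\gtrsim\min(\ell(Q),\dist(x,E\setminus Q))$; consequently $\Reg$-cubes $P\subset Q$ with $\ell(P)\ll\ell(Q)$ must lie within distance $\approx\ell(P)$ of $E\setminus Q$. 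Since $E\setminus Q\subset E\setminus S$ for $S\subset Q$, the thin-boundary estimate \rf{eqfk490} for $S$ then gives the geometric decay you claim, and the inner sum is $\lesssim\mu(3.5B_S)$.

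The gap is in the last sentence: ``absorbs the weight \ldots and closes the $L^2$ estimate.'' After the inner-sum bound you are left with $\sum_{S}\theta_\mu(2B_S)^2\,\mu(3.5B_S)$ over $S\in\TT_\Reg\cap\DD_\mu(Q)$, and this is \emph{not} termwise dominated by the summand $(\ell(S)/\ell(Q))^{3/4}\Theta(S)^2\mu(S)$ of $\EE(2Q)$: the decaying weight $(\ell(S)/\ell(Q))^{3/4}$ is missing, and $\mu(3.5B_S)$ can be much larger than $\mu(S)$ since the intermediate cubes $S$ need not be doubling. What is needed --- and what your own analysis already shows but does not exploit --- is that every such $S$ with $\ell(S)\ll\ell(Q)$ is itself a \emph{boundary cube of $Q$}: it satisfies $\dist(S,E\setminus Q)\lesssim\ell(S)$, essentially the defining condition of $\wt\DD_\mu(Q)$ in \rf{eqdmuint}--\rf{eqdmutot}. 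It is precisely Lemma~\ref{lemdmutot} (equivalently the mechanism behind Lemma~\ref{lemDMimproved}) that converts sums over such boundary cubes, even with a mildly growing weight $(\ell(Q)/\ell(S))^{(1-\gamma)/2}$, into the energy on the right-hand side; this is the step the proof in \cite{DT} carries out and your sketch omits. In short, the thin-boundary property of each individual $S$ handles the inner sum, but it is the boundary structure \emph{of $Q$} together with Lemma~\ref{lemdmutot} that handles the outer sum and yields $\EE(2Q)$.
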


\vv


\subsection{Estimates for the $\PP$ and $\QQ_\Reg$ coefficients of some cubes from $\wt \End$ and $\Reg$}

We will transfer the lower estimate obtained for the $L^p(\eta)$ norm of $\RR\eta$ in Lemma
\ref{lemrieszeta} to $\RR_{\TT}\mu$, $\RR_{\TT_\Reg}\mu$, and $\Delta_{\TT}\RR\mu$ by means of
Lemmas \ref{lemaprox1}, \ref{lemaprox2}, and \ref{lemaprox3}. To this end, we will need careful 
estimates for the $\PP$ and $\QQ_\Reg$ coefficients of cubes from $\wt\End$ and  $\Reg$. This is the
task we will perform in this section.
\vv


Given $R\in\MDW$, recall that
$\HD_1(e'(R))=\HD(R)\cap\sss(e'(R))$. 
To shorten notation, we will write $\HD_1=\HD_1(e'(R))$ in this section.
Notice also that, by \rf{eqstop2}, we have
\begin{equation}\label{eqsplit71}
\wt\End =  \LD_1 \cup \NDB_1 \cup \LD_2  \cup \NDB_2 \cup \HD_2\cup \sM_\Neg,
\end{equation}
where we introduced the following notations:
\begin{itemize}
\item $\LD_1$ is the subfamily of $\wt\End$ of those maximal $\PP$-doubling cubes which are contained both in $e'(R)$ and in some cube from $\LD(R)\cap\sss_1(e'(R))\setminus \Neg$.
\item $\NDB_1$ is the  subfamily of $\wt\End$ of those maximal $\PP$-doubling cubes which are contained both in $e'(R)$ and in some cube from $\NDB(R)\cap\sss_1(e'(R))\setminus \Neg$.
\item $\LD_2$ is the subfamily of $\wt\End$ of those maximal $\PP$-doubling cubes which are contained in some cube
$Q\in \LD(Q')\cap \sss(Q')\setminus \Neg$ for some $Q'\in \HD_1$.
\item $\NDB_2$ is the subfamily of $\wt\End$ of those maximal $\PP$-doubling cubes which are contained in some cube
$Q\in \NDB(Q')\cap\sss(Q')$ for some $Q'\in \HD_1$.
\item $\HD_2= \bigcup_{Q'\in\HD_1}(\HD(Q')\cap\sss(Q'))$.
\end{itemize}
Remark that the splitting in \rf{eqsplit71} is disjoint. Indeed, notice that, by the definition
of $\sM_\Neg$, the cubes from $\NDB_2\cup\HD_2$ do not belong to $\sM_\Neg$, since they are strictly contained in some cube from $\HD_1$, which is $\PP$-doubling, in particular.

For $i=1,2$, we also denote by $\Reg_{\LD_i}$ the subfamily of the cubes from $\Reg$ which are contained in some cube from $\LD_i$, and we define $\Reg_{\NDB_i}$, $\Reg_{\HD_2}$, $\Reg_\Neg$, and
$\Reg_{\sM_\Neg}$ analogously.\footnote{Notice that $\sD_\Neg=\Reg_{\sM_\Neg}$.}
We let $\Reg_\Ot$ be the ``other'' cubes from $\Reg$: the ones which are not contained in any cube from $\End$ (which, in particular, have side length comparable to $\ell_0$).
Also, we let $\Reg_{\DB}$ be the subfamily of the cubes from $\Reg$ which are contained in some cube from $\wt\End\cap\DB$. Notice that we have the splitting
\begin{equation}\label{eqsplitreg0}
\Reg = \Reg_{\LD_1} \cup \Reg_{\NDB_1}\cup \Reg_{\LD_2}\cup \Reg_{\NDB_2}\cup \Reg_{\HD_2}\cup \Reg_{\Neg}\cup \Reg_{\Ot}.
\end{equation}
The families above may intersect the family $\Reg_{\DB}$.

Given a family $\cI\in\DD_\mu$ and $1<p\leq2$, we denote
$$\Sigma_p^\PP(\cI) = \sum_{Q\in \cI} \PP(Q)^p\,\mu(Q),\qquad \Sigma_p^\QQ(\cI) = \sum_{Q\in \cI} \QQ_\Reg(Q)^p\,\mu(Q).$$
We also write $\Sigma^\PP(\cI)= \Sigma_2^\PP(\cI)$, $\Sigma^\QQ(\cI)= \Sigma_2^\QQ(\cI)$.

\vv

\begin{lemma}\label{lemenereg}
For any $Q\in\wt\End$, 
$$\Sigma^\PP(\Reg\cap\DD_\mu(Q)) \lesssim \PP(Q)^2\,\mu(Q) + \EE(2Q).$$
\end{lemma}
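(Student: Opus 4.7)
The plan is to split $\PP(P)$, for $P\in\Reg\cap\DD_\mu(Q)$, into the ``external'' contribution from ancestors $R\supseteq Q$ and the ``internal'' contribution from ancestors with $P\subseteq R\subsetneq Q$. A direct computation gives
$$
\sum_{R\supseteq Q}\frac{\ell(P)}{\ell(R)^{n+1}}\,\mu(2B_R)\,=\,\frac{\ell(P)}{\ell(Q)}\,\PP(Q),
$$
so that $\PP(P)\lesssim \frac{\ell(P)}{\ell(Q)}\,\PP(Q)+B(P)$ with $B(P):=\sum_{P\subseteq R\subsetneq Q}\frac{\ell(P)}{\ell(R)}\,\Theta(R)$, and hence $\PP(P)^2\lesssim (\ell(P)/\ell(Q))^2\,\PP(Q)^2+B(P)^2$.

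The external contribution is immediate. Since the cubes of $\Reg\cap\DD_\mu(Q)$ are pairwise disjoint inside $Q$ and $(\ell(P)/\ell(Q))^2\leq 1$, one has
$$
\sum_{P\in\Reg\cap\DD_\mu(Q)}\left(\frac{\ell(P)}{\ell(Q)}\right)^{\!\!2}\PP(Q)^2\,\mu(P)\,\leq\,\PP(Q)^2\,\mu(Q),
$$
which is the first term on the right-hand side of the lemma.

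The heart of the argument will be to show that $\sum_{P}B(P)^2\,\mu(P)\lesssim \EE(2Q)$. My plan is first to apply a weighted Cauchy--Schwarz in the ancestor sum of the shape
$$
B(P)^2\,\lesssim\,\sum_{R:\,P\subseteq R\subsetneq Q}\left(\frac{\ell(P)}{\ell(R)}\right)^{\!\!\alpha}\Theta(R)^2\qquad(\alpha\in(0,2)),
$$
with the complementary weight $(\ell(P)/\ell(R))^{2-\alpha}$ summing as a convergent geometric series, and then to swap the order of summation over $P$ and $R$. To recover the correct factor $(\ell(R)/\ell(Q))^{3/4}$ matching the definition of $\EE(2Q)$, I would organize the chain of ancestors of each $P$ in $\DD_\mu(Q)$ by its successive $\PP$-doubling members $Q=S_0\supsetneq S_1\supsetneq\cdots$ and exploit the geometric decay $\PP(R)\lesssim A_0^{-m/2}\PP(S_i)$ given by Lemma~\ref{lemdobpp} for the intermediate non-$\PP$-doubling cubes. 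Summing the resulting geometric series within each non-doubling interval collapses $B(P)$ into a sum indexed only over the $\PP$-doubling ancestors $S_i$ of $P$; at that point a final Cauchy--Schwarz step, together with the bound $\PP(S_i)\approx\Theta(S_i)$ for doubling cubes, produces the weight needed to recognise $\EE(2Q)$.

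The main technical obstacle will be calibrating the Cauchy--Schwarz exponent $\alpha$ against the decay rate $A_0^{-m/2}$ of Lemma~\ref{lemdobpp} so that the exponent $3/4$ of $\EE(2Q)$ appears exactly, and simultaneously handling $\Reg$-cubes which themselves fail to be $\PP$-doubling (these sit inside a non-doubling interval and require one extra invocation of Lemma~\ref{lemdobpp} to control $\PP(P)$ by the density of the smallest $\PP$-doubling cube that contains $P$). Once the correct weight emerges, the disjointness $\sum_{P\in\Reg,\,P\subseteq S_i}\mu(P)\leq\mu(S_i)$ delivers the desired bound in terms of $\sum_{S}(\ell(S)/\ell(Q))^{3/4}\Theta(S)^2\mu(S)\leq\EE(2Q)$.
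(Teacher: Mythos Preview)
Your external--internal split of $\PP(P)$ and the treatment of the external contribution are correct. The gap lies in your plan for $\sum_P B(P)^2\mu(P)$. The combination of Cauchy--Schwarz and Lemma~\ref{lemdobpp} that you outline cannot by itself produce the weight $(\ell(R)/\ell(Q))^{3/4}$ required to recognise $\EE(2Q)$. Indeed, collapsing the non-$\PP$-doubling intervals yields $\PP(P)\lesssim\sum_i(\ell(P)/\ell(S_i))^{1/2}\Theta(S_i)$, and any subsequent Cauchy--Schwarz plus the swap and your disjointness bound $\sum_{P\subseteq S}\mu(P)\leq\mu(S)$ leads at best to $\sum_{S}\Theta(S)^2\mu(S)$ over the $\PP$-doubling $S\subseteq Q$. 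This quantity is \emph{not} dominated by $\EE(2Q)$ in general: your argument uses nothing about $\Reg$ beyond disjointness, so it would apply verbatim to any partition of $Q$ into dyadic cubes, and for a single small $\PP$-doubling cube $S\subset Q$ with $\ell(S)\ll\ell(Q)$ one has $\Theta(S)^2\mu(S)\gg(\ell(S)/\ell(Q))^{3/4}\Theta(S)^2\mu(S)$, while the rest of $\EE(2Q)$ can be made negligible.

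The missing ingredient is a genuine geometric property of $\Reg\cap\DD_\mu(Q)$ that your proposal does not invoke. Since $Q\in\wt\End$ is a stopping cube for $\TT$ (or lies inside one from $\Neg$), no proper descendant of $Q$ belongs to $\TT$; hence for $x\in Q$ one has $d_R(x)\gtrsim\min\bigl(\ell(Q),\dist(x,\supp\mu\setminus Q)\bigr)$. Together with Lemma~\ref{lem74}(a) this forces every $P\in\Reg\cap\DD_\mu(Q)$ with $\ell(P)\ll\ell(Q)$ to sit within distance $\approx\ell(P)$ of $\supp\mu\setminus Q$. Consequently every intermediate ancestor $R$ with $P\subseteq R\subsetneq Q$ is itself a ``boundary cube'' of $Q$, i.e., belongs to a family of the same type as $\wt\DD_\mu^{int}(Q)$ in \rf{eqdmuint}. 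The proof in \cite{DT} (Lemma~7.4 there) then applies the thin-boundary estimate underlying Lemma~\ref{lemdmutot} to conclude
$\sum_{R}\theta_\mu(2B_R)^2\mu(R)\lesssim\sum_{P\subset 2Q}(\ell(P)/\ell(Q))^{\gamma}\theta_\mu(2B_P)^2\mu(P)\leq\EE(2Q)$.
This boundary localisation, not Lemma~\ref{lemdobpp}, is what generates the exponent $3/4$.
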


This is proven in the same way as Lemma 7.4 from \cite{DT}.



\vv
\begin{lemma}\label{lempoiss} 
We have:
\begin{itemize}
\item[(i)] If $Q\in\LD_1$, then $\PP(Q)\lesssim \delta_0^{\frac1{n+1}}\,\Lambda^{\frac n{n+1}}\,\Theta(R)$.
\item[(ii)] If $Q\in\LD_2$, then $\PP(Q)\lesssim \delta_0^{\frac1{n+1}}\,\Lambda^{1+\frac n{n+1}}\,\Theta(R)$.
\vspace{1mm}

\item[(iii)] If $Q\in\NDB_1$, then $\PP(Q)\lesssim \Lambda\,\Theta(R)$.
\vspace{1mm}

\item[(iv)] If $Q\in\NDB_2\cup \HD_2$, then $\PP(Q)\lesssim \Lambda^2\,\Theta(R)$.

\vspace{1mm}

\item[(v)] If $Q\in\Neg\cup\sM_\Neg$, then $\PP(Q)\lesssim \left(\frac{\ell(Q)}{\ell(R)}\right)^{1/3}\,\Theta(R)$.
\end{itemize}
\end{lemma}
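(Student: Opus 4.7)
My plan is to treat (i)--(iv) as variations of a single mechanism: in each of these cases $Q$ is $\PP$-doubling by construction of $\wt\End$, hence $\PP(Q)\approx \Theta(Q)$, which reduces the task to bounding the discrete density $\Theta(Q)$ in terms of $\Theta(R)$, $\delta_0$, and $\Lambda$. Case (v) is different: $Q$ lives in a region where no $\PP$-doubling ancestor inside $\TT_\sss(e'(R))$ exists, so one must obtain the decay of $\PP(Q)$ directly from Lemma \ref{lemdobpp} applied along the entire descent from $R$.

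I would first dispose of cases (iii) and (iv), which are the most direct. For $Q\in\HD_2$ I apply Lemma \ref{lempdoubling} twice: once with the parent cube $Q'\in\HD_1$ in place of $R$, to get $\PP$-doubling of $Q$ together with $\Theta(Q)=\Lambda\,\Theta(Q')$; and once with $R$ itself, combined with \rf{eqforakdf33}, to get $\Theta(Q')\approx \Lambda\,\Theta(R)$. Together these yield $\PP(Q)\approx \Lambda^{2}\,\Theta(R)$. For $\NDB_2$ the enclosing stopping cube $Q''\in\NDB(Q')$ is not in $\HD(Q')$ by definition, so $\Theta(Q'')\leq \Lambda\,\Theta(Q')\lesssim \Lambda^{2}\,\Theta(R)$, and the maximal $\PP$-doubling $Q\subset Q''$ inherits the same bound via the interpolation described in the next paragraph. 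The $\NDB_1$ bound (iii) is identical, with $R$ replacing $Q'$ and one factor of $\Lambda$ dropping out.

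The cases (i) and (ii) form the technical heart of the lemma. Here $Q$ is maximal $\PP$-doubling inside a low-density stopping cube $Q''$ with $\Theta(Q'')\leq \delta_0\,\Theta(R)$ (resp.\ $\delta_0\,\Lambda\,\Theta(R)$ for (ii)). Writing $s=\ell(Q)/\ell(Q'')$, two estimates compete: the volume bound
\[ \Theta(Q)\;\leq\;\frac{\mu(3B_{Q''})}{\ell(Q)^n}\;\lesssim\;\Theta(Q'')\,s^{-n}, \]
obtained using the doubling surrogates \rf{eqdob*}, \rf{eqdob23} for $Q''$ and the small-boundary estimate \rf{eqfk490} to absorb $\mu(2B_Q\setminus Q'')$; and a decay bound coming from Lemma \ref{lemdobpp} applied to the chain of non-$\PP$-doubling ancestors of $Q$ between $Q$ and a son of $Q''$, combined with the telescoping identity $\PP(Q)=\Theta(Q)+A_0^{-1}\,\PP(\hat{Q})$ and the $\PP$-doubling inequality $\PP(Q)\leq C_d\,\Theta(Q)$. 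Optimizing in $s$ produces the exponent $1/(n+1)$ in $\delta_0$ and $n/(n+1)$ in $\Lambda$; the surplus factor of $\Lambda$ in (ii) comes from starting the analysis at $Q'\in\HD_1$ in place of $R$.

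Finally, for (v), by Lemma \ref{lemnegs} together with the construction of $\sM_\Neg$ no proper ancestor of $Q$ inside $\TT_\sss(e'(R))$ is $\PP$-doubling, so Lemma \ref{lemdobpp} applies along the whole chain descending from a surrogate of $R$ down to the parent of $Q$, producing a decay factor $A_0^{-m/2}\approx (\ell(Q)/\ell(R))^{1/2}$; combined with $R\in\DD_\mu^\PP$ (so that $\PP(R)\approx \Theta(R)$) and the small-boundary estimate \rf{eqfk490} to pass from $\mu(B_{\hat Q})$ to $\mu(2B_Q)$, this is already stronger than the stated $(\ell(Q)/\ell(R))^{1/3}\,\Theta(R)$ bound. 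The principal obstacle throughout the argument is obtaining the sharp exponents in (i) and (ii): a naive balance between the volume and decay bounds using only the $A_0^{-m/2}$ decay from Lemma \ref{lemdobpp} produces the strictly weaker exponent $1/(2n+1)$, so one must exploit the $\PP$-doubling condition at $Q$ itself---in particular the fact that $\PP(\hat Q)$ is tightly controlled by $\Theta(Q)$---in tandem with the decay of $\PP(\hat Q)$ along the non-$\PP$-doubling chain in order to upgrade the effective decay rate and reach $1/(n+1)$.
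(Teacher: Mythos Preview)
Your plan is essentially correct and matches the paper's approach. The paper cites \cite[Lemma~3.2(c)]{Reguera-Tolsa} for (i) and deduces (ii) by replacing $\Theta(R)$ with $\Lambda\Theta(R)$; your volume--decay optimization is precisely that argument, and your closing remark about upgrading the decay rate from $A_0^{-m/2}$ to $A_0^{-m}$ via the non-$\PP$-doubling condition on the ancestors of $Q$ (namely, $\PP(\hat S)>A_0(1-C_d^{-1})\PP(S)$ whenever $S$ is non-$\PP$-doubling) is exactly what is needed to reach the exponent $1/(n+1)$.

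Two small points where you deviate or overshoot. First, in both (i)--(ii) and (v) you invoke the small-boundary estimate \rf{eqfk490} to control $\mu(2B_Q\setminus Q'')$ or to pass from $\mu(B_{\hat Q})$ to $\mu(2B_Q)$; this is not the right mechanism. What is actually used is the elementary containment $2B_Q\subset 2B_{Q''}$ (valid since $A_0>2C_0$), which already gives $\mu(2B_Q)\leq\mu(2B_{Q''})$ outright. Second, for (iii) and (iv) the paper is more direct than your ``inherits via interpolation'': one simply observes that every cube $Q''\in\sss_1(e'(R))$ (resp.\ $\sss(Q')$ for $Q'\in\HD_1$) satisfies $\PP(Q'')\lesssim\Lambda\Theta(R)$ (resp.\ $\Lambda^2\Theta(R)$) by the stopping conditions, and then $\PP(Q)\lesssim\PP(Q'')$ for the maximal $\PP$-doubling $Q\subset Q''$ follows directly from \rf{eqcad35'}. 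No balancing is needed here. For (v) the paper explicitly expands $\PP(Q)$ as a sum over intermediate scales and picks up a harmless $\log(\ell(R)/\ell(Q))$ factor, which is why the stated exponent is $1/3$ rather than the $1/2$ that your direct application of Lemma~\ref{lemdobpp} yields.
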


\begin{proof}
The statement (i) is proven as in Lemma 3.2 (c) in \cite{Reguera-Tolsa}.
The statement (ii) follows in the same way as (i), replacing $\Theta(R)$ by $\Lambda\Theta(R)$.

The statement (iii) is due to the fact that, by the stopping conditions, the cubes  $Q'\in\sss_1(e'(R))$ satisfy $\PP(Q')\lesssim \Lambda\,\PP(R)\approx \Lambda\,\Theta(R)$. Then 
it just remains to notice that if
$Q$ is a maximal doubling cube contained in $Q'$, from Lemma \ref{lemdobpp} it follows that
$\PP(Q)\lesssim\PP(Q')$.

The property (iv) for the cubes in $\NDB_2$ follows by arguments analogous to the ones for (iii), taking into account that such cubes are maximal doubling cubes contained in cubes 
from $\sss(R')$, for some $R'\in\HD_1$ with $\PP(R')\approx \Theta(R')=\Lambda\,\Theta(R)$.\footnote{This arugment shows that, in fact, $\PP(Q)\lesssim\Lambda^2\Theta(R)$ for all $Q\in\TT(e'(R))$.}
On the other hand the cubes $Q\in\HD_2$ satisfy $\Theta(Q)=\Lambda^2\,\Theta(R)$ and are $\PP$-doubling by construction.

Finally, we turn our attention to (v). By the definitions of $\Neg$ and $\sM_\Neg$ and Lemma \ref{lemdobpp}, for all
$S\in\DD_\mu$ such that $Q\subset S\subset R$, we have
$$\Theta(S)\lesssim \left(\frac{\ell(S)}{\ell(R)}\right)^{1/2}\,\PP(R).$$
Thus,
\begin{align*}
\PP(Q) &\approx \frac{\ell(Q)}{\ell(R)}\,\PP(R) + \sum_{S:Q\subset S\subset R}\frac{\ell(Q)}{\ell(S)}\,\Theta(S)\\
& \lesssim \frac{\ell(Q)}{\ell(R)}\,\Theta(R) + \sum_{S:Q\subset S\subset R}\frac{\ell(Q)}{\ell(S)}
\,\left(\frac{\ell(S)}{\ell(R)}\right)^{1/2}\,\Theta(R)\\
& \lesssim \frac{\ell(Q)}{\ell(R)}\,\Theta(R) + \sum_{S:Q\subset S\subset R}\,\left(\frac{\ell(Q)}{\ell(R)}\right)^{1/2}\,\Theta(R)\\
& \lesssim \frac{\ell(Q)}{\ell(R)}\,\Theta(R) + \log\left(\frac{\ell(R)}{\ell(Q)}\right)\,\left(\frac{\ell(Q)}{\ell(R)}\right)^{1/2}\,\Theta(R) \lesssim \left(\frac{\ell(Q)}{\ell(R)}\right)^{1/3}\,\Theta(R).
\end{align*}

\end{proof}
\vv

\begin{rem}\label{rem9.7}
We will assume that 
$$\delta_0\leq \Lambda^{-(n+2)^2}.$$
With this choice, it follows easily that $\delta_0^{\frac1{n+1}}\,\Lambda^{1+\frac n{n+1}}
\leq \delta_0^{\frac1{n+2}}$, so that by the preceding lemma
 $$\PP(Q)\lesssim \delta_0^{\frac1{n+2}}\,\Theta(R)\quad \mbox{ for all $Q\in\LD_1\cup\LD_2$.}$$
\end{rem}

\vv
\begin{lemma}\label{lemregmolt}
Suppose that $R\in\Trc\cap\Ty$. Then

\begin{equation}\label{eqlem*1}
\Sigma^\PP(\wt\End)\approx \sigma(\wt\End) \lesssim B\,\sigma(\HD_1),
\end{equation}

\begin{equation}\label{eqlem*2}
\Sigma^\PP(\Reg_\DB) \lesssim \sum_{Q\in\wt \End\cap \DB}\EE_\infty(9Q)\leq
\sum_{Q\in\DB:Q\sim \TT}\EE_\infty(9Q)
\lesssim \Lambda^{\frac{-1}{3n}}\,\sigma(\HD_1),
\end{equation}

\begin{equation}\label{eqlem*3}
\Sigma^\PP(\wt\End\cap \DB)\approx\sigma(\wt\End\cap \DB)\leq M_0^{-2}\,\Lambda^{\frac{-1}{3n}}\,\sigma(\HD_1),
\end{equation}

\begin{equation}\label{eqlem*4}
\Sigma^\PP(\Reg_{\NDB_1}) + \Sigma^\PP(\Reg_{\NDB_2})\lesssim 
\sum_{Q\in\NDB_1\cup\NDB_2}\EE_\infty(9Q)
\lesssim \Lambda^{\frac{-1}{3n}}\,\sigma(\HD_1),
\end{equation}

\begin{equation}\label{eqlem*4.5}
\Sigma^\PP(\Reg_{\LD_1}\cup \Reg_{\LD_2}) \lesssim \sum_{Q\in
\LD_1\cup \LD_2} \EE_\infty(9Q)\lesssim 
\big(B\,M_0^2\,\delta_0^{\frac2{n+2}} + \Lambda^{\frac{-1}{3n}}\big)\,\sigma(\HD_1).
\end{equation}

\noi Also, for $1<p\leq2$,

\begin{equation}\label{eqlem*5}
\Sigma_p^\PP(\Reg_{\LD_1}\setminus\Reg_{\DB}) + \Sigma_p^\PP(\Reg_{\LD_2})\lesssim
\Big(B\,\Lambda^2\,\delta_0^{\frac p{n+2}} + \Lambda^{\frac{-p}{6n}}\Big)\,
\sigma_p(\HD_1),
\end{equation}

\begin{equation}\label{eqlem*6}
\Sigma_p^\PP(\Reg_{\HD_2})\lesssim \Sigma_p^\PP(\HD_2) \approx \sigma_p(\HD_2) \lesssim B\,\Lambda^{p-2}\,\sigma_p(\HD_1),
\end{equation}

\begin{equation}\label{eqlem*7}
\Sigma_p^\PP(\Reg_{\NDB_2})\lesssim 
\Lambda^{\frac{-p}{6n}}\,\sigma_p(\HD_1).
\end{equation}
\end{lemma}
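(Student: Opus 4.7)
The plan is to prove each of the eight estimates by combining three inputs: (a) the density bounds for $\PP(Q)$ from Lemma~\ref{lempoiss} for each of the subfamilies in the splitting \rf{eqsplit71}, (b) the tractability of $R$, giving $\sigma(\HD_2)\le B\,\sigma(\HD_1)$ together with the $\MDW$-type lower bound $\sigma(\HD_1)\gtrsim B^{-1}\sigma(R)$, and (c) the typical property $\sum_{P\in\DB:\,P\sim\TT}\EE_\infty(9P)\le \Lambda^{-1/(3n)}\sigma(\HD_1)$. The passage from the $\Reg$-level sums to the $\wt\End$-level sums is made through Lemma~\ref{lemenereg}: summing over $Q\in\wt\End$ inside any prescribed subfamily bounds the $\Reg$-sum by $\sum_Q\bigl(\PP(Q)^2\mu(Q)+\EE(2Q)\bigr)$, and the first summand equals $\approx\Theta(Q)^2\mu(Q)$ when $Q$ is $\PP$-doubling.

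For \rf{eqlem*1} I would first observe that every cube of $\wt\End\setminus\sM_\Neg$ is $\PP$-doubling, so $\PP(Q)\approx\Theta(Q)$ there and $\Sigma^\PP\approx\sigma$; for the $\sM_\Neg$ part, Lemma~\ref{lempoiss}(v) gives $\PP(Q)\lesssim(\ell(Q)/\ell(R))^{1/3}\Theta(R)$, and a geometric series in $\ell(Q)/\ell(R)$ absorbs it. The bound $\sigma(\wt\End)\lesssim B\,\sigma(\HD_1)$ is then obtained subfamily by subfamily: $\sigma(\LD_1)+\sigma(\LD_2)\lesssim\delta_0^{2/(n+2)}\Theta(R)^2\mu(R)$ is negligible thanks to Remark~\ref{rem9.7} and the $\PP$-doubling bound on $R$; $\sigma(\HD_2)\le B\,\sigma(\HD_1)$ is exactly tractability; $\sigma(\NDB_1\cup\NDB_2)$ and $\sigma(\sM_\Neg)$ are controlled by pairing each such cube with the nearby $\DB$-cube $Q'\subset 20Q$ of the same generation guaranteed by the defining property of $\NDB$, and using $\EE_\infty(9Q')\ge M_0^2\,\Theta(Q')^2\mu(9Q')$ together with the typical hypothesis. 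The estimates \rf{eqlem*2} and \rf{eqlem*3} then follow by writing Lemma~\ref{lemenereg} as $\Sigma^\PP(\Reg\cap\DD_\mu(Q))\lesssim\PP(Q)^2\mu(Q)+\EE(2Q)$ and absorbing both terms into $\EE_\infty(9Q)$ via the $\DB$-condition $\EE_\infty(9Q)\ge M_0^2\Theta(Q)^2\mu(9Q)$ (using $M_0\gg1$) and $\EE(4Q)\lesssim\EE_\infty(9Q)$ from Lemma~\ref{lemenergias}, the typical property supplying the final factor $\Lambda^{-1/(3n)}$.

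For \rf{eqlem*4} and its $L^p$-version \rf{eqlem*7}, the nearby-$\DB$-cube mechanism above transfers each $\EE_\infty(9Q)$ for $Q\in\NDB_1\cup\NDB_2$ to the energy of a bounded-overlap family of cubes in $\DB\cap\{P\sim\TT\}$, so the typical hypothesis yields the $\Lambda^{-1/(3n)}\sigma(\HD_1)$ and $\Lambda^{-p/(6n)}\sigma_p(\HD_1)$ bounds (the $L^p$ upgrade uses $\Theta(Q)\lesssim\Lambda^2\Theta(R)$ from Lemma~\ref{lempoiss}(iv)). For \rf{eqlem*4.5} and \rf{eqlem*5} I would split $\Reg_{\LD_1}\cup\Reg_{\LD_2}$ into its $\DB$-part (controlled by \rf{eqlem*2}) and its non-$\DB$-part: on the latter Lemma~\ref{lempoiss}(i)--(ii) and Remark~\ref{rem9.7} give $\PP(Q)\lesssim\delta_0^{1/(n+2)}\Theta(R)$, so $\Sigma_p^\PP$ is bounded by $\delta_0^{p/(n+2)}\Theta(R)^p\,\mu(R)\lesssim B\,\Lambda^{p-2}\delta_0^{p/(n+2)}\sigma_p(\HD_1)$ after using the $\MDW$ lower bound on $\sigma(\HD_1)$. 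Finally, \rf{eqlem*6} is immediate: since $\HD_2$ cubes are $\PP$-doubling and satisfy $\Theta(Q)=\Lambda\Theta(Q')$ for the unique $Q'\in\HD_1$ with $Q\subset Q'$, tractability gives $\sigma_2(\HD_2)\le B\,\sigma_2(\HD_1)$, and the $L^p$ version follows by pulling out $\Lambda^{p-2}$.

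The main obstacle will be the bookkeeping in \rf{eqlem*1}, where bounding $\sigma(\NDB_1\cup\NDB_2)$ requires controlling the overlap when pairing each $\NDB$-cube with its neighboring $\DB$-cube of the same generation, and where the smallness of $\delta_0$ versus the factor $B$ coming from the weak lower bound $\sigma(\HD_1)\gtrsim B^{-1}\sigma(R)$ must be carefully balanced. The rest of the estimates are essentially routine once Lemma~\ref{lemenereg}, Lemma~\ref{lempoiss}, and the typical/tractable hypotheses are properly combined.
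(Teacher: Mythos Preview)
Your overall strategy matches the paper's proof: the three inputs (a), (b), (c) you identify are exactly what is used, and Lemma~\ref{lemenereg} is indeed the bridge from $\Reg$-level sums to $\wt\End$-level sums. Two concrete gaps, however, need correction.

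First, in \rf{eqlem*1} you propose to bound $\sigma(\sM_\Neg)$ by the same nearby-$\DB$-cube mechanism used for $\NDB_1\cup\NDB_2$. This fails: cubes of $\sM_\Neg$ are maximal $\PP$-doubling cubes inside $\Neg$, and by Lemma~\ref{lemnegs} the $\Neg$-cubes split into $\Neg\cap\LD(R)$ and $\Neg\cap\NDB(R)$; for the $\LD$-part no nearby $\DB$-cube is guaranteed. The paper instead uses the crude bound $\PP(Q)\lesssim\Theta(R)$ for $Q\in\sM_\Neg$ (Lemma~\ref{lempoiss}(v)), giving $\Sigma^\PP(\sM_\Neg)\lesssim\sigma(R)\le B\,\sigma(\HD_1)$, which suffices for \rf{eqlem*1}. (Also: all cubes of $\wt\End$, including $\sM_\Neg$, are $\PP$-doubling, so $\Sigma^\PP(\wt\End)\approx\sigma(\wt\End)$ requires no geometric-series argument.)

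Second, for \rf{eqlem*5} and \rf{eqlem*7} your direct $L^p$ bound ``$\PP(Q)\lesssim\delta_0^{1/(n+2)}\Theta(R)$'' applies only to $Q\in\LD_1\cup\LD_2$, not to the smaller $\Reg$-cubes inside them; Lemma~\ref{lemenereg} is an $L^2$ estimate, so you cannot transfer the pointwise $\PP$-bound to the $\Reg$-level in $L^p$ directly. The paper proceeds by first establishing the $p=2$ case via Lemma~\ref{lemenereg}, then applying H\"older's inequality together with the key containment $\Reg_{\LD_2},\,\Reg_{\NDB_2}\subset HD_1$ (so $\mu$ of the supporting set is $\mu(HD_1)$, not $\mu(R)$). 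This is also why the statement of \rf{eqlem*5} excludes $\Reg_{\LD_1}\cap\Reg_\DB$ but keeps all of $\Reg_{\LD_2}$: the $\Reg_{\DB}$-part contained in $HD_1$ admits the H\"older upgrade, while the part outside $HD_1$ does not. Your stated exponent $B\Lambda^{p-2}$ is therefore incorrect; the paper's bound is $B\Lambda^2\,\delta_0^{p/(n+2)}$.
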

\vv

\begin{proof}
To prove \rf{eqlem*1}, notice first that $\Sigma^\PP(\wt\End)\approx \sigma(\wt\End)$ because
the cubes from $\wt\End$ are $\PP$-doubling. Also, by \rf{eqsplit71} we have
$$\Sigma^\PP(\wt\End) =  \Sigma^\PP(\LD_1) + \Sigma^\PP(\LD_2) +\Sigma^\PP(\NDB_1)+ \Sigma^\PP(\NDB_2)
+ \Sigma^\PP(\HD_2) + \Sigma^\PP(\sM_\Neg).$$
By Lemma \ref{lempoiss} and the subsequent remark, we have
\begin{equation}\label{eqld12}
\Sigma^\PP(\LD_1)+ \Sigma^\PP(\LD_2) \lesssim \delta_0^{\frac2{n+2}}\,\Theta(R)^2\,\mu(R) = \delta_0^{\frac2{n+2}}\,\sigma(R).
\end{equation}
Again by Lemma \ref{lempoiss}, we have $\PP(Q)\lesssim\Theta(R)$ for all $Q\in\sM_\Neg$ and thus\footnote{We will obtain better estimates for $\Sigma^\PP(\sM_\Neg)$ below.}
$$\Sigma^\PP(\sM_\Neg)\lesssim \sigma(R).$$
Also, since $\TT$ is a tractable tree, we have
$$\sigma(R)\leq B\,\sigma(\HD_1)\quad \text{ and }\quad
\Sigma^\PP(\HD_2)\approx \sigma(\HD_2) \lesssim B\,\sigma(\HD_1).$$

Regarding the family $\NDB_1\cup\NDB_2$, denote 
$$\sss_\NDB = \sss_1(e'(R))\cap\NDB(R) \cup \bigcup_{S\in\HD_1} \big(\sss(S)\cap\NDB(S)\big),$$
so that the cubes from $P\in\NDB_1\cup\NDB_2$ are maximal $\PP$-doubling cubes contained in some cube from
$\sss_\NDB$. By Lemma \ref{lemdobpp}, if $P\in\NDB_1\cup\NDB_2$ is contained in $Q\in\sss_\NDB$,
then $\PP(P)\lesssim \PP(Q)$. Also, by the definition of $\NDB(\,\cdot\,)$, there exists
$Q'=Q'(Q)\in\DB$ such that $\ell(Q')=\ell(Q)$ and $Q'\subset 20Q$. 
In particular, this implies that $Q'\sim\TT(e'(R))$ and that $\PP(Q)\approx\PP(Q')\approx\sigma(Q')$, and by the definition of $\DB$,
\begin{equation}\label{eqDB99}
\EE_\infty(9Q')\gtrsim M_0^2\,\sigma(Q').
\end{equation}
Thus,
\begin{align}\label{eqndb945}
\Sigma^\PP(\NDB_1\cup\NDB_2) & = \sum_{Q\in\sss_\NDB} \sum_{P\in\End\cap\DD_\mu(Q)} \PP(P)^2\,\mu(P)\\
& \lesssim \sum_{Q\in\sss_\NDB}\PP(Q)^2\,\mu(Q) \approx \sum_{Q\in\sss_\NDB}\sigma(Q'(Q))\nonumber\\
& \lesssim \frac1{M_0^2}\sum_{Q\in\sss_\NDB}\EE_\infty(9Q'(Q))\lesssim \frac1{M_0^2}\sum_{Q'\in\DB:Q'\sim\TT} \EE_\infty(9Q'),\nonumber
\end{align}
where in the last estimate we took into account that for each $Q'$ there is a bounded number of cubes $Q\in\sss_\NDB$ such that $Q'=Q'(Q)$ (possibly depending on $n$ and $A_0$).
Since $\TT$ is a typical tree, by the definition in \rf{defty}, we have
\begin{equation}\label{eqDB09}
\sum_{Q'\in\DB:Q'\sim\TT} \EE_\infty(9Q')\leq \Lambda^{\frac{-1}{3n}}\,\sigma(\HD_1).
\end{equation}
So we have
\begin{equation}\label{eqalighs33}
\Sigma^\PP(\NDB_1\cup\NDB_2)\lesssim M_0^{-2}\Lambda^{\frac{-1}{3n}}\,\sigma(\HD_1).
\end{equation}
Gathering the estimates above, the estimate \rf{eqlem*1} follows.

\vv

To prove \rf{eqlem*2}, we apply Lemma \ref{lemenereg} and the fact that $\TT$ is a typical tree again:
\begin{align*}
\Sigma^\PP(\Reg_\DB) & =\sum_{Q\in\wt\End\cap\DB} \Sigma^\PP(\Reg_\DB\cap \DD_\mu(Q)) \\
& \lesssim
\sum_{Q\in\wt\End\cap\DB} \big(\PP(Q)^2\,\mu(Q) + \EE(9Q)\big)\lesssim \sum_{Q\in\wt\End\cap\DB} \EE_\infty(9Q)
\end{align*}
Notice that, by the construction of the family
$\Reg$, for each $Q\in\sM_\Neg$ there exists some cube $Q'\in\TT$ such that
\begin{equation}\label{eqnegsim*}
\ell(Q)\leq \ell(Q')\leq A_0^2\,\ell(S)\quad \text{ and }\quad 20Q'\cap20Q\neq\varnothing.
\end{equation}
Hence, $Q\sim\TT$ (see \rf{defsim0}), and so
\begin{align*}
\sum_{Q\in\wt\End\cap\DB}  \EE_\infty(9Q) & = \sum_{Q\in\End\cap\DB}  \EE(9Q)_\infty
+ \sum_{Q\in\sM_\Neg\cap\DB} \EE_\infty(9Q)\\
& \lesssim \sum_{S\in\DB:S\sim\TT} \EE_\infty(9S)
\leq \Lambda^{\frac{-1}{3n}}\,\sigma(\HD_1),
\end{align*}
taking into account that $\TT$ is a typical tree, which completes the proof of \rf{eqlem*2}.
Notice also that \rf{eqlem*3} follows from \rf{eqlem*2} and the fact that the cubes from $\DB$ satisfy
\rf{eqDB99}.

\vv
Next we turn our attention to \rf{eqlem*4}. By Lemma \ref{lemenereg} we have
\begin{align*}
\Sigma^\PP(\Reg_{\NDB_1}\cup\Reg_{\NDB_2})& \lesssim \sum_{Q\in \NDB_1\cup\NDB_2} \EE_\infty(9Q)\\
&\leq \sum_{Q\in \wt\End\cap\DB} \EE_\infty(9Q) + \sum_{Q\in (\NDB_1\cup\NDB_2)\setminus \DB} \EE_\infty(9Q).
\end{align*}
By \rf{eqlem*2}, the first term on the right hand side above does not exceed $C\Lambda^{\frac{-1}{3n}}\,\sigma(\HD_1)$. Concerning the second term, we use the fact for the cubes $Q\not\in\DB$,
we have $\EE_\infty(9Q)\lesssim M_0^2\,\sigma(Q)$ together with \rf{eqalighs33}. Then we get
$$\sum_{Q\in (\NDB_1\cup\NDB_2)\setminus \DB} \EE_\infty(9Q)
\lesssim M_0^2 \,\sigma(\NDB_1\cup\NDB_2)\lesssim \Lambda^{\frac{-1}{3n}}\,\sigma(\HD_1).$$

\vv

Regarding the estimate \rf{eqlem*4.5}, by \rf{eqld12} and Lemma
\ref{lemenereg}, we obtain
\begin{align*}
\Sigma^\PP((\Reg_{\LD_1}\cup \Reg_{\LD_2})\setminus\Reg_{\DB}) & \lesssim \sum_{Q\in
(\LD_1\cup \LD_2)\setminus\DB} \EE_\infty(9Q)\lesssim M_0^2\,\Sigma^\PP(\LD_1\cup \LD_2) \\
&\lesssim M_0^2\,\delta_0^{\frac2{n+2}}\sigma(R)\leq 
B\,M_0^2\,\delta_0^{\frac2{n+2}}\sigma(\HD_1).
\end{align*}
Together with \rf{eqlem*2}, this yields \rf{eqlem*4.5}.

To get \rf{eqlem*5}, we apply H\"older's inequality in the preceding estimate:
\begin{align*}
\Sigma_p^\PP((\Reg_{\LD_1}\cup \Reg_{\LD_2})\setminus\Reg_{\DB}) & \leq 
(\Sigma^\PP((\Reg_{\LD_1}\cup \Reg_{\LD_2})\setminus\Reg_{\DB}))^{\frac p2}\,\mu(e'(R))^{1-\frac p2}\\
& \lesssim (B\,M_0^2\,\delta_0^{\frac2{n+2}}\sigma(\HD_1))^{\frac p2}\,\mu(R)^{1-\frac p2}.
\end{align*}
Observe now that, writing $HD_i=\bigcup_{Q\in\HD_i}Q$,
\begin{equation}\label{eqmuhd1}
\mu(HD_1) = \frac1{\Lambda^2\,\Theta(R)^2}\,\sigma(\HD_1) \geq \frac1{B\,\Lambda^2\,\Theta(R)^2}\,\sigma(R) = \frac1{B\,\Lambda^2}\,\mu(R).
\end{equation}
Thus, using also that $M_0\leq \Lambda$,
\begin{align}\label{eqld7345}
\Sigma_p^\PP((\Reg_{\LD_1}\cup \Reg_{\LD_2})\setminus\Reg_{\DB}) &
\lesssim (B\,M_0^2\,\delta_0^{\frac2{n+2}}\sigma(\HD_1))^{\frac p2}\,(B\,\Lambda^2\,\mu(HD_1))^{1-\frac p2} \\
& \leq B\,\Lambda^2\,\delta_0^{\frac p{n+2}}\sigma_p(\HD_1).\nonumber
\end{align}
On the other hand, if we let $\Reg_{\DB_2}$ be the subfamily of the cubes from $\Reg_{\DB}$ which
are contained in some cube from $\HD_1$, by H\"older's inequality, Lemma
\ref{lemenereg}, and \rf{eqDB09}, we get
\begin{align*}
\Sigma_p^\PP(\Reg_{\DB_2}) &\lesssim \Sigma^\PP(\Reg_{\DB_2})^{\frac p2}\,\mu(HD_1)^{1-\frac p2}\\
&\lesssim \bigg(\sum_{Q'\in\DB:Q'\sim\TT} \EE_\infty(9Q')\bigg)^{\frac p2}\,\mu(HD_1)^{1-\frac p2}\\
& \lesssim \big(\Lambda^{\frac{-1}{3n}}\,\sigma(\HD_1)\big)^{\frac p2}\,\mu(HD_1)^{1-\frac p2}= \Lambda^{\frac{-p}{6n}}\,\sigma_p(\HD_1).
\end{align*}
Gathering \rf{eqld7345} and the last estimate, we get \rf{eqlem*5}.

To prove \rf{eqlem*6}, recall that $\Theta(Q) \leq\Lambda^2\,\Theta(R)$ for all $Q\in\TT$.
This implies that also $\PP(Q) \lesssim\Lambda^2\,\Theta(R)$ for all $Q\in\Reg$, by Lemma \ref{lemdobpp}. Consequently,
$$\Sigma_p^\PP(\Reg_{\HD_2})\lesssim \Lambda^{2p}\,\Theta(R)^p\,\mu(HD_2) = \sigma_p(\HD_2) \approx
\Sigma_p^\PP(\HD_2).$$
On the other hand, since $R\in\Trc$,
\begin{align*}
\sigma_p(\HD_2) & = \Theta(\HD_2)^{p-2}\,\sigma(\HD_2)\leq B\,\Theta(\HD_2)^{p-2}\,\sigma(\HD_1)
\\ &= B\,\frac{\Theta(\HD_2)^{p-2}}{\Theta(\HD_1)^{p-2}}\,\sigma_p(\HD_1) = B\,\Lambda^{p-2}\,\sigma_p(\HD_1),
 \end{align*}
 which completes the proof of \rf{eqlem*6}.
 
Finally, observe that \rf{eqlem*7} follows \rf{eqlem*4} using H\"older's inequality and the fact that
the cubes from $\Reg_{\NDB_2}$ are contained in cubes from $\HD_1$:
$$\Sigma_p^\PP(\Reg_{\NDB_2}) \leq \Sigma^\PP(\Reg_{\NDB_2})^{\frac p2}\mu(HD_1)^{1-\frac p2} \lesssim \big(\Lambda^{\frac{-1}{3n}}\sigma(\HD_1)\big)^{\frac p2}\mu(HD_1)^{1-\frac p2}
=  \Lambda^{\frac{-p}{6n}}\,\sigma_p(\HD_1).$$
\end{proof}

\vv
For the record, notice that \rf{eqndb945} also shows that the family $\sss_\NDB$ defined just above
\rf{eqDB99} satisfies
$$\Sigma^\PP(\sss_\NDB)= \sum_{Q\in\sss_\NDB}\PP(Q)^2\,\mu(Q) \lesssim \frac1{M_0^2}\sum_{Q'\in\DB:Q'\sim\TT} \EE_\infty(9Q'),$$
which together with \rf{eqDB09} yields
\begin{equation}\label{eqsssndb6}
\Sigma^\PP(\sss_\NDB)\lesssim M_0^{-2}\Lambda^{\frac{-1}{3n}}\,\sigma(\HD_1).
\end{equation}

\vv

\begin{lemma}\label{lemneg3}
Suppose that $R\in\Trc\cap\Ty$. Then
\begin{equation}\label{eqlemneg01}
\Sigma^\PP(\sM_\Neg)\lesssim\Sigma^\PP(\Neg) \lesssim \big(\delta_0\,B\,\Lambda^5 + B\,\Lambda^{-3}
+M_0^{-2}\Lambda^{\frac{-1}{3n}}\big)\,\sigma(\HD_1),
\end{equation}
and
\begin{equation}\label{eqlemneg02}
\Sigma^\PP(\Reg_\Neg) \lesssim \Sigma^\PP(\Neg) + \sum_{Q\in\sM_\Neg} \EE_\infty(9Q)\lesssim \big(\delta_0\,B\,\Lambda^7 + B\,\Lambda^{-1}
+\Lambda^{\frac{-1}{3n}}\big)\,\sigma(\HD_1).
\end{equation}
\end{lemma}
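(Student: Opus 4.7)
The plan is to prove the two displayed inequalities in sequence, exploiting Lemmas~\ref{lemdobpp}, \ref{lemnegs}, \ref{lempoiss}, \ref{lemenereg}, and \ref{lemenergias} together with the typicality of $\TT$. First I would show $\Sigma^\PP(\sM_\Neg)\lesssim\Sigma^\PP(\Neg)$. For each $P\in\sM_\Neg$ let $Q_P\in\Neg$ be the minimal $\Neg$-cube with $P\subset Q_P$. By the maximality defining $\sM_\Neg$, every $S\in\TT_\Reg$ with $P\subsetneq S\subsetneq Q_P$ is non-$\PP$-doubling, and $Q_P$ itself is non-$\PP$-doubling since $Q_P\in\Neg$. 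Lemma~\ref{lemdobpp} applied along this chain yields $\PP(P)\lesssim \PP(Q_P)$. Summing, and using the disjointness bound $\sum_{P:Q_P=Q}\mu(P)\leq \mu(Q)$, gives the first inequality.

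To bound $\Sigma^\PP(\Neg)$, I would combine the pointwise bound $\PP(Q)\lesssim(\ell(Q)/\ell(R))^{1/3}\Theta(R)$ of Lemma~\ref{lempoiss}(v) with a careful analysis of the mass of $\Neg$. Write $\Neg^*$ for the maximal $\Neg$-cubes; these live at generation $k_R+1$, are non-$\PP$-doubling, and their parents stick out of $e'(R)$. For each $Q^*\in\Neg^*$ the descendants in $\Neg$ lie on chains of non-$\PP$-doubling cubes, so a second application of Lemma~\ref{lemdobpp} sharpens $\PP(Q)$ with generation depth. The lower bound $\ell(Q)\gtrsim\delta_0^2\ell(R)$ from Lemma~\ref{lemnegs} caps this depth at $O(|\log\delta_0|/\log A_0)$, and the small-boundary property~\rf{eqfk490} of $R$ controls the total mass of each scale of $\Neg^*$ near $\partial R$; combined, these produce the $\delta_0 B\Lambda^5$ term. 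The remaining terms $B\Lambda^{-3}$ and $M_0^{-2}\Lambda^{-1/(3n)}$ come from isolating the $\Neg$-cubes whose non-$\PP$-doubling ancestor carries a $\NDB$-witness (a $\DB$-cousin of the same generation sitting inside $20Q$), and invoking the typicality~\rf{defty} of $\TT$ to bound the resulting $\EE_\infty$-sum.

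For $\Sigma^\PP(\Reg_\Neg)$, split $\Reg_\Neg=\Reg_{\sM_\Neg}\cup(\Reg_\Neg\setminus\Reg_{\sM_\Neg})$. A cube $P\in\Reg_\Neg\setminus\Reg_{\sM_\Neg}$ has no $\PP$-doubling intermediary between itself and $Q_P\in\Neg$, so Lemma~\ref{lemdobpp} again yields $\PP(P)\lesssim\PP(Q_P)$, controlling this piece by $\Sigma^\PP(\Neg)$. For $\Reg_{\sM_\Neg}$, Lemma~\ref{lemenereg} applied to each $Q\in\sM_\Neg$ together with Lemma~\ref{lemenergias} (to replace $\EE(2Q)$ by $\EE_\infty(9Q)$) gives
$$\Sigma^\PP(\Reg_{\sM_\Neg})\lesssim\Sigma^\PP(\sM_\Neg)+\sum_{Q\in\sM_\Neg}\EE_\infty(9Q).$$
The energy sum is treated by splitting $\sM_\Neg$ into the $\DB$ and non-$\DB$ parts: on $\sM_\Neg\setminus\DB$ the definition of $\DB$ gives $\EE_\infty(9Q)\lesssim M_0^2\,\sigma(Q)$, producing $M_0^2\,\Sigma^\PP(\sM_\Neg)\lesssim M_0^2\,\Sigma^\PP(\Neg)$; on $\sM_\Neg\cap\DB$, the relation $Q\sim\TT$ from~\rf{eqnegsim*} places these cubes inside the sum on the left of~\rf{defty}, and typicality gives $\Lambda^{-1/(3n)}\,\sigma(\HD_1)$. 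Plugging in the bound for $\Sigma^\PP(\Neg)$ and using $M_0\leq\Lambda$ to absorb $M_0^2$ into $\Lambda^2$ factors yields the claimed estimate.

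The main obstacle is the bookkeeping of the $\delta_0$ and $\Lambda$ powers in the bound on $\Sigma^\PP(\Neg)$: one must simultaneously exploit the Lemma~\ref{lemdobpp} decay of $\PP$ along non-$\PP$-doubling chains rooted at $\Neg^*$, the depth cutoff from Lemma~\ref{lemnegs}, the small-boundary estimate~\rf{eqfk490} to limit the mass near $\partial R$, and the typicality of $\TT$ to absorb the $\DB$-type contributions that enter through the $\NDB$ stopping rule.
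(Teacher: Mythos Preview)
Your treatment of $\Sigma^\PP(\sM_\Neg)\lesssim\Sigma^\PP(\Neg)$ and of $\Sigma^\PP(\Reg_\Neg)$ is correct and matches the paper's approach closely: the Lemma~\ref{lemdobpp} chain argument, the split $\Reg_\Neg=\Reg_{\sM_\Neg}\cup(\Reg_\Neg\setminus\Reg_{\sM_\Neg})$, the use of Lemma~\ref{lemenereg}, and the $\DB$/non-$\DB$ dichotomy on $\sM_\Neg$ are all exactly what the paper does.

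The gap is in your bound for $\Sigma^\PP(\Neg)$ itself. The paper does \emph{not} use the small-boundary property~\rf{eqfk490}, nor any analysis of maximal cubes ``$\Neg^*$'' at generation $k_R+1$, and these would not produce the $\delta_0$ factor you need. The key observation you are missing is that Lemma~\ref{lemnegs} forces every $Q\in\Neg$ to lie in $\LD(R)\cap\sss(e'(R))$ or in $\NDB(R)\cap\sss(e'(R))$, since $\Neg$-cubes are not contained in any cube of $\HD_1(e'(R))$. This gives the decomposition $\Neg=\Neg_\LD\cup\Neg_\NDB$. The $\Neg_\NDB$ piece is bounded exactly as you suggest, via $\Neg_\NDB\subset\sss_\NDB$ and~\rf{eqsssndb6}, giving the $M_0^{-2}\Lambda^{-1/(3n)}$ term. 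For $\Neg_\LD$, the $\delta_0$ factor comes directly from the $\LD$ condition $\Theta(Q)\lesssim\delta_0\,\Theta(R)$, which yields $\mu(Q)\lesssim\delta_0\,\Theta(R)\,\ell(Q)^n$. One then splits $\Neg_\LD$ at the scale $\Lambda^{-5}\ell(R)$: for the large cubes, a packing argument on the disjoint balls $\tfrac12 B(Q)$ gives $\sum\ell(Q)^n\lesssim\Lambda^5\ell(R)^n$, producing the $\delta_0\,B\,\Lambda^5$ term; for the small cubes, the bound $\PP(Q)\lesssim(\ell(Q)/\ell(R))^{1/3}\Theta(R)$ of Lemma~\ref{lempoiss}(v) with $\ell(Q)\leq\Lambda^{-5}\ell(R)$ gives the $B\,\Lambda^{-3}$ term. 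So the $B\Lambda^{-3}$ contribution is an $\LD$ term, not an $\NDB$ term as you wrote, and the $\delta_0$ is a density factor, not a boundary-layer or depth-cutoff factor.
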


\begin{proof}
By Lemma \ref{lemnegs}, the cubes from $\Neg$ belong either to $\LD(R)$ or to $\NDB(R)$. Thus, 
if we denote
$$\Neg_\LD = \Neg \cap\LD(R)\cap\sss(e'(R)),\quad\;\Neg_\NDB = \Neg \cap\NDB(R)\cap\sss(e'(R)),$$
it is clear that
$$\Neg = \Neg_\LD \cup \Neg_\NDB.$$
We can split $\Reg_\Neg$ in an analogous way:
$$\Reg_\Neg = \Reg_{\Neg_\LD} \cup \Reg_{\Neg_\NDB}.$$
Recall that the cubes from $\Neg$ are not $\PP$-doubling and that $\PP(Q)\lesssim \left(\frac{\ell(Q)}{\ell(R)}\right)^{1/3}\,\Theta(R)$ for all $Q\in\Neg$.

To estimate $\Sigma^\PP(\Neg_\LD)$, we split $\Neg_\LD$ into two subfamilies $\cI$ and $\cJ$ so that the 
cubes from $\cI$ have side length at least $\Lambda^{-5}\ell(R)$, opposite to the ones from $\cJ$. We have
\begin{align*}
\Sigma^\PP(\cI) &= \sum_{Q\in \cI}\PP(Q)^2\,\mu(Q) \lesssim \Theta(R)^2\sum_{Q\in \cI}\mu(Q)\\
& \lesssim \Theta(R)^2\sum_{Q\in \cI}\Theta(Q)\,\ell(Q)^n \lesssim \delta_0\Theta(R)^3\sum_{Q\in \cI}\ell(Q)^n.
\end{align*}
Using now that the balls $\frac12B(Q)$, with $Q\in \cI$, are disjoint and that $\ell(Q)\geq \Lambda^{-5}\ell(R)$, we get
$$\sum_{Q\in \cI}\ell(Q)^n \leq \sum_{Q\in \cI}\frac{\Lambda^5\,\ell(Q)^{n+1} }{\ell(R)}\lesssim
\frac{\Lambda^5\,\ell(R)^{n+1} }{\ell(R)}=\Lambda^5\,\ell(R)^n.$$
Therefore,
$$\Sigma^\PP(\cI)\lesssim \delta_0\,\Lambda^5\,\Theta(R)^3\,\ell(R)^n\approx  \delta_0\,\Lambda^5\,\sigma(R)\lesssim \delta_0\,B\,\Lambda^5\,\sigma(\HD_1).$$
In connection with the family $\cJ$, we have
\begin{align*}
\Sigma^\PP(\cJ) &= \sum_{Q\in \cJ}\PP(Q)^2\,\mu(Q)\lesssim \Theta(R)^2\sum_{Q\in \cJ}\left(\frac{\ell(Q)}{\ell(R)}\right)^{2/3}\,\mu(Q)\\
&\leq \Lambda^{-10/3}\,\Theta(R)^2\sum_{Q\in \cJ}\mu(Q) \leq \Lambda^{-3}\,\sigma(R)\leq B\,\Lambda^{-3}\,\sigma(\HD_1).
\end{align*}
Hence,
$$\Sigma^\PP(\Neg_\LD)\lesssim \big(\delta_0\,B\,\Lambda^5 + B\,\Lambda^{-3}\big)\,\sigma(\HD_1).$$

To estimate $\Sigma^\PP(\Neg_\NDB)$ we just take into account that $\Neg_\NDB\subset \sss_\NDB$,
 and then by \rf{eqsssndb6} we have
$$\Sigma^\PP(\Neg_\NDB)\leq
\Sigma^\PP(\sss_\NDB)\lesssim M_0^{-2}\Lambda^{\frac{-1}{3n}}\,\sigma(\HD_1).
$$
Gathering the estimates obtained for $\Sigma^\PP(\Neg_\LD)$ and $\Sigma^\PP(\Neg_\NDB)$ we get
$$\Sigma^\PP(\Neg) \lesssim \big(\delta_0\,B\,\Lambda^5 + B\,\Lambda^{-3}
+M_0^{-2}\Lambda^{\frac{-1}{3n}}\big)\,\sigma(\HD_1),$$
as wished.
The fact that $\Sigma^\PP(\sM_\Neg)\lesssim \Sigma^\PP(\Neg)$ follows from the fact that if $P\in\sM_\Neg$ satisfies $P\subset Q\in \Neg$, then $\PP(P)\lesssim \PP(Q)$ because $P$ is a maximal
$\PP$-doubling cube contained in $Q$.

\vv
Next we deal with \rf{eqlemneg02}.
 We split
$$\Sigma^\PP(\Reg_\Neg) = \Sigma^\PP(\Reg_{\sM_\Neg}) + \Sigma^\PP(\Reg_\Neg\setminus \Reg_{\sM_\Neg}).$$
Notice that, for all $P\in\Reg_\Neg\setminus \Reg_{\sM_\Neg}$ such that $P\subset Q\in\Neg$, by Lemma \ref{lemdobpp}, 
$\PP(P)\lesssim\PP(Q)$. Then it follows that
$$\Sigma^\PP(\Reg_\Neg\setminus \Reg_{\sM_\Neg})\lesssim \Sigma^\PP(\Neg).$$

To estimate $\Sigma^\PP(\Reg_{\sM_\Neg})$, we apply Lemma \ref{lemenereg}:
\begin{align}\label{eq83e}
\Sigma^\PP(\Reg_{\sM_\Neg})  & =\sum_{S\in\sM_\Neg} \Sigma^\PP(\Reg_{\sM_\Neg}\cap\DD_\mu(S))
 \lesssim \sum_{S\in\sM_\Neg} \EE_\infty(9S) \\
 & \leq  \sum_{S\in\sM_\Neg\setminus\DB} \EE_\infty(9S) + \sum_{S\in\DB\cap\wt \End} \EE_\infty(9S).\nonumber
\end{align}
By the definition of $\DB$ and the fact that $\Theta(S)\lesssim\PP(Q)$ whenever $S\subset Q\in\Neg$,
we derive
$$\sum_{S\in\sM_\Neg\setminus\DB} \EE_\infty(9S)\lesssim M_0^2\sum_{S\in\sM_\Neg\setminus\DB} \sigma(S)\lesssim
M_0^2\,\Sigma^\PP(\Neg).$$
On the other hand, by \rf{eqlem*2},
$$\sum_{S\in\DB\cap\wt \End} \EE_\infty(9S) \leq\Lambda^{\frac{-1}{3n}}\,\sigma(\HD_1).
$$
 Therefore,
$$\Sigma^\PP(\Reg_{\sM_\Neg})\lesssim M_0^2\,\Sigma^\PP(\Neg) + \Lambda^{\frac{-1}{3n}}\,\sigma(\HD_1).$$

Gathering the estimates above, and taking into account that $M_0<\Lambda$ (by \rf{eqlammm}), we deduce
\begin{align*}
\Sigma^\PP(\Reg_\Neg)&\lesssim M_0^2\,\Sigma^\PP(\Neg) + \Lambda^{\frac{-1}{3n}}\,\sigma(\HD_1)\\
& \lesssim
\big(\delta_0\,B\,M_0^2\,\Lambda^5 + B\,\Lambda^{-3}\,M_0^2
+\Lambda^{\frac{-1}{3n}}\big)\,\sigma(\HD_1) + \Lambda^{\frac{-1}{3n}}\,\sigma(\HD_1)\\
&\leq \big(\delta_0\,B\,\Lambda^7 + B\,\Lambda^{-1}
+\Lambda^{\frac{-1}{3n}}\big)\,\sigma(\HD_1).
\end{align*}
\end{proof}
\vv

Next lemma deals with $\Sigma_p^\PP(\Reg_\Ot)$.

\begin{lemma}\label{lemregot}
For $1\leq p\leq2$, we have
\begin{equation}\label{eqgaga34}
\limsup_{\ell_0\to0} \Sigma_p^\PP(\Reg_\Ot(\ell_0)) \lesssim \Lambda^{2p}\,\Theta(R)^p\,\mu(Z).
\end{equation}
Consequently, if $\mu(Z)\leq \ve_Z\,\mu(R)$, then
\begin{equation}\label{eqgaga35}
\limsup_{\ell_0\to0} \Sigma_p^\PP(\Reg_\Ot(\ell_0)) \lesssim B\,\Lambda^4\,\ve_Z\,\sigma_p(\HD_1).
\end{equation}
\end{lemma}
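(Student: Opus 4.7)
The strategy is to first bound $\PP(P)$ uniformly for $P \in \Reg_\Ot(\ell_0)$ and then control the total $\mu$-mass of the union of these cubes. Since $\Reg_\Ot(\ell_0) \subset \TT(e'(R))$ (a cube of $\Reg$ not contained in any $\End$ cube is, in particular, not strictly contained in one), the footnote following statement (iv) of Lemma \ref{lempoiss} gives $\PP(P) \lesssim \Lambda^2\,\Theta(R)$ for every $P \in \Reg_\Ot(\ell_0)$. Because the cubes of $\Reg_\Ot(\ell_0)$ are pairwise disjoint by Lemma~\ref{lem74}, this yields
$$\Sigma_p^\PP(\Reg_\Ot(\ell_0)) \lesssim \Lambda^{2p}\,\Theta(R)^p\,\mu\Big(\bigcup_{P \in \Reg_\Ot(\ell_0)} P\Big),$$
and everything reduces to estimating the measure of this union.

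The key geometric observation is the inclusion
$$\bigcup_{P \in \Reg_\Ot(\ell_0)} P \;\subset\; Z \;\cup\; \bigcup_{Q \in \End,\;\ell(Q) \leq \ell_0/(60A_0)} Q.$$
That $Z \subset \bigcup_{P \in \Reg_\Ot(\ell_0)} P$ is immediate: for $x \in Z$ the regularized cube $Q_x$ cannot be contained in any $\End$ cube, else $x$ would belong to one. Conversely, if $x \in P \setminus Z$ for some $P \in \Reg_\Ot(\ell_0)$, then $x \in Q$ for some $Q \in \End$, and dyadic nesting together with $P \not\subset Q$ forces $Q \subsetneq P$. For any $y \in Q$, the fact that $Q \in \TT(e'(R))$ yields $d_R(y) \leq \ell(Q) < \ell(P)$, while the defining property of $P$ as a regularized cube gives $d_{R,\ell_0}(y) \geq 60\,\ell(P)$. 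Since $d_{R,\ell_0}(y) = \max(\ell_0, d_R(y))$, this is only possible if $\ell_0 \geq 60\,\ell(P)$, and hence $\ell(Q) \leq \ell(P)/A_0 \leq \ell_0/(60 A_0)$. As the cubes of $\End$ are pairwise disjoint and contained in $e'(R) \subset 2R$, we have $\sum_{Q \in \End} \mu(Q) \leq \mu(2R) < \infty$, so dominated convergence applied to the counting measure gives $\lim_{\ell_0 \to 0} \sum_{Q \in \End,\,\ell(Q) \leq \ell_0/(60A_0)} \mu(Q) = 0$. Combined with the displayed inclusion, this yields $\limsup_{\ell_0 \to 0} \mu\big(\bigcup_{P \in \Reg_\Ot(\ell_0)} P\big) \leq \mu(Z)$ and proves \rf{eqgaga34}.

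For \rf{eqgaga35}, the plan is to combine \rf{eqgaga34} with the hypothesis $\mu(Z) \leq \ve_Z\,\mu(R)$, write $\Theta(R)^p \mu(R) = \sigma_p(R)$, and use that $R \in \Trc \subset \MDW$ implies $\sigma(R) \leq B\,\sigma(\HD_1)$. Since $\Theta(\HD_1) = \Lambda\,\Theta(R)$, one has $\sigma_p(\HD_1) = \Lambda^{p-2}\,\Theta(R)^{p-2}\,\sigma(\HD_1)$, whence
$$\sigma_p(R) = \Theta(R)^{p-2}\,\sigma(R) \leq B\,\Lambda^{2-p}\,\sigma_p(\HD_1).$$
Plugging this in gives $\Lambda^{2p}\,\Theta(R)^p\,\mu(Z) \leq B\,\Lambda^{p+2}\,\ve_Z\,\sigma_p(\HD_1) \leq B\,\Lambda^4\,\ve_Z\,\sigma_p(\HD_1)$, the last inequality using $p \leq 2$. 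The main obstacle is the geometric argument in the second paragraph forcing $\ell(P) \leq \ell_0/60$ when $P \in \Reg_\Ot(\ell_0)$ strictly contains an $\End$ cube; once this small-scale trapping is established, the rest is essentially bookkeeping.
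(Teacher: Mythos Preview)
Your proof is correct and follows essentially the same approach as the paper. The paper proves the inclusion $\bigcup_{P\in\Reg_\Ot} P \subset e'(R)\setminus \bigcup_{Q\in\End:\ell(Q)>\ell_0} Q$ by arguing from the $\End$ side (if $x$ lies in a large $\End$ cube, its regularized cube is contained there), while you argue the contrapositive from the $\Reg_\Ot$ side; these are equivalent. For \rf{eqgaga35} the paper converts via $\mu(HD_1)\geq (B\Lambda^2)^{-1}\mu(R)$ and you via $\sigma_p(R)\leq B\Lambda^{2-p}\sigma_p(\HD_1)$, both of which unwind to the same inequality $\sigma(R)\leq B\,\sigma(\HD_1)$.
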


Above we wrote $\Reg_\Ot(\ell_0)$ to recall the dependence of the family $\Reg_\Ot$ on the parameter
$\ell_0$ in \rf{eql00*23}.

\begin{proof}
If $x\in Q\in\End$ with $\ell(Q)\geq \ell_0$, then 
$$d_{R,\ell_0}(x)\leq \max(\ell_0,\ell(Q)) = \ell(Q)$$
(recall that $d_{R,\ell_0}$ is defined in \rf{eql00*23}), and thus $x$ is contained in some cube $Q'\in\Reg$ with $\ell(Q')\leq \ell(Q)$, by the definition of the family $\Reg$. So $Q'\subset Q$ and then $Q'\in\Reg\setminus \Reg_\Ot$. Therefore,
\begin{equation}\label{eqregotin}
\bigcup_{P\in\Reg_\Ot} P \subset e'(R) \setminus \bigcup_{Q\in\End:\ell(Q)>\ell_0} Q,
\end{equation}
and so
\begin{equation}\label{eqlimot62}
\limsup_{\ell_0\to0} \mu\bigg(\bigcup_{P\in\Reg_\Ot(\ell_0)} P\bigg) \leq \mu\bigg(e'(R) \setminus \bigcup_{Q\in\End} Q\bigg) = \mu(Z).
\end{equation}
To complete the proof of \rf{eqgaga34} it just remains to notice that, using the fact that $\PP(P)\lesssim \Lambda^2\,\Theta(R)$ for all $P\in\Reg$, we have
$$\Sigma_p^\PP(\Reg_\Ot(\ell_0)) \lesssim \Lambda^{2p}\,\Theta(R)^p \,\mu\bigg(\bigcup_{P\in\Reg_\Ot(\ell_0)} P\bigg).$$

Regarding the second statement of the lemma recall that, as in \rf{eqmuhd1},
$\mu(HD_1) \geq  \frac1{B\,\Lambda^2}\,\mu(R)$, which implies that
$$\mu(Z) \leq B\,\Lambda^2\,\ve_Z\,\mu(HD_1).$$
Plugging this estimate into \rf{eqgaga34}, we get 
$$\limsup_{\ell_0\to0} \Sigma_p^\PP(\Reg_\Ot(\ell_0)) \lesssim B\,\Lambda^{2+p}\,\ve_Z\,\Theta(\HD_1)^p\,\mu(HD_1) \leq B\,\Lambda^4\,\ve_Z\,\sigma_p(\HD_1).$$
\end{proof}

\vv

\begin{rem}
By Lemma \ref{lemregmolt}, for $p\in (1,2)$, if $\Lambda\gg B\gg1$ and
$\delta_0\ll\Lambda^{-1}$, we have
$$\Sigma_p^\PP(\Reg_{\LD_1}\setminus\Reg_{\DB}) + \Sigma_p^\PP(\Reg_{\LD_2}\setminus\Reg_{\DB}) +
\Sigma_p^\PP(\Reg_{\HD_2}) +
\Sigma_p^\PP(\Reg_{\NDB_2})\ll \sigma_p(\HD_1).$$
Analogously, by Lemma \ref{lemregot}, if $\ve_Z\ll (B\Lambda^4)^{-1}$ and $\ell_0$ is small enough,
$$\Sigma_p^\PP(\Reg_{\Ot})\ll \sigma_p(\HD_1).$$
To transfer the lower estimates for the $L^p(\eta)$ norm of $\RR\eta$ in Lemma
\ref{lemrieszeta} to $\RR_{\TT}\mu$, $\RR_{\TT_\Reg}\mu$, and $\Delta_{\TT}\RR\mu$, it would be useful to have estimates for $\Sigma_p^\PP(\Reg_{\NDB_1})$, $\Sigma_p^\PP(\Reg_{\DB})$, and $\Sigma_p^\PP(\Neg)$ analogous to the ones above. However, we have not been able to get them. 
This fact originates some technical difficulties, which will be solved in the next lemmas.
\end{rem}
\vv

\begin{rem}\label{rem9.12}
By Lemma \ref{lemregmolt}, for $1<p\leq 2$, and $\ell_0$ small enough we have
\begin{multline*}
\Sigma_p^\PP(\Reg_{\LD_1}\setminus\Reg_{\DB}) + \Sigma_p^\PP(\Reg_{\LD_2}) +
\Sigma_p^\PP(\Reg_{\NDB_2}) 
\lesssim 
\Big(B\,\Lambda^2\,\delta_0^{\frac 1{n+2}} + \Lambda^{\frac{-1}{6n}}\Big)\,
\sigma_p(\HD_1).
\end{multline*}
Also, by Lemma \ref{lemregot}, for $\ell_0$ small enough,  if
$\mu(Z)\leq \ve_Z\,\mu(R)$, 
$$\Sigma_p^\PP(\Reg_\Ot(\ell_0))\lesssim 
B\,\Lambda^4\,\ve_Z\,\sigma_p(\HD_1).$$
From now on we will assume that $\ell_0$ small enough so that this holds,. Remember also that we chose
$$B=\Lambda^{\frac1{100n}}.$$
We assume also $\delta_0$ and $\ve_Z$ small enough so that
\begin{equation}\label{eqassu78}
B\,\Lambda^2\,\delta_0^{\frac 1{n+2}} + B\,\Lambda^4\,\ve_Z + B\,\Lambda^7\,\delta_0\leq \Lambda^{\frac{-1}{3n}}.
\end{equation}
In this way, we have
\begin{align}\label{eqtot999}
\Sigma_p^\PP(\Reg_{\LD_1}\setminus\Reg_{\DB}) + \Sigma_p^\PP(\Reg_{\LD_2}) +
\Sigma_p^\PP(\Reg_{\NDB_2})  + \Sigma_p^\PP(\Reg_\Ot)\lesssim 
 \Lambda^{\frac{-1}{6n}} \,
\sigma_p(\HD_1),
\end{align}
under the assumption that $\mu(Z)\leq \ve_Z\,\mu(R)$. Also, from \rf{eqlemneg02}, it follows that
\begin{equation}\label{eqassu78699}
\Sigma^\PP(\Reg_\Neg) \lesssim \big(\delta_0\,B\,\Lambda^7 + B\,\Lambda^{-1}
+\Lambda^{\frac{-1}{3n}}\big)\,\sigma(\HD_1)\lesssim \Lambda^{\frac{-1}{3n}} \,
\sigma(\HD_1).
\end{equation}
\end{rem}

\vv

\begin{lemma}\label{lemreg73}
Suppose that $R\in\Trc\cap\Ty$ and that $\mu(Z)\leq \ve_Z\,\mu(R)$.
We have
$$\Sigma^\PP(\Reg)\lesssim B\,\sigma(\HD_1).$$
For $p_0 = 2- \frac1{18n}$, we have
$$\Sigma_{p_0}^\PP(\Reg)\lesssim \Lambda^{\frac{-1}{25n}}\,\sigma_{p_0}(\HD_1).$$
\end{lemma}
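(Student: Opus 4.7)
The proof is essentially bookkeeping on the disjoint splitting \rf{eqsplitreg0} of $\Reg$ into seven subfamilies, with all the analytic work already collected in Lemmas \ref{lemregmolt}, \ref{lemneg3}, \ref{lemregot} and Remark \ref{rem9.12}.

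For the first bound, I would directly sum the $p=2$ estimates \rf{eqlem*4.5}, \rf{eqlem*4}, \rf{eqlem*6} at $p=2$, \rf{eqlemneg02}, and \rf{eqgaga35} at $p=2$. Under the standing assumption \rf{eqassu78} and recalling $B = \Lambda^{1/(100n)}$, every contribution carries a negative power of $\Lambda$ except the term from $\Reg_{\HD_2}$, which produces exactly the stated bound $B\,\sigma(\HD_1)$ via tractability.

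For the second bound with $p_0 = 2 - 1/(18n)$, three groups of families are treated differently. First, the four families $\Reg_{\LD_1} \setminus \Reg_\DB$, $\Reg_{\LD_2}$, $\Reg_{\NDB_2}$, $\Reg_\Ot$ are bounded at once by \rf{eqtot999}, giving $\Lambda^{-1/(6n)}\,\sigma_{p_0}(\HD_1)$. Second, $\Reg_{\HD_2}$ is bounded by \rf{eqlem*6}:
$$\Sigma_{p_0}^\PP(\Reg_{\HD_2}) \lesssim B\,\Lambda^{p_0 - 2}\,\sigma_{p_0}(\HD_1) = \Lambda^{1/(100n) - 1/(18n)}\,\sigma_{p_0}(\HD_1),$$
and since $1/18 - 1/100 = 82/1800 > 72/1800 = 1/25$, this is controlled by $\Lambda^{-1/(25n)}\,\sigma_{p_0}(\HD_1)$. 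Third, the families $\Reg_{\NDB_1}$, $\Reg_\DB$ (which also covers the leftover $\Reg_{\LD_1}\cap \Reg_\DB$), and $\Reg_\Neg$ all lie inside $2R$, so I would apply H\"older's inequality with exponents $2/p_0$ and $2/(2-p_0)$: for any such family $\mathcal{F}$,
$$\Sigma_{p_0}^\PP(\mathcal{F}) \leq \Sigma^\PP(\mathcal{F})^{p_0/2}\, \mu(2R)^{1 - p_0/2}.$$
The respective $p=2$ bounds \rf{eqlem*4}, \rf{eqlem*2}, \rf{eqassu78699} all give $\Sigma^\PP(\mathcal{F}) \lesssim \Lambda^{-1/(3n)}\,\sigma(\HD_1)$; combined with $\mu(2R) \lesssim B\Lambda^2\,\mu(HD_1)$ from \rf{eqmuhd1} and \rf{eqdoub*11} and the identity $1 - p_0/2 = 1/(36n)$, the Hölder bound becomes $\Lambda^{-5/(18n) + O(1/n^2)}\,\sigma_{p_0}(\HD_1)$. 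Since $5/18 - 1/25 = 107/450$ leaves substantial room beyond the second-order corrections, this is dominated by $\Lambda^{-1/(25n)}\,\sigma_{p_0}(\HD_1)$ uniformly in $n \geq 1$. Summing the three groups completes the argument.

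The main obstacle is really just verifying that the numerical exponent arithmetic closes. The binding inequality is $5/18 > 1/25$, and this is the reason for the choice $p_0 = 2 - 1/(18n)$ in the statement: it is a decrement from $2$ small enough that the Hölder transfer costs us only a factor $(B\Lambda^2)^{1/(36n)}$, yet large enough that the $-1/(3n)$ gain from the $p=2$ bounds still leaves the total exponent strictly below $-1/(25n)$.
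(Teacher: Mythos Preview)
Your proposal is correct and follows essentially the same approach as the paper's proof: the same three-way grouping of the subfamilies, with \rf{eqtot999} and \rf{eqlem*6} handling the first two groups and H\"older's inequality combined with the $p=2$ bounds \rf{eqlem*2}, \rf{eqlem*4}, \rf{eqassu78699} and \rf{eqmuhd1} handling $\Reg_\DB$, $\Reg_{\NDB_1}$, $\Reg_\Neg$. The only cosmetic difference is that the paper simplifies the H\"older exponent by crudely bounding $B\Lambda^2\leq\Lambda^3$ and $\Lambda^{-p_0/(6n)}\leq\Lambda^{-1/(6n)}$ to obtain the cleaner $\Lambda^{-1/(12n)}$, whereas you track the exponent more precisely to $-5/(18n)+O(1/n^2)$; either way the bound is well below $\Lambda^{-1/(25n)}$.
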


\begin{proof}
By \rf{eqtot999} and \rf{eqlem*6}, for $1<p\leq 2$,
\begin{multline*}
\Sigma_p^\PP(\Reg_{\LD_1}\setminus\Reg_{\DB}) + \Sigma_p^\PP(\Reg_{\LD_2}) +
\Sigma_p^\PP(\Reg_{\NDB_2}) + \Sigma_p^\PP(\Reg_{\HD_2})+ \Sigma_p^\PP(\Reg_\Ot)\\
\lesssim 
\Big(\Lambda^{\frac{-1}{6n}} + B\,\Lambda^{p-2}\Big)\,
\sigma_p(\HD_1).
\end{multline*}
Also, by \rf{eqlem*2}, \rf{eqlem*4}, and \rf{eqassu78699},
$$\Sigma^\PP(\Reg_\DB) + \Sigma^\PP(\Reg_{\NDB_1}) + \Sigma^\PP(\Reg_{\Neg}) \lesssim 
\big(\delta_0\,B\,\Lambda^7 + B\,\Lambda^{-1}
+\Lambda^{\frac{-1}{3n}}\big)\,\sigma(\HD_1)\lesssim
\Lambda^{\frac{-1}{3n}}\,\sigma(\HD_1),
$$
by the assumptions on $\delta_0$, $\Lambda$, and $B$ in Remarks \ref{rem9.7} and \ref{rem9.12}.
Choosing $p=2$ above and adding the preceding estimates, we get the first statement in the lemma.

To get the second inequality in the lemma, we apply by H\"older's inequality and \rf{eqmuhd1} in the last estimate, and we get
\begin{align}\label{eqsam438}
\Sigma_p^\PP(\Reg_\DB) + \Sigma_p^\PP(\Reg_{\NDB_1}) + \Sigma_p^\PP(\Reg_{\Neg}) &\lesssim \big(\Lambda^{\frac{-1}{3n}}\,\sigma(\HD_1)\big)^{\frac p2}\,\mu(R)^{1-\frac p2}\\
& \lesssim
\big(\Lambda^{\frac{-1}{3n}}\,\sigma(\HD_1)\big)^{\frac p2}\,\big( B\,\Lambda^2\mu(HD_1)\big)^{1-\frac p2}\nonumber\\
& \leq \Lambda^{\frac{-1}{6n}}\,\big(\Lambda^3\big)^{1-\frac p2}\,\sigma_p(\HD_1).\nonumber
\end{align}
For $p=2-\frac1{18n}$, we have 
$$\big(\Lambda^3\big)^{1-\frac p2} = \Lambda^{\frac1{12n}},$$
and thus
\begin{equation}\label{eqsam439}
\Sigma_p^\PP(\Reg_\DB) + \Sigma_p^\PP(\Reg_{\NDB_1})+\Sigma_p^\PP(\Reg_{\Neg}) \lesssim \Lambda^{\frac{-1}{12n}}\,\sigma_p(\HD_1).
\end{equation}
Also,
$$B\,\Lambda^{p-2} = \Lambda^{\frac1{100n}}\,\Lambda^{\frac{-1}{18n}} = \Lambda^{\frac{-41 }{900n}}
< \Lambda^{\frac{-1 }{25n}},$$
and thus
\begin{multline*}
\Sigma_p^\PP(\Reg_{\LD_1}\setminus\Reg_{\DB}) + \Sigma_p^\PP(\Reg_{\LD_2}) +
\Sigma_p^\PP(\Reg_{\NDB_2}) + \Sigma_p^\PP(\Reg_{\HD_2})+ \Sigma_p^\PP(\Reg_\Ot)\\
\lesssim \Lambda^{\frac{-1 }{25n}}\,
\sigma_p(\HD_1).
\end{multline*}
Gathering the estimates above, the lemma follows.
\end{proof}

\vv
Remark that in order to transfer the lower estimates for $\RR\eta$ to $\RR\mu$ we will need
to take $p$ very close to $1$ below, in particular with $p<p_0$.

Next lemma shows how one can estimate the $\QQ_\Reg$ coefficients in terms of the $\PP$ coefficients.
This is proven in \cite[Lemma 7.12]{DT}.
\vv

\begin{lemma}\label{lemregpq}
For all $p\in(1,\infty)$,
$$\Sigma_p^\QQ(\Reg)\lesssim \Sigma_p^\PP(\Reg).$$
\end{lemma}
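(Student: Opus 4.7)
The plan is to prove $\Sigma_p^\QQ(\Reg)\lesssim \Sigma_p^\PP(\Reg)$ for $p\in(1,\infty)$ by $L^p$--$L^{p'}$ duality on $\ell^p(\Reg,\mu)$, combined with a dyadic-shell estimate. By duality, $\Sigma_p^\QQ(\Reg)^{1/p}$ equals the supremum of $\sum_{Q\in\Reg}f(Q)\QQ_\Reg(Q)\mu(Q)$ over nonnegative $f$ on $\Reg$ with $\sum_{Q\in\Reg}f(Q)^{p'}\mu(Q)=1$. Expanding $\QQ_\Reg$ and swapping the order of summation converts this into
$$B(f):=\sum_{P\in\Reg}\ell(P)\mu(P)\sum_{Q\in\Reg}\frac{f(Q)\mu(Q)}{D(P,Q)^{n+1}},$$
so it suffices to show $B(f)\lesssim\Sigma_p^\PP(\Reg)^{1/p}$.

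The key pointwise estimate I would establish is, for each $P\in\Reg$ (with $f$ on the right interpreted as the function constant on each $Q\in\Reg$ with value $f(Q)$):
$$\sum_{Q\in\Reg}\frac{f(Q)\mu(Q)}{D(P,Q)^{n+1}}\lesssim\sum_{R\in\DD_\mu,\,R\supset P}\frac{1}{\ell(R)^{n+1}}\int_{2B_R}f\,d\mu.$$
This is verified by a dyadic-shell decomposition: the cubes $Q\in\Reg$ with $D(P,Q)\in[\ell(R)/A_0,\ell(R)]$ (for $R$ the ancestor of $P$ at the appropriate level) satisfy $\dist(P,Q)+\ell(Q)\le\ell(R)$, hence lie inside $B(x_P,2\ell(R))$, which is contained in $C B_R$ (and thus in $2B_{R^+}$ for an ancestor $R^+$ of $R$ a constant number of levels up). The kernel is $\approx \ell(R)^{-(n+1)}$ throughout that shell, so summing over $Q$ in the shell gives the displayed inequality.

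Substituting into $B(f)$, writing $\int_{2B_R}f\,d\mu=\mu(2B_R)\langle f\rangle_{2B_R}$, and estimating $\langle f\rangle_{2B_R}\le Mf(P)$ for the dyadic-type maximal function $Mf(P):=\sup_{R\in\DD_\mu,\,R\supset P}\langle f\rangle_{2B_R}$ yields
$$B(f)\lesssim\sum_{P\in\Reg}\ell(P)\mu(P)\,Mf(P)\sum_{R\in\DD_\mu,\,R\supset P}\frac{\mu(2B_R)}{\ell(R)^{n+1}}=\sum_{P\in\Reg}\mu(P)\,\PP(P)\,Mf(P),$$
since $\PP(P)=\ell(P)\sum_{R\supset P}\mu(2B_R)/\ell(R)^{n+1}$ by definition. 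H\"older's inequality then gives
$$B(f)\le\Sigma_p^\PP(\Reg)^{1/p}\,\|Mf\|_{L^{p'}(\mu|_{\cup\Reg})},$$
and since $\mu$ has polynomial growth of degree $n$, the dyadic maximal function $M$ is bounded on $L^{p'}(\mu)$ for $p'\in(1,\infty)$, giving $\|Mf\|_{L^{p'}(\mu)}\lesssim\|f\|_{L^{p'}(\mu)}=1$.

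The part I expect to require the most care is the bookkeeping of geometric constants in the pointwise estimate: the shells produce balls of the form $CB_R$, which must be absorbed into $2B_{R^+}$ for a suitable nearby ancestor $R^+$ so that the resulting sum reassembles cleanly into $\PP(P)$; controlling the resulting constant amounts to paying a factor $A_0^{O(1)(n+1)}$, which is acceptable. The $L^{p'}(\mu)$-boundedness of $M$ for non-doubling $\mu$ is standard under polynomial growth but should be cited carefully. Note also that this argument is clean precisely because no use is made of either the $\PP$-doubling property or the tree structure of $\TT$: it is a general inequality between the $\PP$- and $\QQ_\Reg$-coefficients of any disjoint dyadic family $\Reg$.
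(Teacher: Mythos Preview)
Your argument is correct. The duality reduction, the dyadic-shell pointwise estimate, the recombination into $\PP(P)$, and the final H\"older/maximal-function step are all sound. One small remark: the $L^{p'}(\mu)$ boundedness of the maximal operator $Mf(x)=\sup_{B\ni x}\mu(B)^{-1}\int_B|f|\,d\mu$ (which dominates your $Mf(P)$ pointwise, since the cubes in $\Reg$ are disjoint and each $2B_R$ with $R\supset P$ contains any $x\in P$) holds for \emph{any} Radon measure on $\R^{n+1}$ via Besicovitch's covering theorem; the polynomial growth of $\mu$ is not needed at this step.

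As for comparison with the paper: the paper does not actually prove this lemma here but simply cites \cite[Lemma 7.12]{DT}. So there is no in-paper proof to compare against. Your approach---duality plus a shell decomposition reducing $\QQ_\Reg$ to $\PP$ times a maximal function---is the natural one, and the geometric bookkeeping you flag (absorbing $CB_R$ into $2B_{R^+}$ by passing to a bounded number of ancestors, at the cost of a factor $A_0^{O(n)}$) is exactly the care required. Your closing observation that the argument uses neither the $\PP$-doubling property nor the tree structure is also correct: this is a general inequality for any disjoint subfamily of $\DD_\mu$.
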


\vv


\subsection{Transference of the lower estimates for $\RR\eta$ to $\Delta_{\wt \TT}\RR\mu$}
Denote
$$F = e'(R) \setminus \bigcup_{Q\in\Reg_{\HD_2}} Q.$$
By the splitting \rf{eqsplitreg0}, it is clear that $F$ coincides with the union of the cubes in the family
$$\Reg_F:=
 \Reg_{\LD_1} \cup \Reg_{\LD_2}\cup \Reg_{\NDB_1}\cup \Reg_{\NDB_2}\cup \Reg_{\Neg}\cup \Reg_{\Ot}.
$$
We also write
$$F_\eta= \bigcup_{Q\in\Reg_F} \frac12 B(Q),$$
so that the measure $\mu|_F$ is well approximated by $\eta|_{F_\eta}$, in a sense. 

Recall that in Lemma \ref{lemrieszeta} we showed that  
$$\int_{V_R} \big|(|\RR\eta(x)| - \frac{c_2}2\,\Theta(\HD_1))_+\big|^p\,d\eta(x)
 \gtrsim \Lambda^{-p'\ve_n}\sigma_p(\HD_1(e(R))),$$
for any $p\in (1,2]$, with $c_2$ be as in Lemma \ref{lemrieszeta}. 
We will show below that  a similar lower estimate holds if we restrict the integral on the 
left side to $HD_2$. The first step is the next lemma.

We let $\wt V_R$ be the union of the balls $\frac12B(Q)$, with $Q\in\Reg$, that intersect $V_R$.

\vv
\begin{lemma}\label{leminteta*}
Suppose that $R\in\Trc\cap\Ty$ and also that $\mu(Z)\leq \ve_Z\,\mu(R)$ and
$$\|\Delta_\wt\TT \RR\mu\|_{L^2(\mu)}^2\leq \Lambda^{-1}\,\sigma(\HD_1).$$
Then, for $p_0 = 2- \frac1{18n}$, assuming $\ve_Z\leq \Lambda^{-72n}$ and $\ell_0$ small enough,
$$\int_{F_\eta\cap\wt V_R}\big|(|\RR\eta| - \frac{c_2}4 \Theta(\HD_1))_+\big|^{p_0}
\,d\eta\lesssim \Lambda^{\frac{-1}{25n}}\,\sigma_{p_0}(\HD_1).$$
\end{lemma}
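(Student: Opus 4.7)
The plan is to chain the three comparison Lemmas \ref{lemaprox1}, \ref{lemaprox2}, \ref{lemaprox3} so as to transfer the $L^2$-hypothesis on $\Delta_{\wt\TT}\RR\mu$ to an $L^{p_0}$-bound for $\RR\eta$ on $F_\eta\cap\wt V_R$. All resulting error terms will be absorbed either into the sums $\Sigma_{p_0}^\PP(\Reg)$, $\Sigma_{p_0}^\QQ(\Reg)$ controlled by Lemmas \ref{lemreg73} and \ref{lemregpq}, or into energy tails tamed by the typicality condition $R\in\Ty$ and the $\sigma_{p_0}$-analogues of Lemmas \ref{lemregmolt}, \ref{lemneg3}, \ref{lemregot}.

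First, I would cover $F_\eta\cap\wt V_R$ by balls $\tfrac12 B(Q)$ with $Q\in\Reg_F$ and $\tfrac12 B(Q)\cap V_R\neq\varnothing$ (with bounded overlap guaranteed by Lemma \ref{lem74}(c)). For each such $Q$, Lemma \ref{lemaprox1} gives $|\RR\eta(y)|\leq |\RR_{\TT_\Reg}\mu(x)|+C(\Theta(R)+\PP(Q)+\QQ_\Reg(Q))$ for $y\in\tfrac12 B(Q)$, $x\in Q$. Applying $(a+b)_+\leq a_++b$, raising to the $p_0$-th power, averaging in $x\in Q$ with respect to $\mu$, integrating $y$ over $\tfrac12 B(Q)$ with respect to $\eta$ (using $\eta(\tfrac12 B(Q))=\mu(Q)$), and summing in $Q$ yields
\begin{equation*}
\int_{F_\eta\cap\wt V_R}\!\bigl(|\RR\eta|-\tfrac{c_2}{4}\Theta(\HD_1)\bigr)_{\!+}^{p_0}\,d\eta\lesssim \int_F\bigl(|\RR_{\TT_\Reg}\mu|-\tfrac{c_2}{4}\Theta(\HD_1)\bigr)_{\!+}^{p_0}\,d\mu+\Theta(R)^{p_0}\mu(F)+\Sigma_{p_0}^\PP(\Reg_F)+\Sigma_{p_0}^\QQ(\Reg_F),
\end{equation*}
where the last three addends are $\ll\Lambda^{-1/(25n)}\sigma_{p_0}(\HD_1)$ by Lemmas \ref{lemreg73}, \ref{lemregpq}, together with $\Theta(R)=\Lambda^{-2}\Theta(\HD_1)$ and $\mu(F)\lesssim B\Lambda^2\mu(\HD_1)$ (cf.\ \rf{eqmuhd1}).

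Next, I would split $F=\bigcup_{Q'\in\wt\End\setminus\HD_2}Q'\cup\wt Z$. On each $Q'$, the triangle inequality together with Lemma \ref{lemaprox2} (pointwise) and Lemma \ref{lemaprox3} (in $L^{p_0}(\mu|_{Q'})$) produces
\begin{equation*}
\int_{Q'}\!\bigl(|\RR_{\TT_\Reg}\mu|-\tfrac{c_2}{4}\Theta(\HD_1)\bigr)_{\!+}^{p_0}\,d\mu\lesssim\int_{Q'}\!|\Delta_{\wt\TT}\RR\mu|^{p_0}\,d\mu+\EE(2Q')^{p_0/2}\mu(Q')^{1-p_0/2}+\Bigl(\PP(R)^{p_0}+\tfrac{\EE(4R)^{p_0/2}}{\mu(R)^{p_0/2}}+\PP(Q')^{p_0}\Bigr)\mu(Q'),
\end{equation*}
and I would dispose of the $\wt Z$-part via Lemma \ref{lemregot} using $\mu(Z)\leq\ve_Z\mu(R)$ with $\ve_Z\leq\Lambda^{-72n}$. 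The main term is handled by H\"older with exponents $2/p_0$ and $2/(2-p_0)$, the hypothesis, and $\mu(2R)\lesssim B\Lambda^2\mu(\HD_1)$, producing $B^{1-p_0/2}\Lambda^{2-3p_0/2}\sigma_{p_0}(\HD_1)\approx\Lambda^{-1+1/(12n)+o(1)}\sigma_{p_0}(\HD_1)\ll\Lambda^{-1/(25n)}\sigma_{p_0}(\HD_1)$ for $p_0=2-\tfrac{1}{18n}$. The $\PP(R)$ and $(\EE(4R)/\mu(R))^{1/2}$ terms are pointwise $\ll\Theta(\HD_1)$, by splitting on whether $R\in\DB$: if $R\in\DB$ then $R\sim\TT$ and typicality yields $\EE(4R)\leq\Lambda^{-1/(3n)}\sigma(\HD_1)$, while if $R\notin\DB$ then $\EE(4R)\lesssim M_0^2\sigma(R)$. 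Their integrated contribution is therefore negligible. The $\PP(Q')^{p_0}$-sums are controlled by the $\sigma_{p_0}$-analogues of Lemma \ref{lemregmolt}, and the energy sums $\sum_{Q'}\EE(2Q')^{p_0/2}\mu(Q')^{1-p_0/2}$ are split on $Q'\in\DB$: the $\DB$-part via H\"older together with the typicality estimate $\sum_{Q'\in\DB:Q'\sim\TT}\EE_\infty(9Q')\leq\Lambda^{-1/(3n)}\sigma(\HD_1)$, and the non-$\DB$ part via $\EE(2Q')\lesssim\EE_\infty(9Q')\lesssim M_0^2\sigma(Q')$ combined with the density bounds of Lemmas \ref{lempoiss}, \ref{lemregmolt}, \ref{lemneg3}.

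The main obstacle is the tight numerical balance between the gain $\Lambda^{-p_0/2}\approx\Lambda^{-1}$ supplied by the $L^2$-hypothesis and the loss $\mu(2R)^{1-p_0/2}\approx\Lambda^{1/(18n)}$ incurred by H\"older: together they barely produce the required $\Lambda^{-1/(25n)}$ decay, which is exactly why Lemma \ref{lemreg73} was calibrated with $p_0=2-\tfrac{1}{18n}$ and decay $\Lambda^{-1/(25n)}$, and why the threshold $\ve_Z\leq\Lambda^{-72n}$ is indispensable to render the $\Reg_\Ot$-contribution (governed by Lemma \ref{lemregot}) negligible at the $L^{p_0}$-scale.
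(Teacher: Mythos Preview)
Your overall strategy---chaining Lemmas \ref{lemaprox1}, \ref{lemaprox3}, \ref{lemaprox2} to pass from $\RR\eta$ to $\RR_{\TT_\Reg}\mu$ to $\RR_{\wt\TT}\mu$ to $\Delta_{\wt\TT}\RR\mu$---matches the paper's, and your treatment of the part of $F$ lying under the cubes of $\wt\End\setminus\HD_2$ is essentially the paper's estimate of ``$I_a$''. A minor slip: $\Theta(R)=\Lambda^{-1}\Theta(\HD_1)$, not $\Lambda^{-2}$, so the term $\Theta(R)^{p_0}\mu(F)$ is not small in your accounting; the paper instead absorbs $C\Theta(R)$ into the threshold, lowering $\tfrac{c_2}{4}$ to $\tfrac{c_2}{8}$.

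The genuine gap is your handling of $\wt Z$. You write ``dispose of the $\wt Z$-part via Lemma \ref{lemregot} using $\mu(Z)\leq\ve_Z\mu(R)$'', but this conflates two different sets: $\wt Z=e'(R)\setminus\bigcup_{Q\in\wt\End}Q$ properly contains $Z=e'(R)\setminus\bigcup_{Q\in\End}Q$, and the extra piece $\wt Z\setminus Z$ (the points in $\Neg$-cubes not lying in any $\sM_\Neg$-cube) has no smallness hypothesis on its $\mu$-measure. Moreover, Lemma \ref{lemregot} only bounds $\Sigma_p^\PP(\Reg_\Ot)$; it says nothing about $\int_{\wt Z}(|\RR_{\TT_\Reg}\mu|-c\,\Theta(\HD_1))_+^{p_0}\,d\mu$. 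The paper splits $\wt Z\cap F$ into the part $F_b$ coming from $\Reg_\Neg\setminus\Reg_{\sM_\Neg}$ and the part $F_\Ot$ from $\Reg_\Ot$, and treats them by entirely different (and non-trivial) arguments: on $F_b$ one shows the integrand \emph{vanishes} pointwise, using that every cube between $Q\in\Reg_\Neg\setminus\Reg_{\sM_\Neg}$ and $R$ is non-$\PP$-doubling, whence $|\RR\eta|\lesssim\Theta(R)\ll\Theta(\HD_1)$; on $F_\Ot$ one cannot invoke Lemmas \ref{lemaprox2}--\ref{lemaprox3} (no containing $\wt\End$-cube exists), so the paper introduces an auxiliary family $\sM_\Ot$ of maximal $\PP$-doubling cubes inside $\Reg_\Ot\setminus\Neg(e'(R))$, proves $\sM_\Ot\subset\TT$ and $N_\Ot\subset Z$, applies H\"older to gain the factor $\mu(Z)^{1-p_0/2}\leq(\ve_Z\mu(R))^{1-p_0/2}$, and then bounds the remaining $L^2$-integral via a $\Delta_{\sM_\Ot}\RR\mu$-type operator compared back to $\Delta_{\wt\TT}\RR\mu$. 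This last construction is exactly what exploits the hypothesis $\ve_Z\leq\Lambda^{-72n}$, and it is missing from your sketch.
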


Remark that the proof of this lemma takes advantage of the good estimates we have
obtained for $\Sigma^\PP_{p_0}(\Reg)$ in  Lemma \ref{lemreg73}. Later on we will show that an analogous estimate holds for all $p\in (1,p_0)$ (see Lemma \ref{lemNZ} below).

\begin{proof}
Denote
$$F_a= \bigcup_{Q\in\wt \End\setminus \HD_2} Q,\qquad F_b= \bigcup_{Q\in\Reg_\Neg\setminus \Reg_{\sM_\Neg}} Q,\qquad F_\Ot= \bigcup_{Q\in\Reg_\Ot} Q,$$
so that 
$F=F_a\cup F_b\cup F_\Ot$.
Write also 
$$\End_a = \LD_1 \cup \LD_2 \cup \NDB_1  \cup \NDB_2  \cup\sM_\Neg$$
and
$$\Reg_a = \Reg_{\LD_1}\cup\Reg_{\LD_2}\cup\Reg_{\NDB_1}\cup\Reg_{\NDB_2}\cup\Reg_{\sM_\Neg},$$
so that
$$F_a= \bigcup_{Q\in \End_a} Q = \bigcup_{Q\in \Reg_a} Q.$$
We also consider
$$F_{a,\eta} = \bigcup_{Q\in \Reg_a} \frac12B(Q),\qquad F_{b,\eta}= \bigcup_{Q\in\Reg_\Neg\setminus \Reg_{\sM_\Neg}} \frac12B(Q),  \qquad F_{\Ot,\eta}= \bigcup_{Q\in\Reg_\Ot} \frac12B(Q),$$
so that these sets approximate $F_a,F_b,F_\Ot$ at the level of the family $\Reg$, in a sense. Moreover, we have $F_\eta=F_{a,\eta}\cup F_{b,\eta}\cup F_{\Ot,\eta}$.

We split
$$\int_{F_\eta\cap\wt V_R}\big|(|\RR\eta| - \frac{c_2}4 \Theta(\HD_1))_+\big|^{p_0}
\,d\eta = \int_{F_{a,\eta}\cap\wt V_R} \ldots +  \int_{F_{b,\eta}\cap\wt V_R} \ldots + \int_{F_{\Ot,\eta}\cap\wt V_R}\ldots =:I_a+ I_b + I_\Ot,$$
where ``$\ldots$'' stands for 
$\big|(|\RR\eta| - \frac{c_2}4 \Theta(\HD_1))_+\big|^{p_0}
\,d\eta$. 

\vv
\noi {\bf Estimates for $I_b$.}
We claim that $I_b=0$. Indeed, given $Q\in\Reg_\Neg\setminus \Reg_{\sM_\Neg}$, notice that all the cubes $S$ such that $Q\subset S\subset R$ satisfy
$$\Theta(S)\lesssim \left(\frac{\ell(S)}{\ell(R)}\right)^{1/2}\,\Theta(R),$$
and so, for any $x\in Q$,
\begin{align*}
|\RR\eta(x)| &\lesssim \sum_{S:Q\subset S\subset R} \theta_\eta(4B_S) \lesssim 
\sum_{S:Q\subset S\subset R} \theta_\mu(CB_S) \\
&\lesssim \sum_{S:Q\subset S\subset R}\left(\frac{\ell(S)}{\ell(R)}\right)^{1/2}\,\Theta(R) + \Theta(R)\lesssim \Theta(R).
\end{align*}
Hence, for $\Lambda$ big enough, $(|\RR\eta(x)| - \frac{c_2}4 \Theta(\HD_1))_+=0$, which proves our claim.

\vv
\noi {\bf Estimates for $I_a$.} 
By Lemma \ref{lemaprox1}, for all $Q\in\Reg_a$ such that $\frac12B(Q)\subset\wt V_R$, all $x\in \frac12B(Q)$, and all $y\in Q$,
$$|\RR\eta(x)| \leq |\RR_{\TT_\Reg}\mu(y)| + C\Theta(R) + C\PP(Q) + C\QQ_\Reg(Q).$$
Thus, for $\Lambda$ big enough, since $C\Theta(R)=\Lambda^{-1}\Theta(\HD_1)<\frac{c_2}8 \Theta(\HD_1)$,
$$(|\RR\eta(x)| - \frac{c_2}4 \Theta(\HD_1))_+\leq
(|\RR_{\TT_\Reg}\mu(y)| - \frac{c_2}8 \Theta(\HD_1))_+  + C\PP(Q) + C\QQ_\Reg(Q).$$
Consequently,
\begin{align}\label{eqalgjx2}
I_a & \lesssim \sum_{Q\in\Reg_a} \int_Q \big|(|\RR_{\TT_\Reg}\mu(y)| - 
\frac{c_2}8 \Theta(\HD_1))_+\big|^{p_0}\,d\mu(y) + \!\sum_{Q\in\Reg_a} (\PP(Q)^{p_0} + \QQ_\Reg(Q)^{p_0})\,\mu(Q)\\
& \lesssim \sum_{S\in\End_a}\int_S \big|(|\RR_{\TT_\Reg}\mu| - 
\frac{c_2}8 \Theta(\HD_1))_+\big|^{p_0}\,d\mu + \Sigma_{p_0}^\PP(\Reg),\nonumber
\end{align}
where, in the last inequality, we used the fact that all the cubes from $\Reg_a$
are contained in some cube $S\in\End_a$ and we applied Lemma \ref{lemregpq}.

For each $S\in\End_a$, by the triangle inequality and the fact that $(\;\cdot\;)_+$ is a $1$-Lipschitz function, we get 
\begin{align*}
\int_S \big|(|\RR_{\TT_\Reg}\mu| - 
\frac{c_2}8 \Theta(\HD_1))_+\big|^{p_0}\,d\mu &\lesssim 
\int_S \big|(|\RR_{\wt \TT}\mu| - 
\frac{c_2}8 \Theta(\HD_1))_+\big|^{p_0}\,d\mu \\
&\quad+ \int_S\big|\RR_{\wt\TT}\mu - \RR_{\TT_\Reg}\mu\big|^{p_0}\,d\mu
\end{align*}
By Lemma \ref{lemaprox3}, the last integral does not exceed 
$C\EE(2S)^{\frac{p_0}2} \,\mu(S)^{1-\frac{p_0}2}$, and thus we deduce that
\begin{equation}\label{eqIa1}
I_a\lesssim
\sum_{S\in\End_a}\int_S \big|(|\RR_{\wt\TT}\mu| - 
\frac{c_2}8 \Theta(\HD_1))_+\big|^{p_0}\,d\mu + \sum_{S\in\End_a}\EE(2S)^{\frac{p_0}2} \,\mu(S)^{1-\frac {p_0}2} + \Sigma_{p_0}^\PP(\Reg).
\end{equation}

Next we apply Lemma \ref{lemaprox2}, which ensures that
for any $S\in\End_a\subset\wt\End$ and all $x\in S$,
\begin{equation}\label{eqIa2}
|\RR_{\wt\TT}\mu(x)| \leq |\Delta_{\wt\TT}\RR\mu(x)| + C\left(\frac{\EE(4R)}{\mu(R)}\right)^{1/2} +  C\left(\frac{\EE(2S)}{\mu(S)}\right)^{1/2}.
\end{equation}
In case that $R\not\in\DB$, recalling that
$\Lambda  = M_0^{\frac{8n-1}{8n-2}}\gg M_0$ by \rf{eqlammm}, for  $\Lambda$ big enough we obtain
\begin{equation}\label{eqEr4}
C\left(\frac{\EE(4R)}{\mu(R)}\right)^{1/2} \leq C\,M_0\,\Theta(R)\leq \frac{c_2}8 \Theta(\HD_1).
\end{equation}
In case that $R\in\DB$, since $R\in\Ty$, we have 
$$
\EE(4R)\lesssim
\sum_{Q\in\DB:Q\sim\TT} \EE_\infty(9Q)\leq \Lambda^{\frac{-1}{3n}}\,\sigma(\HD_1),
$$
and so we also get
\begin{equation}\label{eqEr5}
C\left(\frac{\EE(4R)}{\mu(R)}\right)^{1/2} \leq C \left(\frac{\Lambda^{\frac{-1}{3n}}\,\sigma(\HD_1)}{\mu(R)}\right)^{1/2}\leq C\, \Lambda^{\frac{-1}{6n}} \,\Theta(\HD_1)
\leq \frac{c_2}8 \Theta(\HD_1).
\end{equation}
Thus, in any case,
$$(|\RR_{\wt\TT}\mu(x)| - \frac{c_2}8 \Theta(\HD_1))_+ \leq 
|\Delta_{\wt\TT}\RR\mu(x)| + C\left(\frac{\EE(2S)}{\mu(S)}\right)^{1/2}.$$
Plugging this estimate into \rf{eqIa1}, we get
\begin{equation}\label{eqIa99}
I_a\lesssim
\sum_{S\in\End_a}\int_S |\Delta_{\wt\TT}\RR\mu|^{p_0}\,d\mu + \sum_{S\in\End_a}\EE(2S)^{\frac {p_0}2} \,\mu(S)^{1-\frac {p_0}2} + \Sigma_{p_0}^\PP(\Reg).
\end{equation}

We deal with each term on the right hand side of the preceding inequality separately. First, by H\"older's inequality and the assumptions in the lemma, we have
$$\int |\Delta_{\wt\TT}\RR\mu|^{p_0}\,d\mu \lesssim \|\Delta_{\wt\TT}\RR\mu\|_{L^2(\mu)}^{p_0}\,
\mu(R)^{1-\frac{p_0}2}\leq \Lambda^{-\frac{p_0}2}\,\sigma(\HD_1)^{\frac{p_0}2}\,\mu(R)^{1-\frac{p_0}2}.$$
Regarding the second term in \rf{eqIa99}, by H\"older's inequality again,
\begin{align*}
\sum_{S\in\End_a}\EE(2S)^{\frac{p_0}2} \,\mu(S)^{1-\frac {p_0}2} 
& \leq \bigg(\sum_{S\in\End_a}\EE(2S)\bigg)^{\frac{p_0}2} \,\bigg(\sum_{S\in\End_a} \mu(S)\bigg)^{1-\frac {p_0}2} \\& \lesssim  \bigg(\sum_{S\in\End_a}\EE(2S)\bigg)^{\frac{p_0}2} \,\mu(R)^{1-\frac {p_0}2}.
\end{align*}
We estimate the first factor on the right hand side using \rf{eqlem*4}, \rf{eqlem*4.5}, and \rf{eqlemneg02}:
\begin{align*}
\bigg(\sum_{S\in\End_a}\EE(2S)\bigg)^{\frac{p_0}2} & \leq \bigg(\sum_{S\in\NDB_1\cup\NDB_2}\EE_\infty(9S) +
\sum_{S\in
\LD_1\cup \LD_2} \EE_\infty(9S) + \sum_{S\in\sM_\Neg} \EE_\infty(9S)\bigg)^{\frac{p_0}2} \\
& \lesssim \big(B\Lambda^{-1}\!+\Lambda^{\frac{-1}{3n}} + B\,M_0^2\,\delta_0^{\frac2{n+2}} + B\,\Lambda^7\delta_0\big)^{\frac{p_0}2}\,\sigma(\HD_1)^{\frac{p_0}2}
\leq \Lambda^{\frac{-1}{6n}}\,\sigma(\HD_1)^{\frac{p_0}2},
\end{align*}
by the assumption \rf{eqassu78} on $\delta_0$.
In connection with the last summand on the right hand side of \rf{eqIa99}, by Lemma \ref{lemreg73}
we have
$$\Sigma_{p_0}^\PP(\Reg)\lesssim \Lambda^{\frac{-1}{25n}}\,\sigma_{p_0}(\HD_1).$$
Therefore,
$$I_a\lesssim \Lambda^{\frac{-1}{6n}}\,\sigma(\HD_1)^{\frac{p_0}2}\,\mu(R)^{1-\frac{p_0}2} + \Lambda^{\frac{-1}{25n}}\,\sigma_{p_0}(\HD_1).$$
From \rf{eqmuhd1} we derive that 
$\mu(HD_1) \geq\Lambda^{-3}\,\mu(R)$, and then
\begin{equation}\label{eqhd1*p}
 \Lambda^{\frac{-1}{6n}}\,\sigma(\HD_1)^{\frac{p_0}2}\,\mu(R)^{1-\frac{p_0}2} 
\leq \Lambda^{\frac{-1}{6n}}\,\big(\Lambda^3\big)^{1-\frac {p_0}2}\,\sigma_{p_0}(\HD_1)=
\Lambda^{\frac{-1}{12n}}\,\sigma_{p_0}(\HD_1)\leq \Lambda^{\frac{-1}{25n}}\,\sigma_{p_0}(\HD_1).
\end{equation}
So we get
\begin{equation}\label{eqIa93}
I_a\lesssim \Lambda^{\frac{-1}{25n}}\,\sigma_{p_0}(\HD_1).
\end{equation}
\vv

\noi {\bf Estimate of $I_\Ot$.} 
By the same arguments as in \rf{eqalgjx2}, just replacing $\Reg_a$ by $\Reg_\Ot$, we obtain
$$I_\Ot  \lesssim \sum_{Q\in\Reg_\Ot} \int_Q \big|(|\RR_{\TT_\Reg}\mu(y)| - 
\frac{c_2}8 \Theta(\HD_1))_+\big|^{p_0}\,d\mu(y) + \!\sum_{Q\in\Reg_\Ot} (\PP(Q)^{p_0} + \QQ_\Reg(Q)^{p_0})\,\mu(Q).$$
Thus, using again Lemmas \ref{lemregpq} and \ref{lemreg73}, we get
\begin{align}\label{eqiot78}
I_\Ot  &\lesssim \sum_{Q\in\Reg_\Ot} \int_Q \big|(|\RR_{\TT_\Reg}\mu(y)| - 
\frac{c_2}8 \Theta(\HD_1))_+\big|^{p_0}\,d\mu(y) +\Lambda^{\frac{-1}{25n}}\,\sigma_{p_0}(\HD_1)\\
& =: \wt I_\Ot +
\Lambda^{\frac{-1}{25n}}\,\sigma_{p_0}(\HD_1).\nonumber
\end{align}
To estimate the integral $\wt I_\Ot$ on the right hand side, we split
\begin{align*}
\wt I_\Ot & =\sum_{Q\in\Reg_{\Ot}\setminus \Neg(e'(R))}
\int_Q \big|(|\RR_{\TT_\Reg}\mu(x)| - 
\frac{c_2}8 \Theta(\HD_1))_+\big|^{p_0}\,d\mu(x)\\
&\quad+ \sum_{Q\in\Reg_{\Ot}\cap \Neg(e'(R))}
\int_Q \big|(|\RR_{\TT_\Reg}\mu(x)| - 
\frac{c_2}8 \Theta(\HD_1))_+\big|^{p_0}\,d\mu(x)\\
& = \wt I_{\Ot,1} + \wt I_{\Ot,2}.
\end{align*}
Notice that, by definition we have $\Reg_\Ot\subset\TT$. 
Recall that $\Neg = \Neg(e'(R))\cap \End$ and thus we may have $\Reg_{\Ot}\cap \Neg(e'(R))\neq\varnothing$.
In this case, we have $\Reg_{\Ot}\cap \Neg(e'(R))\subset \TT_\sss(e'(R))$ (because $\Neg(e'(R))\subset \TT_\sss(e'(R))$ by construction).

The same argument used to show that $I_b=0$ shows
that 
$$\wt I_{\Ot,2} = 0.$$
To estimate $\wt I_{\Ot,1}$,
denote by $\sM_\Ot$ the family of maximal $\PP$-doubling cubes which are contained in some cube from $\Reg_{\Ot}\setminus \Neg(e'(R))$
and let
$$N_\Ot = \bigcup_{Q\in \Reg_{\Ot}\setminus \Neg(e'(R))} Q\setminus \bigcup_{P\in \sM_{\Ot}} P.$$
We claim that 
\begin{equation}\label{eqinclu827}
\sM_\Ot\subset \TT\quad \text{ and }\quad N_\Ot\subset Z.
\end{equation}
To check this, for a given $P\in\sM_\Ot$ with $P\subset Q\in\Reg_{\Ot}\setminus \Neg(e'(R))$, suppose there exists $S\in\End$ such that $S\supset P$. As $P$ is a contained in some $Q\in\Reg_{\Ot}\setminus \Neg(e'(R))$, we  have
$S\not\in \Neg$. Further, $S\subsetneq Q$ because $Q\in\Reg_{\Ot}$ implies that $Q\not\subset S$. Since $S$ is $\PP$-doubling, we deduce that $P=S$, by the maximality of $P$ as $\PP$-doubling cube contained in $Q$. An analogous argument shows that $N_\Ot\subset Z$.

By H\"older's inequality and \rf{eqlimot62}, for $\ell_0$ small enough we have
\begin{align*}
\wt I_\Ot &\leq \bigg(\sum_{Q\in\Reg_{\Ot}} \int_Q \big|(|\RR_{\TT_\Reg}\mu(x)| - 
\frac{c_2}8 \Theta(\HD_1))_+\big|^{2}\,d\mu(x)\big)^{\frac {p_0}2} \bigg(\sum_{Q\in\Reg_{\Ot}}\mu(Q)\bigg)^{1-\frac {p_0}2}\\
& \leq
\bigg(\sum_{P\in\sM_{\Ot}} \int_P \big|\RR_{\TT_\Reg}\mu\big|^{2}\,d\mu + \int_{N_\Ot} \big|\RR_{\TT_\Reg}\mu\big|^{2}\,d\mu
\bigg)^{\frac {p_0}2} \bigg(\mu(Z) + o(\ell_0)\bigg)^{1-\frac {p_0}2},
\end{align*}
with $o(\ell_0)\to 0$ as $\ell_0\to0$. 

Denote
$$\RR_{\sM_\Ot}\mu(x) = \sum_{P\in\sM_\Ot} \chi_P(x)\,\RR(\chi_{2R\setminus 2P}\mu)(x)$$
and
$$\Delta_{\sM_\Ot}\RR\mu(x) =
\sum_{P\in\sM_\Ot} \chi_P(x)\,\big(m_{\mu,P}(\RR\mu) - m_{\mu,2R}(\RR\mu)\big)
+ \chi_{Z}(x) \big(\RR\mu(x) -  m_{\mu,2R}(\RR\mu)\big)
.$$
Notice that, for $x\in P\in\sM_\Ot$ and $Q\in \Reg_\Ot\setminus \Neg(e'(R))$ such that $Q\supset P$, since there are no $\PP$-doubling cubes $P'$ such that $P\subsetneq P'\subset Q$,
$$\big|\RR_{\sM_\Ot}\mu(x) - \RR_{\TT_\Reg}\mu(x)\big| = |
\RR(\chi_{2 Q\setminus 2P}\mu)(x)|\lesssim \sum_{P: P\subset P'\subset Q} \Theta(P') \lesssim \PP(Q)\lesssim
\Lambda\,\Theta(\HD_1).$$
Almost the same argument shows also that, for $x\in N_\Ot$,
$$\big|\RR(\chi_{2R}\mu)(x) - \RR_{\TT_\Reg}\mu(x)\big| \lesssim
\Lambda\,\Theta(\HD_1).$$
Remark also that
$$\int_{N_\Ot} \big|\RR(\chi_{2R}\mu)\big|^{2}\,d\mu \leq 
\int_Z \big|\RR(\chi_{2R}\mu)\big|^{2}\,d\mu.$$
So we deduce that, for $\ell_0$ small enough,
$$\wt I_\Ot \lesssim \bigg(\sum_{P\in\sM_{\Ot}} \int_P \big|\RR_{\sM_\Ot}\mu\big|^{2}\,d\mu + \int_Z \big|\RR(\chi_{2R}\mu)\big|^{2}\,d\mu + \Lambda^2\,\Theta(\HD_1)^2\,\mu(R)
\bigg)^{\frac {p_0}2} \big(\ve_Z\,\mu(R)\big)^{1-\frac {p_0}2}.$$

Almost the same arguments as in Lemma \ref{lemaprox2} show that for $x\in P\in\sM_\Ot$,
$$\big|\RR_{\sM_\Ot}\mu(x) - \Delta_{\sM_\Ot}\RR\mu(x)\big| \lesssim \PP(R) + \left(\frac{\EE(4R)}{\mu(R)}\right)^{1/2} + \PP(P) +  \left(\frac{\EE(2P)}{\mu(P)}\right)^{1/2}$$
and that, for $x\in Z$,
$$\big|\RR(\chi_{2R}\mu)(x) - \Delta_{\sM_\Ot}\RR\mu(x)\big| \lesssim \PP(R) + \left(\frac{\EE(4R)}{\mu(R)}\right)^{1/2}.$$
Therefore, by \rf{eqEr4} and \rf{eqEr5} and taking into account that $\PP(P)\lesssim\Lambda\,\Theta(\HD_1)$ for 
$P\in\sM_\Ot$, we deduce
$$\wt I_\Ot \lesssim \bigg(\int |\Delta_{\sM_\Ot}\RR\mu|^2\,d\mu +
\sum_{P\in\sM_{\Ot}} \EE(2P) + \Lambda^2\,\Theta(\HD_1)^2\,\mu(R)
\bigg)^{\frac {p_0}2} \big(\ve_Z\,\mu(R)\big)^{1-\frac {p_0}2}.$$ 

By the orthogonality of the functions $\Delta_Q\RR\mu$, $Q\in\DD_\mu$, the assumptions in the lemma, 
and \rf{eqinclu827},
it is clear that
$$\int |\Delta_{\sM_\Ot}\RR\mu|^2\,d\mu\leq \|\Delta_{\wt\TT}\RR\mu\|_{L^2(\mu)}^2\leq \Lambda^{-1}\,\sigma(\HD_1).$$
On the other hand, since the tree $\TT$ is typical,
\begin{align*}
\sum_{P\in\sM_{\Ot}} \EE(2P) & \leq \sum_{P\in\sM_{\Ot}\setminus\DB} \EE(2P) + \sum_{P\in \TT\cap\DB} \EE(2P)
\leq M_0^2\,\sigma(\sM_{\Ot}) + \Lambda^{\frac{-1}{3n}}\,\sigma(\HD_1)\\
&
\leq \Lambda^2\,\Theta(\HD_1)^2\,\mu(R)+ \Lambda^{\frac{-1}{3n}}\,\sigma(\HD_1)\lesssim \Lambda^2\,\Theta(\HD_1)^2\,\mu(R).
\end{align*}
Thus, using also \rf{eqmuhd1},
\begin{align*}
\wt I_\Ot & \lesssim \Lambda^{\frac{-p_0}2}\,\sigma(\HD_1)^{\frac{p_0}2}\, \mu(R)^{1-\frac {p_0}2}+ \ve_Z^{1-\frac {p_0}2}\,\Lambda^{p_0}\,\Theta(\HD_1)^{p_0}\,\mu(R)\\
&
\lesssim 
 (B\Lambda^2)^{^{1-\frac {p_0}2}}\Lambda^{\frac{-p_0}2}\,\sigma_{p_0}(\HD_1)
+ \ve_Z^{1-\frac {p_0}2}\,B\,\Lambda^{2+{p_0}}\,\sigma_{p_0}(\HD_1).
\end{align*}
By the choice of $p_0$, $\Lambda$, and $B$ and the assumption $\ve_Z\leq \Lambda^{-72n},$ we have
$$\wt I_\Ot\lesssim \Lambda^{\frac{-1}2}\,\sigma_{p_0}(\HD_1) + \Lambda^{\frac{-1}{25n}}\,\sigma_{p_0}(\HD_1)
\lesssim \Lambda^{\frac{-1}{25n}}\,\sigma_{p_0}(\HD_1)
.$$
Together with \rf{eqiot78}, this yields
$$I_\Ot  \lesssim \Lambda^{\frac{-1}{25n}}\,\sigma_{p_0}(\HD_1).
$$
Gathering the estimates obtained for $I_a$ and $I_\Ot$, the lemma follows.
\end{proof}
\vv


\vv
\begin{lemma}\label{lemNZ}
Suppose that $R\in\Trc\cap\Ty$ and also that $\mu(Z)\leq \ve_Z\,\mu(R)$ and
$$\|\Delta_{\wt\TT} \RR\mu\|_{L^2(\mu)}^2\leq \Lambda^{-1}\,\sigma(\HD_1).$$
 Assume also that
$\ve_Z\leq \Lambda^{-72n}$ and $\ell_0$ is small enough.
Then
$$\eta\big(\big\{x\in \wt V_R: |\RR\eta(x)|> \frac{c_2}2 \Theta(\HD_1)\big\}\big)\lesssim \Lambda^{\frac{-1}{25n}}\,\mu(HD_1).$$
Also, for any $p\in (1,p_0)$,
$$\int_{F_\eta\cap\wt V_R}\big|(|\RR\eta| - \frac{c_2}2 \Theta(\HD_1))_+\big|^{p}
\,d\eta\lesssim \Lambda^{\frac{-1}{25n}}\,\sigma_{p}(\HD_1).$$
\end{lemma}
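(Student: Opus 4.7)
The plan is to obtain both statements as essentially formal consequences of Lemma \ref{leminteta*}, exploiting the gap between the cutoff $c_2/4$ in that lemma and the cutoff $c_2/2$ in the present lemma.

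First, for the measure estimate, I would split
$$\wt V_R = (F_\eta \cap \wt V_R) \;\cup\; (\wt V_R \setminus F_\eta)$$
and bound the $\eta$-measure of the sublevel set on each piece. On $F_\eta \cap \wt V_R$, the inequality $|\RR\eta(x)| > \tfrac{c_2}{2}\Theta(\HD_1)$ forces $(|\RR\eta(x)| - \tfrac{c_2}{4}\Theta(\HD_1))_+ \geq \tfrac{c_2}{4}\Theta(\HD_1)$, so Chebyshev applied to Lemma \ref{leminteta*} gives
$$\eta\bigl(\{x\in F_\eta\cap \wt V_R : |\RR\eta(x)| > \tfrac{c_2}{2}\Theta(\HD_1)\}\bigr) \lesssim \Lambda^{-1/(25n)}\,\frac{\sigma_{p_0}(\HD_1)}{\Theta(\HD_1)^{p_0}} = \Lambda^{-1/(25n)}\mu(HD_1).$$
On $\wt V_R \setminus F_\eta$, the decomposition \eqref{eqsplitreg0} and the definition of $F_\eta$ force this set to lie inside the union of balls $\tfrac12 B(Q)$, $Q\in\Reg_{\HD_2}$, whose $\eta$-measure is comparable to $\mu(HD_2)$ by Lemma \ref{lem74}. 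Using $R\in\Trc$, one has
$$\mu(HD_2) \le \frac{\sigma(\HD_2)}{\Theta(\HD_2)^2} \le \frac{B\,\sigma(\HD_1)}{\Lambda^4\,\Theta(R)^2} = B\,\Lambda^{-2}\mu(HD_1),$$
and since $B=\Lambda^{1/(100n)}$ we have $B\Lambda^{-2}\ll \Lambda^{-1/(25n)}$, completing the first statement.

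For the $L^p$ estimate with $1<p<p_0$, observe that the integrand vanishes outside the set
$$A := \{x\in F_\eta\cap\wt V_R:|\RR\eta(x)|>\tfrac{c_2}{2}\Theta(\HD_1)\},$$
so the integral on $F_\eta\cap\wt V_R$ equals the integral on $A$. I would then interpolate between the $L^{p_0}$ bound and the measure bound just obtained, via Hölder with exponents $p_0/p$ and $p_0/(p_0-p)$. Using the trivial pointwise inequality $(|\RR\eta|-\tfrac{c_2}{2}\Theta(\HD_1))_+\le (|\RR\eta|-\tfrac{c_2}{4}\Theta(\HD_1))_+$ and Lemma \ref{leminteta*},
\begin{align*}
\int_A\!\bigl|(|\RR\eta|-\tfrac{c_2}{2}\Theta(\HD_1))_+\bigr|^{p}\,d\eta
&\le \Bigl(\int_A\!\bigl|(|\RR\eta|-\tfrac{c_2}{4}\Theta(\HD_1))_+\bigr|^{p_0}\,d\eta\Bigr)^{p/p_0}\eta(A)^{1-p/p_0}\\
&\lesssim \bigl(\Lambda^{-1/(25n)}\sigma_{p_0}(\HD_1)\bigr)^{p/p_0}\bigl(\Lambda^{-1/(25n)}\mu(HD_1)\bigr)^{1-p/p_0}\\
&= \Lambda^{-1/(25n)}\,\Theta(\HD_1)^{p}\,\mu(HD_1) = \Lambda^{-1/(25n)}\,\sigma_{p}(\HD_1),
\end{align*}
which is the desired inequality.

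There is no serious obstacle here: the lemma really is a distillation of Lemma \ref{leminteta*}, and the only delicate point is the bookkeeping for $\wt V_R\setminus F_\eta$, where one must verify using \eqref{eqsplitreg0} and the bounded overlap property of the balls $\tfrac12 B(Q)$ from Lemma \ref{lem74} that this set contributes only via $\Reg_{\HD_2}$ and hence only through $\mu(HD_2)$, which is absorbed by the tractability hypothesis $R\in\Trc$.
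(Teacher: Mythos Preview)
Your proposal is correct and follows essentially the same approach as the paper's own proof: split $\wt V_R$ into $F_\eta\cap\wt V_R$ and its complement, apply Chebyshev with Lemma~\ref{leminteta*} on the first piece, bound the second piece by $\mu(HD_2)\le B\Lambda^{-2}\mu(HD_1)$ via tractability, and then deduce the $L^p$ bound by H\"older interpolation between the $L^{p_0}$ estimate and the measure estimate. The bookkeeping and the final algebra $\sigma_{p_0}(\HD_1)^{p/p_0}\mu(HD_1)^{1-p/p_0}=\sigma_p(\HD_1)$ match the paper's computation exactly.
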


\begin{proof}
Denote 
$$A = \big\{x\in \wt V_R: |\RR\eta(x)|> \frac{c_2}2 \Theta(\HD_1)\big\}.$$
By the definition of $F_\eta$ we can split
$$\eta(A)\leq \eta(A\cap F_\eta) + \eta\bigg(\bigcup_{Q\in\HD_2} \frac12B(Q)\bigg).$$
Notice first that 
\begin{equation}\label{eqhd62}
\eta\bigg(\bigcup_{Q\in\HD_2} \frac12B(Q)\bigg) = \frac1{\Theta(\HD_2)^2}\,\sigma(\HD_2)
\leq \frac{B}{\Lambda^2\,\Theta(\HD_1)^2}\,\sigma(\HD_1) \leq \Lambda^{-1}\,\mu(HD_1).
\end{equation}
On the other hand, for $x\in A$ we have 
$$(|\RR\eta(x)| - \frac{c_2}4 \Theta(\HD_1))_+ \geq \frac{c_2}4 \Theta(\HD_1).$$
So, by Chebyshev and Lemma \ref{leminteta*},
\begin{align*}
\eta(A\cap F_\eta) &\lesssim \frac1{\Theta(\HD_1)^{p_0}} 
\int_{F_\eta\cap\wt V_R}\big|(|\RR\eta| - \frac{c_2}4 \Theta(\HD_1))_+\big|^{p_0}
\,d\eta\\
& \lesssim \frac{\Lambda^{\frac{-1}{25n}}\,\sigma_{p_0}(\HD_1)}{\Theta(\HD_1)^{p_0}}  = 
\Lambda^{\frac{-1}{25n}}\,\mu(HD_1),
\end{align*}
which, together with \rf{eqhd62}, proves the first assertion of the lemma.

For the second statement in the lemma we use H\"older and Lemma \ref{leminteta*} again:
\begin{align*}
\int_{F_\eta\cap\wt V_R}\big|(|\RR\eta| - \frac{c_2}2 \Theta(\HD_1))_+\big|^{p}
\,d\eta& = \int_{A\cap F_\eta}\big|(|\RR\eta| - \frac{c_2}2 \Theta(\HD_1))_+\big|^{p}
\,d\eta\\
& \leq \left(\int_{A\cap F_\eta}\!\big|(|\RR\eta| - \frac{c_2}4 \Theta(\HD_1))_+\big|^{p_0}
d\eta\right)^{\frac p{p_0}}\!\eta(A\cap F_\eta)^{1-\frac p{p_0}}\\
& \lesssim \big(\Lambda^{\frac{-1}{25n}}\,\sigma_{p_0}(\HD_1)\big)^{\frac p{p_0}}\,
\big(\Lambda^{\frac{-1}{25n}}\,\mu(HD_1)\big)^{1-\frac p{p_0}}\\
&= \Lambda^{\frac{-1}{25n}}\,\sigma_{p}(\HD_1).
\end{align*}
\end{proof}

\vv

Observe that, given $R\in\MDW\cap\Trc\cap\Ty$, from Lemmas \ref{lemrieszeta} and \ref{lemNZ}, under
the assumptions in those lemmas, we derive that 
\begin{align}\label{eqh2893} 
\int_{V_R\cap HD_2} \big|(|\RR\eta(x)| - &\frac{c_2}2\,\Theta(\HD_1))_+\big|^p\,d\eta(x)\\
 &\geq c\Lambda^{-p'\ve_n}\,\sigma_p(\HD_1) - C\Lambda^{\frac{-1}{25n}}\,\sigma_p(\HD_1)\approx
 \Lambda^{-p'\ve_n}\,\sigma_p(\HD_1),\nonumber
\end{align}
for $p\in (1,p_0]$, assuming that $\ve_n$ and $p$ are chosen so that
$p'\ve_n \ll \frac{1}{25n}.$
This is the main ingredient for the proof of the next lemma, which is the main result
of this section.
\vv

\begin{lemma}\label{lemalter*}
Let $R\in\MDW\cap\Trc\cap\Ty$. Let $\Lambda>0$ be big enough and suppose that
$\ve_Z\leq \Lambda^{-72n}$. 
Then one of the following alternatives holds:
\begin{itemize}
\item[(a)] $\mu(Z) > \ve_Z\,\mu(R)$, or\vv

\item[(b)] $\|\Delta_{\wt \TT} \RR\mu\|_{L^2(\mu)}^2> \Lambda^{-1}\,\sigma(\HD_1).$
\end{itemize}
\end{lemma}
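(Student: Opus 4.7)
The plan is to argue by contradiction: suppose neither (a) nor (b) holds, so the hypotheses of Lemmas \ref{lemrieszeta} and \ref{lemNZ} are both satisfied and the combination \rf{eqh2893} gives
\[
\int_{V_R\cap HD_2}\bigl(|\RR\eta|-\tfrac{c_2}{2}\Theta(\HD_1)\bigr)_+^{p}\,d\eta\gtrsim \Lambda^{-p'\ve_n}\,\sigma_p(\HD_1)
\]
for any $p\in(1,p_0]$ (taking $\ve_n$ small enough that $p'\ve_n\ll 1/(25n)$). I will fix $p$ slightly larger than $1$ and derive a competing upper bound for the same integral that is strictly smaller than $\Lambda^{-p'\ve_n}\sigma_p(\HD_1)$ once $\ve_n$ is small, producing the contradiction.

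The upper bound is obtained by running the transference scheme of the proof of Lemma \ref{leminteta*}, now restricted to the $\HD_2$-region instead of $F$. For each $Q\in\Reg_{\HD_2}$ with $\tfrac12 B(Q)\cap V_R\neq\varnothing$, Lemma \ref{lemaprox1} replaces $|\RR\eta|$ on $\tfrac12 B(Q)$ by $|\RR_{\TT_\Reg}\mu|$ on $Q$ at cost $\Theta(R)+\PP(Q)+\QQ_\Reg(Q)$; Lemma \ref{lemaprox3} (applied on each $S\in\HD_2$) replaces $\RR_{\TT_\Reg}\mu$ by $\RR_{\wt\TT}\mu$ at cost $\EE(2S)^{p/2}\mu(S)^{1-p/2}$; and Lemma \ref{lemaprox2} replaces $\RR_{\wt\TT}\mu$ by $\Delta_{\wt\TT}\RR\mu$ at cost $\PP(R)+(\EE(4R)/\mu(R))^{1/2}+\PP(S)+(\EE(2S)/\mu(S))^{1/2}$. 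The constants $\Theta(R)=\Lambda^{-1}\Theta(\HD_1)$ and $(\EE(4R)/\mu(R))^{1/2}$ (bounded as in \rf{eqEr4}--\rf{eqEr5}, depending on whether or not $R\in\DB$) are small enough to be absorbed into the threshold $\tfrac{c_2}{4}\Theta(\HD_1)$, leaving
\[
\Lambda^{-p'\ve_n}\sigma_p(\HD_1)\lesssim \int_{HD_2}|\Delta_{\wt\TT}\RR\mu|^p\,d\mu+\sum_{S\in\HD_2}\EE(2S)^{p/2}\mu(S)^{1-p/2}+\Sigma_p^\PP(\Reg_{\HD_2})+\Sigma_p^\QQ(\Reg_{\HD_2}).
\]

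Each term is then bounded separately. By Lemma \ref{lemregpq} and \rf{eqlem*6} of Lemma \ref{lemregmolt},
\[
\Sigma_p^\PP(\Reg_{\HD_2})+\Sigma_p^\QQ(\Reg_{\HD_2})\lesssim B\Lambda^{p-2}\sigma_p(\HD_1).
\]
For the energy sum, splitting $\HD_2=(\HD_2\setminus\DB)\cup(\HD_2\cap\DB)$ and using $\EE(2S)\lesssim \EE_\infty(9S)\lesssim M_0^2\sigma(S)$ on the first piece together with tractability $\sigma(\HD_2)\leq B\sigma(\HD_1)$, while invoking the typicality bound \rf{defty} on the second piece, gives $\sum_{S\in\HD_2}\EE(2S)\lesssim M_0^2 B\sigma(\HD_1)$. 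Combining with H\"older's inequality and $\mu(HD_2)\leq B\Lambda^{-2}\mu(HD_1)$ (a consequence of $\sigma(\HD_2)\leq B\sigma(\HD_1)$ and $\Theta(\HD_2)=\Lambda\Theta(\HD_1)$), this term is controlled by $(M_0^2 B)^{p/2}(B\Lambda^{-2})^{1-p/2}\sigma_p(\HD_1)$. Finally, H\"older's inequality together with the assumed failure of (b) yields
\[
\int_{HD_2}|\Delta_{\wt\TT}\RR\mu|^p\,d\mu\leq \|\Delta_{\wt\TT}\RR\mu\|_{L^2(\mu)}^p\,\mu(HD_2)^{1-p/2}\lesssim B^{1-p/2}\Lambda^{p/2-2}\sigma_p(\HD_1).
\]
Adding the four contributions, the upper bound takes the form $C\Lambda^{-\kappa(p)}\sigma_p(\HD_1)$ with $\kappa(p)$ bounded below by a positive absolute constant (depending only on $n$) as $p\to 1^+$, thanks to the relations $\Lambda=M_0^{(8n-1)/(8n-2)}$ and $B=\Lambda^{1/(100n)}$. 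Fixing $p$ close enough to $1$ and choosing $\ve_n$ so small that $p'\ve_n<\kappa(p)/2$ contradicts the lower bound, completing the proof.

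The main obstacle is the joint choice of $p$ and $\ve_n$: as $p\downarrow 1$ the exponent $p'$ blows up and inflates the exponent $p'\ve_n$ in the lower bound, while the factor $\mu(HD_2)^{1-p/2}$ in the upper bound tends to $\mu(HD_2)^{1/2}\lesssim \Lambda^{-1}\mu(HD_1)^{1/2}$ and $\Lambda^{p/2-2}\to \Lambda^{-3/2}$, supplying a $\Lambda^{-c}$ decay uniform as $p\to 1$ that can be made to dominate $\Lambda^{-p'\ve_n}$ by choosing $\ve_n$ sufficiently small. The delicate point is therefore the precise tracking of powers of $\Lambda$, $B$, and $M_0$ through the intermediate steps, particularly in the two energy splittings, where the $\Lambda^{-1/(3n)}$ factor furnished by typicality of $\TT$ on $\DB$-stopping cubes is essential to keep the energy contribution under control.
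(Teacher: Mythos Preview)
Your proposal is correct and follows essentially the same route as the paper's proof: contradiction, the lower bound \rf{eqh2893} on the $\HD_2$-region, then the three-step transference (Lemmas~\ref{lemaprox1}, \ref{lemaprox3}, \ref{lemaprox2}) and the same term-by-term bookkeeping leading to the inequality \rf{eqIa99*}. One small imprecision: Lemma~\ref{lemregpq} is stated only for the full family $\Reg$, not for the subfamily $\Reg_{\HD_2}$, so to get $\Sigma_p^\QQ(\Reg_{\HD_2})\lesssim B\Lambda^{p-2}\sigma_p(\HD_1)$ you should first apply H\"older to pass to $\Sigma^\QQ(\Reg)^{p/2}\mu(HD_2)^{1-p/2}$ and then invoke Lemmas~\ref{lemregpq} and~\ref{lemreg73} at $p=2$---this is exactly how the paper handles it, and your stated bound is correct. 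The paper is also more explicit about the final parameter choice ($p=1+\tfrac1{4(8n-1)}$, $\ve_n\le \tfrac1{32(8n-1)^2}$), whereas you argue qualitatively that $\kappa(p)$ stays bounded below as $p\to 1^+$; both are fine.
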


\begin{proof}
Suppose that none of the alternatives holds. Then, by Lemmas  \ref{lemrieszeta} and \ref{lemNZ},
as in \rf{eqh2893}, we have
\begin{equation} \label{eqtrans736}
I_{\HD_2}:=\int_{V_R\cap HD_2} \big|(|\RR\eta(x)| - \frac{c_2}2\,\Theta(\HD_1))_+\big|^p\,d\eta(x)
\gtrsim
 \Lambda^{-p'\ve_n}\,\sigma_p(\HD_1),
\end{equation}
for all $p\in (1,p_0]$,
assuming that $\ell_0$ is chosen small enough and that 
\begin{equation}\label{eqass934}
{p'\ve_n} \leq \frac{1}{50n}.
\end{equation}
The appropriate values of $\ve_n$ and $p$ will be chosen at the end of the proof.

By arguments analogous to the ones we used to estimate the integral $I_a$ in the proof of Lemma
\ref{leminteta*} we will ``transfer'' the estimate for $\RR\eta$ in \rf{eqtrans736} to $\Delta_{\wt\TT}\RR\mu$, so that we will obtain a lower estimate for $\|\Delta_{\wt \TT} \RR\mu\|_{L^2(\mu)}$ which will contradict the assumption that (b) does not hold. 
Although some of the estimates below are very similar to the ones to obtain the inequality 
\rf{eqIa99} in the proof of Lemma
\ref{leminteta*}, we will include the full details here for the reader's convenience. 
On the other hand, an important differences between the proof of that lemma and the current proof is that in Lemma 
\ref{leminteta*} we took advantage of the fact that $p_0$ is close to $2$, and the estimates there would not work for the family $\Reg_{\HD_2}$, while in the arguments below it is  essential the fact that we are taking $p$ close to $1$ and the estimates work fine for the family $\Reg_{\HD_2}$, 
while they would fail for the family $\Reg_a$.

By Lemma \ref{lemaprox1}, for all $Q\in\Reg_{\HD_2}$ such that $\frac12B(Q)\subset\wt V_R$, all $x\in \frac12B(Q)$, and all $y\in Q$,
$$|\RR\eta(x)| \leq |\RR_{\TT_\Reg}\mu(y)| + C\Theta(R) + C\PP(Q) + C\QQ_\Reg(Q).$$
Thus, for $\Lambda$ big enough, since $\Theta(R)=\Lambda^{-1}\Theta(\HD_1)<\frac{c_2}4 \Theta(\HD_1)$,
$$(|\RR\eta(x)| - \frac{c_2}2 \,\Theta(\HD_1))_+\leq
(|\RR_{\TT_\Reg}\mu(y)| - \frac{c_2}4\, \Theta(\HD_1))_+  + C\PP(Q) + C\QQ_\Reg(Q).$$
Therefore,
$$
I_{\HD_2}  \lesssim \sum_{Q\in\Reg_{\HD_2}} \!\int_Q \big|(|\RR_{\TT_\Reg}\mu(y)| - 
\frac{c_2}4 \Theta(\HD_1))_+\big|^{p}\,d\mu(y) + \!\!\sum_{Q\in\Reg_{\HD_2}} \!\!\!\!(\PP(Q)^{p} + \QQ_\Reg(Q)^{p})\,\mu(Q).$$
By \rf{eqlem*6} and Lemma \ref{lemregpq}, we have
\begin{align*}
\sum_{Q\in\Reg_{\HD_2}} (\PP(Q)^{p} + \QQ_\Reg(Q)^{p})\,\mu(Q) & = 
\Sigma_p^\PP(\Reg_{\HD_2}) +\Sigma_p^\QQ(\Reg_{\HD_2})\\ 
&\lesssim
\sigma_p(\HD_2) + \Sigma^\QQ(\Reg)^{\frac p2}\,\mu(HD_2)^{1-\frac p2} \\ 
& \lesssim B\,\Lambda^{p-2}\,\sigma_p(\HD_1) + \Sigma^\PP(\Reg)^{\frac p2}\,\mu(HD_2)^{1-\frac p2}.
\end{align*}
Also, recalling that $\sigma(\HD_2)\leq B\,\sigma(\HD_1)$, we get
$\mu(HD_2)\leq B\,\Lambda^{-2}\,\mu(HD_1)$. Then, by Lemma \ref{lemreg73}, we obtain
$$\Sigma^\PP(\Reg)^{\frac p2}\,\mu(HD_2)^{1-\frac p2} \leq \big(B\,\sigma(\HD_1)\big)^{\frac p2}\,
\big(B\,\Lambda^{-2} \,\mu(HD_1)\big)^{1-\frac p2} = B\,\Lambda^{p-2}\,\sigma_p(\HD_1).
$$
So, since any cube from $\Reg_{\HD_2}$
is contained in some cube $S\in\HD_2$:
$$I_{\HD_2}  \lesssim \sum_{S\in\HD_2} \!\int_S \big|(|\RR_{\TT_\Reg}\mu(y)| - 
\frac{c_2}4 \Theta(\HD_1))_+\big|^{p}\,d\mu(y) + B\,\Lambda^{p-2}\,\sigma_p(\HD_1).$$

For each $S\in\HD_2$, by the triangle inequality and the fact that $(\;\cdot\;)_+$ is a $1$-Lipschitz function, we obtain
\begin{align*}
\int_S \big|(|\RR_{\TT_\Reg}\mu| - 
\frac{c_2}4 \Theta(\HD_1))_+\big|^{p}\,d\mu &\lesssim 
\int_S \big|(|\RR_{\wt \TT}\mu| - 
\frac{c_2}4 \Theta(\HD_1))_+\big|^{p}\,d\mu \\
&\quad+ \int_S\big|\RR_{\wt\TT}\mu - \RR_{\TT_\Reg}\mu\big|^{p}\,d\mu
\end{align*}
By Lemma \ref{lemaprox3}, the last integral does not exceed 
$C\EE(2S)^{\frac{p}2} \,\mu(S)^{1-\frac{p}2}$, and thus
\begin{equation}\label{eqIa1*}
I_{\HD_2}\lesssim\!
\sum_{S\in\HD_2}\int_S \big|(|\RR_{\wt\TT}\mu| - 
\frac{c_2}4 \Theta(\HD_1))_+\big|^{p}\,d\mu +\!\! \sum_{S\in\HD_2}\!\!\EE(2S)^{\frac{p}2} \mu(S)^{1-\frac {p}2} + B\,\Lambda^{p-2}\,\sigma_p(\HD_1).
\end{equation}

Next we apply Lemma \ref{lemaprox2}, which implies that
for any $S\in\HD_2$ and all $x\in S$,
\begin{equation}\label{eqIa2*}
|\RR_{\wt\TT}\mu(x)| \leq |\Delta_{\wt\TT}\RR\mu(x)| + C\left(\frac{\EE(4R)}{\mu(R)}\right)^{1/2} +  C\left(\frac{\EE(2S)}{\mu(S)}\right)^{1/2}.
\end{equation}
In case that $R\not\in\DB$, recalling that
$\Lambda  = M_0^{\frac{8n-1}{8n-2}}\gg M_0$ by \rf{eqlammm}, for  $\Lambda$ big enough we obtain
$$C\left(\frac{\EE(4R)}{\mu(R)}\right)^{1/2} \leq C\,M_0\,\Theta(R)\leq \frac{c_2}4 \Theta(\HD_1),$$
If $R\in\DB$, then we use the fact that $R\in\Ty$, which ensures that
$$\EE(4R)\lesssim \sum_{P\sim \TT:P\in\DB}\EE_\infty(9P) \leq \Lambda^{\frac{-1}{3n}}\,\sigma(\HD_1),$$
and so we also get
$$C\left(\frac{\EE(4R)}{\mu(R)}\right)^{1/2} \leq C \left(\frac{\Lambda^{\frac{-1}{3n}}\,\sigma(\HD_1)}{\mu(R)}\right)^{1/2}\leq C\, \Lambda^{\frac{-1}{6n}} \,\Theta(\HD_1)
\leq \frac{c_2}4 \Theta(\HD_1).$$
Hence, in any case we have
$$(|\RR_{\wt\TT}\mu(x)| - \frac{c_2}4 \Theta(\HD_1))_+ \leq 
|\Delta_{\wt\TT}\RR\mu(x)| + C\left(\frac{\EE(2S)}{\mu(S)}\right)^{1/2}.$$
Plugging this estimate into \rf{eqIa1*}, we get
\begin{equation}\label{eqIa99*}
I_{\HD_2}\lesssim
\int_{HD_2} |\Delta_{\wt\TT}\RR\mu|^{p}\,d\mu + \sum_{S\in\HD_2}\EE(2S)^{\frac{p}2} \,\mu(S)^{1-\frac {p}2} 
+  B\,\Lambda^{p-2}\,\sigma_p(\HD_1).
\end{equation}

Next we will estimate each term on the right hand side. First, by H\"older's inequality, we have
\begin{align*}
\int_{HD_2} |\Delta_{\wt\TT}\RR\mu|^{p}\,d\mu & \lesssim \|\Delta_{\wt\TT}\RR\mu\|_{L^2(\mu)}^{p}\,
\mu(HD_2)^{1-\frac{p}2}\\
&\leq \|\Delta_{\wt\TT}\RR\mu\|_{L^2(\mu)}^{p}\,(B\Lambda^{-2}\mu(HD_1))^{1-\frac{p}2}
\leq \|\Delta_{\wt\TT}\RR\mu\|_{L^2(\mu)}^{p}(\Lambda^{-1}\mu(HD_1))^{1-\frac{p}2}.
\end{align*}
Regarding the second term in \rf{eqIa99*}, by H\"older's inequality again,
\begin{align}\label{eqplug731}
\sum_{S\in\HD_2}\EE(2S)^{\frac{p}2} \,\mu(S)^{1-\frac {p}2} 
& \leq  \bigg(\sum_{S\in\HD_2}\EE(2S)\bigg)^{\frac{p}2} \,\mu(HD_2)^{1-\frac {p}2}.
\end{align}
We estimate now the first factor on the right hand side:
\begin{align*}
\sum_{S\in\HD_2}\EE(2S) & \leq \sum_{S\in\HD_2\setminus\DB}\EE(2S) + \sum_{S\sim\TT:S\in\DB}\EE(2S)\\
&\lesssim M_0^2\,\sum_{S\in\HD_2\setminus\DB}\sigma(S) + \Lambda^{\frac{-1}{3n}}\,\sigma(\HD_1)\\
&\leq M_0^2\,\sigma(\HD_2) + \Lambda^{\frac{-1}{3n}}\,\sigma(\HD_1)\\
& \leq B\,M_0^2\,\sigma(\HD_1) + \Lambda^{\frac{-1}{3n}}\,\sigma(\HD_1) \lesssim B\,M_0^2\,\sigma(\HD_1).
\end{align*}
Hence, plugging this estimate into \rf{eqplug731} and using that $\mu(HD_2)\leq B\,\Lambda^{-2}\mu(HD_1)$,
$$\sum_{S\in\HD_2}\EE(2S)^{\frac{p}2} \,\mu(S)^{1-\frac {p}2}\lesssim 
\big(B\,M_0^2\,\sigma(\HD_1)\big)^{\frac p2} \,\big(B\,\Lambda^{-2}\mu(HD_1)\big)^{1-\frac p2}
=M_0^p\, B\,\Lambda^{p-2}\,\sigma_p(\HD_1).$$

Altogether, we deduce that
$$I_{\HD_2}\lesssim \|\Delta_{\wt\TT}\RR\mu\|_{L^2(\mu)}^{p}\,(\Lambda^{-1}\mu(HD_1))^{1-\frac{p}2} + M_0^p\, B\,\Lambda^{p-2}\,\sigma_p(\HD_1).$$
Recalling the lower estimate for $I_{\HD_2}$ in \rf{eqtrans736}, we obtain
\begin{equation}\label{eqdelt634}
\|\Delta_{\wt\TT}\RR\mu\|_{L^2(\mu)}^{p}\,(\Lambda^{-1}\mu(HD_1))^{1-\frac{p}2} \geq
c\, \Lambda^{-p'\ve_n}\,\sigma_p(\HD_1) - C\,M_0^p\, B\,\Lambda^{p-2}\,\sigma_p(\HD_1).
\end{equation}
Recall now that 
$$M_0= \Lambda^{1-\frac1{8n-1}}\ll\Lambda.$$ 
Notice that for $p$ close enough to $1$, we have $M_0^p\, B\,\Lambda^{p-2}\ll1$, so that the last term on the right hand side of \rf{eqdelt634} is much smaller than the first one, assuming $\ve_n$ close enough to $0$. To be more precise, let us take 
$$p=1+ \frac1{4(8n-1)}.$$
A straightforward calculation gives $M_0^p\, \Lambda^{p-2} = \Lambda^{-\frac1{2(8n-1)} -\frac1{4(8n-1)^2}}$, so that
$$B\,M_0^p\, \Lambda^{p-2} \leq \Lambda^{\frac1{100n}}\,\Lambda^{-\frac1{2(8n-1)}} \leq 
\Lambda^{-\frac1{4(8n-1)}}.$$
Then we choose $\ve_n$ so that, besides \rf{eqass934}, it satisfies
$$\ve_n \leq \frac1{8(8n-1)}\,(p-1) = \frac1{32(8n-1)^2},$$
and we derive
$$\|\Delta_{\wt\TT}\RR\mu\|_{L^2(\mu)}^{p}\,(\Lambda^{-1}\mu(HD_1))^{1-\frac{p}2} \gtrsim
\Lambda^{-p'\ve_n}\,\sigma_p(\HD_1),$$
which is equivalent to
\begin{align*}
\|\Delta_{\wt\TT}\RR\mu\|_{L^2(\mu)}^{2} &\gtrsim
\big( \Lambda^{-p'\ve_n}\,
\,\Lambda^{1-\frac p2} 
\,\sigma_p(\HD_1)\,\mu(HD_1)^{\frac{p}2-1}\big)^{\frac p2}\\
& = \Lambda^{-\frac{2\ve_n}{p-1} +\frac p2-1}\,\sigma(\HD_1)\gg \Lambda^{-1}\,\sigma(\HD_1).
\end{align*}
This contradicts the assumption that the alternative (b) in the lemma does not hold.
\end{proof}

\vv


\section{The proof of Proposition \ref{propomain}}

We have to show that
$$\sum_{Q\in\DB} \EE_\infty(9Q) \leq C\big( \|\RR\mu\|_{L^2(\mu)}^2 + 
\theta_0^2\,\|\mu\|\big),$$
with $C$ possibly depending on $\Lambda$ and other parameters.
Recall that by Lemma \ref{lemsuper**}, we have
$$\sum_{Q\in\DB} \EE_\infty(9Q) \lesssim \Lambda^{\frac{-1}{2n}}(\log\Lambda)^2 
\sum_{R\in\sL(\GDF)}\,
\sum_{k\geq0} B^{-k/2} \sum_{Q\in\Trc_k(R)\cap\Ty}\sigma(\HD_1(e(Q))).$$
Also, by Lemma \ref{lemimp9}, it turns out that, for all $P\in\DD_\mu$ and all $k\geq0$,
$$\#\big\{R\in\sL(\GDF):\exists \,Q\in\Trc_k(R) \mbox{ such that } P\in\TT(e'(Q))\big\}\leq C_2\,(\log\Lambda)^2.$$
Observe now that, by Lemma \ref{lemalter*}, for each $Q\in\Trc_k(R)\cap\Ty$,
either
$$\sigma(\HD_1(e(Q)))\lesssim \theta_0^2\,\mu(Q)\leq \theta_0^2\,\ve_Z^{-1}\,\mu(Z(Q)),$$
where $Z(Q)$ is the set $Z$ appearing in Lemma \ref{lemsuper**} (replacing $R$ by $Q$ there), or
$$\sigma(\HD_1(e(Q))) \leq \Lambda\,\|\Delta_{\wt \TT(e'(Q))} \RR\mu\|_{L^2(\mu)}^2.$$
Therefore,
\begin{align}\label{eqsumtot93}
\sum_{Q\in\DB} \EE_\infty(9Q) & \lesssim_\Lambda 
\sum_{R\in\sL(\GDF)}\,
\sum_{k\geq0} B^{-k/2} \sum_{Q\in\Trc_k(R)\cap\Ty}
\|\Delta_{\wt \TT(e'(Q))} \RR\mu\|_{L^2(\mu)}^2\\
&\quad+\!\!\sum_{R\in\sL(\GDF)}\,
\sum_{k\geq0} B^{-k/2} \!\!\!\!\sum_{Q\in\Trc_k(R)\cap\Ty}\!
\theta_0^2\,\ve_Z^{-1}\,\mu(Z(Q))\nonumber\\
&=: T_1 + T_2.\nonumber
\end{align}

\vv

\subsection{Estimate of $T_1$}
Recall that 
\begin{align*}
\Delta_{\wt\TT(e'(Q))}\RR\mu(x) & = \sum_{P\in\wt\End(e'(Q))} \chi_P(x)\,\big(m_{\mu,P}(\RR\mu) - m_{\mu,2Q}(\RR\mu)\big)\\
&\quad +  \chi_{Z(Q)}(x) \big(\RR\mu(x) -  m_{\mu,2R}(\RR\mu)\big).
\end{align*}
For $Q\in\MDW$, we write $S\prec Q$ if $S\in\DD_\mu$ is a maximal cube contained in $e'(Q)$. Then we denote
$$\wh\Delta_Q\RR\mu = \sum_{S\prec Q} (m_{\mu,S}(\RR_\mu) - m_{\mu,2Q}(\RR\mu)\big)\,\chi_S.$$
Then, it is easy to check that
$$\Delta_{\wt\TT(e'(Q))}\RR\mu = \bigg(\sum_{P\in\wt\TT(e'(Q))\setminus \wt\End(e'(Q))} \Delta_P \RR\mu + \wh\Delta_Q\RR\mu\bigg)\,\chi_{\wt G(Q)},$$
where
$$\wt G(Q) = \bigcup_{P\in\wt\End(e'(Q))} P \,\cup \,Z(Q)$$
(see {\cite[Section 8.1]{DT} for the full details).
It is also immediate to check that, for a fixed $Q$, all the functions appearing on the right hand side are mutually orthogonal in $L^2(\mu)$.  Arguing as in \rf{eqnegsim*}, one sees that the cubes $P\in\wt\TT(e'(Q))$ satisfy 
$P\sim\TT(e'(Q))$. So we get
\begin{align*}
\|\Delta_{\wt \TT(e'(Q))} \RR\mu\|_{L^2(\mu)}^2 & =
\sum_{P\in\wt\TT(e'(Q))\setminus \wt\End(e'(Q))}\| \Delta_P \RR\mu\|_{L^2(\mu)}^2  + \|\wh\Delta_Q\RR\mu\|_{L^2(\mu)}^2 \\
& \leq \sum_{P\sim\TT(e'(Q))} \|\Delta_P \RR\mu\|_{L^2(\mu)}^2  + \|\wh\Delta_Q\RR\mu\|_{L^2(\mu)}^2.
\end{align*}
Therefore,
\begin{align*}
T_1 & \leq \sum_{R\in\sL(\GDF)}\,
\sum_{k\geq0} B^{-k/2} \sum_{Q\in\Trc_k(R)\cap\Ty}
\sum_{P\sim\TT(e'(Q))} \|\Delta_P \RR\mu\|_{L^2(\mu)}^2\\
&\quad + \sum_{R\in\sL(\GDF)}\,\sum_{k\geq0} B^{-k/2} \sum_{Q\in\Trc_k(R)\cap\Ty}
\|\wh\Delta_Q\RR\mu\|_{L^2(\mu)}^2\\
& =: T_{1,1} + T_{1,2}.
\end{align*}
By Fubini, regarding the term $T_{1,1}$, we have
\begin{align*}
T_{1,1}
 &\leq
\sum_{P\in \DD_\mu} \|\Delta_P \RR\mu\|_{L^2(\mu)}^2
 \sum_{k\geq0} B^{-k/2}\,
\# A(P,k),
\end{align*}
where
$$A(P,k)= 
\big\{R\in \sL(\GDF):\exists \,Q\in\Trc_k(R) \text{ such that }P\sim\TT(e'(Q))\big\}.$$
As shown in \rf{eqremaa95}, it holds
$$\#A(P,k)\lesssim 
(\log\Lambda)^2.$$
Hence,
$$T_{1,1}
 \lesssim_\Lambda
\sum_{P\in \DD_\mu} \|\Delta_P \RR\mu\|_{L^2(\mu)}^2\lesssim_\Lambda \|\RR\mu\|_{L^2(\mu)}^2.$$
Concerning $T_{1,2}$, we argue analogously:
\begin{align*}
T_{1,2}
 &\le 
\sum_{Q\in \DD_\mu^\PP} \|\wh\Delta_Q\RR\mu\|_{L^2(\mu)}^2
 \sum_{k\geq0} B^{-k/2}\,
\# \wt A(Q,k),
\end{align*}
where
$$\wt A(Q,k)= \big\{R\in \sL(\GDF):
Q\in\Trc_k(R)\big\}.$$
Since 
$$\#\wt A(Q,k) \leq\#A(Q,k)\lesssim(\log\Lambda)^2,$$
we deduce that
$$T_{1,2}
 \lesssim_\Lambda
\sum_{Q\in \MDW} \|\wh \Delta_Q \RR\mu\|_{L^2(\mu)}^2.$$
By Lemma 8.1 from \cite{DT},\footnote{In fact, the family $\MDW$ in \cite{DT} is not the same as the one in the current paper because in \cite{DT} $\MDW$ is a subfamily of roots of a corona decomposition of $\DD_\mu$. However, the reader can check that the proof of Lemma 8.1 from \cite{DT} works verbatim in our situation too.}
 the right hand side above is also bounded by $C\|\RR\mu\|_{L^2(\mu)}^2$.
So we have,
$$T_1
 \lesssim_\Lambda \|\RR\mu\|_{L^2(\mu)}^2.$$

\vv

\subsection{Estimate of $T_2$}

We have
\begin{align*}
T_2 & = \sum_{R\in\sL(\GDF)}\,
\sum_{k\geq0} B^{-k/2} \!\!\!\!\sum_{Q\in\Trc_k(R)\cap\Ty}\!
\theta_0^2\,\ve_Z^{-1}\,\mu(Z(Q))\\
& =
\sum_{R\in\sL(\GDF)}\,
\sum_{k\geq0} B^{-k/2} \!\!\!\!\sum_{Q\in\Trc_k(R)\cap\Ty}\!
\int_{Z(Q)}\theta_0^2\,\ve_Z^{-1}\,d\mu\\
&= \int\theta_0^2\,\ve_Z^{-1}\, \bigg(\sum_{R\in\sL(\GDF)}\,\sum_{k\geq0} B^{-k/2}\!\!\!
\sum_{Q\in\Trc_k(R)\cap\Ty}\!\chi_{Z(Q)}\bigg)\,d\mu.
\end{align*}
By Fubini, we have
$$\sum_{R\in\sL(\GDF)}\,\sum_{k\geq0} B^{-k/2}\!\!\!
\sum_{Q\in\Trc_k(R)\cap\Ty}\!\chi_{Z(Q)} \leq \sum_{k\geq0} B^{-k/2} \,\# D(x,k),$$
where
$$D(x,k) = 
\big\{R\in \sL(\GDF):\exists \,Q\in\Trc_k(R) \text{ such that }x\in Z(Q)\big\}.
$$
Observe now that, given $j\ge1$, if we let
\begin{align*}D_j(x,k) = 
\big\{R\in \sL(\GDF): \exists \,Q\in&\Trc_k(R) \text{ such that $\TT(e'(Q))$ contains} \\
&\text{
every $P\in\DD_\mu$ such that $x\in P$ and $\ell(P)\leq A_0^{-j}$}\big\},
\end{align*}
then we have
$$D(x,k) = \bigcup_{j\geq 1} D_j(x,k),$$
and moreover $D_j(x,k)\subset D_{j+1}(x,k)$ for all $j$.
From Lemma \ref{lemimp9} we deduce that
$$\#D_j(x,k)\leq C\,(\log\Lambda)^2\quad \mbox{ for all $j\geq 1$.}$$
Thus, $\#D(x,k)\leq C\,(\log\Lambda)^2$ too. Consequently,
$$\sum_{R\in\sL(\GDF)}\,\sum_{k\geq0} B^{-k/2}\!\!\!
\sum_{Q\in\Trc_k(R)\cap\Ty}\!\chi_{Z(Q)} \lesssim_\Lambda 1,$$
and so
$$T_2\lesssim_\Lambda \ve_Z^{-1}
\theta_0^2\,\|\mu\|.$$
Together with the estimate we obtained for $T_1$, this yields
$$\sum_{Q\in\DB} \EE_\infty(9Q)\lesssim_\Lambda \|\RR\mu\|_{L^2(\mu)}^2 + \ve_Z^{-1}
\theta_0^2\,\|\mu\|,$$
which concludes the proof of Proposition \ref{propomain}.
\vv


\vvv

\end{document}